\newtheorem{thm}{Theorem}[section]
\newtheorem{defn}[thm]{Definition}
\newtheorem{prop}[thm]{Proposition}
\newtheorem{rem}[thm]{Remark}
\newtheorem{lem}[thm]{Lemma}
\newtheorem{cor}[thm]{Corollary}
\theoremstyle{remark}
\numberwithin{equation}{section}
\newcommand{\N}{\ensuremath{\mathbb{N}}}
\newcommand{\R}[1]{\ensuremath{\mathbb{R}^{#1}}}
\def\MM{\mathcal{M}}
\newcommand{\cone}[1]{\ensuremath{\mathfrak{C}_{#1}}}
\newcommand{\bfe}{\ensuremath{\mathbf{e}}}
\newcommand{\bfp}{\ensuremath{\mathbf{p}}}
\newcommand{\bfw}{\ensuremath{\mathbf{w}}}
\newcommand{\bfx}{\ensuremath{\mathbf{x}}}
\newcommand{\bfy}{\ensuremath{\mathbf{y}}}
\newcommand{\bfz}{\ensuremath{\mathbf{z}}}
\newcommand{\bfr}{\ensuremath{\mathbf{r}}}
\newcommand{\bfk}{\ensuremath{\mathbf{k}}}
\newcommand{\bfs}{\ensuremath{\mathbf{s}}}
\newcommand{\bft}{\ensuremath{\mathbf{t}}}
\newcommand{\cE}{\ensuremath{\mathcal{E}}}
\newcommand{\cF}{\ensuremath{\mathcal{F}}}
\newcommand{\cG}{\ensuremath{\mathcal{G}}}
\newcommand{\II}{\ensuremath{\mathcal{I}}}
\newcommand{\LL}{\ensuremath{\mathcal{L}}}
\newcommand{\cP}{\ensuremath{\mathcal{P}}}
\newcommand{\cT}{\ensuremath{\mathcal{T}}}
\newcommand{\cV}{\ensuremath{\mathcal{V}}}
\newcommand{\cS}{\ensuremath{\mathcal{S}}}
\newcommand{\eps}{\varepsilon}
\newcommand{\xeps}{\varepsilon_{\bfx}}
\newcommand{\Om}{\ensuremath{\Omega}}
\newcommand{\supp}{\operatorname*{\rm supp}}
\newcommand{\dist}{\operatorname*{\rm dist}}
\newcommand{\lb}{\ensuremath{\left\lvert}} 
\newcommand{\rb}{\ensuremath{\right\rvert}} 
\newcommand{\sn}[1]{\ensuremath{\lb#1\rb}} 
\newcommand{\lbb}{\ensuremath{\left\lVert}} 
\newcommand{\rbb}{\ensuremath{\right\rVert}} 
\newcommand{\vn}[1]{\ensuremath{\lbb#1\rbb}} 
\newcommand{\Ba}{\ensuremath{B_\alpha}}
\newcommand{\Bb}{\ensuremath{B_\beta}}
\newcommand{\Bax}{\ensuremath{B_{\alpha \eps(\bfx)} \left( \bfx \right)}}
\newcommand{\Bbx}{\ensuremath{B_{\beta \eps(\bfx)} \left( \bfx \right)}}
\newcommand{\refK}{\ensuremath{\widehat K}}
\newcommand{\diam}{{\rm diam }}
\newcommand{\eremk}{\hbox{}\hfill\rule{0.8ex}{0.8ex}}
\title{Local high-order regularization and applications to
  \textit{hp}-methods}
\author[puc]{M.~Karkulik}
\ead{mkarkulik@mat.puc.cl}
\author[tu]{J.M.~Melenk\corref{cor1}}
\ead{melenk@tuwien.ac.at}
\address[puc]{Pontificia Universidad Cat\'olica de Chile,
  Facultad de Matem\'aticas, Vicu\~na Mackenna 4860, Santiago, Chile.}
\address[tu]{Technische Universit\"at Wien, Institut f\"ur Analysis und
Scientific Computing,  Wiedner Hauptstrasse 8-10, Vienna, Austria.}
\begin{document}
\begin{abstract}
  We develop a regularization operator based on smoothing on a spatially varying length scale. 
  This operator is defined for functions $u \in L^1$ and has approximation properties that are given 
  by the local Sobolev regularity of $u$ and the local smoothing length scale. Additionally, 
  the regularized function satisfies inverse estimates commensurate with the approximation orders. 
  By combining this operator with a classical $hp$-interpolation operator, we obtain an 
  $hp$-Cl\'ement type quasi-interpolation operator, i.e., an operator that requires minimal smoothness 
  of the function to be approximated but has the expected approximation properties in terms of the 
  local mesh size and polynomial degree. As a  second application, we consider  residual error
  estimates in $hp$-boundary element methods that are explicit in the local
  mesh size and the local approximation order.
\end{abstract}
\begin{keyword}
  Cl\'ement interpolant, quasi-interpolation, $hp$-FEM, $hp$-BEM
  \MSC 65N30, 65N35, 65N50
\end{keyword}
\maketitle
\section{Introduction}
The regularization (or mollification or smoothing) of a function is a basic tool in analysis
and the theory of functions, cf., e.g.,~\cite{burenkov,Hilbert_Mollifier_MCOM73}. In its simplest form on the full space $\R{d}$
one chooses a compactly supported smooth mollifier $\rho$ with $\|\rho\|_{L^1(\R{d})} = 1$, introduces 
for $\eps \in (0,1)$ the scaled mollifier
$\rho_\eps(\bfx) = \eps^{-d} \rho(\bfx/\eps)$, and defines the regularization $u_\eps$ of a function 
$u \in L^1(\R{d})$ as the convolution of $u$ with the mollifier $\rho_\eps$, i.e., 
$u_\eps:= \rho_\eps \star u$. It is well-known that this regularized function satisfies certain 
``inverse estimates'' and has certain approximation properties if the function $u$ has some Sobolev regularity. 
That is, if one denotes by $\omega_\eps:= \cup_{\bfx \in \omega} B_\eps(\bfx)$ the ``$\eps$-neighborhood'' of a 
domain $\omega$, then one has with the usual Sobolev spaces $H^s$, 
\begin{align*}
\text{inverse estimate:} &\qquad  
\|u_\eps\|_{H^{k}(\omega)} \lesssim \eps^{m-k} \|u\|_{H^m(\omega_\eps)}, 
\quad k \ge m;  \\
\text{simultaneous approximation property:} &\qquad  
\|u - u_\eps\|_{H^{k}(\omega)} \lesssim \eps^{m-k} \|u\|_{H^m(\omega_\eps)}, 
\quad 0 \leq k \leq m.  
\end{align*}
The regularized function $u_\eps$ is obtained from $u$ by an averaging of $u$ 
on a \emph{fixed} length scale $\eps$. It is the purpose of the present paper to derive analogous 
estimates for operators that are based on averaging on a \emph{spatially varying} length scale 
(see Theorem~\ref{thm:hpsmooth}).
Let us mention that averaging on a spatially varying length scale
has been used in~\cite{shankov85} to obtain an inverse trace theorem.

For many purposes of numerical analysis, the tool corresponding to the regularization technique
in analysis is quasi-interpolation. In the finite element community, such operators are often 
associated with the names of Cl\'ement \cite{clement1} or Scott \& Zhang \cite{scott1}. Many variants
exist, but they all rely, in one way or another, on averaging on a length scale that is given by the local 
mesh size. The basic results
for the space $S^{1,1}({\mathcal T})$ of continuous, piecewise linear functions on 
a triangulation ${\mathcal T}$ of a domain $\Omega$ take the following form: 
\begin{align*}
\text{inverse estimate:} &\qquad  
\|I^{Cl} u\|_{H^{k}(K)} \lesssim h_K^{-k}  \|I^{Cl} u\|_{L^2(K)} \lesssim h_K^{-k} \|u\|_{L^2(\omega_K)}, 
\qquad k \in \{0,1\},\\ 
\text{approximation property:} &\qquad  
\|u - I^{Cl} u\|_{L^{2}(K)} \lesssim h_K  \|u\|_{H^1(\omega_K)}; 
\end{align*}
here, $h_K$ stands for the diameter of the element $K \in {\mathcal T}$, and 
$\overline{\omega_K} = \cup \{\overline{K^\prime}\,|\, \overline{K^\prime} \cap \overline{K} \ne \emptyset\}$ is the patch of 
neighboring elements of $K$. 
Quasi-interpolation operators of the above type are not restricted to 
piecewise linears on affine triangulations. The literature includes many extensions and refinements 
of the original construction of \cite{clement1} to account for boundary conditions,
isoparametric elements, Hermite elements, anisotropic meshes, or hanging nodes, 
see \cite{scott1,bernardi,girault1,cc1,cchu,apel2,a01,rand12,ern-guermond15}. Explicit constants for stability or approximation estimates 
for quasi-interpolation operators are given in \cite{verfuerth1}. It is worth stressing that the typical
$h$-version quasi-interpolation operators have \emph{simultaneous} approximation properties in a scale 
of Sobolev spaces including fractional order Sobolev spaces. 

In the $hp$-version of the finite element method (or the closely related  spectral element method) 
the quasi-interpolation operator maps into the space $S^{p,1}({\mathcal T})$ of 
continuous, piecewise polynomials of arbitrarily high degree $p$ on a mesh ${\mathcal T}$.
There the situation 
concerning quasi-interpolation with $p$-explicit approximation properties in scales of Sobolev spaces 
and corresponding inverse estimates is much less developed.  In particular, for inverse estimates it is 
well-known that, in contrast to the $h$-version,
elementwise polynomial inverse estimates do not match the approximation properties
of polynomials so that some appropriate substitute needs to be found. 

In the $hp$-version finite element method, the standard approach to the construction of 
piecewise polynomial approximants on unstructured meshes 
is to proceed in two steps: In a first step, polynomial approximations are constructed for every element separately; 
in a second step, the continuity requirements are enforced by using lifting operators. 
The first step thus falls into the realm of classical approximation theory and a plethora of results are 
available there, see, e.g., \cite{devore1}. Polynomial approximation results developed in the
$hp$-FEM/spectral element literature focused mostly (but not exclusively) on $L^2$-based spaces and include 
\cite{ainsworth1,guo2,canuto1,babuska1,demkowicz-buffa05,demkowicz-cao05} and 
\cite{canuto-quarteroni82,bernardi-maday92,bernardi-maday97,bernardi-dauge-maday99,bernardi-dauge-maday07,quarteroni84}. 
The second step is concerned with removing the interelement jumps. In the $L^2$-based setting, appropriate
liftings can be found in \cite{babuska2,sola1,bernardi-dauge-maday07,bernardi-dauge-maday92} although the 
key lifting goes back at least to \cite{gagliardo57}. While optimal (in $p$) convergence rates can be obtained 
with this approach, the function to be approximated is required to have some regularity since traces on the element
boundary need to be defined. In conclusion, this route does not appear
to lead to approximation operators for functions with minimal regularity (i.e., $L^2$ or even $L^1$).
It is possible, however, to construct quasi-interpolation operators in an $hp$-context as done
in \cite{mel1}. There, the construction is performed patchwise instead of elementwise and thus
circumvents the need for lifting operators. 

The present work takes a new approach to the construction of quasi-interpolation operators suitable for an $hp$-setting. 
These quasi-interpolation operators are constructed as the concatenation of two operators, namely, a smoothing 
operator and a classical polynomial interpolation operator. The smoothing operator turns an $L^1$-function into
a $C^\infty$-function by a local averaging procedure just as in the case of constant $\eps$ mentioned at the beginning 
of the introduction. The novel aspect is that the length scale on which the averaging
is done may be linked to the {\em local} mesh size $h$ and the {\em local} approximation order $p$; essentially, 
we select the local length scale $\eps \sim h/p$. The resulting function $\II_\eps u$ is smooth, and one can quantify 
$u - \II_\eps u$ locally in terms of the local regularity of $u$ and the local length scale $h/p$. Additionally, the 
averaged function $\II_\eps u$ satisfies appropriate inverse estimates. The smooth function $\II_\eps u$ can be 
approximated by piecewise polynomials using classical interpolation operators, whose approximation properties 
are well understood. In total, one arrives at a quasi-interpolation operator.

Our two-step construction that is based on first smoothing and then employing a classical interpolation operator 
has several advantages. The smoothing operator $\II_\eps$ is defined merely in terms 
of a length scale function $\eps$ and not explicitly in terms of a mesh. Properties of the mesh are only required
for the second step, the interpolation step. Hence, quasi-interpolation operators for a variety of meshes 
including those with hanging nodes can be constructed; the requirement is that a classical interpolation operator
for smooth functions be available with the appropriate approximation properties. Also in $H^1$-conforming settings
of regular meshes (i.e., no hanging nodes), the two-step construction can lead to improved results: 
In \cite{mps13}, an $hp$-interpolation operator is constructed that leads to optimal $H^1$-conforming approximation 
in the broken $H^2$-norm under significant smoothness assumptions. The present technique allows us to reduce
this regularity requirement to the minimal $H^2$-regularity. Finally, we mention that on a technical side, 
the present construction leads to a tighter domain of dependence for the quasi-interpolant than the 
construction in \cite{mel1}. 

Another feature of our construction
is that it naturally leads to simultaneous approximation results in scales of (positive order)
Sobolev spaces. 
Such simultaneous approximations have many applications, for example in connection with
singular perturbation 
problems, \cite{melenk-wihler14}. The simultaneous approximation properties in a scale of
Sobolev spaces makes 
$hp$-quasi-interpolation operators available for (positive order) fractional order Sobolev 
spaces, which are useful in $hp$-BEM. As an application, we employ our $hp$-quasi-interpolation operator 
for the {\sl a posteriori} error estimation in $hp$-BEM (on shape regular meshes) involving the hypersingular operator, following
\cite{cmps04} for the $h$-BEM. 

Above, we stressed the importance of inverse estimates satisfied by the classical low order
quasi-interpolation operators. The smoothed function $\II_\eps u$ satisfies inverse estimates
as well. This can be used as a substitute for the lack of an inverse estimate for the
$hp$-quasi-interpolant. We illustrate
how this inverse estimate property of $\II_\eps u$ can be exploited in conjunction with
(local) approximation properties 
of $\II_\eps$ for {\sl a posteriori} error estimation in $hp$-BEM.
Specifically, we generalize the reliable $h$-BEM {\sl a posteriori} error estimator of
\cite{c97,cms01} for the single layer BEM operator to the $hp$-BEM setting. 

We should mention a restriction innate to our approach. 
Our averaging operator $\II_\eps$ is based on volume averaging. In this way, the operator can be defined on 
$L^1$. However, this very approach limits the ability to incorporate boundary conditions. We note that the classical
$h$-FEM Scott-Zhang operator \cite{scott1} successfully deals with boundary conditions by using averaging on 
boundary faces instead of volume elements. While such a technique could be employed here as well, it is beyond 
the scope of the present paper. Nevertheless, we illustrate in Theorem~\ref{thm:homogeneous-bc} and 
Corollary~\ref{cor:homogeneous-bc} what is possible within our framework of pure volume averaging. 

This work is organized as follows: In Section~\ref{sec:notation-main-results} we present the main result 
of the paper, that is, the averaging operator $\II_\eps$. This operator is defined in terms of 
a spatially varying length scale, which we formally introduce in Definition~\ref{def:varepsilon}. 
The stability and approximation properties of $\II_\eps$ are studied locally and collected in 
Theorem~\ref{thm:hpsmooth}.  The following Section~\ref{sec:applications} is devoted to applications 
of the operator $\II_\eps$. 
In Section~\ref{sec:quasi-interpolation-FEM} (Theorem~\ref{thm:qi}) 
we show how to generate a quasi-interpolation operator from $\II_\eps$ and a classical interpolation operator. 
In this construction, one has to define a length scale function from the local mesh size and the local approximation order. 
This is done in Lemma~\ref{lem:lsf}, which may be of independent interest.  
Section~\ref{sec:BEM} is devoted to {\sl a posteriori} error estimation in $hp$-BEM: 
Corollary~\ref{cor:single-layer-BEM} addresses the single layer operator and 
Corollary~\ref{cor:hypersingular-BEM} deals with the hypersingular operator. The remainder of the paper is 
devoted to the proof of Theorem~\ref{thm:hpsmooth}. Since the averaging is performed 
on a spatially varying length scale, we will require variations of the Fa\`a Di Bruno formula in 
Lemmas~\ref{lem:faa-di-bruno-1}, \ref{lem:faa-1-estimate}. We conclude the paper with Appendix~\ref{appendix} in which 
we show for domains that are star-shaped with respect to a ball, that the constants in the Sobolev embedding
theorems (with the exception of certain limiting cases) can be controlled solely in terms of 
the diameter and the ``chunkiness'' of the domain. This result is obtained by a careful tracking of constants 
in the proof of the Sobolev embedding theorem. However, since this statement does not seem to be explicitly 
available in the literature, we include its proof in Appendix~\ref{appendix}.
\section{Notation and main result}
\label{sec:notation-main-results}
Points in physical space $\R{d}$ are denoted by small boldface letters, e.g.,
$\bfx=\left( x_1, \dots, x_d \right)$.
Multiindices in $\N_0^{d}$ are also denoted by small boldface letters, e.g.,
$\bfr$, and are used for partial derivatives, e.g., $D^\bfr u$,
which have order $r = |\bfr| = \sum_{i=1}^d \bfr_i$.
We also use the notation $\bfx^\bfr = \prod_{i=1}^d x_i^{r_i}$.
A ball with radius $r$ centered at $\bfx \in \R{d}$ is denoted by
$B_r(\bfx) = \left\{ \bfy \in \R{d} \mid \sn{\bfy-\bfx} < r \right\}$, 
and we abbreviate $B_r := B_r(0)$.
For open sets $\Om \subset \R{d}$,
$C^\infty(\Om)$ is the space of functions with derivatives of all order, and
$C^\infty_0(\Om)$ is the space of functions with derivatives of all orders and
compact support in $\Om$. By $W^{r,p}(\Om)$ for $r \in \N_0$
and $p \in [1,\infty]$ we denote the standard Sobolev space of functions with distributional
derivatives of order $r$ being in $L^p(\Om)$,
with norm $\vn{u}_{r,p,\Om}^p = \sum_{\sn{\bfr} \leq r}\vn{D^\bfr u}_{L^p(\Om)}^p$
and seminorm
$\sn{u}_{r,p,\Om}^p = \sum_{\sn{\bfr} = r} \vn{D^\bfr u}_{L^p(\Om)}^p$.
We will also work with fractional order spaces:
for $\sigma \in (0,1)$ and $p \in [1,\infty)$ we define Aronstein-Slobodeckij seminorms
\begin{align*}
  \sn{u}_{\sigma,p,\Om} = \left( \int_{\Omega} \int_{\Omega}
  \frac{|u(\bfx) - u(\bfy)|^p}{|\bfx - \bfy|^{\sigma p+d}}\,d\bfy\,d\bfx \right)^{1/p}; 
\end{align*}
for $s = \lfloor s\rfloor + \sigma$ with $\lfloor s\rfloor = \sup \{n \in \N_0\,|\, n \leq s\}$ and 
$\sigma \in (0,1)$ we set $\sn{u}_{s,p,\Om}^p =
\sum_{\sn{\bfs}  = \lfloor s\rfloor} |D^\bfs u|_{\sigma,p,\Omega}^p$. 
The full norm on $W^{s,p}(\Omega)$ is given by
$\vn{u}_{s,p,\Om}^p =
\|u\|_{\lfloor s\rfloor,p,\Omega}^p + |u|_{s,p,\Omega}^p$.
By $A \lesssim B$ we mean that there is a constant $C>0$ that is independent of relevant 
parameters such as the mesh size, polynomial degree and the like with 
$A \leq C\cdot B$.

In order to state our main result, we need the following definition.

\begin{defn}[$\Lambda$-admissible length scale function]
\label{def:varepsilon}
Let $\Omega\subset\R{d}$ be a domain, and let 
  $\Lambda:= \bigl( \LL,\left( \Lambda_{\bfr} \right)_{\bfr\in\N_0^d}
  \bigr)$, where $\LL\in\R{}$ is non-negative and 
  $\left( \Lambda_{\bfr} \right)_{\bfr\in\N_0^d}$ is a sequence of
  non-negative numbers.
  A function $\eps:\Omega\rightarrow \R{}$ is called a
  \emph{$\Lambda$-admissible length scale function}, if
  \begin{enumerate}[(i)]
    \item \label{item:lsf-i}  $\eps \in C^{\infty}(\Om)$,
    \item \label{item:lsf-ii}  $0 < \eps \leq \Lambda_\mathbf{0}$,
    \item \label{item:lsf-iii}  
  $\sn{D^{\bfr}\eps} \leq \Lambda_{\bfr} \sn{\eps}^{1-\sn{\bfr}}$
  pointwise in $\Omega$ for all $\bfr\in\N_0^d$,
    \item \label{item:lsf-iv} $\eps$ is Lipschitz continuous with constant
    $\LL$.
  \end{enumerate}
\end{defn}

\begin{rem}
If $\Omega$ is assumed to be a bounded Lipschitz domain, then the condition (\ref{item:lsf-iii}) implies
$\|\varepsilon\|_{L^\infty(\Omega)} < \infty$ and the Lipschitz continuity of $\varepsilon$, i.e., 
(\ref{item:lsf-iv}). Nevertheless, 
we include items (\ref{item:lsf-ii}) and (\ref{item:lsf-iv}) in Definition~\ref{def:varepsilon} in order to 
explicitly introduce the parameters $\Lambda_{\mathbf{0}}$ and ${\mathcal L}$ as they appear frequently in the proofs below.
\eremk
\end{rem}
The following theorem is the main result of the present work. Its proof will be given in
Section~\ref{section:proof:hpsmooth} below.
\begin{thm}\label{thm:hpsmooth}
  Let $\Om\subset\R{d}$ be a bounded Lipschitz domain. Let $k_{\max}\in\N_0$ and
  $\Lambda:=\bigl( \LL,\left( \Lambda_\bfr \right)_{\bfr\in\N_0^d}\bigr)$.
  Then, there exists a constant $0<\beta<1$
  such that for every $\Lambda$-admissible
  length scale function $\eps \in C^\infty(\Omega)$ there exists a linear operator
  $\II_\eps: L^1_{loc}(\Om) \rightarrow C^\infty(\Om)$ with the following
  properties (\ref{item:thm:hpsmooth-i})--(\ref{item:thm:hpsmooth-iv}). 
  In the estimates below, $\omega \subset \Omega$ is an arbitrary open set and  
  $\omega_\eps\subset\Omega$ denotes its ``neighborhood'' given by 
  \begin{align}
\label{eq:omega_eps}
    \omega_\eps := \Om \cap \bigcup_{\bfx\in \omega} B_{\beta \eps (\bfx)}(\bfx).
  \end{align}
\begin{enumerate}[(i)]
\item 
\label{item:thm:hpsmooth-i}
  Suppose that the pair $(s,p)\in\N_0 \times [1,\infty]$ satisfies $s\leq k_{\max}+1$. Assume that 
  the pair $(r,q) \in \R{} \times [1,\infty]$ satisfies 
  $(0 \leq s\leq r\in\R{}\text{ and } q\in[1,\infty))$ or
  $(0 \leq s\leq r\in\N_0\text{ and }q\in[1,\infty])$. Then 
  \begin{align}\label{thm:hpsmooth:stab}
    \sn{\II_\eps u}_{r,q,\omega} &\leq C_{r,q,s,p,\Lambda,\Om}
    \sum_{\sn{\bfs}\leq s} \vn{\eps^{s-r+d(1/q-1/p)}D^\bfs u}_{0,p,\omega_\eps},
  \end{align}
  where $C_{r,q,s,p,\Lambda,\Om}$ depends only on $r$, $q$, $s$, $p$,
  $\LL$,
  $(\Lambda_{\bfr'})_{\sn{\bfr'}\leq \lceil r \rceil}$, and $\Omega$. 
\item 
\label{item:thm:hpsmooth-ii}
  Suppose $0\leq s\in\R{}$, $r\in\N_0$ with $s\leq r\leq k_{\max}+1$,
  and $1\leq p\leq q<\infty$. Define $\mu:=d(p^{-1}-q^{-1})$. Assume that either 
  $(r=s+\mu\text{ and }p>1)$ or $(r>s+\mu)$. Then
  \begin{align}\label{thm:hpsmooth:apx}
    \sn{u - \II_\eps u}_{s,q,\omega} &\leq C_{s,q,r,p,\Lambda,\Om}
    \sum_{\sn{\bfr}\leq r} \vn{\eps^{r-s+d(1/q-1/p)}D^\bfr u}_{0,p,\omega_\eps},
  \end{align}
  where $C_{s,q,r,p,\Lambda,\Om}$ depends only on $s$, $q$, $r$, $p$,
  $\LL$,
  $(\Lambda_{\bfs'})_{\sn{\bfs'}\leq \lceil s \rceil}$, and $\Omega$. 
\item 
\label{item:thm:hpsmooth-iii}
  Suppose $s$, $r\in\N_0$ with $s\leq r\leq k_{\max}+1$, and $1\leq p < \infty$.
  Define $\mu:=d/p$. Assume that either $(r=s+\mu\text{ and }p=1)$ or $(r>s+\mu
  \text{ and } p>1)$.
  Then 
  \begin{align}\label{thm:hpsmooth:apx:infty}
    \sn{u - \II_\eps u}_{s,\infty,\omega} &\leq C_{s,r,p,\Lambda,\Om}
    \sum_{\sn{\bfr}\leq r} \vn{\eps^{r-s-d/p}D^\bfr u}_{0,p,\omega_\eps},
  \end{align}
  where $C_{s,r,p,\Lambda,\Om}$ depends only on $s$, $r$, $p$,
  $\LL$,
  $(\Lambda_{\bfs'})_{\sn{\bfs'}\leq s}$, and $\Omega$. 
\item 
\label{item:thm:hpsmooth-iv}
  If $\eps\in C^\infty(\overline\Omega)$ and $\eps>0$ on $\overline\Omega$,
  then $\II_\eps: L^1_{loc}(\Omega) \rightarrow C^\infty(\overline\Omega)$.
\end{enumerate}
\end{thm}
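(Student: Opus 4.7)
The plan is to define $\II_\eps$ as a mollifier whose bandwidth varies spatially with $\eps(\bfx)$. I fix once and for all a reference kernel $\rho\in C^\infty_0(B_1(0))$ with $\int\rho = 1$ and the vanishing moments $\int \bfy^\bfs \rho(\bfy)\,d\bfy = 0$ for $1 \leq |\bfs| \leq k_{\max}$, which can be constructed as a linear combination of derivatives of a standard bump. The operator is then
\begin{equation*}
\II_\eps u(\bfx) := \int u(\bfy)\, (\beta\eps(\bfx))^{-d}\, \rho\!\left( \frac{\bfx-\bfy}{\beta\eps(\bfx)} \right) d\bfy,
\end{equation*}
and $\beta\in(0,1)$ is chosen small, in terms of $\LL$ and the Lipschitz character of $\Omega$, so that $\overline{B_{\beta\eps(\bfx)}(\bfx)} \subset \Omega$ for all $\bfx\in\Omega$; this likely requires first replacing $\eps$ by a smooth minimum of $\eps$ and a constant multiple of $\dist(\cdot,\partial\Omega)$, which preserves item (\ref{item:lsf-iii}) of Definition~\ref{def:varepsilon} with comparable constants. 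The vanishing moments drive the approximation order, and $\omega_\eps$ in~\eqref{eq:omega_eps} records precisely the set of $\bfy$ that enter $\II_\eps u$ at points $\bfx\in\omega$.

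The technical heart of (\ref{item:thm:hpsmooth-i}) is the computation of $D^\bfr\II_\eps u(\bfx)$. Differentiating in $\bfx$ distributes across the explicit prefactor $\eps(\bfx)^{-d}$ and across the inner composition $\bfx\mapsto(\bfx-\bfy)/(\beta\eps(\bfx))$. I would feed the latter into the Fa\`a Di Bruno expansions of Lemmas~\ref{lem:faa-di-bruno-1} and~\ref{lem:faa-1-estimate}; every occurrence of $D^{\bfs'}\eps$ is then replaced by $\Lambda_{\bfs'}\,\eps^{1-|\bfs'|}$ via condition (\ref{item:lsf-iii}) of Definition~\ref{def:varepsilon}. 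A careful bookkeeping shows that the combined $\eps$-weight in every term of $D^\bfr \II_\eps u$ collapses to $\eps^{-|\bfr|-d}$ times a bounded combination of derivatives of $\rho$ evaluated at $(\bfx-\bfy)/(\beta\eps(\bfx))$. Moving up to $|\bfs|\leq s$ derivatives onto $u$ by integration by parts against the smooth kernel, then applying Hölder's inequality on $B_{\beta\eps(\bfx)}(\bfx)$ in $\bfy$ and Minkowski's integral inequality in $\bfx$, yields~\eqref{thm:hpsmooth:stab} for integer $r$; the factor $\eps^{d(1/q-1/p)}$ emerges from Jensen's inequality on balls of radius $\sim\eps(\bfx)$. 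Non-integer $r$ follows by real interpolation from the integer endpoints, using that the right-hand side depends monotonically on the integer index.

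For the approximation estimates~\eqref{thm:hpsmooth:apx} and~\eqref{thm:hpsmooth:apx:infty}, the moment conditions give
\begin{equation*}
u(\bfx) - \II_\eps u(\bfx) = \int \bigl[ u(\bfy) - T^{r-1}_\bfx u(\bfy) \bigr]\, (\beta\eps(\bfx))^{-d}\rho\!\left(\frac{\bfx-\bfy}{\beta\eps(\bfx)}\right) d\bfy,
\end{equation*}
whenever $r\leq k_{\max}+1$, where $T^{r-1}_\bfx u$ is the Taylor polynomial of $u$ at $\bfx$ of degree $r-1$. The integral remainder bounds $|u(\bfx) - \II_\eps u(\bfx)|$ pointwise by $\eps(\bfx)^r$ times an average of $|D^\bfr u|$ on $B_{\beta\eps(\bfx)}(\bfx)$. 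Raising this to the $q$-th power and integrating in $\bfx$, then invoking the Sobolev embedding of the Appendix with constants depending only on diameter and chunkiness of $\Omega$, handles the case $\mu > 0$ as well as the borderline cases in~\eqref{thm:hpsmooth:apx:infty}. For $s\geq 1$, I would write $D^\bfs(u-\II_\eps u) = (D^\bfs u - \II_\eps D^\bfs u) + (\II_\eps D^\bfs u - D^\bfs \II_\eps u)$, apply the $s=0$ approximation estimate to the first term and the Fa\`a Di Bruno commutator identity from the previous paragraph to the second term. Finally, (\ref{item:thm:hpsmooth-iv}) is immediate: if $\eps$ is smooth and strictly positive up to $\partial\Omega$, the integrand of $\II_\eps u$ and all its $\bfx$-derivatives extend continuously to $\overline\Omega$.

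The main obstacle I expect is the Fa\`a Di Bruno bookkeeping of (\ref{item:thm:hpsmooth-i}): verifying that every one of the many terms produced by $D^\bfr\II_\eps u$ absorbs exactly the right power of $\eps$ so that the unified exponent $s-r+d(1/q-1/p)$ of~\eqref{thm:hpsmooth:stab} comes out. A secondary difficulty is the preliminary geometric reduction that guarantees $B_{\beta\eps(\bfx)}(\bfx)\subset\Omega$ while respecting the admissibility bounds in Definition~\ref{def:varepsilon}; once this is done, tracking constants through the Sobolev embedding of the Appendix should yield the claimed dependence only on $\LL$, $(\Lambda_{\bfr'})$, and $\Omega$.
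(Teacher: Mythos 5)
The central gap is your treatment of the boundary. You propose to shrink $\eps$ to a smoothed $\min(\eps, c\,\dist(\cdot,\partial\Omega))$ so that the averaging ball stays inside $\Omega$, and you dismiss this as a ``secondary difficulty.'' It is not, and in fact this modification cannot yield the theorem as stated. Write $\eps'\leq\eps$ for the shrunk length scale. Near $\partial\Omega$ you have $\eps'(\bfx)\to 0$ while $\eps(\bfx)$ stays bounded away from zero. Your mollifier then gives, at best, the stability estimate~\eqref{thm:hpsmooth:stab} with weight $(\eps')^{\,s-r+d(1/q-1/p)}$ on the right. In the typical inverse-estimate regime (say $r>s$, $p=q$) this exponent is strictly negative, so $(\eps')^{\,s-r}$ \emph{blows up} near $\partial\Omega$ and does not dominate the claimed weight $\eps^{\,s-r}$. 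The estimate you would prove is weaker than, and does not imply, the one asserted with the original $\eps$. It also destroys item~(\ref{item:thm:hpsmooth-iv}): if the internal length scale collapses at $\partial\Omega$ you cannot obtain $\II_\eps u\in C^\infty(\overline\Omega)$. The paper circumvents all of this with the operator $\cG$ of Section~\ref{sec:regularization-near-boundary}: near $\partial\Omega$ one averages on a ball \emph{translated inward} by $L\delta\eps(\bfx_0)\bfe_d$ (so it still sits in $\Omega$, see Lemma~\ref{lem:local-balls}) and then corrects with a degree-$k_{\max}$ Taylor expansion as in~\eqref{lem:G:def} to recover the full approximation order. Keeping the averaging scale $\sim\eps$ up to the boundary is the whole point; this is precisely what the FEM/BEM applications on boundary-touching elements require, and no shrinking of $\eps$ can reproduce it.

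A second, smaller issue is the fractional case of~\eqref{thm:hpsmooth:stab}. You assert it follows ``by real interpolation from the integer endpoints,'' but the endpoints here are weighted $L^p$-type statements with a spatially varying weight $\eps(\bfx)^{\,s-r+d(1/q-1/p)}$ whose exponent itself depends on $r$, and the domain $\omega_\eps$ is fixed while the left-hand seminorm is a nonlocal double integral. This is not a situation where standard real interpolation of the operator applies off the shelf. The paper instead proves a Sobolev embedding with constants controlled by the chunkiness (Theorem~\ref{thm:embedding}, proved in Appendix~\ref{appendix}), applies it on the reference ball $B_\alpha$ to land in integer order (see Lemma~\ref{lem:approx:refel}, cases $r\notin\N_0$), and then globalizes the Aronstein--Slobodeckij seminorm via the Besicovitch covering and the tail estimates in Lemma~\ref{lem:besicovitch}\,(\ref{item:lem:besicovitch-iv}). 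If you want to pursue interpolation you would at minimum have to set it up carefully in the scale of weighted spaces and justify that the constants come out uniform in the length scale; as written the claim is a gap.

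Your interior mechanics (moment conditions of order $k_{\max}$, Fa\`a di Bruno bookkeeping replacing $D^{\bfs'}\eps$ by $\Lambda_{\bfs'}\eps^{1-|\bfs'|}$, Taylor-remainder bound for the approximation error, H\"older/Minkowski for the $L^p\!\to\!L^q$ shift) are sound and essentially match the paper's Lemmas~\ref{lem:faa-di-bruno-1}--\ref{lem:approx}. The commutator split $D^\bfs(u-\II_\eps u)=(D^\bfs u-\II_\eps D^\bfs u)+(\II_\eps D^\bfs u-D^\bfs\II_\eps u)$ is a plausible alternative to the paper's pullback-to-reference-configuration route for $s\geq 1$ (the commutator is nonzero because $\eps$ varies, but its Fa\`a di Bruno expansion has the right $\eps$-weights), so that part is not a gap, just a different bookkeeping. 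But the boundary construction must be replaced by something like the paper's shifted-ball-plus-Taylor device before the argument can be completed.
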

A few comments concerning Theorem~\ref{thm:hpsmooth} are in order: 
\begin{rem}
\begin{enumerate}
\item 
The stability properties (part (\ref{item:thm:hpsmooth-i})) and the approximation
properties (parts (\ref{item:thm:hpsmooth-ii}), (\ref{item:thm:hpsmooth-iii})) involve 
{\em unweighted} (possibly fractional) Sobolev norms on the left-hand side and 
weighted integer order norms on the right-hand side. Our reason for admitting fractional
Sobolev spaces only on one side of the estimate (here: the left-hand side) is that in this
case the local length scale can be incorporated fairly easily into the estimate. 
\item
The pairs $(s,q)$ for the left-hand side and $(r,p)$ for the right-hand side 
in part (\ref{item:thm:hpsmooth-ii}) are linked to each other. Essentially, the parameter 
combination of $(s,q)$ and $(r,p)$ in part (\ref{item:thm:hpsmooth-ii}) is the one known from 
the classical Sobolev embedding theorems; the only possible exception 
are certain cases related to the limiting case $p = 1$. This connection to 
the Sobolev embedding theorems arises from the proof of Theorem~\ref{thm:hpsmooth}, which employs 
Sobolev embedding theorems and scaling arguments. 
\item 
In the classical Sobolev embedding theorems, the embedding into $L^\infty$-based spaces is 
special, since the embedding is actually into a space of continuous functions. 
Part (\ref{item:thm:hpsmooth-ii}) therefore excludes the case $q = \infty$, 
and some results for the special case $q = \infty$ are collected in 
part (\ref{item:thm:hpsmooth-iii}).  
\eremk
\end{enumerate}
\end{rem}
The following variant of Theorem~\ref{thm:hpsmooth} allows for the preservation of 
homogeneous boundary conditions: 
\begin{thm}
\label{thm:homogeneous-bc}
The operator $\II_\eps$ in Theorem~\ref{thm:hpsmooth} can be modified such that the following 
is true for all $u \in L^1_{loc}(\Omega)$:  
\begin{enumerate}[(i)]
\item 
\label{item:thm:homogeneous-bc-i}
The statements (\ref{item:thm:hpsmooth-i})---(\ref{item:thm:hpsmooth-iii}) of Theorem~\ref{thm:hpsmooth} 
are valid, if one 
replaces $\omega_\eps$ of the right-hand sides with 
$\widetilde \omega_\eps:= \cup_{\bfx \in \omega} B_{\beta \eps(\bfx)}(\bfx)$ and simultaneously 
replaces $u$ on the right-hand sides with $\widetilde u:= u \chi_\Omega$ (i.e., $u$ is extended by 
zero outside $\Omega$).  This assumes that $\widetilde u$ is as regular on $\widetilde \omega_\varepsilon$ 
as the
right hand-sides of~(\ref{item:thm:hpsmooth-i})---(\ref{item:thm:hpsmooth-iii}) dictate. 
\item 
\label{item:thm:homogeneous-bc-ii}
The function $\II_\eps u$ vanishes near $\partial\Omega$. More precisely, 
assuming that $\eps \in C(\overline{\Omega})$, then there is $\lambda > 0$
such that 
$$
\II_\eps u = 0 \quad \mbox{ on $\cup_{\bfx \in \partial\Omega} B_{\lambda \eps(\bfx)}(\bfx).$} 
$$
\end{enumerate}
\end{thm}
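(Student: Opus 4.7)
The plan is to obtain the modified operator by translating the support of the averaging kernel outward near $\partial\Omega$. First I would construct a vector field $\nu\in C^\infty(\R{d};\R{d})$ whose derivatives of all orders are uniformly bounded on $\overline\Omega$ and which satisfies a uniform outward-escape property: there exists $c_0>0$ such that $\bfx+t\nu(\bfx)\notin\overline\Omega$ for every $\bfx\in\overline\Omega$ and every $t\in(0,c_0]$. For bounded Lipschitz domains such a field is standard: cover $\partial\Omega$ by finitely many Lipschitz graph patches, use in each the vertical direction relative to the graph, glue the local choices by a smooth partition of unity, and extend smoothly into the interior where the values of $\nu$ are immaterial.

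Writing the operator of Theorem~\ref{thm:hpsmooth} as $\II_\eps u(\bfx)=\int K(\bfx,\bfy)u(\bfy)\,d\bfy$ with $K(\bfx,\cdot)$ supported in $B_{\beta\eps(\bfx)}(\bfx)$, the modified operator is
\[
\II_\eps^{BC} u(\bfx):=\int K\bigl(\bfx,\bfy-\tau\eps(\bfx)\nu(\bfx)\bigr)\,\widetilde u(\bfy)\,d\bfy,
\]
so that the effective support in $\bfy$ is the shifted ball $B_{\beta\eps(\bfx)}(\bfx+\tau\eps(\bfx)\nu(\bfx))\subset B_{(\beta+\tau)\eps(\bfx)}(\bfx)$. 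This containment lets the neighborhood on the right-hand sides be absorbed into $\widetilde\omega_\eps$ after redefining the constant $\beta$ as $\beta+\tau$. For the vanishing property (\ref{item:thm:homogeneous-bc-ii}), I would fix $\tau$ large relative to $\beta$ and the Lipschitz constant of $\partial\Omega$ and choose $\lambda<c_0$ sufficiently small so that whenever $\dist(\bfx,\partial\Omega)\le\lambda\eps(\bfx)$ (in particular for $\bfx\in\partial\Omega$), the shift $\tau\eps(\bfx)\nu(\bfx)$ carries the entire $\beta$-ball out of $\overline\Omega$. Then $\widetilde u$ vanishes on the support of the integrand and $\II_\eps^{BC} u(\bfx)=0$.

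For the estimates (\ref{item:thm:homogeneous-bc-i}), the translated kernel remains a smooth kernel supported in a ball of radius comparable to $\eps(\bfx)$, and its $\bfx$-derivatives are controlled by the Fa\`a di Bruno formula applied to the composition $\bfy\mapsto\bfy-\tau\eps(\bfx)\nu(\bfx)$: the admissibility bound $|D^\bfr\eps|\le\Lambda_\bfr|\eps|^{1-|\bfr|}$ combined with the uniform derivative bounds on $\nu$ gives the translated kernel the same scaling estimates as $K$, with constants depending only on $\Lambda$, $\Omega$, and $\nu$. Hence the proof of Theorem~\ref{thm:hpsmooth} transfers with $u$ replaced by $\widetilde u$ and $\omega_\eps$ by $\widetilde\omega_\eps$.

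The main obstacle is the trade-off underlying (ii): the shift $\tau\eps(\bfx)\nu(\bfx)$ must be long enough to push the $\beta$-ball out of $\overline\Omega$ against the Lipschitz roughness of $\partial\Omega$, while the enlarged radius $\beta+\tau$ must still yield the advertised neighborhood $\widetilde\omega_\eps$. This balancing couples the Lipschitz constants of $\partial\Omega$, the constants $\Lambda_\bfr$ and $\LL$ from Definition~\ref{def:varepsilon}, and the choice of $\beta$ in Theorem~\ref{thm:hpsmooth}, and its quantitative resolution is where the geometry of Lipschitz domains enters nontrivially.
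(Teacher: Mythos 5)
Your overall strategy is the right one and matches the paper's: near $\partial\Omega$, shift the averaging region outward so that it lies in $\R{d}\setminus\overline\Omega$, where the zero extension $\widetilde u$ vanishes. Part (\ref{item:thm:homogeneous-bc-ii}) then follows, as you say, once the shift length $\tau$ dominates the kernel radius and the Lipschitz oscillation of $\partial\Omega$.

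The problem is in the mechanism you use to implement the shift, and it affects part (\ref{item:thm:homogeneous-bc-i}). You define
$\II_\eps^{BC}u(\bfx)=\int K(\bfx,\bfy-\tau\eps(\bfx)\nu(\bfx))\,\widetilde u(\bfy)\,d\bfy$, i.e., you translate the entire kernel. Substituting $\bfz=\bfy-\tau\eps(\bfx)\nu(\bfx)$ shows this is $\II_\eps\bigl(\widetilde u(\cdot+\tau\eps(\bfx)\nu(\bfx))\bigr)(\bfx)$, which approximates $\widetilde u(\bfx+\tau\eps(\bfx)\nu(\bfx))$, not $\widetilde u(\bfx)$. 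Concretely, if $K(\bfx,\cdot)$ reproduces polynomials at $\bfx$ (i.e.\ $\int K(\bfx,\bfy)\,p(\bfy)\,d\bfy=p(\bfx)$ for $p\in\cP_{k_{\max}}$, which the operator of Theorem~\ref{thm:hpsmooth} is built to do via \eqref{eq:order-condition-mollifier} and the operator $\cG$ of \eqref{lem:G:def}), then the translated kernel satisfies $\int K(\bfx,\bfy-\tau\eps(\bfx)\nu(\bfx))\,p(\bfy)\,d\bfy=p(\bfx+\tau\eps(\bfx)\nu(\bfx))\ne p(\bfx)$ already for affine $p$. So polynomial reproduction fails, the Bramble--Hilbert step at the core of \eqref{thm:hpsmooth:apx} no longer applies, and the error $u-\II_\eps^{BC}u$ contains an $O(\eps)$ contribution that caps the achievable rate at first order. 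The stability estimate \eqref{thm:hpsmooth:stab}, which you do address via Fa\`a di Bruno, would survive; the approximation estimates \eqref{thm:hpsmooth:apx}--\eqref{thm:hpsmooth:apx:infty} would not.

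The missing ingredient is the Taylor re-centering built into the boundary-layer operator $\cG$ in \eqref{lem:G:def}: one does not merely evaluate the smoothed function at the shifted point, but forms the degree-$k_{\max}$ Taylor polynomial of $u\star\rho_{\delta\eps(\bfx_0)}$ centered at the shifted point and evaluates it back at $\bfx_0$. This is precisely what restores polynomial reproduction after translation, independently of whether the translation vector $\bf b$ points inward (Theorem~\ref{thm:hpsmooth}) or outward (Theorem~\ref{thm:homogeneous-bc}). The paper's proof therefore keeps the structure of $\cG$ untouched and only flips the sign of the translation so that the averaging ball lands in $\R{d}\setminus\Omega$; the high-order estimates then transfer verbatim. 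To repair your argument, replace the raw kernel translation with the $\cG$-type construction (with your outward $\nu$ replacing $\bfe_d$, which would be a fine alternative to the paper's chart-wise formulation). As a minor remark, the outward-escape property as you state it ($\bfx+t\nu(\bfx)\notin\overline\Omega$ for \emph{all} $\bfx\in\overline\Omega$ and $t\in(0,c_0]$) cannot hold for interior $\bfx$; it must be restricted to a boundary collar, with the translation tapered to zero, and patched to the untranslated interior operator $\cE$ via the partition of unity, as in the original proof.
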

\begin{proof}
The proof follows by a modification of the proof of Theorem~\ref{thm:hpsmooth}. In 
Theorem~\ref{thm:hpsmooth}, the value $(\II_\eps v)(\bfx)$ for an $\bfx$ near $\partial\Omega$ 
is obtained by an averaging of $v$ on a ball ${\mathbf b} + B_{\delta \eps(\bfx)}(\bfx)$ where 
${\mathbf b}$ is chosen (in dependence on $\bfx$ and $\eps(\bfx))$ in a such a way that 
${\mathbf b} + B_{\delta \eps(\bfx)}(\bfx) \subset \Omega$. In order to ensure that $\II_\eps v$ vanishes
near $\partial\Omega$, one can modify this procedure: one extends $v$ by zero outside $\Omega$ and 
selects the translation ${\mathbf b}$ so that the averaging region 
${\mathbf b} + B_{\delta \eps(\bfx)}(\bfx) \subset \R{d} \setminus \Omega$. In this way, the desired condition 
(\ref{item:thm:homogeneous-bc-ii}) is ensured. The statement
(\ref{item:thm:homogeneous-bc-i}) follows in exactly the same way as in the proof of Theorem~\ref{thm:hpsmooth}. 
\end{proof}
\section{Applications to $hp$-methods} 
\label{sec:applications} 
Let $\Omega \subset \R{d}$ be a bounded Lipschitz domain.
A \emph{partition} $\cT = \left\{ K \right\}_{K\in\cT}$ of $\Om$ 
is a collection of open, bounded, and pairwise disjoint sets
such that $\overline\Omega = \cup_{K\in\cT}\overline K$.
We set 
$$
\omega_K:= \operatorname*{interior} 
\left( \cup \{\overline{K^\prime}\,|\, \overline{K^\prime } \cap \overline{K} \ne \emptyset \}\right). 
$$
In order to simplify the notation, we will write also
``$K' \in \omega_K$'' to mean $K' \in \cT$ such that $K' \subset \omega_K$.
A finite partition $\cT$ of $\Om$ is called a mesh, if
every element $K$ is the image of the reference simplex $\refK \subset \R{d}$ under 
an affine map $F_K:\refK\rightarrow K$. 
A mesh $\cT$ is assumed to be regular in the sense of Ciarlet \cite{ciarlet76a}, i.e., it is not allowed 
to have hanging nodes (this restriction is not essential and only made for convenience of 
presentation--see Remark~\ref{rem:hanging-nodes} below).
To every element $K$ we associate the mesh width $h_K := {\rm diam}(K)$,
and we define $h\in L^\infty(\Om)$ as $h(\bfx) = h_K$ for $\bfx\in K$.
We call a mesh $\gamma$-shape regular if 

\begin{align*}
  h_K^{-1}  \vn{F'_K}_{} + h_K \vn{(F'_K)^{-1}}_{} \leq \gamma
  \quad\text{ for all } K\in\cT.
\end{align*}
A $\gamma$-shape regular mesh is locally quasi-uniform, i.e.,
there is a constant $C_\gamma$ which depends only on $\gamma$ such that
\begin{align}
\label{eq:shape-regular}
  C_\gamma^{-1} h_K \leq h_{K'} \leq C_\gamma h_K
  \quad\text{ for } \overline{K}\cap\overline{K'}\neq\emptyset.
\end{align}
A polynomial degree distribution $\bfp$ on a partition $\cT$ is a multiindex
$\bfp=(p_K)_{K\in\cT}$ with $p_K \in \N_0$.
A polynomial degree distribution is said to be $\gamma_p$-shape regular
if
\begin{align}
\label{eq:shape-regular-p}
\gamma_p^{-1}(p_K +1) \leq p_{K'} +1 \leq \gamma_p (p_K+1)
  \quad\text{ for } \overline{K}\cap\overline{K'}\neq\emptyset.
\end{align}
We define a function $p\in L^\infty(\Om)$ by $p(\bfx) = p_K$ for $\bfx\in K$.
For $r \in \{0,1\}$, a mesh $\cT$, and a polynomial degree distribution 
$\bfp$ we introduce 
\begin{align}
\label{eq:Sp}
  \cS^{\bfp,r}(\cT) = \left\{ u \in H^r(\Om) \mid \forall K \in\cT:
    u|_K \circ F_K \in \cP_{p_K}(\refK) \right\},
 \mbox{ where } \cP_{p}(\refK) = 
    {\rm span}\left\{ \bfx^\bfk \mid \sn{\bfk} \leq p \right\}. 
\end{align}
The next lemma shows that shape-regular meshes and polynomial degree
distributions allow for the construction of length scale functions that are essentially 
given by $h/p$ and for which 
$\Lambda = \bigl( \LL, (\Lambda_{\bfr})_{\bfr \in \N_0^{d}} \bigr)$ depends
only on the mesh parameters $\gamma$ and $\gamma_p$  and on the domain
$\Omega$.

\begin{lem}\label{lem:lsf}
  Let $\Om\subset\R{d}$ be a bounded Lipschitz domain and $\cT$ be a partition of $\Om$.
Define, for $\delta > 0$, the extended sets $K_\delta$ by $K_\delta = \cup_{\bfx \in K}
B_{\delta}(\bfx)$. 
\begin{enumerate}[(i)]
\item 
\label{item:lem:lsf-i}
Let $\widetilde\eps \in L^\infty(\Omega)$ be piecewise constant on the partition $\cT$, i.e., 
$\widetilde \eps|_K \in \R{}$ for each $K \in \cT$. Assume that for  some 
$C_\eps$, $C_{\rm reg} > 0$, $M \in \N$, the following two conditions are satisfied: 
\begin{align}
\label{eq:lem:lsf-10}
K\in \cT \quad \Longrightarrow \quad
\operatorname*{card} \{K^\prime \in \cT \colon 
K_{C_{\rm reg} \widetilde \eps|_K} \cap K_{C_{\rm reg} \widetilde
\eps|_{K^\prime}}^\prime \neq \emptyset\} &\leq M,  \\
\label{eq:lem:lsf-20}
K, K^\prime \in \cT \mbox{ and }\  
K_{C_{\rm reg} \widetilde \eps|_K} \cap K_{C_{\rm reg} \widetilde
\eps|_{K^\prime}}^\prime \ne \emptyset &\Longrightarrow 
0 < \widetilde \eps|_K \leq C_\eps \widetilde \eps|_{K^\prime}. 
\end{align}
  Then, there exists a $\Lambda$-admissible length scale function $\eps \in C^\infty(\Omega)$ 
  such that for all $K\in\cT$ it holds
  \begin{align}
    \label{eq:lem:lsf-1}
    \eps|_K \sim \widetilde \eps|_K. 
  \end{align}
  The implied constants in~\eqref{eq:lem:lsf-1} as well as the sequence
  $\Lambda = \bigl( \LL, \left( \Lambda_\bfr \right)_{\bfr\in\N_0^d} \bigr)$
  can be controlled in terms of $\Omega$ and
  the parameters $C_\eps$, $C_{\rm reg}$, $\vn{\widetilde
  \eps}_{L^{\infty}(\Omega)}$, and $M$ only. 
\item 
\label{item:lem:lsf-ii}
Let $\cT$ be a $\gamma$-shape regular mesh on a bounded Lipschitz domain $\Omega$. 
Let $\bfp$ be a $\gamma_p$-shape regular polynomial degree distribution. Then there exists 
a $\Lambda$-admissible length scale function $\eps \in
C^\infty(\overline\Omega)$ with $\eps>0$ on $\overline\Omega$ such that for every $K \in \cT$
\begin{equation}
\label{eq:lem:lsf-50} 
\eps|_K \sim \frac{h_K}{p_K+1}
\qquad \mbox{ and } \qquad K_{\eps}:=\cup_{\bfx \in K} B_{\eps(\bfx)}(\bfx)  \subset \omega_K.   
\end{equation}
The implied constants in~\eqref{eq:lem:lsf-50} as well as the sequence
$\Lambda = \bigl( \LL, \left( \Lambda_\bfr \right)_{\bfr\in\N_0^d} \bigr)$
depend solely on the shape-regularity parameters $\gamma$ and $\gamma_p$ as well as $\Omega$. 
\end{enumerate}
\end{lem}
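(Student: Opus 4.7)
The plan is to reduce part (ii) to part (i) by setting $\widetilde\eps|_K := c\,h_K/(p_K+1)$ for some small constant $c = c(\gamma,\gamma_p,\Omega)$, and to prove part (i) by a smooth partition-of-unity construction subordinate to the inflated-element cover $\{K_{C_{\rm reg}\widetilde\eps|_K}\}_{K\in\cT}$. This reduces the whole lemma to the single construction in (i), whose core is verifying the scaled derivative bound (\ref{item:lsf-iii}).

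For part (i), I first build a family of smooth bump functions $\eta_K\in C_0^\infty(\R{d})$ with $\eta_K\equiv 1$ on $K_{C_{\rm reg}\widetilde\eps|_K/2}$, $\mathrm{supp}\,\eta_K\subset K_{C_{\rm reg}\widetilde\eps|_K}$, and $|D^\bfr\eta_K|\leq C_\bfr(\widetilde\eps|_K)^{-|\bfr|}$, obtained by mollifying the indicator of an intermediate inflation at scale proportional to $\widetilde\eps|_K$. Thanks to (\ref{eq:lem:lsf-10}) the sum $S:=\sum_{K'}\eta_{K'}$ is locally finite and $\geq 1$ on $\Om$, so the normalized functions $\psi_K:=\eta_K/S$ form a smooth partition of unity; using (\ref{eq:lem:lsf-20}) to compare $\widetilde\eps|_{K'}$ for nearby $K'$, one checks by the quotient rule that $|D^\bfr\psi_K(\bfx)|\lesssim \widetilde\eps|_{K^*(\bfx)}^{-|\bfr|}$, where $K^*(\bfx)$ is any element containing $\bfx$. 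Set
\[
\eps(\bfx) := \sum_{K\in\cT} \widetilde\eps|_K\,\psi_K(\bfx).
\]

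Smoothness is immediate. Since $\eps(\bfx)$ is a convex combination of values $\widetilde\eps|_{K'}$ for $K'$ in a neighborhood of $\bfx$, the equivalence $\eps|_K\sim\widetilde\eps|_K$ in (\ref{eq:lem:lsf-1}) follows from (\ref{eq:lem:lsf-20}), and (\ref{item:lsf-ii}) from $\eps\leq\vn{\widetilde\eps}_{L^\infty(\Omega)}$. The crucial step is the derivative bound (\ref{item:lsf-iii}): exploiting the partition-of-unity identity $\sum_K D^\bfr\psi_K(\bfx)=0$ for $|\bfr|\geq 1$, we write, for an arbitrary reference element $K^*$ containing $\bfx$,
\[
D^\bfr\eps(\bfx) = \sum_K \bigl(\widetilde\eps|_K - \widetilde\eps|_{K^*}\bigr)\,D^\bfr\psi_K(\bfx).
\]
At most $M$ terms are non-zero by (\ref{eq:lem:lsf-10}); each prefactor satisfies $|\widetilde\eps|_K-\widetilde\eps|_{K^*}|\leq (C_\eps+1)\widetilde\eps|_{K^*}\sim \eps(\bfx)$ by (\ref{eq:lem:lsf-20}); and $|D^\bfr\psi_K(\bfx)|\lesssim \eps(\bfx)^{-|\bfr|}$. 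Multiplying yields $|D^\bfr\eps(\bfx)|\leq \Lambda_\bfr\,\eps(\bfx)^{1-|\bfr|}$ with constants depending only on $C_\eps$, $C_{\rm reg}$, $M$, and $\Om$. The Lipschitz bound (\ref{item:lsf-iv}) is the special case $|\bfr|=1$.

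For part (ii), the $\gamma$-shape regularity (\ref{eq:shape-regular}) gives local equivalence of $h_K$ across neighbors, and the $\gamma_p$-shape regularity (\ref{eq:shape-regular-p}) does the same for $p_K+1$; together with a uniform bound on the number of neighbors of any element on a shape-regular mesh, this yields (\ref{eq:lem:lsf-10}) and (\ref{eq:lem:lsf-20}) for $\widetilde\eps|_K = c\,h_K/(p_K+1)$ with constants depending only on $\gamma,\gamma_p$. Choosing $c$ sufficiently small ensures that any ball $B_{\eps(\bfx)}(\bfx)$ with $\bfx\in K$ stays inside $\omega_K$, since on a shape-regular mesh the interior distance from $K$ to $\Om\setminus\omega_K$ is bounded below by a constant times $h_K$. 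Smoothness and strict positivity of $\eps$ extend to $\overline\Om$ because each $\widetilde\eps|_K>0$, the partition-of-unity construction is local, and $\cT$ covers $\overline\Om$.

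The main obstacle is obtaining the homogeneous scaling $\eps^{1-|\bfr|}$ in (\ref{item:lsf-iii}): a naive bound using $|D^\bfr\eps|\leq\sum_K \widetilde\eps|_K|D^\bfr\psi_K|$ loses one power of $\eps$. The telescoping through $\sum_K D^\bfr\psi_K=0$ is what recovers the missing factor, and it works precisely because (\ref{eq:lem:lsf-20}) guarantees that the differences $\widetilde\eps|_K-\widetilde\eps|_{K^*}$ are of order $\eps(\bfx)$ itself.
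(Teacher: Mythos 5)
Your construction is correct, but it is structurally different from the paper's. The paper does not normalize: it sets
\[
\eps := \sum_{K\in\cT}\widetilde\eps|_K\,\bigl(\rho_{\delta_K}\star\chi_{K_{\delta_K}}\bigr),
\qquad \delta_K = \tfrac12 C_{\rm reg}\widetilde\eps|_K,
\]
where each summand is a mollified indicator that is non-negative, $\leq 1$, and $\equiv 1$ on $K$. The lower bound $\eps|_K\geq\widetilde\eps|_K$ then comes for free from the $K$-term alone and the non-negativity of the rest; the upper bound $\eps|_K\leq(1+(M-1)C_\eps)\widetilde\eps|_K$ comes from the local finiteness and the comparability~\eqref{eq:lem:lsf-20}; and the derivative bound~(\ref{item:lsf-iii}) drops out directly from $D^\bfr(u\star\rho_\delta)=\delta^{-\sn{\bfr}}\,u\star(D^\bfr\rho)_\delta$, no quotient rule required. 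You instead normalize to build a genuine partition of unity $\psi_K=\eta_K/\sum_{K'}\eta_{K'}$ and take $\eps=\sum_K\widetilde\eps|_K\psi_K$, which makes~\eqref{eq:lem:lsf-1} immediate by convexity, at the cost of running Leibniz/Fa\`a di Bruno through the quotient to control $D^\bfr\psi_K$. Both routes work; the paper's avoids the quotient, yours trades that for a slicker equivalence argument.

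One technical note: your closing remark, that the naive estimate $\sn{D^\bfr\eps}\leq\sum_K\widetilde\eps|_K\sn{D^\bfr\psi_K}$ ``loses one power of $\eps$,'' is not correct. With your own bound $\sn{D^\bfr\psi_K(\bfx)}\lesssim\widetilde\eps|_{K^*(\bfx)}^{-\sn{\bfr}}$ and the comparability~\eqref{eq:lem:lsf-20}, that naive sum is already $\lesssim M\,C_\eps\,\widetilde\eps|_{K^*}^{1-\sn{\bfr}}\sim\eps(\bfx)^{1-\sn{\bfr}}$, which is exactly the homogeneity demanded by~(\ref{item:lsf-iii}). The telescoping through $\sum_K D^\bfr\psi_K=0$ is a harmless extra step, not a needed one; the proof is still valid, it just contains an unnecessary complication.
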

\begin{proof}
{\em Proof of (\ref{item:lem:lsf-i}):} 
Introduce the abbreviation $\delta_K:= \frac{1}{2} C_{\rm reg} \widetilde \eps|_K$.  
Let $\rho$ be the standard mollifier given by $\rho(\bfx) = C_\rho\exp(-1/(1-\sn{\bfx}^2))$
  for $\sn{\bfx}<1$; the constant $C_\rho>0$ is chosen such that $\int_{\R{d}}\rho(\bfx)\,d\bfx=1$,
  and $\rho_\delta(\bfx) := \delta^{-d}\rho(\bfx/\delta)$. Define, with $\chi_A$ denoting the characteristic
function of the set $A$,
\begin{equation}
\label{eq:lem:lsf-100}
\eps:= \sum_{K \in \cT} \widetilde \eps|_{K} \rho_{\delta_K} \star \chi_{K_{\delta_K}}. 
\end{equation}
By assumption, the sum is locally finite with not more than $M$ terms in the sum for each $x \in \Omega$. 
In fact, when restricting the attention to 
$\operatorname*{supp} \rho_{\delta_{K^\prime}} \star \chi_{K^\prime_{\delta_{K^\prime}}}$ for a fixed 
$K^\prime$, the sum reduces to a finite one with at most $M$ terms. Hence, 
$\eps \in C^\infty(\Omega)$. Furthermore,  
$\rho_{\delta_{K^\prime}} \star \chi_{K^\prime_{\delta_{K^\prime}}} \equiv 1$ on $K^\prime$. Since 
all terms in the sum (\ref{eq:lem:lsf-100}) are non-negative, we conclude 
$$
\eps|_K \ge \widetilde \eps|_K \qquad \forall K \in \cT. 
$$
Since the sum is locally finite, we get in view of 
(\ref{eq:lem:lsf-20}) that $\eps|_K \leq (1 + (M-1)C_\eps) \widetilde \eps|_K$. To see that 
$\eps$ is $\Lambda$-admissible, we note for each $K$ that on 
$\widetilde K:= \operatorname*{supp} (\rho_{\delta_K} \star \chi_{K_{\delta_K}})$ the sum reduces to 
at most $M$ terms where the corresponding values $\widetilde \eps|_{K^\prime}$ are all comparable to 
$\widetilde \eps|_K$ by (\ref{eq:lem:lsf-20}).
 The observation
$D^\bfr (u\star\rho_{\delta}) = u\star (D^\bfr
\rho)_{\delta}\delta^{-\sn{\bfr}}$ implies the key property (\ref{item:lsf-iii}) of Definition~\ref{def:varepsilon}.
In particular, $\|\nabla \varepsilon\|_{L^\infty(\Omega)} < \infty$. Since $\Omega$ is a bounded Lipschitz domain, 
this implies the Lipschitz continuity of $\varepsilon$ (i.e., (\ref{item:lsf-iv}) of Definition~\ref{def:varepsilon}) 
with ${\mathcal L}$ depending only on $\|\nabla \varepsilon\|_{L^\infty(\Omega)}$ and $\Omega$.

{\em Proof of (\ref{item:lem:lsf-ii}):} We apply (\ref{item:lem:lsf-i}). We define 
$\widetilde \eps$ elementwise by $\widetilde \eps|_K:= h_K/(p_K+1)$. 
By the $\gamma$-shape regularity of $\cT$ and since $\widetilde \eps |_K \leq h_K$ for all $K \in \cT$, 
we can find $C_{\rm reg}$, which depends solely on $\gamma$, such that 
$\Omega \cap K_{C_{\rm reg} \widetilde \eps|_K} \subset \omega_K$ for every $K \in \cT$. This readily implies 
(\ref{eq:lem:lsf-10}). The $\gamma_p$-shape regularity of the polynomial 
degree distribution then ensures (\ref{eq:lem:lsf-20}). An application of (\ref{item:lem:lsf-i}) yields 
the desired $\Lambda$-admissible length scale function $\eps$ such that the first condition 
in (\ref{eq:lem:lsf-50}) is satisfied. The final step consists in multiplying the thus obtained $\eps$ 
with $C_{\rm reg}^{-1}$ to ensure the second condition in (\ref{eq:lem:lsf-50}). 
If the triangulation is finite, then $\inf_{x \in \Omega} \widetilde \eps > 0$, and this implies 
$\eps \in C^\infty(\overline{\Omega})$ and $\eps>0$ on $\overline\Omega$.
\end{proof}
\begin{rem}
Lemma~\ref{lem:lsf}, (\ref{item:lem:lsf-ii}) is formulated for meshes,
i.e., for affine, simplicial partitions.
However, since Lemma~\ref{lem:lsf}, (\ref{item:lem:lsf-ii}) relies heavily on
Lemma~\ref{lem:lsf}, (\ref{item:lem:lsf-i}), which allows for very general partitions of $\Omega$, 
analogous results can be formulated for partitions based on quadrilaterals 
or partitions that include curved elements. 
\eremk
\end{rem}
\subsection{Quasi-interpolation operators}
\label{sec:quasi-interpolation-FEM}
The following theorem shows how to construct a quasi-interpolation operator
by combining the smoothing operator $\II_\eps$ from Theorem~\ref{thm:hpsmooth} 
with a classical interpolation operator, which is allowed to require significant smoothness (e.g., 
point evaluations). 
\begin{thm}\label{thm:qi}
  Let $\Om\subset\R{d}$ be a bounded Lipschitz domain. Let $\cT$
  be a $\gamma$-shape regular mesh on $\Om$, and let $\bfp$ be a $\gamma_p$-shape regular
  polynomial degree distribution on $\cT$ with $p_K \ge 1$ for all $K \in \cT$.
  
  Suppose that $p\in[1,\infty)$, $r'\in\N_0$, and
  $\Pi^{hp}:W^{r',p}(\Om)\rightarrow \cS^{\bfp,1}(\cT)$ is bounded and
  linear. Then, there exists a linear operator
  $\II^{hp}:L^1_{\rm loc}(\Om)\rightarrow\cS^{\bfp,1}(\cT)$ with the following approximation properties: If
$\Pi^{hp}$ has the local approximation property 
  \begin{align}\label{cor:hp:apx}
    \sn{u-\Pi^{hp} u}_{s,p,K}^p \leq
    C_{\Pi} \left( \frac{h_K}{p_K} \right)^{p(r'-s)}\vn{u}_{r',p,\omega_K}^p  
\qquad \forall K \in \cT
  \end{align}
  for some $s \in \N_0$ with $0 \leq s \leq r'$, then for all $r \in \N_0$ with $s \leq r \leq r^\prime$
  \begin{align*}
    \sn{u-\II^{hp} u}_{s,p,K}^p \leq
    C C_{\Pi} \left( \frac{h_K}{p_K} \right)^{p(r-s)}
    \sum_{K' \in \omega_K} \vn{u}_{r,p,\omega_{K'}}^p 
\qquad \forall K \in \cT. 
  \end{align*}
  The constant $C$ depends only on the mesh parameters $\gamma$, $\gamma_p$, 
  and on $s$, $r$, $r^\prime$, $p$, and $\Omega$. 
\end{thm}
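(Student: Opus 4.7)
My plan is to set $\II^{hp} := \Pi^{hp} \circ \II_\eps$, where $\II_\eps$ is the smoothing operator of Theorem~\ref{thm:hpsmooth} (with $k_{\max} = r'$) applied to the $\Lambda$-admissible length scale function $\eps \in C^\infty(\overline\Omega)$ furnished by Lemma~\ref{lem:lsf}(\ref{item:lem:lsf-ii}). By construction $\eps|_K \sim h_K/(p_K+1)$ and $K_\eps \subset \omega_K$, and the parameter $\Lambda$ is controlled in terms of $\gamma$, $\gamma_p$, $\Omega$. Since $\eps > 0$ on $\overline\Omega$, Theorem~\ref{thm:hpsmooth}(\ref{item:thm:hpsmooth-iv}) yields $\II_\eps u \in C^\infty(\overline\Omega) \subset W^{r',p}(\Omega)$, so $\Pi^{hp}(\II_\eps u)$ is well defined and $\II^{hp} u \in \cS^{\bfp,1}(\cT)$.

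The estimate then splits via the triangle inequality as $|u - \II^{hp} u|_{s,p,K} \le |u - \II_\eps u|_{s,p,K} + |\II_\eps u - \Pi^{hp}(\II_\eps u)|_{s,p,K}$. The first summand is handled by Theorem~\ref{thm:hpsmooth}(\ref{item:thm:hpsmooth-ii}) with $q = p$ (so $\mu = 0$): when $r > s$ or ($r = s$ and $p > 1$) it yields $|u - \II_\eps u|_{s,p,K}^p \lesssim \sum_{|\bfr|\le r} \|\eps^{r-s} D^\bfr u\|_{0,p,K_\eps}^p$, which together with $\eps \sim h_K/p_K$ on $K_\eps \subset \omega_K$ gives the required $(h_K/p_K)^{p(r-s)}\|u\|_{r,p,\omega_K}^p$; the degenerate case $r=s$, $p=1$ is handled by the triangle inequality combined with the stability statement (\ref{item:thm:hpsmooth-i}) applied to $|\II_\eps u|_{s,p,K}$ (taking $s_{\text{thm}} = r_{\text{thm}} = s$). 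For the second summand, plugging $\II_\eps u$ into the hypothesis (\ref{cor:hp:apx}) gives $|\II_\eps u - \Pi^{hp}(\II_\eps u)|_{s,p,K}^p \le C_\Pi (h_K/p_K)^{p(r'-s)}\|\II_\eps u\|_{r',p,\omega_K}^p$, and I bound each seminorm $|\II_\eps u|_{j,p,\omega_K}$, $0 \le j \le r'$, by invoking Theorem~\ref{thm:hpsmooth}(\ref{item:thm:hpsmooth-i}) with left-order $j$ and right-order $\min(j,r)$: for $j \ge r$ this yields $|\II_\eps u|_{j,p,\omega_K} \lesssim (h_K/p_K)^{r-j}\|u\|_{r,p,(\omega_K)_\eps}$, while for $j < r$ I obtain the cruder bound $\|u\|_{j,p,(\omega_K)_\eps} \le \|u\|_{r,p,(\omega_K)_\eps}$.

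After multiplying by $(h_K/p_K)^{p(r'-s)}$, the contributions with $j \ge r$ produce exactly the desired factor $(h_K/p_K)^{p(r-s)}$, whereas those with $j < r$ carry an excess factor $(h_K/p_K)^{p(r'-r)}$. This bookkeeping is the main technical obstacle: I absorb the excess into the constant using $h_K \le \diam(\Omega)$ together with the standing hypothesis $p_K \ge 1$, which gives $(h_K/p_K)^{p(r'-r)} \le \diam(\Omega)^{p(r'-r)}$, a constant depending only on $\Omega$, $r$, $r'$, $p$. Finally, from $K'_\eps \subset \omega_{K'}$ (Lemma~\ref{lem:lsf}(\ref{item:lem:lsf-ii})) and $\beta < 1$ one has $(\omega_K)_\eps \subset \bigcup_{K' \in \omega_K} \omega_{K'}$, so $\|u\|_{r,p,(\omega_K)_\eps}^p \le \sum_{K' \in \omega_K} \|u\|_{r,p,\omega_{K'}}^p$, which matches the right-hand side of the claimed bound.
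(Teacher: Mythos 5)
Your proposal is correct and follows essentially the same route as the paper's own proof: set $\II^{hp} := \Pi^{hp}\circ\II_\eps$ with $\eps$ from Lemma~\ref{lem:lsf}, split by the triangle inequality, treat the first term by the approximation property (\ref{item:thm:hpsmooth-ii}) and the second by the hypothesis (\ref{cor:hp:apx}) followed by the stability estimate (\ref{item:thm:hpsmooth-i}) applied with right-hand-side order $\min(j,r)$, and absorb the harmless excess power $(h_K/p_K)^{p(r'-r)}$ into the constant via $h_K\leq\diam(\Omega)$ and $p_K\geq 1$. The only cosmetic difference is your choice $k_{\max}=r'$ instead of the paper's $k_{\max}=r'-1$ (both suffice, since Theorem~\ref{thm:hpsmooth} requires only $r'\leq k_{\max}+1$); you also explicitly flag the degenerate case $r=s$, $p=1$ excluded from part (\ref{item:thm:hpsmooth-ii}) and handle it via stability plus the triangle inequality, a small point the paper passes over silently.
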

\begin{proof}
  Choose $\eps$ from Lemma~\ref{lem:lsf} and $k_{\max} = r'-1$.
  We use the operator $\II_\eps$ of Theorem~\ref{thm:hpsmooth} and define
  $\II^{hp} := \Pi^{hp}\circ \II_\eps$. Then, according
  to~\eqref{thm:hpsmooth:apx} and~\eqref{cor:hp:apx}, it holds
  \begin{align*}
    \sn{u-\II^{hp}u}_{s,p,K} &\leq
    \sn{u-\II_\eps u}_{s,p,K} + \sn{\II_\eps u - \Pi^{hp}\II_\eps u}_{s,p,K}
\lesssim 
    \left( \frac{h_K}{p_K} \right)^{r-s}\vn{u}_{r,p,\omega_K}
    +\left( \frac{h_K}{p_K} \right)^{r'-s}\vn{\II_\eps u}_{r',p,\omega_K}. 
  \end{align*}
  Theorem~\ref{thm:hpsmooth} implies 
  \begin{align*}
    \vn{\II_\eps u}_{r',p,\omega_K}^p
    &\lesssim \sum_{K'\in\omega_{K}}
    \left(
    \sum_{j=0}^{r}\sn{\II_\eps u}_{j,p,K'}^p
    + \sum_{j=r}^{r'}\sn{\II_\eps u}_{j,p,K'}^p \right)
\lesssim \sum_{K'\in\omega_{K}}
    \left( 
      \sum_{j=0}^{r}\sn{u}_{j,p,\omega_{K'}}^p
      + \sum_{j=r}^{r'}\left( \frac{h_K}{p_K} \right)^{p(r-j)}\sn{u}_{r,p,\omega_{K'}}^p
      \right)\\
    &\lesssim \sum_{K'\in\omega_{K}}
    \left(\frac{h_K}{p_K} \right)^{p(r-r')} \vn{u}_{r,p,\omega_{K'}}^p,
  \end{align*}
  where we additionally used that $K_\eps \subset \omega_K$ due to~\eqref{eq:lem:lsf-1}.
  This shows the result.
\end{proof}

\begin{rem}
Theorem~\ref{thm:qi} is formulated so as to accommodate several types of operators $\Pi^{hp}$ that can
be found in the literature. In this remark, we focus on the $H^1$-setting $s = 1$, $p = 2$
and emphasize the high order aspect. The available operators can be divided into two groups.
In the first group, the operator $\Pi^{hp}$ is constructed
in two steps: in a first step, a discontinuous piecewise polynomial is constructed in an elementwise fashion,
disregarding the interelement continuity requirement. Typically, this is achieved by some (local) projection or an
interpolation. In a second step, the interelement jumps are corrected. For this, polynomial liftings
are required. Early polynomial liftings include \cite{babuska1,babuska-suri87a}, which did not
have the optimal lifting property $H^{1/2}(\partial K) \rightarrow H^1(K)$. We hasten to add that, given sufficient
regularity of the function to be approximated, these lifting still lead to the optimal rates both in $h$ and $p$.
Interelement corrections based on liftings $H^{1/2}(\partial K) \rightarrow H^1(K)$ were constructed subsequently in
\cite{babuska2}
for $d = 2$ and \cite{sola1} for $d = 3$; closely related liftings can be found in
\cite{maday89,belgacem94,bernardi-dauge-maday07,mel1,demkowicz-gopalakrishnan-schoeberl08}.
Since the construction of $\Pi^{hp}$ is based on joining approximations on neighboring elements,
the approximation on $K$ is defined in terms of the function values on $\omega_K$.
The second group of operators reduces the domain of dependence: $(\Pi^{hp} u)|_K$ is determined
by $u|_{\overline{K}}$. This is achieved by requiring, for example in the case $d = 3$, the following
conditions to ensure $C^0$-interelement continuity:
The value $(\Pi^{hp} u)(V)$ in each vertex $V$ is given by $u(V)$; the values
$(\Pi^{hp} u)|_e$ for each edge $e$ are completely determined by $u|_e$; the values
$(\pi^{hp} u)|_f$ for each face $f$ are completely determined by $u|_f$.
Such a procedure is worked out in the {\em projection-based interpolation}
approach \cite{demkowicz08,demkowicz-kurtz-pardo-paszynski-rachowicz-zdunek08}
and in \cite{melenk-sauter10,mps13} and requires $r^\prime > d/2$. We point out 
that the procedure of \cite{mps13} is refined in Corollary~\ref{cor:MPS} below.
We mention that several polynomial approximation operators were developed in the contexts
of the {\em spectral method} and the {\em spectral element method}
(see, e.g., \cite{bernardi-maday97,canuto1}).

It is worth stressing that the two approaches outlined above can successfully deal with non-affine elements and that
they are not restricted to so-called conforming meshes. That is, meshes with ``hanging nodes'' can be dealt
with (see, e.g., \cite{oden-demkowicz-rachowicz-hardy89,demkowicz-kurtz-pardo-paszynski-rachowicz-zdunek08}
and \cite[Sec.~{4.5.3}]{schwab1}).
\eremk
\end{rem}
As an application of Theorem~\ref{thm:qi}, we construct an
interpolation operator with simultaneous approximation properties
on regular meshes. The novel feature of the operator of Corollary~\ref{cor:MPS}
is that it provides the optimal rate of convergence in the broken $H^2$-norm, which 
can be of interest in the analysis of $hp$-Discontinuous Galerkin methods. 
\begin{cor}
\label{cor:MPS}
  Let $\Om\subset\R{d}$, $d \in \{2,3\}$ be a polygonal/polyhedral domain. 
  Fix $r_{\max} \in \N_0$. Let $\cT$ be a $\gamma$-shape regular
  mesh on $\Om$ and $\bfp$ be a $\gamma_p$-shape regular polynomial degree distribution 
  with $p_K \ge 1$ for all $K \in \cT$.
  Define 
  $\widehat p_K:= \min\{p_{K^\prime}\,|\, K^\prime \subset \omega_K\}$. 
  Then, there is a linear  operator
  $\II^{hp}:L^1_{\rm loc}(\Omega)\rightarrow \cS^{\bfp,1}(\cT)$ such that 
  for every $r \in \{0,1,\ldots,r_{\max}\}$ 
  \begin{align}\label{eq:qi}
    \sn{u-\II^{hp}u}_{\ell,2,K} \leq C h_K^{\min\{\widehat p_K+1,r\}-\ell} p_K^{-(r-\ell)}
    \sum_{K'\in\omega_K}\vn{u}_{r,2,\omega_{K'}}
    \quad\text{ for } \ell = 0,1,\ldots,\min\{2,r\}. 
  \end{align}
The constant $C$ depends only on the shape regularity constants $\gamma$, $\gamma_p$, on 
$r_{\max}$, and on $\Omega$. 
\end{cor}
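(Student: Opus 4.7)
The plan is to apply Theorem~\ref{thm:qi} in the composition form $\II^{hp} := \Pi^{hp}\circ\II_\eps$, where $\Pi^{hp}$ is an $H^1$-conforming projection-based $hp$-interpolation operator of the type constructed in \cite{mps13} (see also \cite{demkowicz08}). Such an operator maps into $\cS^{\bfp,1}(\cT)$ and enjoys the polynomial-degree sensitive, simultaneous local bound
\[
|v - \Pi^{hp} v|_{\ell,2,K} \leq C\, h_K^{\min\{p_K+1,r'\}-\ell}\, p_K^{-(r'-\ell)}\,\vn{v}_{r',2,\omega_K}
\]
for smooth $v$, any $\ell\in\{0,1,2\}$, and any $r'\ge r_0:=2$ (which exceeds $d/2$ for $d\le 3$). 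The role of $\II_\eps$ is exactly to turn an $L^1_{\rm loc}$-function into a smooth one so that $\Pi^{hp}$ is applicable regardless of the regularity of $u$.

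First I invoke Lemma~\ref{lem:lsf}(\ref{item:lem:lsf-ii}) to construct a $\Lambda$-admissible length scale $\eps\in C^\infty(\overline\Omega)$ with $\eps|_K\sim h_K/(p_K+1)$ and $K_\eps\subset\omega_K$, and Theorem~\ref{thm:hpsmooth} with $k_{\max}=r_{\max}$ then yields the operator $\II_\eps$. Since $\II_\eps:L^1_{\rm loc}(\Omega)\to C^\infty(\overline\Omega)$, the composition $\II^{hp}$ is well-defined on $L^1_{\rm loc}(\Omega)$ and takes values in $\cS^{\bfp,1}(\cT)$. The triangle inequality splits the error into $|u-\II_\eps u|_{\ell,2,K}$ and $|\II_\eps u-\Pi^{hp}(\II_\eps u)|_{\ell,2,K}$.

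For the first piece, Theorem~\ref{thm:hpsmooth}(\ref{item:thm:hpsmooth-ii}) with $s=\ell$ and $p=q=2$ gives $|u-\II_\eps u|_{\ell,2,K}\lesssim (h_K/p_K)^{r-\ell}\vn{u}_{r,2,K_\eps}$, which is dominated by the right-hand side of~\eqref{eq:qi} since $\min\{\widehat p_K+1,r\}\le r$ and $h_K\lesssim\diam(\Omega)$. For the second piece I choose $r':=\max\{r,r_0\}$, apply the $\Pi^{hp}$ bound above, and then the stability estimate of Theorem~\ref{thm:hpsmooth}(\ref{item:thm:hpsmooth-i}) with $s=r$, $p=q=2$ (using $\eps|_{\omega_K}\sim h_K/p_K$ from $\gamma$- and $\gamma_p$-shape regularity) to obtain $\vn{\II_\eps u}_{r',2,\omega_K}\lesssim(h_K/p_K)^{r-r'}\vn{u}_{r,2,(\omega_K)_\eps}$. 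Multiplying the two factors cancels the $p_K^{-r'}$ against $p_K^{-(r-r')}$, leaving $p_K^{-(r-\ell)}$ and producing the $h_K$-exponent $\min\{p_K+1,r'\}+r-r'$.

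The main obstacle is to verify that this $h_K$-exponent is at least $\min\{\widehat p_K+1,r\}$, with any excess absorbed into the constant since $h_K$ is bounded. Here the assumption $p_K\ge 1$ is crucial: it forces $p_K+1\ge r_0$, ruling out the otherwise troublesome case $p_K+1<r'$ when $r\le r_0$; in that regime $r'=r_0$ and the exponent equals $r\ge\min\{\widehat p_K+1,r\}$. For $r>r_0$ one has $r'=r$ and the exponent equals $\min\{p_K+1,r\}\ge\min\{\widehat p_K+1,r\}$ because $p_K\ge\widehat p_K$. Finally, $(\omega_K)_\eps$ is absorbed into $\bigcup_{K'\in\omega_K}\omega_{K'}$ by applying the inclusion $K'_\eps\subset\omega_{K'}$ from Lemma~\ref{lem:lsf}(\ref{item:lem:lsf-ii}) elementwise.
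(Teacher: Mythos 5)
The proposal correctly identifies the high-level strategy---smooth with $\II_\eps$, then interpolate with a polynomial projection operator, and combine the approximation and stability bounds of Theorem~\ref{thm:hpsmooth}---but the proof hinges on a local bound for $\Pi^{hp}$ that is not actually available in the cited literature, and the resulting exponent arithmetic does not survive once one inserts the correct regularity threshold.

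You assert that \cite{mps13} (or \cite{demkowicz08}) supplies an $H^1$-conforming $\Pi^{hp}$, for \emph{variable} $\gamma_p$-shape-regular degree distributions, satisfying
$|v - \Pi^{hp} v|_{\ell,2,K} \leq C\, h_K^{\min\{p_K+1,r'\}-\ell}\, p_K^{-(r'-\ell)}\,\vn{v}_{r',2,\omega_K}$
simultaneously for all $\ell\in\{0,1,2\}$ and all $r'\ge 2$. This is precisely what does \emph{not} exist prior to this paper: extending the operator of \cite[Thm.~B.3]{mps13} to variable polynomial degree is the entire content of the paper's Appendix~\ref{sec:appendixB}, and moreover the approximation estimate on the reference element there (Theorem~\ref{thm:element-by-element-variable}(\ref{item:thm:element-by-element-variable-i})) requires $s>5$, not $s\ge 2$. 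The latter is not a cosmetic issue: the $\pi_e^p$, $\pi_f^p$ sub-operators in the construction evaluate derivatives up to order $3$ at vertices and mixed second derivatives, so the $r'\ge 2 > d/2$ trace argument you offer is far from sufficient. Once $r_0$ is replaced by the genuine threshold (say $r_0\ge 6$), your verification that the $h_K$-exponent is at least $\min\{\widehat p_K+1,r\}-\ell$ collapses for small $p_K$: with $r'=\max\{r,r_0\}$, $r\le r_0$, and $p_K+1 < r_0$ (entirely permissible under $p_K\ge 1$), the combined exponent $\min\{p_K+1,r'\}-\ell+r-r' = p_K+1+r-r_0-\ell$ can drop well below $\min\{\widehat p_K+1,r\}-\ell$. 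The hypothesis $p_K\ge 1$ only rescues the case $r_0=2$, which is too small for the operator to exist.

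What the paper actually does to close this gap is construct in Appendix~\ref{sec:appendixB} a \emph{hybrid} element-by-element operator: on each edge/face it switches between a low-degree interpolant ($\pi^h_e$, $\pi^h_f$, which only needs vertex values and hence works for small $p_e$) and a high-degree $p$-version interpolant ($\pi^p_e$, $\pi^p_f$, which needs more regularity of the smooth function but delivers optimal $p$-rates), with the switch controlled by a fixed threshold $p_{\rm ref}\ge \Ccomp\, r_{\max}$. The threshold is calibrated so that Theorem~\ref{thm:element-by-element-variable}(\ref{item:thm:element-by-element-variable-ii}) gives polynomial reproduction $\pi v = v$ for all $v\in{\mathcal P}_{\min\{p,r_{\max}\}}$, where $p = \widehat p_K$ is the minimum adjacent degree. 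Feeding this reproduction property into the reference-element estimate (which is valid for $s>5$, applicable because $\II_\eps u$ is smooth), the Bramble--Hilbert lemma, and the stability estimate of Theorem~\ref{thm:hpsmooth} produces precisely the $h_K^{\min\{\widehat p_K+1,r\}-\ell}\, p_K^{-(r-\ell)}$ rate. That custom reproduction mechanism is the missing ingredient in your proposal; without it, one cannot simply invoke the conclusion of Theorem~\ref{thm:qi} with an off-the-shelf $\Pi^{hp}$ and obtain the $\min\{\widehat p_K+1,r\}$ exponent.
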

\begin{proof}
See Appendix~\ref{sec:appendixB} for details.
\end{proof}
\begin{rem}(non-regular meshes/hanging nodes)
\label{rem:hanging-nodes}
The meshes in Theorem~\ref{thm:qi} and Corollary~\ref{cor:MPS} are assumed to be regular, i.e., no hanging
nodes are allowed. Furthermore, the meshes are assumed to be affine and simplicial. The proof of Theorem~\ref{thm:qi} 
shows that these restrictions are not essential: It relies on the smoothing operator $\II_\eps$ (which is 
essentially independent of the meshes) and some suitable polynomial approximation operator on the mesh $\cT$. 
If a polynomial approximation operator is available on meshes with hanging nodes, or on meshes that contain
other types of elements (e.g., quadrilaterals) or non-affine elements, 
then similar arguments as in the proof of Theorem~\ref{thm:qi} can be applied. 
\eremk
\end{rem}
So far, we have only made use of Theorem~\ref{thm:hpsmooth}. Its modification, 
Theorem~\ref{thm:homogeneous-bc}, allows for the incorporation of boundary conditions. It is worth pointing 
out that regularity of the zero extension $\chi_{\Omega} u$ of $u$ is required, which limits the useful
parameter range. Nevertheless, Theorem~\ref{thm:homogeneous-bc} allows us to develop an $hp$-Cl\'ement 
interpolant that preserves homogeneous boundary conditions: 
\begin{cor}
\label{cor:homogeneous-bc}
Let the hypotheses on the mesh $\cT$ and polynomial degree distribution $\bfp$ be as in Theorem~\ref{thm:qi}. Then 
there exists a linear operator $\II^{hp}:L^1_{loc}(\Omega) \rightarrow \cS^{\bfp,1}(\cT) \cap H^1_0(\Omega)$ 
such that for all $K \in \cT$ and all $u \in H^1_0(\Omega)$: 
\begin{eqnarray*}
\|\II^{hp} u \|_{0,2,K} &\leq& C \|u\|_{0,2,\omega_K}, \\
|u - \II^{hp} u |_{\ell,2,K} &\leq& C \left(\frac{h_K}{p_K}\right)^{1-\ell}
\sum_{K'\in\omega_K}\|u\|_{1,2,\omega_{K'}}, 
\qquad \ell \in \{0,1\}.
\end{eqnarray*}
The constant $C$ depends only on the mesh parameters $\gamma$, $\gamma_p$ and $\Omega$. 
\hfill\qed
\end{cor}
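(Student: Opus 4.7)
The plan is to compose the boundary-preserving smoothing operator of Theorem~\ref{thm:homogeneous-bc} with a classical $hp$-interpolation operator whose values on $\partial\Omega$ are determined solely by the input's trace on $\partial\Omega$, so that vanishing traces are inherited automatically.

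First I would apply Lemma~\ref{lem:lsf}(\ref{item:lem:lsf-ii}) to obtain a $\Lambda$-admissible length scale function $\eps\in C^\infty(\overline\Omega)$ with $\eps|_K \sim h_K/(p_K+1)$ and $K_\eps\subset\omega_K$ for every $K\in\cT$, where $\Lambda$ depends only on $\gamma$, $\gamma_p$, and $\Omega$. Taking $k_{\max}=1$, Theorem~\ref{thm:homogeneous-bc} supplies a smoothing operator $\II_\eps:L^1_{loc}(\Omega)\to C^\infty(\overline\Omega)$ that (a) satisfies the stability and approximation bounds of Theorem~\ref{thm:hpsmooth}(\ref{item:thm:hpsmooth-i})--(\ref{item:thm:hpsmooth-ii}) with the zero extension $\widetilde u = u\chi_\Omega$ in place of $u$ and $\widetilde\omega_\eps$ in place of $\omega_\eps$ on the right-hand sides, and (b) vanishes identically on $\bigcup_{\bfx\in\partial\Omega}B_{\lambda\eps(\bfx)}(\bfx)$ for some $\lambda>0$. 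Next I would pick a classical $hp$-interpolation operator $\Pi^{hp}:C^\infty(\overline\Omega)\to\cS^{\bfp,1}(\cT)$ satisfying the local approximation estimate~\eqref{cor:hp:apx} for $s\in\{0,1\}$ with some fixed $r'\geq 1$, and such that $(\Pi^{hp} v)|_{\partial\Omega}$ depends only on $v|_{\partial\Omega}$; any $hp$-Scott--Zhang type or projection-based interpolant such as those in \cite{mps13,demkowicz08} meets these requirements. Setting $\II^{hp}:=\Pi^{hp}\circ\II_\eps$, property (b) forces the trace of $\II_\eps u$ on $\partial\Omega$ to vanish, whence $\II^{hp} u\in\cS^{\bfp,1}(\cT)\cap H^1_0(\Omega)$.

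The quantitative bounds then follow exactly the chain of estimates of Theorem~\ref{thm:qi}, with minor bookkeeping to pass from $u$ to $\widetilde u$. For $u\in H^1_0(\Omega)$ the extension $\widetilde u$ lies in $H^1(\R{d})$ with $\|\widetilde u\|_{s,2,\widetilde\omega_\eps} = \|u\|_{s,2,\widetilde\omega_\eps\cap\Omega}$ for $s\in\{0,1\}$. Choosing the multiplicative constant in $\eps$ small enough so that $\widetilde{(\omega_K)}_\eps\cap\Omega$ is contained in a fixed enlargement of $\omega_K$ controlled by shape regularity, I would split
\[
|u-\II^{hp}u|_{\ell,2,K}\leq |u-\II_\eps u|_{\ell,2,K}+|\II_\eps u-\Pi^{hp}\II_\eps u|_{\ell,2,K},
\]
bound the first term via Theorem~\ref{thm:homogeneous-bc}(\ref{item:thm:homogeneous-bc-i}) (analogue of~\eqref{thm:hpsmooth:apx}) with $s=\ell$, $r=1$, $p=q=2$, which gives $(h_K/p_K)^{1-\ell}\|u\|_{1,2,\omega_K}$ after inserting $\eps\sim h_K/p_K$, and bound the second term by combining~\eqref{cor:hp:apx} with the stability estimate~\eqref{thm:hpsmooth:stab} applied to $\II_\eps u$ with $s=1$, $r=r'$, $p=q=2$; the resulting factor of $(h_K/p_K)^{r'-\ell}(h_K/p_K)^{1-r'}$ matches the target exponent $1-\ell$. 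The $L^2$-stability $\|\II^{hp}u\|_{0,2,K}\lesssim\|u\|_{0,2,\omega_K}$ follows in the same way from the elementwise $L^2$-stability of $\Pi^{hp}$ and Theorem~\ref{thm:hpsmooth}(\ref{item:thm:hpsmooth-i}) with $s=r=0$, $p=q=2$.

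The main, and essentially only, difficulty beyond Theorem~\ref{thm:qi} is the preservation of the homogeneous Dirichlet condition. The crux is that $\II_\eps$ from Theorem~\ref{thm:homogeneous-bc} annihilates an entire open neighborhood of $\partial\Omega$, not merely the boundary; consequently there is no delicate matching required between the vanishing strip of width $\lambda\eps\sim h/p$ and a boundary layer of elements, and any standard $hp$-interpolant whose boundary behavior is governed by face, edge, and vertex degrees of freedom produces a function vanishing on $\partial\Omega$. With that point secured, the remaining estimates transfer verbatim from the proof of Theorem~\ref{thm:qi}.
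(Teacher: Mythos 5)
Your proposal follows precisely the route the paper intends (the corollary is stated with only a \qed, relying on the preceding development): compose the boundary-annihilating smoother $\II_\eps$ of Theorem~\ref{thm:homogeneous-bc} with an $hp$-interpolant $\Pi^{hp}$ whose boundary trace depends only on the input's boundary trace, observe that $u\in H^1_0(\Omega)$ makes $\widetilde u = u\chi_\Omega$ an $H^1(\R{d})$-function so the modified estimates of Theorem~\ref{thm:homogeneous-bc}(\ref{item:thm:homogeneous-bc-i}) are available, and then reproduce the two-term splitting and stability chain from the proof of Theorem~\ref{thm:qi}. This is the same approach as the paper's, and your identification of the one additional requirement on $\Pi^{hp}$ (boundary-trace locality, satisfied by the operators of \cite{mps13,demkowicz08}) is exactly the point the paper leaves implicit.
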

\subsection{Residual error estimation in $hp$-boundary element methods}
\label{sec:BEM}
Let $\Gamma:=\partial\Omega$ be the boundary of a bounded Lipschitz domain 
$\Omega \subset \R{d}$, $d \in \{2,3\}$. Assume that $\Gamma$ is connected. 
If $d=2$, we assume additionally $\diam(\Om)<1$.
Two basic 
problems in boundary element methods (BEM) involve the \emph{single layer operator 
$\operatorname*{V}:H^{-1/2}(\Gamma) \rightarrow H^{1/2}(\Gamma)$}
and the \emph{hypersingular operator $\operatorname*{D}:H^{1/2}(\Gamma) \rightarrow H^{-1/2}(\Gamma)$}. 
We refer to~\cite{Costabel_88_BIO,hw08,mclean00,Nedelec_88_AIE} for a detailed discussion of these
operators and to the monographs \cite{s08,sasc:11} for boundary element methods in general.
In the simplest BEM settings, one studies the following two problems: 
\begin{align}
\label{eq:single-layer-equation}
\mbox{ Find $\varphi \in H^{-1/2}(\Gamma)$ s.t. } 
& \operatorname*{V} \varphi = f; \\
\label{eq:hypersingular-equation}
\mbox{ Find $u \in H^{1/2}(\Gamma)$ s.t. } 
& \operatorname*{D} u = g;  
\end{align}
here, the right-hand sides are given data with 
$f \in H^{1/2}(\Gamma)$ and $g \in H^{-1/2}(\Gamma)$ such that $\langle g,1\rangle_\Gamma = 0$. 
In a conforming Galerkin setting, one takes finite-dimensional subspaces $V_N \subset H^{-1/2}(\Gamma)$ 
and $W_N \subset H^{1/2}(\Gamma)$ and defines the Galerkin approximations $\varphi_N \in V_N$ and 
$u_N \in W_N$ by 
\begin{align}
\label{eq:BEM-V}
\mbox{ Find $\varphi_N \in V_N$ s.t. } \langle \operatorname*{V} \varphi_N,v\rangle_\Gamma = 
\langle f,v\rangle_\Gamma \qquad \forall v \in V_N,  \\
\label{eq:BEM-D}
\mbox{ Find $u_N \in W_N$ s.t. } \langle \operatorname*{D} u_N,v\rangle_\Gamma = 
\langle g,v\rangle_\Gamma \qquad \forall v \in W_N.  
\end{align}
Residual {\sl a posteriori} error estimation for these Galerkin approximations is based on bounding 
\begin{align}
\label{eq:bem-residuals}
\|f - \operatorname*{V} \varphi_N\|_{1/2,2,\Gamma} 
\qquad \mbox{ and } 
\qquad 
\|g - \operatorname*{D} u_N\|_{-1/2,2,\Gamma}. 
\end{align}
These norms are non-local, which results in two difficulties.
First, it makes them hard to evaluate in a computational 
environment. Second, they cannot be used as indicators for local mesh refinement.
The ultimate goal of residual error estimation is to obtain a fully localized,
computable error estimator based on the equation's residual.
Several residual error estimators have been presented in an $h$-version BEM context, 
for example, 
\cite{rank86a,rank89,r93,cs95,cs96,c97,cms01,cmps04}.
The localization of norms in the $hp$-BEM context is a more delicate question,
and to our knowledge there are no results on fully localized residual error
estimates. The only result that we are aware of is~\cite{cfs96},
were it is shown that the error can be bounded reliably by
the product of two localized residual error estimators.
In the following, we generalize the local residual estimators of \cite{cms01} (for the single layer operator) 
and \cite{cmps04} (for the hypersingular operator), which were developed in an $h$-BEM setting, to the 
$hp$-BEM in Corollaries~\ref{cor:single-layer-BEM} and \ref{cor:hypersingular-BEM}.
\subsubsection{Spaces and meshes on surfaces}

\subsubsection*{Sobolev spaces and local parametrizations by charts}

The Sobolev spaces on surfaces $\Gamma:=\partial\Omega$ for bounded
Lipschitz domains $\Omega \subset \R{d}$ 
are defined as in \cite[p.~{89}, p.~{96}]{mclean00}, using the fact that, locally, $\Gamma$ is a hypograph. 
We recall some facts relevant for our purposes: There are Lipschitz continuous functions 
$\widetilde \chi_j:\R{d-1} \rightarrow \R{}$, $j=1,\ldots,n$, Euclidean changes of coordinates 
$Q_j:\R{d} \rightarrow \R{d}$, $j=1,\ldots,n$, as well as an open cover 
${\mathcal U} = \{U_j\}_{j=1}^n$ of $\Gamma$ such that the 
bi-lipschitz mappings $\widehat \chi_j:\R{d-1} \times \R{} \rightarrow \R{d}$ given by 
$(x,t) \mapsto Q_j(x,\widetilde \chi_j(x)+t)$ satisfy 
$\widehat \chi_j(\R{d-1} \times \{0\}) \cap U_j = \Gamma \cap U_j$, $j=1,\ldots,n$. Define 
$V_j:= \widehat\chi_j^{-1} (U_j \cap \Gamma) \subset \R{d-1} \times \{0\}$ and identify this set in the canonical
way with a subset of $\R{d-1}$. At the same time, define $\chi_j:V_j \rightarrow \Gamma$ by 
$\chi_j(x):= \widehat \chi_j(x,0)$, which are local parametrizations of the boundary $\Gamma$. 
Note that the maps $\chi_j$ are bi-lipschitz maps between $V_j$ and $\chi_j(V_j)$. 

An important observation to be made is for polygonal/polyhedral domains $\Omega$: 
\begin{rem} 
\label{rem:polyhedra}
Let the bounded Lipschitz domain $\Omega$ be a polyhedron, i.e., the intersection of half-spaces
defined by hyperplanes. Let the boundary $\Gamma = \partial\Omega$ be comprised of ``faces'' 
$\Gamma_i$, $i=1,\ldots,N$, i.e., pieces of hypersurfaces. Then the maps $\widehat \chi_j$, $j=1,\ldots,n$,
that define the local parametrizations can be chosen to be piecewise affine. More precisely: 
If $\Gamma_i \cap U_j \ne \emptyset$, then the restriction 
$\chi_j: \chi_j^{-1}(\Gamma_i \cap U_j)  \rightarrow \Gamma_i \cap U_j$ is affine. 
\eremk
\end{rem}

Let $\{\beta_j\}_{j=1}^n$ be a smooth partition of unity on $\Gamma$
subordinate to the cover ${\mathcal U}$. The Sobolev spaces $H^s(\Gamma)$, $s \in [0,1]$,  are then 
defined by the norm 
$\vn{u}_{s,2,\Gamma}^2 := \sum_{j=1}^n\vn{(\beta_ju)\circ\chi_j}_{s,2,V_j}^2$.
The space $L^2(\Gamma)$ is defined equivalently with respect
to the surface measure $d\sigma$, and 
the space $H^1(\Gamma)$ is defined equivalently via the norm
$\vn{u}_{1,2,\Gamma}^2 := \vn{u}_{0,2,\Gamma}^2 + \vn{\nabla_\Gamma u}_{0,2,\Gamma}^2$,
where $\nabla_\Gamma$ denotes the surface gradient.
The space $L^2(\Gamma)$ is equipped with the scalar product
$\int_\Gamma u\cdot v\;d\sigma$. The space $H^{-1/2}(\Gamma)$ is the dual space
of $H^{1/2}(\Gamma)$ with respect to the extended $L^2(\Gamma)$-scalar product.
Additionally, spaces of fractional order can be defined via interpolation
between $L^2(\Gamma)$ and $H^1(\Gamma)$ with norm equivalent to $\|\cdot\|_{s,2,\Gamma}$.

\subsubsection*{Meshes and piecewise polynomial spaces}
As in the case of volume discretizations discussed above, we restrict our attention to affine, 
regular (in the sense of Ciarlet) 
triangulations of $\Gamma$. A triangulation $\cT$ of $\Gamma$ is a partition of $\Gamma$ into 
(relatively) open disjoint elements $K$. Every element $K$ is the image of the reference 
simplex $\refK \subset \R{d-1}$ under an affine element map
$F_K:\refK \rightarrow K \subset \Gamma \subset \R{d}$. 
The mesh width $h_K$ is given by $h_K: = \operatorname*{diam}(K)$.
Since the (affine) element maps $F_K$ map from 
$\R{d-1}$ to $\R{d}$, the shape-regularity requirement takes the following form: 
\begin{equation}
\label{eq:shape-regularity-Gamma}
h_K^{-1} \|F^\prime_K\|_{}  + h_K^2 \|\left( (F_K^\prime)^\top  F_K^\prime\right)^{-1}\|_{} \leq \gamma 
\quad \mbox{ for all $K \in \cT$.}
\end{equation}
The local comparability (\ref{eq:shape-regular}) is ensured by (\ref{eq:shape-regularity-Gamma}).
The spaces $\cS^{\bfp,0}(\cT)$ and $\cS^{\bfp,1}(\cT)$ are defined as in (\ref{eq:Sp}),
but with $\Gamma$ instead of $\Omega$. We will also require the local comparability
of the polynomial degree spelled out in (\ref{eq:shape-regular-p}). As at the outset of 
Section~\ref{sec:applications}, we denote by $h$ and $p$ the piecewise constant functions given by 
$h|_K = h_K$ and $p|_K = p_K$.
\subsubsection{Single layer operator}
\begin{lem}\label{bem:smooth}
  Let $\Omega\subset\R{d}$ be a bounded Lipschitz domain and let $\Gamma=\partial\Om$ be its boundary.
  Let $\cT$ be a $\gamma$-shape regular mesh on $\Gamma$, and let $\bfp$ be a $\gamma_p$-shape regular 
  polynomial degree distribution on $\cT$. Then it holds
  \begin{align*}
    \vn{u}_{1/2,2,\Gamma} \lesssim \vn{h^{-1/2}\hat p^{1/2}u}_{0,2,\Gamma}
    + \vn{h^{1/2}\hat p^{-1/2}\nabla_\Gamma u}_{0,2,\Gamma}
    \quad\text{ for } u\in H^1(\Gamma),
  \end{align*}
  where $\hat p := \max(1,p)$
  and the hidden constant depends only on $\gamma$ and $\gamma_p$.
\end{lem}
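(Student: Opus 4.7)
The plan is to localize via the partition of unity and the charts from the definition of $H^s(\Gamma)$, then apply the smoothing operator $\II_{\eps}$ of Theorem~\ref{thm:hpsmooth} on each planar chart with $\eps\sim h/\hat p$. First, by the chart-based definition of $H^s(\Gamma)$, the finite partition of unity $\{\beta_j\}_{j=1}^n$, and the bi-Lipschitz character of the $\chi_j$, it suffices to bound $\|u_j\|_{1/2,2,V_j}$ for $u_j:=(\beta_j u)\circ\chi_j$ (compactly supported in $V_j\subset\R{d-1}$) by the right-hand side of the claim; chain rule and Leibniz produce the controllable extra factors $|\nabla_\Gamma\beta_j|$ on the surface side, which are absorbed into lower-order $L^2$ terms. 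Pulling back the mesh $\cT$ and the degree distribution $\bfp$ through $\chi_j^{-1}$ yields a shape-regular triangulation of $V_j$ with a shape-regular polynomial degree distribution (the shape-regularity constants change only by factors depending on the finitely many $\chi_j$). Lemma~\ref{lem:lsf}(\ref{item:lem:lsf-ii}), applied on a bounded Lipschitz domain of $\R{d-1}$ containing $V_j$, then delivers a $\Lambda$-admissible length scale function $\eps_j\in C^\infty$ with $\eps_j|_{\chi_j^{-1}(K)}\sim h_K/(p_K+1)$; Theorem~\ref{thm:hpsmooth} with $k_{\max}=1$ supplies the associated smoothing $\II_{\eps_j}$.

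The core step is the splitting
\[
\|u_j\|_{1/2,2,V_j}\leq\|u_j-\II_{\eps_j}u_j\|_{1/2,2,V_j}+\|\II_{\eps_j}u_j\|_{1/2,2,V_j}.
\]
For the first term, \eqref{thm:hpsmooth:apx} applied with $(s,q,r,p)=(0,2,1,2)$ and $(s,q,r,p)=(1/2,2,1,2)$ (both satisfying $r>s+\mu$ with $\mu=0$) yields, after absorbing the lower $\eps_j$-powers using $\eps_j\leq\Lambda_{\mathbf 0}$,
\[
\|u_j-\II_{\eps_j}u_j\|_{1/2,2,V_j}\lesssim\|\eps_j^{1/2}u_j\|_{0,2,V_j}+\|\eps_j^{1/2}\nabla u_j\|_{0,2,V_j}.
\]
For the second term, the inverse estimate \eqref{thm:hpsmooth:stab} applied with $(r,q,s,p)=(0,2,0,2)$ and $(r,q,s,p)=(1/2,2,0,2)$ gives
\[
\|\II_{\eps_j}u_j\|_{1/2,2,V_j}\lesssim\|u_j\|_{0,2,V_j}+\|\eps_j^{-1/2}u_j\|_{0,2,V_j}\lesssim\|\eps_j^{-1/2}u_j\|_{0,2,V_j}.
\]
Since $\eps_j$ is bounded above by $\Lambda_{\mathbf 0}$, one has $\eps_j^{1/2}\lesssim\eps_j^{-1/2}$ pointwise, and the weight $\eps_j^{1/2}$ on $u_j$ above can be replaced by $\eps_j^{-1/2}$. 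Combining, using $\eps_j\sim h/\hat p$ on the pulled-back elements, transporting the weighted $L^2$-norms back to $\Gamma$ through the bi-Lipschitz $\chi_j$, and summing over $j$ yields the claim.

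The main obstacle is not analytic but rather bookkeeping: certifying that the pulled-back mesh $\chi_j^{-1}(\cT)$ remains shape-regular, that the $\Lambda$-admissible length scale on $V_j$ can indeed be identified with $h/\hat p$ uniformly across the finitely many charts, and that the Leibniz terms from $\beta_j$ as well as the distortion factors of $\chi_j'$ are absorbed into the target norm. Once this has been settled, the analytic heart of the argument is the observation that Theorem~\ref{thm:hpsmooth} supplies \emph{both} an approximation and an inverse estimate at the fractional smoothness $s=1/2$, so the borderline weights $\eps_j^{\pm 1/2}$ produced by the two estimates pair up precisely with the weights $h^{\mp 1/2}\hat p^{\pm 1/2}$ demanded by the statement.
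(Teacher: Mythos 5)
Your overall strategy — localize through the charts and partition of unity, pull back to $V_j\subset\R{d-1}$, build $\eps_j\sim h/\hat p$ via Lemma~\ref{lem:lsf}, and then split $\|u_j\|_{1/2,2,V_j}$ by the triangle inequality into the inverse estimate \eqref{thm:hpsmooth:stab} for $\II_{\eps_j}u_j$ and the approximation estimate \eqref{thm:hpsmooth:apx} for $u_j-\II_{\eps_j}u_j$ at $s=1/2$ — is exactly the route the paper takes, and your arithmetic with the weights $\eps_j^{\pm 1/2}$ and the absorption $\eps_j\leq\Lambda_{\mathbf 0}$ is correct.

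There is, however, a concrete error in the step where you invoke Lemma~\ref{lem:lsf}(\ref{item:lem:lsf-ii}) on the pulled-back family $\{\chi_j^{-1}(K\cap U_j)\}$. Part (\ref{item:lem:lsf-ii}) is stated for a $\gamma$-shape regular \emph{mesh}, i.e., a partition into affine images of the reference simplex. The sets $\chi_j^{-1}(K\cap U_j)$ are not of this type: $K\cap U_j$ can be a truncation of $K$, and $\chi_j$ is merely bi-Lipschitz (piecewise affine for polyhedra, hence not affine on elements crossing a face boundary), so the pull-back is only a \emph{partition} of $V_j$, not an affine simplicial mesh. You must therefore use Lemma~\ref{lem:lsf}(\ref{item:lem:lsf-i}), which accepts a general partition under conditions \eqref{eq:lem:lsf-10}--\eqref{eq:lem:lsf-20}, and verifying those conditions for this pull-back is a genuine (if short) step, as it requires comparing distances before and after $\chi_j$ and choosing $C_{\rm reg}$ accordingly. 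Related to this, your remark about applying the lemma "on a bounded Lipschitz domain of $\R{d-1}$ containing $V_j$" does not resolve the support question: the pulled-back partition only covers $V_j$, so there is no natural length scale outside it. What is needed instead is to cap $\widetilde\eps$ at $\nu:=\min\{1,\min_j\dist(\supp(\beta_j\circ\chi_j),\partial V_j)\}$, which is precisely what guarantees $\supp\bigl(\II_{\eps_j}(\beta_j u\circ\chi_j)\bigr)\subset V_j$ so the chart-local argument closes. Neither issue changes the analytic heart, which you have identified correctly, but both must be fixed for the proof to go through.
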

\begin{proof}

  Define $\cT_j := \{ \chi_j^{-1}(K\cap U_j) \mid K\in\cT \}$, $j=1,\ldots,n$. 
  As $\chi_j$ is bi-lipschitz, $\cT_j$ is a partition of 
  $V_j$. Set $\nu := \min\{1, \min_{j=1,\ldots,n} \dist(\supp(\beta_j \circ \chi_j),\partial V_j)\}$. 
Define on the partition ${\mathcal T}_j$ the function $\widetilde\eps \in L^\infty(V_j)$ 
  by $\widetilde\eps|_{\chi_j^{-1}(K\cap U_j)}:= \min\{\nu,h_K/(p_K+1)\}$, $K \in \cT$, where 
$h_K$ and $p_K$ are the element diameter and polynomial degree of 
$K \in \cT$. 

We wish to apply Lemma~\ref{lem:lsf}. Using the shape-regularity of ${\mathcal T}$ select $C>0$ such that for all 
$K$, $K^\prime \in {\mathcal T}$ the following holds: $K_{C h_K} \cap K^\prime_{C h_{K^\prime}} \ne \emptyset 
\Longrightarrow \overline{K} \cap \overline{K^\prime} \ne \emptyset$. 
(We set $K_{\delta}:=\cup_{x \in K} B_\delta(x)\subset \R{d}$.) Let $\widehat K:= \chi_j^{-1}(K \cap U_j)$
and $\widehat K^\prime:= \chi_{j}^{-1}(K^\prime \cap U_j)$. 
For $C_1 > 0$ sufficiently small (depending only on $\chi_j$ and the shape-regularity of ${\mathcal T}$) 
we claim that $\widehat K_{C_1 C h_K} \cap \widehat K^\prime_{C_1 C h_{K^\prime}} \ne \emptyset$ implies 
$\overline{K} \cap \overline{K^\prime} \ne \emptyset$. To see this, 
let $\widehat x \in \widehat K$ and 
$\widehat x^\prime \in \widehat K^\prime$ with 
$B_{C_1 C h_K}(\hat x) \cap B_{C_1 C h_{K^\prime}}(\widehat x^\prime) \ne \emptyset$. Since $\chi_j$ is 
bilipschitz, the corresponding points $x = \chi_j(\widehat x) \in K$, 
$x^\prime  = \chi_j(\widehat x^\prime) \in K^\prime$ satisfy 
$\dist(K,K^\prime) \leq \dist(x,x^\prime) \lesssim \dist(\widehat x,\widehat x^\prime) \lesssim 
C_1 C (h_K + h_{K^\prime})$. If $C_1$ is sufficiently small, the shape-regularity of ${\mathcal T}$ 
then implies that this last estimate in fact ensures $\overline{K} \cap \overline{K^\prime} \ne \emptyset$. 
Now that the key condition 
``$\widehat K_{C_1 C h_K} \cap \widehat K^\prime_{C_1 C h_{K^\prime}} \ne \emptyset \Longrightarrow 
\overline{K} \cap \overline{K^\prime} \ne \emptyset$'' is proved, and since the 
function $\widetilde \varepsilon$ satisfies $\widetilde \varepsilon|_K \leq h_K$ for all $K \in {\mathcal T}$, 
we see that by selecting the parameter $C_{\rm reg}$ in Lemma~\ref{lem:lsf} sufficiently small
the conditions (\ref{eq:lem:lsf-10}), (\ref{eq:lem:lsf-20}) can be met; the $\gamma_p$-shape regularity of 
$\bfp$ has to be used as well. 

Hence, Lemma~\ref{lem:lsf}, (\ref{item:lem:lsf-i}) 
  provides a family $\left\{ \eps_j \right\}_{j=1}^n$
  of $\Lambda$-admissible length scale functions on the sets $V_j$ with
  $\eps_j|_{\chi_j^{-1}(K\cap U_j)} \sim \min\{\nu,h_K / p_K\}$ 
  whenever $\chi_j^{-1}(K \cap U_j) \ne \emptyset$. In fact, multiplying by a constant if necessary we may additionally assume 
  that $\eps_j \leq \nu$ on $V_j$. 
  With Theorem~\ref{thm:hpsmooth} we construct operators
  $\II_{\eps_j}:L^1_{\rm loc}(V_j)\rightarrow C^\infty(V_j)$, and the condition $\eps_j \leq \nu$
  shows $\supp(\II_{\eps_j}(\beta_j u\circ\chi_j)) \subset V_j$. 
  Theorem~\ref{thm:hpsmooth} then shows 
  \begin{align*}
    \vn{\beta_j u\circ \chi_j}_{1/2,2,V_j}
    &\leq
    \vn{\II_{\eps_j}(\beta_j u\circ \chi_j)}_{1/2,2,V_j}
    + \vn{\beta_j u\circ \chi_j -
      \II_{\eps_j}(\beta_ju\circ\chi_j)}_{1/2,2,V_j}
\\ &
\lesssim
    \vn{\eps_j^{-1/2} \beta_ju\circ\chi_j}_{0,2,V_j}
    + \vn{\eps_j^{1/2}\nabla(\beta_ju\circ\chi_j)}_{0,2,V_j}
\lesssim
    \vn{\eps_j^{-1/2} u\circ\chi_j}_{0,2,V_j}
    + \vn{\eps_j^{1/2}\nabla(u\circ\chi_j)}_{0,2,V_j}\\
    &\lesssim \vn{h^{-1/2}\hat p^{1/2}u}_{0,2,U_j\cap\Gamma}
    + \vn{h^{1/2}\hat p^{-1/2}\nabla_\Gamma u}_{0,2,U_j\cap\Gamma},
  \end{align*}
  where we used that $\chi_j$ is bi-lipschitz. 
  Taking the sum over $j$ concludes the proof.
\end{proof}
The following can be seen as a generalization of the results
of~\cite{c97,cms01} to obtain a residual {\sl a posteriori}
error estimator for $hp$-boundary elements for weakly singular integral equations.
\begin{thm}
\label{thm:single-layer}
  Let $\Omega\subset\R{d}$ be a bounded Lipschitz domain and let $\Gamma=\partial\Om$ be its boundary.
  Let $\cT$ be a $\gamma$-shape regular mesh on $\Gamma$, and let $\bfp$ be a $\gamma_p$-shape regular 
  polynomial degree distribution on $\cT$.  Suppose that $u\in H^1(\Gamma)$ satisfies
  \begin{align}\label{cor:res:galorth}
    \int_\Gamma u\cdot\phi_{hp}\,d\sigma=0
    \quad\text{ for all }\phi_{hp}\in\cS^{\bfp,0}(\cT).
  \end{align}
  Then, with $\hat p := \max(1,p)$, 
  \begin{align*}
    \vn{u}_{1/2,2,\Gamma} \leq C_{\gamma,\gamma_p}
    \vn{h^{1/2}\hat p^{-1/2}\nabla_\Gamma u}_{0,2,\Gamma},
  \end{align*}
  where the constant $C_{\gamma,\gamma_p}$ depends only on the shape-regularity constants, 
  $\gamma$, $\gamma_p$, and on $\Gamma$. 
\end{thm}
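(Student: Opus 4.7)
The starting point is Lemma~\ref{bem:smooth}, which gives
\[
\|u\|_{1/2,2,\Gamma} \;\lesssim\; \|h^{-1/2}\hat p^{1/2}\,u\|_{0,2,\Gamma}
\;+\; \|h^{1/2}\hat p^{-1/2}\nabla_\Gamma u\|_{0,2,\Gamma}.
\]
The second term already appears on the right-hand side of the desired inequality, so the whole task reduces to absorbing the weighted $L^2$-term $\|h^{-1/2}\hat p^{1/2}u\|_{0,2,\Gamma}$ into the weighted $H^1$-term using the Galerkin orthogonality~(\ref{cor:res:galorth}). I would carry this out element by element.

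The key observation is that $\cS^{\bfp,0}(\cT)$ imposes no inter-element continuity, so for a fixed $K\in\cT$ the extension-by-zero of any $\phi\in\cP_{p_K}(K)$ (after pull-back by $F_K$) lies in $\cS^{\bfp,0}(\cT)$. Consequently,~(\ref{cor:res:galorth}) is equivalent to saying that $u|_K$ is $L^2$-orthogonal to $\cP_{p_K}(K)$ on every $K\in\cT$, i.e.~the elementwise $L^2$-projection $\Pi_K u$ of $u|_K$ onto $\cP_{p_K}(K)$ vanishes. Pythagoras then identifies the $L^2$-norm of $u|_K$ with the best polynomial approximation error,
\[
\|u\|_{0,2,K} \;=\; \|u-\Pi_K u\|_{0,2,K} \;=\; \inf_{\pi\in\cP_{p_K}(K)}\|u-\pi\|_{0,2,K}.
\]

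To estimate the right-hand side I would pull back to the reference simplex via the affine map $F_K:\refK\to K$, apply the standard $p$-robust polynomial best-approximation estimate on $\refK$,
\[
\inf_{\hat\pi\in\cP_{p_K}(\refK)}\|\hat u-\hat\pi\|_{0,2,\refK} \;\lesssim\; \hat p_K^{-1}\,|\hat u|_{1,2,\refK},
\]
and scale back using shape-regularity~(\ref{eq:shape-regularity-Gamma}). The outcome is the elementwise bound $\|u\|_{0,2,K}\lesssim (h_K/\hat p_K)\,\|\nabla_\Gamma u\|_{0,2,K}$ with a constant depending only on $\gamma$. Multiplying by $\hat p_K/h_K$, squaring, and summing over $K\in\cT$ produces
\[
\|h^{-1/2}\hat p^{1/2}\,u\|_{0,2,\Gamma} \;\lesssim\; \|h^{1/2}\hat p^{-1/2}\,\nabla_\Gamma u\|_{0,2,\Gamma},
\]
which combined with Lemma~\ref{bem:smooth} gives the claim.

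The one step that requires some care is the polynomial best-approximation estimate with only the $H^1$-\emph{seminorm} on the right-hand side: this is what yields the $p$-independent ``Poincaré constant'' and, together with the constant-reproducing property of the $L^2$-projection, allows the case $\hat p_K=1$ (Poincaré--Wirtinger on a shape-regular element) and the high-$p$ case (truncation of orthogonal-polynomial expansions) to be subsumed in the single rate $\hat p_K^{-1}$. Everything else---the discontinuous nature of $\cS^{\bfp,0}(\cT)$, the reassembly of the global weighted norms, and the two-dimensional surface scaling via~(\ref{eq:shape-regularity-Gamma})---is a routine consequence of shape-regularity.
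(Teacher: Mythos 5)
Your proof is correct and follows essentially the same route as the paper: both recognize that the orthogonality~(\ref{cor:res:galorth}) means $u$ coincides with the error of its elementwise $L^2$-projection onto $\cS^{\bfp,0}(\cT)$, invoke the $p$-robust elementwise approximation estimate to bound $\|h^{-1/2}\hat p^{1/2}u\|_{0,2,\Gamma}$ by $\|h^{1/2}\hat p^{-1/2}\nabla_\Gamma u\|_{0,2,\Gamma}$, and close with Lemma~\ref{bem:smooth}. You simply unpack the "well-known approximation properties of the elementwise $L^2$-projection" that the paper cites in one line, which is a reasonable level of detail but not a different argument.
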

\begin{proof}
  Denote by $\Pi$ the $L^2(\Gamma)$-orthogonal projection onto $\cS^{\bfp,0}(\cT)$.
  Then, due to the orthogonality~\eqref{cor:res:galorth}, it holds
  \begin{align*}
    \vn{h^{-1/2}\hat p^{1/2}u}_{0,2,\Gamma} = 
    \vn{h^{-1/2}\hat p^{1/2}(1-\Pi)u}_{0,2,\Gamma}
    \lesssim \vn{h^{1/2}\hat p^{-1/2}\nabla_\Gamma u}_{0,2,\Gamma},
  \end{align*}
  where the last estimate follows from well-known approximation properties of the elementwise 
  $L^2$-projection. Finally, Lemma~\ref{bem:smooth} concludes the proof.
\end{proof}
We explicitly formulate the residual error estimate that results from Theorem~\ref{thm:single-layer}:
\begin{cor}[$hp$-{\sl a posteriori} error estimation for single layer operator]
\label{cor:single-layer-BEM}
Let $f \in H^1(\Gamma)$ and let $\varphi \in H^{-1/2}(\Gamma)$ solve 
(\ref{eq:single-layer-equation}).
Suppose that $\cT$ is a $\gamma$-shape regular mesh on $\Gamma$ and $\bfp$ is a $\gamma_p$-shape
regular polynomial degree distribution.
Let $V_N = \cS^{\bfp,0}({\mathcal T})$ in (\ref{eq:BEM-V}) and let  
$\varphi_N$ be the solution of (\ref{eq:BEM-V}). Then with the residual 
$R_N:= f - \operatorname*{V} \varphi_N$ the Galerkin error $\varphi - \varphi_N$ satisfies 
$$
\|\varphi - \varphi_N\|_{-1/2,2,\Gamma} \leq C \|R_N\|_{1/2,2,\Gamma} 
\leq C \|\left( h/\hat p\right)^{1/2} \nabla_\Gamma R_N\|_{0,2,\Gamma}. 
$$
\end{cor}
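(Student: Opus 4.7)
The plan is to chain two estimates, corresponding exactly to the two inequalities in the statement. The first inequality, $\|\varphi-\varphi_N\|_{-1/2,2,\Gamma} \leq C\|R_N\|_{1/2,2,\Gamma}$, is purely functional-analytic: the single layer operator $\operatorname{V}:H^{-1/2}(\Gamma)\to H^{1/2}(\Gamma)$ is an isomorphism (elliptic, under the convention $\operatorname{diam}(\Omega)<1$ when $d=2$), and $\operatorname{V}(\varphi-\varphi_N)=f-\operatorname{V}\varphi_N=R_N$. Hence $\|\varphi-\varphi_N\|_{-1/2,2,\Gamma} = \|\operatorname{V}^{-1}R_N\|_{-1/2,2,\Gamma} \leq \|\operatorname{V}^{-1}\|\,\|R_N\|_{1/2,2,\Gamma}$. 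This gives the first inequality with a constant depending only on $\Gamma$.

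For the second inequality I plan to apply Theorem~\ref{thm:single-layer} directly to $u := R_N$. Two things need to be verified: the regularity $R_N\in H^1(\Gamma)$ and the Galerkin-type orthogonality~\eqref{cor:res:galorth}. The orthogonality is immediate: for any $\phi_{hp}\in \cS^{\bfp,0}(\cT)=V_N$, the definition of $\varphi_N$ in~\eqref{eq:BEM-V} gives $\int_\Gamma R_N\,\phi_{hp}\,d\sigma = \langle f,\phi_{hp}\rangle_\Gamma - \langle \operatorname{V}\varphi_N,\phi_{hp}\rangle_\Gamma = 0$. The $H^1$-regularity of $R_N$ splits into $f\in H^1(\Gamma)$ (given) and $\operatorname{V}\varphi_N\in H^1(\Gamma)$; the latter follows from the standard boundary-integral mapping property $\operatorname{V}:L^2(\Gamma)\to H^1(\Gamma)$ applied to the piecewise polynomial $\varphi_N\in\cS^{\bfp,0}(\cT)\subset L^2(\Gamma)$.

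Once these two facts are checked, Theorem~\ref{thm:single-layer} applied to $R_N$ yields
\begin{equation*}
\|R_N\|_{1/2,2,\Gamma} \leq C_{\gamma,\gamma_p}\,\|h^{1/2}\hat p^{-1/2}\nabla_\Gamma R_N\|_{0,2,\Gamma},
\end{equation*}
which is the second inequality. Combining the two estimates produces the claim, with the constant depending only on $\gamma$, $\gamma_p$, and $\Gamma$ (through $\|\operatorname{V}^{-1}\|$ and the mapping constant of $\operatorname{V}:L^2\to H^1$).

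The main obstacle, if any, is the justification of $\operatorname{V}\varphi_N\in H^1(\Gamma)$ on a general Lipschitz surface; however, since $\varphi_N$ is piecewise polynomial (hence in $L^\infty(\Gamma)\subset L^2(\Gamma)$), this follows from standard results on the single layer potential and does not require any new machinery. All other ingredients are either classical (ellipticity of $\operatorname{V}$) or already established in the paper (Theorem~\ref{thm:single-layer}).
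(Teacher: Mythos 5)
Your argument is correct and follows exactly the paper's route: the first inequality from boundedness of $\operatorname{V}^{-1}$, the second from Theorem~\ref{thm:single-layer} applied to $R_N$ using the Galerkin orthogonality. The extra justification you give for $R_N\in H^1(\Gamma)$ (via $\operatorname{V}:L^2(\Gamma)\to H^1(\Gamma)$) is a detail the paper leaves implicit, and it is a sound way to verify the hypothesis of Theorem~\ref{thm:single-layer}.
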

\begin{proof}
The first estimate expresses the boundedness of the operator $\operatorname*{V}^{-1}$. The second 
estimate follows from Theorem~\ref{thm:single-layer} and the Galerkin orthogonalities. 
\end{proof}
\subsubsection{Hypersingular operator}
\begin{lem}\label{lem:W:aux}
  Let $\Omega\subset\R{d}$, $d \in \{2,3\}$, be a bounded Lipschitz domain, and let $\Gamma=\partial\Om$.  
  Let $\cT$ be a $\gamma$-shape
  regular mesh on $\Gamma$, and let $\bfp$ be a $\gamma_p$-shape regular polynomial degree distribution
  on $\cT$ with $p_K \ge 1$ for all $K \in \cT$. 
  Then, there exists a linear operator $\II_\cT^\bfp: H^{1/2}(\Gamma)\rightarrow \cS^{\bfp,1}(\cT)$
  such that
  \begin{subequations}
  \begin{align}
    \vn{h^{-1/2}p^{1/2} (u-\II_\cT^\bfp u)}_{0,2,\Gamma}
    &\lesssim \vn{u}_{1/2,2,\Gamma}\label{W:aux:eq1},\\
    \vn{u-\II_\cT^\bfp u}_{1/2,2,\Gamma}
    &\lesssim
    \|{ h^{1/2}p^{-1/2}u}\|_{0,2,\Gamma}
    +
    \|{h^{1/2}p^{-1/2}\nabla_\Gamma u}\|_{0,2,\Gamma}.
    \label{W:aux:eq2}
  \end{align}
  \end{subequations}
\end{lem}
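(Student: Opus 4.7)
The plan is to construct $\II_\cT^\bfp$ as a surface analog of the $hp$-quasi-interpolation operator of Theorem~\ref{thm:qi}. First I would follow the chart setup from the proof of Lemma~\ref{bem:smooth}: with local parametrizations $\{\chi_j,U_j,V_j\}_{j=1}^n$ of $\Gamma$ and a smooth partition of unity $\{\beta_j\}$ subordinate to $\{U_j\}$, the fact that the elements of $\cT$ are flat $(d-1)$-simplices allows the $\chi_j$ to be chosen piecewise affine on the preimages of elements (cf.~Remark~\ref{rem:polyhedra}). Then the pulled-back meshes $\cT_j$ on $V_j\subset\R{d-1}$ are $\gamma$-shape regular in the sense of~\eqref{eq:shape-regular}, and the pulled-back polynomial degree distribution remains $\gamma_p$-shape regular. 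I would apply Theorem~\ref{thm:qi} on each $V_j$ (with $p=2$, $r'=1$) to $\beta_j u\circ\chi_j$ and push forward, obtaining a global linear operator $\II_\cT^\bfp:L^1_{loc}(\Gamma)\to\cS^{\bfp,1}(\cT)$.

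Theorem~\ref{thm:qi} with $r'=1$ then supplies, for each $K\in\cT$ and $\ell\in\{0,1\}$, the $L^2$-stability $\vn{\II_\cT^\bfp u}_{0,2,K}\lesssim \vn{u}_{0,2,\omega_K}$ and the approximation bound
\begin{align*}
\vn{u-\II_\cT^\bfp u}_{\ell,2,K}\lesssim (h_K/p_K)^{1-\ell}\sum_{K'\in\omega_K}\vn{u}_{1,2,\omega_{K'}}.
\end{align*}
In other words $I-\II_\cT^\bfp$ is bounded $L^2(\omega_K)\to L^2(K)$ with norm $\lesssim 1$ and $H^1(\omega_K)\to L^2(K)$ with norm $\lesssim h_K/p_K$. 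These two integer-order bounds are the workhorse of the argument.

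For~\eqref{W:aux:eq1}, real interpolation $[L^2,H^1]_{1/2,2}=H^{1/2}$ applied to the two bounds above produces
\begin{align*}
\vn{u-\II_\cT^\bfp u}_{0,2,K}\lesssim (h_K/p_K)^{1/2}\,\vn{u}_{1/2,2,\omega_K},
\end{align*}
where the local quasi-uniformity~\eqref{eq:shape-regular},~\eqref{eq:shape-regular-p} allows $h_K/p_K$ to be treated as a constant on $\omega_K$. Multiplying by $h_K^{-1}p_K$, squaring, summing over $K\in\cT$, and using the finite overlap of patches yields~\eqref{W:aux:eq1}.

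For~\eqref{W:aux:eq2}, I would set $v:=u-\II_\cT^\bfp u$ and apply Lemma~\ref{bem:smooth} to $v$. The $L^2$-term $\vn{h^{-1/2}\hat p^{1/2}v}_{0,2,\Gamma}$ is controlled elementwise by the $\ell=0$ bound above; the extra factor $h_K/p_K$ combines with $h_K^{-1/2}p_K^{1/2}$ into $h_K^{1/2}p_K^{-1/2}$. The $H^1$-term $\vn{h^{1/2}\hat p^{-1/2}\nabla_\Gamma v}_{0,2,\Gamma}$ is handled by the triangle inequality and the $\ell=1$ bound (giving $H^1$-stability of $\II_\cT^\bfp$). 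Since $p_K\geq 1$ implies $\hat p = p$, summation with finite overlap yields~\eqref{W:aux:eq2}. The main obstacle is the real-interpolation step: although $I-\II_\cT^\bfp$ is globally defined, one has to localize the $K$-functional characterization of $H^{1/2}(\omega_K)$ and exploit that $h_K/p_K$ is essentially constant on $\omega_K$ before the patchwise bounds can be assembled into the global estimate~\eqref{W:aux:eq1}.
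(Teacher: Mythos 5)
Your overall strategy — smooth first, then interpolate by polynomials, then combine local integer-order estimates into fractional ones — is the right philosophy, and your argument for \eqref{W:aux:eq2} (apply Lemma~\ref{bem:smooth} to $v=u-\II_\cT^\bfp u$, then use the $L^2$-approximation and $H^1$-stability of the quasi-interpolant elementwise) is clean and works. However, there are two genuine gaps.

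\textbf{Conformity of the construction.} You propose $\II_\cT^\bfp u=\sum_j\bigl(\II^{hp}_j(\beta_j u\circ\chi_j)\bigr)\circ\chi_j^{-1}$ with $\II^{hp}_j$ the quasi-interpolant of Theorem~\ref{thm:qi} on the chart domain $V_j\subset\R{d-1}$, and you assert this maps into $\cS^{\bfp,1}(\cT)$. This does not follow. The function $\beta_j u\circ\chi_j$ is compactly supported in $V_j$, but the quasi-interpolant $\II^{hp}_j=\Pi^{hp}_j\circ\II_{\eps_j}$ does not preserve compact support: the near-boundary translated averaging (Section~\ref{sec:regularization-near-boundary}) and the patchwise interpolation $\Pi^{hp}_j$ both spread the support. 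Hence $\II^{hp}_j(\beta_j u\circ\chi_j)$ will in general be nonzero near $\partial V_j$, and extending its pushforward by zero to $\Gamma$ produces a jump across $\partial(\chi_j(V_j))$ — the sum lands in a broken space, not in $H^1(\Gamma)$. (It would work if you appealed to the boundary-preserving modification of Theorem~\ref{thm:homogeneous-bc}, but you would then need to verify the approximation estimates in that setting, which you have not done.) The paper avoids this issue by decoupling the two steps: the \emph{smoothing} $\II_\cT u=\sum_j(\II_{\eps_j}(\beta_j u\circ\chi_j))\circ\chi_j^{-1}$ preserves compact support in $V_j$ (because the smoothing radius is capped by $\nu$), so $\II_\cT u$ is a well-defined $H^1(\Gamma)$-function, elementwise in $H^2$; the \emph{polynomial interpolation} is then done directly on $\Gamma$ by the element-by-element operator of \cite[Lemma~B.3]{melenk-sauter10}, which gives $\cS^{\bfp,1}(\cT)$-conformity by construction. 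You cannot sidestep this by invoking Theorem~\ref{thm:qi} as a black box on the charts.

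\textbf{The interpolation step for \eqref{W:aux:eq1}.} You propose to deduce the local fractional estimate $\vn{u-\II_\cT^\bfp u}_{0,2,K}\lesssim (h_K/p_K)^{1/2}\vn{u}_{1/2,2,\omega_K}$ from the $L^2\to L^2$ and $H^1\to L^2$ bounds by local real interpolation, and you acknowledge the localization of the $K$-functional as "the main obstacle" but leave it unresolved. This is a real gap: the map $u\mapsto (u-\II_\cT^\bfp u)|_K$ is not local (its domain of dependence is a two-layer patch), and even under shape regularity one does not simply get a uniform local $K$-functional equivalence that feeds into a sum over elements with weights $h_K/p_K$. The paper uses a cleaner route: it first establishes the two \emph{global, weighted} estimates $\vn{h^{-1}p\,(u-\II_\cT^\bfp u)}_{0,2,\Gamma}\lesssim\vn{u}_{1,2,\Gamma}$ and $\vn{u-\II_\cT^\bfp u}_{0,2,\Gamma}\lesssim\vn{u}_{0,2,\Gamma}$, and then interpolates the single operator $u\mapsto u-\II_\cT^\bfp u$ between $L^2(\Gamma)\to L^2(\Gamma)$ and $H^1(\Gamma)\to L^2(\Gamma,(h^{-1}p)^2\,d\sigma)$; Stein--Weiss interpolation of weighted $L^2$ spaces gives exactly the $h^{-1/2}p^{1/2}$ weight. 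If you reformulate your step~(2) this way, replacing the local patchwise interpolation by this global one, it becomes rigorous.

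A minor remark: invoking Theorem~\ref{thm:qi} with $r'=1$ is optimistic; for surfaces in $\R{3}$ (chart domains in $\R{2}$) the available $\Pi^{hp}$ operators (e.g.\ \cite{melenk-sauter10}) require $r'\ge 2$. This is harmless since the smoothing step supplies the regularity, but it should be stated.
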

\begin{proof}
  Define the linear smoothing operator $\II_\cT$ by
  $\II_\cT u := \sum_{j=1}^n \left(\II_{\eps_j}(\beta_ju\circ\chi_j)\right)\circ\chi_j^{-1}$,
  where the operators $\II_{\eps_j}$ are as in the proof of Lemma~\ref{bem:smooth}.
Recall from Remark~\ref{rem:polyhedra} that the maps $\chi_j$ are piecewise affine. More precisely, 
 for every planar side $\Gamma_k$ of $\Gamma$, one has that 
 $\chi_j:V_j \cap \chi_j^{-1}(\Gamma_k \cap U_j) \rightarrow \Gamma_k\cap U_j$ is affine. This implies 
  that $\II_\cT u\in H^1(\Gamma)$ and $\II_\cT u \in C^{\infty}(\overline{\Gamma_k})$ on every
  planar side $\Gamma_k$ of $\Gamma$. Since $d \in \{2,3\}$, the approximation operator of 
  \cite[Lemma~{B.3}]{melenk-sauter10} is applicable to the function $\II_\cT u$, which is in $H^1(\Gamma)$
  and elementwise in $H^2$. \cite[Lemma~{B.3}]{melenk-sauter10} produces a piecewise polynomial 
  approximation $\II_\cT^\bfp u\in\cS^{\bfp,1}(\cT)$ in an element-by-element fashion from 
  $\II_\cT u$ such that 
  \begin{align*}
    \vn{\II_\cT u - \II_\cT^\bfp u}_{0,2,K}
    \lesssim h_K^{2}p_K^{-2}\sn{\II_\cT u}_{2,2,K}.
  \end{align*}
  For $s\in\R{}$ we obtain with Theorem~\ref{thm:hpsmooth}
  \begin{align*}
    \vn{h^{-s}p^s\,(\II_\cT u -\II_\cT^\bfp u)}_{0,2,\Gamma}^2
    &= \sum_{K\in\cT}\vn{h^{-s}p^s\,(\II_\cT u-\II_\cT^\bfp u)}_{0,2,K}^2
    \lesssim \sum_{K\in\cT} h_K^{2(2-s)}p_K^{-2(2-s)}\sn{\II_\cT u}_{2,2,K}^2 \\
    &\lesssim \sum_{j=1}^n\sum_{K\in\cT} h_K^{2(2-s)}p_K^{-2(2-s)}
    \|{\II_{\eps_j}(\beta_ju\circ\chi_j)\circ\chi_j^{-1}}\|_{2,2,K\cap U_j}^2\\
    &\lesssim \sum_{j=1}^n\sum_{K\in\cT} h_K^{2(2-s)}p_K^{-2(2-s)}
    \|{\II_{\eps_j}(\beta_ju\circ\chi_j)}\|_{2,2,V_j\cap\chi_j^{-1}(K)}^2\\
    &\lesssim \sum_{j=1}^n\sum_{K\in\cT} h_K^{2(1-s)}p_K^{-2(1-s)}
    \left( \|{\beta_ju\circ\chi_j}\|_{0,2,V_j\cap\chi_j^{-1}(\omega_K)}^2
    +
    \|{\nabla(\beta_ju\circ\chi_j)}\|_{0,2,V_j\cap\chi_j^{-1}(\omega_K)}^2\right)\\
    &\lesssim \sum_{j=1}^n
    \left( \|{ \varepsilon_j^{1-s}u\circ\chi_j}\|_{0,2,V_j}^2
    +
    \|{\varepsilon_j^{1-s}\nabla(u\circ\chi_j)}\|_{0,2,V_j}^2\right)\\
    &\lesssim 
    \|{ h^{1-s}p^{-(1-s)}u}\|_{0,2,\Gamma}^2
    +
    \|{h^{1-s}p^{-(1-s)}\nabla_\Gamma u}\|_{0,2,\Gamma}^2.
  \end{align*}
  Analogously, we obtain
  \begin{align*}
    \vn{h^{-s}p^s (u-\II_\cT u)}_{0,2,\Gamma}^2
    &= \vn{h^{-s}p^s \sum_{j=1}^n \left(\beta_j u -
      \II_{\eps_j}(\beta_ju\circ\chi_j)\circ\chi_j^{-1}\right)}_{0,2,\Gamma}^2
\lesssim \sum_{j=1}^n \vn{\eps_j^{-s} \left(\beta_j u\circ\chi_j -
      \II_{\eps_j}(\beta_ju\circ\chi_j)\right)}_{0,2,V_j}^2
\\ &
\lesssim \sum_{j=1}^n
    \left( \|\varepsilon_j^{1-s} u\circ\chi_j\|_{0,2,V_j}^2
    +
    \|\varepsilon_j^{1-s}\nabla(u\circ\chi_j)\|_{0,2,V_j}^2\right)
\\ &
\lesssim 
    \|{ h^{1-s}p^{-(1-s)}u}\|_{0,2,\Gamma}^2
    +
    \|{h^{1-s}p^{-(1-s)}\nabla_\Gamma u}\|_{0,2,\Gamma}^2.
  \end{align*}
  Here, the second estimate follows from Theorem~\ref{thm:hpsmooth} since we can bound the approximation error
  of $\II_{\eps_j}$ locally on every element $\chi_j^{-1}(K\cap U_j)$.
  For $s=1$, the above estimates read
  $\vn{h^{-1}p\,(\II_\cT u -\II_\cT^\bfp u)}_{0,2,\Gamma} \lesssim \vn{u}_{1,2,\Gamma}$
  and
  $\vn{h^{-1}p\,(u -\II_\cT u)}_{0,2,\Gamma} \lesssim \vn{u}_{1,2,\Gamma}$. 
  A similar reasoning as above shows
  $\vn{\II_\cT u -\II_\cT^\bfp u}_{0,2,\Gamma}\lesssim\vn{u}_{0,2,\Gamma}$
  and $\vn{u-\II_\cT u}_{0,2,\Gamma} \lesssim \vn{u}_{0,2,\Gamma}$.
  An interpolation argument and the triangle inequality
  show~\eqref{W:aux:eq1}.
  Choosing $s=1/2$ in the above estimates yields 
  \begin{align*}
    \vn{h^{-1/2}p^{1/2}(u -\II_\cT^\bfp u)}_{0,2,\Gamma}\lesssim
    \|{ h^{1/2}p^{-1/2}u}\|_{0,2,\Gamma}
    +
    \|{h^{1/2}p^{-1/2}\nabla_\Gamma u}\|_{0,2,\Gamma}.
  \end{align*}
  In addition, similar arguments as above show that
  \begin{align*}
    \vn{h^{1/2}p^{-1/2}\nabla_\Gamma \II_\cT^\bfp u}_{0,2,\Gamma}\lesssim
    \|{ h^{1/2}p^{-1/2}u}\|_{0,2,\Gamma}
    +
    \|{h^{1/2}p^{-1/2}\nabla_\Gamma u}\|_{0,2,\Gamma}.
  \end{align*}
  Together with Lemma~\ref{bem:smooth}, this proves~\eqref{W:aux:eq2}.

\end{proof}
The following can be seen as a generalization of the results of~\cite{cmps04}
to obtain a residual {\sl a posteriori} error estimator for $hp$-boundary elements
for hypersingular integral equations.
\begin{thm}
\label{thm:hypersingular-operator}
  Let $\Om\subset\R{d}$, $d \in \{2,3\}$, be a bounded Lipschitz domain and let $\Gamma=\partial\Omega$
  be its boundary. Let $\cT$ be a $\gamma$-shape regular mesh on $\Gamma$, 
  and let $\bfp$ be a $\gamma_p$-shape regular polynomial degree distribution on
  $\cT$ with $p_K\geq 1$ for all $K\in\cT$. Suppose that $u\in L^2(\Gamma)$ satisfies
  \begin{align}
    \int_\Gamma u\cdot \phi_{hp}\;d\sigma = 0 \
    \quad\text{ for all }\phi_{hp}\in\cS^{\bfp,1}(\cT).
    \label{cor:resW:galorth}
  \end{align}
  Then, for a constant $C_{\gamma,\gamma_p}$ that depends only on the shape-regularity constants $\gamma$, 
  $\gamma_p$ of the mesh and on $\Gamma$,
  \begin{align*}
    \vn{u}_{-1/2,2,\Gamma} \leq C_{\gamma,\gamma_p}\vn{h^{1/2}p^{-1/2}u}_{0,2,\Gamma}.
  \end{align*}
\end{thm}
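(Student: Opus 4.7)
The plan is to prove the bound by duality, exploiting the fact that Lemma~\ref{lem:W:aux} supplies an operator $\II_\cT^\bfp:H^{1/2}(\Gamma)\to\cS^{\bfp,1}(\cT)$ with a weighted $L^2$ approximation estimate that is precisely the dual of the weight $h^{1/2}p^{-1/2}$ appearing on the right-hand side of the theorem. Specifically, I will use the characterization
\begin{align*}
\vn{u}_{-1/2,2,\Gamma} = \sup_{0\ne v\in H^{1/2}(\Gamma)} \frac{\int_\Gamma u\,v\,d\sigma}{\vn{v}_{1/2,2,\Gamma}},
\end{align*}
so that the theorem reduces to showing $\int_\Gamma u\,v\,d\sigma \lesssim \vn{h^{1/2}p^{-1/2}u}_{0,2,\Gamma}\vn{v}_{1/2,2,\Gamma}$ for every $v\in H^{1/2}(\Gamma)$, with the hidden constant depending only on $\gamma$, $\gamma_p$, and $\Gamma$.

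Fix such a $v$. Since $\II_\cT^\bfp v \in \cS^{\bfp,1}(\cT)$, the orthogonality hypothesis \eqref{cor:resW:galorth} allows me to subtract it:
\begin{align*}
\int_\Gamma u\,v\,d\sigma = \int_\Gamma u\,(v-\II_\cT^\bfp v)\,d\sigma.
\end{align*}
I will then apply a weighted Cauchy--Schwarz inequality with the reciprocal weights $h^{1/2}p^{-1/2}$ and $h^{-1/2}p^{1/2}$, which is legal because $h$ and $p$ are strictly positive piecewise constants:
\begin{align*}
\int_\Gamma u\,(v-\II_\cT^\bfp v)\,d\sigma \leq \vn{h^{1/2}p^{-1/2}u}_{0,2,\Gamma}\,\vn{h^{-1/2}p^{1/2}(v-\II_\cT^\bfp v)}_{0,2,\Gamma}.
\end{align*}
The second factor is controlled by \eqref{W:aux:eq1} of Lemma~\ref{lem:W:aux}, giving $\vn{h^{-1/2}p^{1/2}(v-\II_\cT^\bfp v)}_{0,2,\Gamma}\lesssim \vn{v}_{1/2,2,\Gamma}$. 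Dividing by $\vn{v}_{1/2,2,\Gamma}$ and taking the supremum over $v$ completes the argument.

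In this argument there is essentially no obstacle, as all heavy lifting has been done in Lemma~\ref{lem:W:aux}: the construction of the $hp$-interpolation operator $\II_\cT^\bfp$ with weighted $L^2$-approximation property against the $H^{1/2}$-norm is precisely what the proof needs. The only point worth double-checking is that $\II_\cT^\bfp v$ is actually an admissible test function in \eqref{cor:resW:galorth}, but this is immediate since Lemma~\ref{lem:W:aux} produces $\II_\cT^\bfp v \in \cS^{\bfp,1}(\cT)$ and $u\in L^2(\Gamma)$ makes the pairing with any piecewise polynomial well-defined. The result is thus the natural $hp$-generalization of the $h$-BEM residual estimator of~\cite{cmps04}, and the role of Theorem~\ref{thm:hpsmooth} here is implicit, entering through the construction of $\II_\cT^\bfp$ in Lemma~\ref{lem:W:aux}.
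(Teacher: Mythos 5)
Your proposal is correct and coincides with the paper's own proof: both use the duality characterization of the $H^{-1/2}(\Gamma)$-norm, subtract $\II_\cT^\bfp v$ via the Galerkin orthogonality, apply weighted Cauchy--Schwarz with the reciprocal weights $h^{\pm 1/2}p^{\mp 1/2}$, and then invoke estimate \eqref{W:aux:eq1} of Lemma~\ref{lem:W:aux}. No further comment is needed.
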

\begin{proof}
  The orthogonality~\eqref{cor:resW:galorth}, Lemma~\ref{lem:W:aux},
  and Cauchy-Schwarz show for any $v\in H^{1/2}(\Gamma)$
  \begin{align*}
    \int_\Gamma u\cdot v\;d\sigma
    = \int_\Gamma u\cdot (v-\II_\cT^\bfp v)\;d\sigma
    &\lesssim \vn{h^{1/2}p^{-1/2}u}_{0,2,\Gamma}
    \vn{h^{-1/2}p^{1/2}(v-\II_\cT^\bfp v)}_{0,2,\Gamma}
\lesssim \vn{h^{1/2}p^{-1/2}u}_{0,2,\Gamma} \vn{v}_{1/2,2,\Gamma}.
  \end{align*}
  The definition of $H^{-1/2}(\Gamma)$ as dual space of $H^{1/2}(\Gamma)$
  shows the result.
\end{proof}
\begin{cor}[$hp$-{\sl a posteriori} error estimation for hypersingular operator]
\label{cor:hypersingular-BEM}
Let $\Gamma = \partial\Omega$ be connected. 
Let $g \in L^{2}(\Gamma)$ with $\langle g,1\rangle_\Gamma = 0$ and let $u \in H^{1/2}(\Gamma)$ 
be defined by (\ref{eq:hypersingular-equation}). 
Suppose that $\cT$ is a $\gamma$-shape regular mesh on $\Gamma$ and $\bfp$ is a $\gamma_p$-shape
regular polynomial degree distribution with $p_K\geq1$ for all $K\in\cT$.
Let $W_N = \cS^{\bfp,1}({\mathcal T})$ and $u_N \in W_N$ be given by (\ref{eq:BEM-D}). Then with the 
residual $R_N:= g - \operatorname*{D} u_N$ 
$$
\sn{u - u_N}_{1/2,2,\Gamma} \leq C \|R_N\|_{-1/2,2,\Gamma} \leq C \|(h/p)^{1/2}
R_N\|_{0,2,\Gamma}. 
$$
\end{cor}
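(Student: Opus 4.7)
The strategy is to combine two independent ingredients: (i) the classical reliability bound that controls the error by the dual norm of the residual, and (ii) the localization estimate already proved as Theorem~\ref{thm:hypersingular-operator}. Accordingly, I would treat the two inequalities separately.

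For the first inequality, I would start from the identity $\operatorname{D}(u-u_N) = g - \operatorname{D} u_N = R_N$. Since $\Gamma$ is connected, the hypersingular operator is an isomorphism from $H^{1/2}(\Gamma)/\R{}$ onto the annihilator $\{\psi \in H^{-1/2}(\Gamma) : \langle \psi,1\rangle_\Gamma = 0\}$, and the semi-norm $|\cdot|_{1/2,2,\Gamma}$ is equivalent to the quotient norm on $H^{1/2}(\Gamma)/\R{}$. The necessary compatibility $\langle R_N,1\rangle_\Gamma = 0$ follows from the hypothesis $\langle g,1\rangle_\Gamma = 0$ together with the Galerkin equation (\ref{eq:BEM-D}) tested against the constant function $1$, which lies in $W_N = \cS^{\bfp,1}(\cT)$ because $p_K\geq 1$ for all $K\in\cT$. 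Combining these observations yields $|u-u_N|_{1/2,2,\Gamma} \leq C\,\|R_N\|_{-1/2,2,\Gamma}$, with a constant depending only on $\Gamma$.

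For the second inequality, I would apply Theorem~\ref{thm:hypersingular-operator} with $u$ replaced by $R_N$. Two hypotheses must be checked: the $L^2(\Gamma)$-regularity and the Galerkin-type orthogonality (\ref{cor:resW:galorth}) against $\cS^{\bfp,1}(\cT)$. The orthogonality is immediate from (\ref{eq:BEM-D}) since it is precisely the defining property of $u_N$. The $L^2$-regularity follows from $g \in L^2(\Gamma)$ and the fact that $u_N$ is piecewise polynomial, hence $\operatorname{D} u_N$ is an $L^2(\Gamma)$-function on the Lipschitz surface. Theorem~\ref{thm:hypersingular-operator} then gives $\|R_N\|_{-1/2,2,\Gamma} \leq C_{\gamma,\gamma_p}\|h^{1/2}p^{-1/2}R_N\|_{0,2,\Gamma}$, which is the desired estimate.

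The main obstacle, conceptually, is the first inequality: it relies on the invertibility of $\operatorname{D}$ modulo constants on connected Lipschitz boundaries, which is exactly the reason why the connectedness hypothesis on $\Gamma$ is imposed. All remaining steps are essentially bookkeeping, with the only subtle point being the verification that $1\in W_N$ so that both $R_N$ and $g$ live in the correct subspace annihilating constants.
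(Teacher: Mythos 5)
Your proof is correct and follows the paper's own route: the first inequality is the (semi-)ellipticity of $\operatorname{D}$ on $H^{1/2}(\Gamma)/\R{}$, and the second is Theorem~\ref{thm:hypersingular-operator} applied to $R_N$, whose orthogonality against $W_N=\cS^{\bfp,1}(\cT)$ is immediate from (\ref{eq:BEM-D}). Your additional verifications---$\langle R_N,1\rangle_\Gamma=0$ via $1\in W_N$ and $\langle g,1\rangle_\Gamma=0$, and the $L^2(\Gamma)$-regularity of $R_N$---correctly fill in details the paper leaves implicit.
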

\begin{proof}
The first estimate follows from the (semi-)ellipticity of the hypersingular operator 
$\operatorname*{D}: H^{1/2}(\Gamma) \rightarrow H^{-1/2}(\Gamma)$. The second estimate
follows from Theorem~\ref{thm:hypersingular-operator} and Galerkin orthogonality.  
\end{proof}
\section{Technical results for the proof of Theorem~\ref{thm:hpsmooth}}
\label{section:preliminairies}
This section provides tools and technical results that will be used in the
remainder.
The letter $\rho$ will always denote a \textit{mollifier}, i.e., 
a function $\rho \in C_0^{\infty}(\R{d})$
with \textit{(i)} $\rho(\bfx) = 0$ for $\sn{\bfx} \geq 1$,
and \textit{(ii)} $\int_{\R{d}} \rho(\bfx) d\bfx = 1$.
For $\delta  >0$, we write
$\rho_{\delta}(\bfx) := \rho \left( \bfx / \delta \right) \delta^{-d}$, so that
$\rho_{\delta}(\bfx) = 0$ for $\sn{\bfx} \geq \delta$ and $\int_{\R{d}} \rho_{\delta}(\bfx) d\bfx = 1$.
A mollifier $\rho$ is said to be of order $k_{\max}\in\N_0$ if
\begin{align}
\label{eq:order-condition-mollifier}
  \int_{\R{d}} \bfy^{\bfs} \rho(\bfy) d\bfy = 0 \qquad
  \text{ for every multi-index } \bfs \in \N_0^d \text{ with } 
  1 \leq \sn{\bfs} \leq k_{\max}. 
\end{align}
(Note that this condition is void if $k_{\max} = 0$.)
The condition (\ref{eq:order-condition-mollifier}) implies that a convolution with a mollifier 
of order $k_{\max}$ reproduces polynomials of degree up to $k_{\max}$.

Many results will be proved in a local fashion.
In order to transform these local results into global ones, we will use
Besicovitch's covering theorem, see \cite{eva1}. It is recalled here for
the reader's convenience: 
\begin{prop}[Besicovitch covering theorem]
\label{thm:besicovitch}
  There is a constant $N_d$ (depending only on the spatial dimension
  $d$) such that the following holds: 
  For any collection $\cF$ of non-empty, closed balls in $\R{d}$ with
$\displaystyle     \sup \left\{ {\rm diam }\,B \mid B \in \cF \right\} < \infty,
$
  and the set $A$ of the mid-points of the balls $B \in\cF$, there are
  subsets $\cG_1, \dots, G_{N_d} \subset \cF$ such that for each 
  $i = 1, \dots, N_d$, the family $\cG_i$ is a countable set of
  pairwise disjoint balls and
  \begin{align*} 
    A \subset \bigcup_{i=1}^{N_d} \bigcup_{B \in \cG_i} B.
\tag*{\qed}
  \end{align*}
\end{prop}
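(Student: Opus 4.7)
The plan is to follow the classical greedy-selection argument from geometric measure theory (essentially the proof in Evans--Gariepy). Since the statement is a structural covering result for arbitrary collections of balls in $\R{d}$, I would split the argument into two conceptually distinct parts: (1) extract a countable subcollection $\cG \subset \cF$ that still covers $A$ and has bounded pointwise multiplicity, and (2) partition $\cG$ into at most $N_d$ pairwise disjoint subfamilies by a finite coloring argument.

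First I would reduce to the case where the center set $A$ is bounded. Partitioning $\R{d}$ into concentric annular shells whose thickness is a fixed multiple of $D := \sup\{\diam B \mid B\in\cF\} < \infty$, the balls with centers in a given shell cannot interact with balls centered in far-away shells, and one can handle even and odd shells separately. Once $A$ is bounded, I would build $\cG$ greedily: order a selection $B_1, B_2, \dots$ by (approximately) decreasing radius, requiring at each step that the center of $B_{j+1}$ not lie in $\bigcup_{i \leq j} B_i$, and that its radius be at least half the supremum of the radii of balls whose centers are still uncovered. The "half the supremum" rule prevents radii from collapsing too quickly and, together with a standard transfinite/ordinal exhaustion, shows that the chosen countable family covers all of $A$.

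The geometric heart of the proof is the pointwise-multiplicity lemma: any $\bfx \in \R{d}$ lies in at most $N_d$ balls of $\cG$, where $N_d$ depends only on $d$. I would split the balls of $\cG$ containing $\bfx$ into two classes according to whether their centers are close to $\bfx$ (radius comparable to the distance from $\bfx$) or far from $\bfx$ (ball much larger than this distance). In the first class, the radius-halving rule together with the fact that centers of later balls lie outside earlier ones forces the centers to be mutually well-separated on a common scale, so only finitely many (depending on $d$) can fit in a fixed-size annulus around $\bfx$; in the second class, an angular argument at $\bfx$ shows that the unit vectors from $\bfx$ to the respective centers are pairwise separated by a dimension-dependent angle, and the number of such directions in $\mathbb{S}^{d-1}$ is bounded by a packing constant.

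Once the multiplicity bound is available, the final step is elementary: define a graph on $\cG$ with an edge between any two overlapping balls. The multiplicity bound says this graph has degree at most $N_d - 1$ in the sense that no point is covered more than $N_d$ times, which via a greedy coloring yields a proper coloring with $N_d$ colors; the $i$-th color class is the required subfamily $\cG_i$ of pairwise disjoint balls, and every center of $\cF$ is covered since $\cG$ covers $A$. The main obstacle, and the only genuinely nontrivial part, is the geometric multiplicity lemma, since both the "close" and "far" cases require careful quantitative estimates to produce a constant that depends only on $d$; the reduction steps and the coloring argument are routine once this lemma is in hand.
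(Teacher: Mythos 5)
The paper does not prove Proposition~\ref{thm:besicovitch} at all; it states it with a \verb|\qed| and cites Evans--Gariepy \cite{eva1}. Your sketch is therefore a reconstruction of the standard textbook proof rather than an alternative to anything in the paper, and its overall skeleton (reduction to bounded $A$ via annular shells, greedy selection with a radius-comparison rule, a geometric counting lemma split into ``close'' and ``far'' balls, and a finite coloring) is indeed the classical route.

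There is, however, a genuine gap in the passage from the counting lemma to the coloring. You formulate the geometric heart as a \emph{pointwise multiplicity} bound: every $\bfx$ lies in at most $N_d$ of the selected balls. You then assert that ``the multiplicity bound says this graph has degree at most $N_d-1$,'' and invoke greedy coloring. That inference is false in general: a ball $B_k$ can intersect arbitrarily many balls $B_j$ even if no single point of $\R{d}$ lies in more than, say, two of them, because the various $B_j$ can touch $B_k$ at different points. What the greedy coloring actually needs is the stronger statement that for each $k$ the number of \emph{earlier} selected balls $B_j$ with $B_j\cap B_k\ne\emptyset$ is bounded by a dimensional constant; this is precisely the lemma proved in Evans--Gariepy, and it exploits that for $j<k$ the radius rule forces $r_j\gtrsim r_k$ and the selection rule forces the centers $c_j$ to be pairwise $\gtrsim r_k$-separated, so a volume-packing estimate (for the close balls) and the angular estimate on $\mathbb{S}^{d-1}$ (for the far balls) bound the count. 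Your ``close/far'' split is the right tool, but it must be applied to $\{j<k: B_j\cap B_k\ne\emptyset\}$, not to $\{j: \bfx\in B_j\}$. The pointwise multiplicity bound is a corollary of that intersection-count lemma, not a substitute for it, and as written your proof of the coloring step would not go through.
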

An open set $S\subset\R{d}$ is said to be \textit{star-shaped with respect to
a ball B}, if the closed convex hull of $\left\{ \bfx \right\} \cup B$ is a
subset of $S$ for every $\bfx \in S$. The \textit{chunkiness parameter}
of $S$ is defined as $\eta(S):= \diam(S)/\rho_{\max}$, where
\begin{align*}
  \rho_{\max} := \sup\left\{ \rho \mid S
    \text{ is star-shaped with respect to a ball of radius } \rho \right\},
\end{align*}
cf.~\cite[Def.~4.2.16]{brenner2008}.
We will frequently employ Sobolev embedding theorems, and it will be necessary
to control the constants in terms of the chunkiness of the underlying domain.
Results of this type are well-known for integer order spaces,
cf.~\cite[Ch.~4]{adams-fournier03}. In Appendix~\ref{appendix}, we give
a self-contained proof that also for fractional order spaces the constants of the Sobolev embedding
theorem for star-shaped domains can be controlled in terms of 
the chunkiness parameter and the diameter of the domain. This results in the following 
embedding theorem. 
\begin{thm}[embedding theorem]
\label{thm:embedding}
  Let $\eta>0$, and let $\Om\subset\R{d}$ be a bounded domain with chunkiness parameter 
$\eta(\Om)\leq \eta$.
  Let $s,r,p,q\in\R{}$ with $0\leq s \leq r < \infty$ and $1\leq p \leq q < \infty$
  and set $\mu := d(p^{-1}-q^{-1})$.
  Assume that $(r=s+\mu\text{ and } p>1)$
  or $(r>s+\mu)$.
  Then there exists a constant $C_{s,q,r,p,\eta,d}$ (depending only on the quantities indicated) such that

  \begin{align*}
    \sn{u}_{s,q,\Om} \leq C_{s,q,r,p,\eta,d}\; \diam(\Om)^{-\mu}
    \Bigl\{ \diam(\Om)^{r - s}\sn{u}_{r,p,\Om}
    + \sum_{\substack{r' \in \N_0:\\ \lceil s \rceil \leq r' \leq r}} 
     \diam(\Om)^{r' - s}\sn{u}_{r',p,\Om}
    \Bigr\}.
  \end{align*}
  Furthermore, if $s$, $r\in\N_0$, $s\leq r$, set $\mu^\prime := d/p$.
  Assume that $(r=s+\mu^\prime\text{ and } p=1)$ or
  $(r>s+\mu^\prime \text{ and } p > 1)$.
  Then there exists a constant $C_{s,r,p,\eta,d}$ (depending only on the quantities indicated) such that
  \begin{align*}
    \sn{u}_{s,\infty,\Om} \leq C_{s,r,p,\eta,d}\; \diam(\Om)^{-\mu^\prime}
    \sum_{r' = s}^r  \diam(\Om)^{r' - s}\sn{u}_{r',p,\Om}. 
  \end{align*}
\end{thm}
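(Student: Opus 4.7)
The plan is to reduce by scaling to a domain of unit diameter, subtract a suitable averaged Taylor polynomial, and then invoke a Sobolev embedding on the rescaled domain whose constant depends only on the chunkiness parameter $\eta$. Setting $\lambda := \diam(\Om)$ and $\widetilde u(\bfy) := u(\lambda \bfy)$ on $\widetilde \Om := \lambda^{-1}\Omega$, the chain rule for integer orders and a change of variables in the Aronstein--Slobodeckij double integral for fractional orders give $\sn{u}_{t,p',\Om} = \lambda^{d/p' - t}\sn{\widetilde u}_{t,p',\widetilde \Om}$ for any order $t \geq 0$ and any $p' \in [1,\infty]$. Since $\diam(\widetilde \Om) = 1$ and $\eta(\widetilde \Om) \leq \eta$, it suffices to prove the estimate on $\widetilde \Om$ with constant depending only on the listed quantities; the powers of $\diam(\Om)$ on the right-hand side reappear automatically once one undoes the scaling, using $\mu = d(p^{-1} - q^{-1})$.

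On $\widetilde \Om$, star-shaped with respect to a ball $B$ of radius $\geq 1/\eta$, I would let $\widehat \pi$ be the Dupont--Scott averaged Taylor polynomial of $\widetilde u$ of degree $\lceil s \rceil - 1$ based on $B$. Classical Bramble--Hilbert estimates (cf.\ \cite[Ch.~4]{brenner2008}) with constants depending only on $\eta$, $d$, and $\lceil s \rceil$ yield
\begin{align*}
\sum_{j=0}^{\lceil s \rceil - 1}\sn{\widetilde u - \widehat \pi}_{j,p,\widetilde \Om} \leq C_{\eta,d,s,p}\,\sn{\widetilde u}_{\lceil s \rceil,p,\widetilde\Om}.
\end{align*}
Since $\deg \widehat\pi \leq \lceil s \rceil - 1$, both $\sn{\widehat\pi}_{s,q,\widetilde\Om} = 0$ and $\sn{\widetilde u - \widehat \pi}_{r',p,\widetilde\Om} = \sn{\widetilde u}_{r',p,\widetilde\Om}$ for every order $r' \geq \lceil s \rceil$ (including the possibly fractional top index $r$). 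Applying a Sobolev embedding $W^{r,p}(\widetilde \Om) \hookrightarrow W^{s,q}(\widetilde \Om)$ with constant $C_{\eta,d,s,q,r,p}$ to $\widetilde u - \widehat \pi$ then yields
\begin{align*}
\sn{\widetilde u}_{s,q,\widetilde\Om} = \sn{\widetilde u - \widehat \pi}_{s,q,\widetilde\Om} \leq C\,\vn{\widetilde u - \widehat \pi}_{r,p,\widetilde\Om} \leq C\Bigl(\sn{\widetilde u}_{r,p,\widetilde \Om} + \sum_{\substack{r' \in \N_0 \\ \lceil s\rceil \leq r' \leq r}}\sn{\widetilde u}_{r',p,\widetilde\Om}\Bigr).
\end{align*}
The $q = \infty$ variant is treated identically, invoking the analogous embedding into $W^{s,\infty}$ that holds precisely under the hypotheses of the second part of the theorem.

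The main obstacle is securing the underlying Sobolev embedding $W^{r,p}(\widetilde \Om) \hookrightarrow W^{s,q}(\widetilde \Om)$ with constant depending only on $\eta$, $d$, $s$, $q$, $r$, $p$. Star-shapedness with respect to a ball of radius $\gtrsim 1/\eta$ forces $\partial\widetilde \Om$ to be Lipschitz with constant controlled by $\eta$, so Stein's universal extension provides an operator $E:W^{r,p}(\widetilde\Om) \to W^{r,p}(\R{d})$ with operator norm bounded by $C_{\eta,d,r,p}$; composition with the translation-invariant Sobolev embedding on $\R{d}$ (valid precisely under the hypotheses on $(s,q,r,p)$) and restriction back to $\widetilde\Om$ then yields the desired embedding. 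An alternative route that bypasses Stein and is better suited to explicit constant tracking is to use the Taylor integral remainder along segments from the star-center of $B$ to represent $D^{\bfs}(\widetilde u - \widehat \pi)$ with $\sn{\bfs} = \lfloor s \rfloor$ as a Riesz-potential-type integral of $D^{\lceil r \rceil}\widetilde u$, then invoke Hardy--Littlewood--Sobolev to pass from $L^p$ to $L^q$ together with direct kernel estimates for the Aronstein--Slobodeckij double integral; in either case the constants remain explicitly traceable through the chunkiness parameter alone.
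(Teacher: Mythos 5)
Your reduction is exactly the paper's: rescale to $\widetilde\Om$ of unit diameter (chunkiness preserved), subtract the Dupont--Scott averaged Taylor polynomial of degree $\lceil s\rceil-1$ to absorb the low-order seminorms and restrict the sum to $r'\ge \lceil s\rceil$, and then invoke an embedding $W^{r,p}(\widetilde\Om)\hookrightarrow W^{s,q}(\widetilde\Om)$ whose constant is controlled by $\eta$ and $d$ alone. The paper's one-paragraph proof does precisely this, citing its own Theorem~\ref{thm:sobolev-embedding} for the embedding and \cite[Lem.~4.3.4]{brenner2008} for the $L^\infty$ variant.

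The divergence is in how the embedding on $\widetilde\Om$ is secured. You propose Stein's extension followed by the translation-invariant embedding on $\R{d}$. Two caveats there. First, Stein's extension as classically stated covers integer-order $W^{k,p}$, whereas $r$ (and $s$) may be fractional here, which is precisely the regime the theorem targets; making the Stein route airtight for fractional $r$ requires either an interpolation argument (with interpolation constants themselves to be tracked through $\eta$) or a fractional extension such as Rychkov's, neither of which you address. Second, the claim that the norm of the bounded-domain Stein operator depends only on the Lipschitz constant (hence $\eta$) is exactly the kind of ``known in principle, not explicit in the literature'' assertion that motivated the paper to prove the embedding from scratch rather than cite it; it is not wrong, but on its own it does not close the gap. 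Your alternative route --- representing $D^{\bfs}(\widetilde u-\widehat\pi)$ as a Riesz-potential-type integral via Taylor remainders along segments to the star-center and invoking Hardy--Littlewood--Sobolev --- is in fact much closer to what the paper does in Appendix~\ref{appendix} (Muramatu-style averaged representations $M_R$, $M_S$ over cones, together with the Marcinkiewicz interpolation form of Hardy--Littlewood--Sobolev in Lemma~\ref{lemma:marcinkiewicz}) and handles the fractional-index case cleanly via the $M_S$ representation; if you develop that route, you essentially reproduce the paper's Lemmas~\ref{lemma:estimate-U}--\ref{lemma:sobolev-embedding-Ws-left:2}. In summary, the reduction step is correct and identical to the paper's; the main gap is that your primary (Stein) route for the embedding does not, as written, cover fractional $r$ nor establish the $\eta$-only dependence at the level of rigor the theorem requires.
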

\begin{proof}
  As $\Om$ is star-shaped with respect to a ball of radius $\diam(\Om)/(2\eta)$,
  the scaled domain $\widehat\Om:= \diam(\Om)^{-1}\Om$ is star-shaped with respect
  to a ball of radius $1/(2\eta)$. For the first result, we employ 
  Theorem~\ref{thm:sobolev-embedding} and scaling arguments to obtain the stated 
  right-hand side with the sum extending over $r' \in \{0,1,\ldots,\lfloor r\rfloor\}$
  instead of $\{\lceil s\rceil,\ldots,\lfloor r \rfloor\}$. 
  The restriction of the summation to $r' \in \{\lceil s\rceil,\ldots,\lfloor r \rfloor\}$
  follows from the observation that the left-hand side vanishes for polynomials of degree 
  $\lceil s \rceil-1$. Hence, one can use of polynomial approximation result of \cite{dupont-scott80} 
  in the usual way by replacing $u$ with $u - \pi$, where $\pi$ is the polynomial approximation
  given in \cite{dupont-scott80}. 

  The $L^\infty$-estimate follows from~\cite[Lem.~4.3.4]{brenner2008} and
  scaling arguments.
\end{proof}
Another tool we will use is the classical Bramble-Hilbert Lemma.
\begin{lem}[Bramble-Hilbert, \protect{\cite[Lemma~{4.3.8}]{brenner2008}}]
\label{lem:bramblehilbert}
  Let $\Om\subset\R{d}$ be a bounded domain with chunkiness parameter $\eta(\Om)\leq\eta<\infty$.
  Then, for all $u\in W^{m,p}(S)$ with $p\geq 1$, there is a polynomial
  $\pi\in\cP^{m-1}(S)$ such that
  \begin{align*}
    \sn{u-\pi}_{k,p,S}\leq C_{m,d,\eta}\; \diam(S)^{m-k} \sn{u}_{m,p,S},
    \qquad\text{ for all } k = 0, \dots, m.
  \end{align*}
  The constant $C_{m,d,\eta}$ depends only on $m$, $d$, and $\eta$.\qed
\end{lem}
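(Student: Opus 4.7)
\textbf{Proof plan for Lemma~\ref{lem:bramblehilbert}.} The plan is to realize $\pi$ as an averaged Taylor polynomial (sometimes called the Sobolev representation) and to control the remainder by an integral against the top-order derivatives. Fix a ball $B = B_\rho(\bfx_0) \subset S$ with respect to which $S$ is star-shaped; by the chunkiness assumption we may take $\rho \geq \diam(S)/(2\eta)$. Pick a smooth cutoff $\phi \in C_0^\infty(B)$ with $\int_B \phi = 1$ and $\|\phi\|_{L^\infty(B)} \lesssim \rho^{-d}$ (the implied constant depends only on $d$). For $\bfx \in S$, define
\begin{align*}
\pi u(\bfx) := \int_B \Bigl( \sum_{|\boldsymbol\alpha| \leq m-1} \frac{D^{\boldsymbol\alpha} u(\bfy)}{\boldsymbol\alpha!} (\bfx - \bfy)^{\boldsymbol\alpha} \Bigr) \phi(\bfy)\, d\bfy .
\end{align*}
Clearly $\pi u \in \cP^{m-1}(S)$. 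A short integration by parts (moving the $D^{\boldsymbol\alpha}$ onto $\phi$) shows that $\pi u$ makes sense for $u \in W^{m,p}(S)$, $p \geq 1$, and that $\pi q = q$ for every $q \in \cP^{m-1}(S)$.

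Next, I would invoke the integral remainder formula of Taylor,
\begin{align*}
u(\bfx) - \sum_{|\boldsymbol\alpha| \leq m-1} \frac{D^{\boldsymbol\alpha} u(\bfy)}{\boldsymbol\alpha!} (\bfx-\bfy)^{\boldsymbol\alpha}
= m \sum_{|\boldsymbol\alpha| = m} \frac{(\bfx-\bfy)^{\boldsymbol\alpha}}{\boldsymbol\alpha!} \int_0^1 (1-s)^{m-1} D^{\boldsymbol\alpha} u\bigl(\bfy + s(\bfx - \bfy)\bigr)\, ds,
\end{align*}
integrate against $\phi(\bfy)\, d\bfy$, and perform the substitution $\bfz = \bfy + s(\bfx - \bfy)$ for fixed $\bfx$, $s$. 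The star-shaped hypothesis guarantees that $\bfz$ stays in $S$, and the Jacobian $s^{-d}$ combines with the $(1-s)^{m-1}$ weight so that the $s$-integral converges. After swapping the order of integration, $u(\bfx) - \pi u(\bfx)$ is represented as a convolution-type operator
\begin{align*}
(u - \pi u)(\bfx) = \sum_{|\boldsymbol\alpha| = m} \int_S K_{\boldsymbol\alpha}(\bfx, \bfz)\, D^{\boldsymbol\alpha} u(\bfz)\, d\bfz,
\end{align*}
with a kernel $K_{\boldsymbol\alpha}$ satisfying $|K_{\boldsymbol\alpha}(\bfx,\bfz)| \lesssim \|\phi\|_{L^\infty} |\bfx-\bfz|^{m-d} \lesssim \rho^{-d} \diam(S)^{m-d}$ (with the usual modification when $m \geq d$ so that the singularity is integrable). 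Taking derivatives $D^{\boldsymbol\beta}$ of this formula in $\bfx$ for $|\boldsymbol\beta| = k$ produces analogous kernels scaling as $\diam(S)^{m-k-d}/\rho^d$.

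Finally, Young's inequality (or a direct $L^p$ estimate of the integral operator) gives
\begin{align*}
|u - \pi u|_{k,p,S} \lesssim \rho^{-d} \diam(S)^{m-k} \cdot \sup_\bfx |B_{\diam(S)}(\bfx) \cap S| \cdot |u|_{m,p,S} \lesssim \eta^{d} \diam(S)^{m-k} |u|_{m,p,S},
\end{align*}
which is the desired bound. The main obstacle is bookkeeping in step three: one must verify that the change of variables $\bfz = \bfy + s(\bfx - \bfy)$ produces an integrable kernel uniformly in the chunkiness, and handle separately the borderline case $m \leq d$ where the naive pointwise kernel bound is not integrable (this requires pairing the $|\bfx - \bfz|^{m-d}$ factor with an $L^p$–$L^q$ estimate rather than using $L^\infty$ bounds on $K_{\boldsymbol\alpha}$). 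Once this is dealt with, every constant depends only on $m$, $d$, and $\eta$, as required.
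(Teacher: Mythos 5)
The paper does not prove this lemma: it is stated with a \verb|\qed| and cited directly as \cite[Lemma~4.3.8]{brenner2008}, so there is no in-paper proof to compare against. Your proposal reconstructs the standard averaged-Taylor-polynomial (Sobolev representation) argument, which is precisely the route in the cited reference, and it is essentially correct: the choice $\rho\geq\diam(S)/(2\eta)$, the normalization $\|\phi\|_{L^\infty}\lesssim\rho^{-d}$, the Taylor remainder identity, and the resulting Riesz-potential kernel with the flagged borderline case $m\leq d$ are all the right ingredients. One place where I would push you to be more careful: rather than ``taking derivatives $D^{\boldsymbol\beta}$ of the kernel formula in $\bfx$'' (which requires controlling differentiation across the diagonal singularity), the clean device from Brenner--Scott is the commutation identity $D^{\boldsymbol\beta}\pi u = Q^{m-k}\,(D^{\boldsymbol\beta} u)$ for $|\boldsymbol\beta|=k<m$, where $Q^j$ denotes the averaged Taylor polynomial of degree $j-1$ built on the same ball $B$; this reduces the seminorm estimate for order $k$ to the $L^p$ estimate with $m$ replaced by $m-k$, so you only ever need the undifferentiated kernel bound. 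Also note that $k=m$ is trivial, since $\pi u\in\cP^{m-1}$ gives $|u-\pi u|_{m,p,S}=|u|_{m,p,S}$.
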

The next lemma is a version of the Fa\`a di Bruno formula,
which is a formula for computing higher derivatives
of composite functions. For $s,\ell\in\N_0$
we denote by $\MM_{s,\ell}$ a set of multi-indices 
$\MM_{s,\ell} = \left\{ \bft_{i} \right\}_{i=1}^\ell \subset \N_0^d $ such that
$\sn{\bft_i}\geq 1$ and $\sum_{i=1}^\ell \left( \sn{\bft_i}-1 \right) = s$.
\begin{lem}[Fa\`a di Bruno]
\label{lem:faa-di-bruno-1}
For every $\bfs \in \N_0^d$ with $\sn{\bfs}\geq 1$ and every $\bfr \in \N_0^d$ with $\sn{\bfr} \leq \sn{\bfs}$
and every set $\MM_{\sn{\bfs}-\sn{\bfr},\ell}$ with $1\leq\ell\leq\sn{\bfr}$
there is a polynomial $P_{\bfs,\bfr,\MM_{\sn{\bfs}-\sn{\bfr},\ell}}:\R{d} \rightarrow \R{}$
of degree $|\bfr|$ such that the following is true:
For any $\eps \in C^\infty(\R{d})$, $\bfz \in \R{d}$, and $u \in C^\infty(\R{d})$
the derivative $D^\bfs_\bfx u(\bfx')$, $\bfx':=\bfx+\bfz\eps(\bfx)$,
can be written in the form 
\begin{equation} 
\label{lem:faa-di-bruno-1:eq2}
D^\bfs_\bfx u(\bfx') = 
(D^\bfs u)(\bfx^\prime) + 
\sum_{|\bfr| \leq |\bfs|} (D^\bfr u)(\bfx^\prime) 
\sum_{\substack{\MM_{\sn{\bfs}-\sn{\bfr},\ell}\\1\leq\ell\leq\sn{\bfr}}}
P_{\bfs,\bfr,\MM_{\sn{\bfs}-\sn{\bfr},\ell}}(\bfz) 
\prod_{\bft\in\MM_{\sn{\bfs}-\sn{\bfr},\ell}} D^{\bft} \eps(\bfx). 
\end{equation}

We employ the convention that empty sums take the value 
zero and empty products the value $1$.
Furthermore, if $P_{\bfs,\bfr,\MM_{\sn{\bfs}-\sn{\bfr},\ell}}$ is constant, 
then $P_{\bfs,\bfr,\MM_{\sn{\bfs}-\sn{\bfr},\ell}} \equiv 0$. 
\end{lem}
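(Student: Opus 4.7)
\emph{Proof plan.} The plan is to proceed by induction on $|\bfs|\geq 1$, viewing $u(\bfx+\bfz\eps(\bfx))$ as the composition $u\circ\phi$ with $\phi(\bfx):=\bfx+\bfz\eps(\bfx)$. The derivatives of the inner map $\phi$ have an especially rigid form, namely $D^{e_k}\phi_j(\bfx)=\delta_{jk}+z_j D^{e_k}\eps(\bfx)$ and, for $|\bft|\geq 2$, $D^{\bft}\phi_j(\bfx)=z_j D^{\bft}\eps(\bfx)$, so every application of the chain rule to $u\circ\phi$ produces only terms built from Kronecker symbols and products $z_j D^{\bft}\eps(\bfx)$. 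This structural feature is what makes the claimed polynomial form in $\bfz$ possible at every order.

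For the base case $\bfs=e_k$ the chain rule gives directly
\[
D^{e_k}_{\bfx}u(\bfx') = (D^{e_k}u)(\bfx') + \sum_{j=1}^{d} z_j (D^{e_j}u)(\bfx')\,D^{e_k}\eps(\bfx),
\]
which matches the claim with $\bfr=e_j$, $\ell=1$, $\MM=\{e_k\}$, and $P(\bfz)=z_j$. For the inductive step I apply $D^{e_k}_{\bfx}$ to both sides of the formula for $\bfs$ and distribute using Leibniz. A generic summand $(D^\bfr u)(\bfx')\,P(\bfz)\prod_{\bft\in\MM}D^{\bft}\eps(\bfx)$ produces three kinds of contributions:
(A) differentiating $(D^\bfr u)(\bfx')$ through the Kronecker-$\delta$ part of $D^{e_k}\phi_j$ yields $(D^{\bfr+e_k}u)(\bfx')$, i.e.\ the transition $(\bfr,\MM)\to(\bfr+e_k,\MM)$ with $P$ unchanged;
(B) differentiating $(D^\bfr u)(\bfx')$ through the $z_j D^{e_k}\eps$ part yields $z_j(D^{\bfr+e_j}u)(\bfx')D^{e_k}\eps(\bfx)$ (one term for each $j$), i.e.\ the transition $(\bfr,\MM)\to(\bfr+e_j,\MM\cup\{e_k\})$ with $P$ replaced by $z_j P$;
(C) differentiating one factor $D^{\bft}\eps(\bfx)$ promotes it to $D^{\bft+e_k}\eps(\bfx)$, i.e.\ the transition $(\bfr,\MM)\to(\bfr,(\MM\setminus\{\bft\})\cup\{\bft+e_k\})$ with $P$ unchanged. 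A direct check shows that the index-sum identity $\sum_{\bft\in\MM}(|\bft|-1)=|\bfs|-|\bfr|$ and the bound $1\leq\ell\leq|\bfr|$ are both preserved when $|\bfs|$ is incremented by one.

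The main subtlety is controlling the degree of $P$ and verifying the ``constant implies identically zero'' clause. The cleanest device is to \emph{strengthen} the inductive hypothesis to the statement that $P_{\bfs,\bfr,\MM_{|\bfs|-|\bfr|,\ell}}$ is a \emph{homogeneous} polynomial in $\bfz$ of degree exactly $\ell$. Since $\ell\leq|\bfr|$ throughout the admissible range, this immediately yields the stated degree bound. Moreover, a nonzero homogeneous polynomial of degree $\ell\geq 1$ cannot be a constant, so whenever the collected $P$ happens to be a constant it must vanish identically, giving the final assertion for free. Homogeneity is preserved under the induction because operations (A) and (C) leave $P$ and $\ell$ unchanged, while (B) multiplies $P$ by a single $z_j$ and simultaneously increments $\ell$ by one. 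The remaining work is purely bookkeeping: one collects all contributions according to the terminal pair $(\bfr',\MM')$ and observes that the resulting coefficient polynomial depends only on the combinatorial triple $(\bfs+e_k,\bfr',\MM')$, not on the particular $u$ and $\eps$, because every operation acts on the combinatorial data alone.
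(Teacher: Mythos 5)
Your proof is correct and follows essentially the same route as the paper: induction on $|\bfs|$, viewing $\bfx\mapsto u(\bfx+\bfz\eps(\bfx))$ as a composition and applying the chain rule, with the inductive step split into the same three kinds of contributions (your (A), (B), (C) correspond exactly to the paper's sums $T_1+T_2$ and the three pieces of $T_3$). The one genuinely new ingredient is the strengthened inductive hypothesis that $P_{\bfs,\bfr,\MM}$ is homogeneous of degree exactly $\ell=|\MM|$, which the paper does not state; since $\ell\le|\bfr|$ this gives the degree bound directly, and since a nonzero homogeneous polynomial of degree $\ell\ge1$ cannot be constant it also yields the ``constant implies identically zero'' clause for free -- a cleaner way to discharge those two assertions than the paper, which leaves them implicit. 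The bookkeeping you invoke (checking that $\sum_{\bft\in\MM}(|\bft|-1)=|\bfs|-|\bfr|$, $1\le\ell\le|\bfr|$, and homogeneity are all preserved by (A), (B), (C)) does go through, so the argument is complete.
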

\begin{proof}
  Introduce the function $\widetilde u$ as 
\begin{equation}
\label{eq:utilde}
\widetilde u(\bfx):= u(\bfx+\bfz \eps(\bfx)). 
\end{equation}

  We use the shorthand $\partial_i = \partial \cdot / \partial x_i$ and compute for $|\bfs| = 1$
  \begin{eqnarray*}
  \partial_i \widetilde u(\bfx) = (\partial_i u)(\bfx^\prime)(1 + \bfz_i \partial_i \eps(\bfx)) + 
  \sum_{j \ne i} (\partial_j u)(\bfx^\prime) \bfz_j \partial_i \eps (\bfx)
  =(\partial_i u)(\bfx^\prime) + \sum_{j=1}^d (\partial_j u)(\bfx^\prime) \bfz_j \partial_i \eps(\bfx), 
  \end{eqnarray*}
  which we recognize to be of the form (\ref{lem:faa-di-bruno-1:eq2}). We now proceed by induction. 
  To that end, we assume that formula (\ref{lem:faa-di-bruno-1:eq2}) is true for all
  multiindices $\bfs^\prime \in \N_0^d$ with $|\bfs^\prime| \leq n$. Then for 
  $\bfs=(\bfs_1^\prime,\ldots,\bfs_{i-1}^\prime,\bfs_i^\prime+1,\bfs_{i+1}^\prime,\ldots,\bfs_d^\prime)$
  we compute with the induction hypothesis: 
  \begin{eqnarray*}
  D^{\bfs} \widetilde u(\bfx^\prime) &=& 
  \partial_i D^{\bfs^\prime} \widetilde u(\bfx^\prime) = 
  (D^{\bfs} u)(\bfx^\prime) + 
  \sum_{j=1}^d (D^{\bfs^\prime} \partial_j u)(\bfx^\prime) \bfz_j \partial_i \eps(\bfx), \\
  && \mbox{} + 
  \partial_i \left( 
  \sum_{|\bfr| \leq |\bfs^\prime|} (D^\bfr u)(\bfx^\prime) 
  \sum_{\substack{\MM_{\sn{\bfs'}-\sn{\bfr},\ell}\\1\leq\ell\leq\sn{\bfr}}}
  P_{\bfs,\bfr,\MM_{\sn{\bfs'}-\sn{\bfr},\ell}}(\bfz) 
  \prod_{\bft\in\MM_{\sn{\bfs'}-\sn{\bfr},\ell}} D^{\bft} \eps(\bfx) 
  \right) 
  =: T_1 + T_2 + T_3. 
  \end{eqnarray*}
  Hence, $T_1 + T_2$ consists of terms of the desired form. 
  For the term $T_3$, we compute 
  \begin{eqnarray*}
    \partial_i (D^{\bfr} u)(\bfx^\prime) &=& (D^{\bfr} \partial_i u)(\bfx^\prime) + 
    \sum_{j^\prime=1}^d (D^\bfr \partial_{j^\prime} u)(\bfx^\prime) \bfz_{j^\prime} \partial_{i} \eps(\bfx) \\
    \partial_i 
    \prod_{\bft\in\MM_{\sn{\bfs'}-\sn{\bfr},\ell}} D^{\bft} \eps(\bfx) 
    &=& 
    \sum_{\bft\in\MM_{\sn{\bfs'}-\sn{\bfr},\ell}} (D^{\bft} \partial_i \eps(\bfx)) \prod_{\substack{\bft^\prime\in\MM_{\sn{\bfs'}-\sn{\bfr},\ell} \\ \bft^\prime \ne \bft}} D^{\bft^\prime} \eps(\bfx).
  \end{eqnarray*}
  Hence, $T_3$ has the form 
  \begin{eqnarray*}
  T_3 &=& 
  \sum_{|\bfr| \leq |\bfs^\prime|} D^\bfr \partial_i u(\bfx^\prime) 
  \sum_{\substack{\MM_{\sn{\bfs'}-\sn{\bfr},\ell}\\1\leq\ell\leq\sn{\bfr}}}
  P_{\bfs',\bfr,\MM_{\sn{\bfs'}-\sn{\bfr},\ell}}(\bfz) 
  \prod_{\bft\in\MM_{\sn{\bfs'}-\sn{\bfr},\ell}} D^{\bft} \eps(\bfx) \\
  && \mbox+ 
  \sum_{|\bfr| \leq |\bfs^\prime|} \sum_{j^\prime=1}^d D^\bfr \partial_{j^\prime} u(\bfx^\prime) 
  \sum_{\substack{\MM_{\sn{\bfs'}-\sn{\bfr},\ell}\\1\leq\ell\leq\sn{\bfr}}}
  \bfz_{j^\prime}\partial_i\eps(\bfx)P_{\bfs',\bfr,\MM_{\sn{\bfs'}-\sn{\bfr},\ell}}(\bfz) 
  \prod_{\bft\in\MM_{\sn{\bfs'}-\sn{\bfr},\ell}} D^{\bft} \eps(\bfx) \\
  && \mbox{}+ 
  \sum_{|\bfr| \leq |\bfs^\prime|} 
  D^\bfr u(\bfx^\prime) 
  \sum_{\substack{\MM_{\sn{\bfs'}-\sn{\bfr},\ell}\\1\leq\ell\leq\sn{\bfr}}}
  P_{\bfs',\bfr,\MM_{\sn{\bfs'}-\sn{\bfr},\ell}}(\bfz) 
    \sum_{\bft\in\MM_{\sn{\bfs'}-\sn{\bfr},\ell}} (D^{\bft} \partial_i \eps(\bfx)) \prod_{\substack{\bft^\prime\in\MM_{\sn{\bfs'}-\sn{\bfr},\ell} \\ \bft^\prime \ne \bft}} D^{\bft^\prime} \eps(\bfx).
  \end{eqnarray*}
  Since $|\bfs| = |\bfs^\prime| + 1$, each of the three sums has the stipulated form. 
  This concludes the induction argument. 
\end{proof}
For the function $\widetilde u$ given by (\ref{eq:utilde}), the 
next lemma quantifies the difference $u - \widetilde u$ if $\eps$ is 
a $\Lambda$-admissible length scale function.
\begin{lem}
  \label{lem:faa-1-estimate}
Let $\Omega \subset \R{d}$ be a domain,  and let $\eps \in C^\infty(\Omega)$ be a 
  $\Lambda$-admissible length scale function. Let $u \in C^\infty(\R{d})$. 
  Then, for a multiindex $\bfs \in \N_0^d$, 
  the derivative $D^\bfs_\bfx u(\bfx')$, $\bfx':=\bfx+\bfz\eps(\bfx)$,
  can be written in the form 
  \begin{align}
    D^\bfs_\bfx u(\bfx') = (D^\bfs u)(\bfx^\prime) + \sum_{|\bfr| \leq |\bfs|} 
    \left(D^\bfr u\right)(\bfx^\prime) E_{\bfs,\bfr}(\bfz,\bfx), 
  \end{align}
  where the smooth functions $E_{\bfs,\bfr}$ are polynomials of degree $\sn{\bfr}$ in the first component and satisfy 
  \begin{align}
    \sup_{\sn{\bfz}\leq R}|D^\bft_\bfz E_{\bfs,\bfr} (\bfz,\bfx)| \leq C_{\Lambda,\bfs,R} |\eps(\bfx)|^{|\bfr| - |\bfs|} 
    \qquad \forall \bfx \in \Omega. 
  \end{align}
  The constants $C_{\Lambda,\bfs,R}$ depend only on $\bfs$, $R$, and $(\Lambda_{\bfs'})_{\sn{\bfs'}\leq\sn{\bfs}}$.
\end{lem}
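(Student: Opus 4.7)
The plan is to read off the explicit form of $E_{\bfs,\bfr}$ directly from Lemma~\ref{lem:faa-di-bruno-1}, namely
\[
E_{\bfs,\bfr}(\bfz,\bfx) := \sum_{\substack{\MM_{|\bfs|-|\bfr|,\ell}\\ 1\le\ell\le|\bfr|}} P_{\bfs,\bfr,\MM_{|\bfs|-|\bfr|,\ell}}(\bfz)\,\prod_{\bft\in\MM_{|\bfs|-|\bfr|,\ell}} D^\bft\eps(\bfx).
\]
The polynomial-degree claim is then immediate: each $P_{\bfs,\bfr,\MM_{|\bfs|-|\bfr|,\ell}}$ has degree $|\bfr|$ in $\bfz$ by the previous lemma, the $\eps$-factors are independent of $\bfz$, and the collection of admissible families $\MM_{|\bfs|-|\bfr|,\ell}$ is finite with cardinality controlled only by $|\bfs|$.

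The quantitative estimate rests on condition~(iii) of Definition~\ref{def:varepsilon}, i.e.\ $|D^\bft\eps|\le\Lambda_\bft|\eps|^{1-|\bft|}$. For any family $\MM_{|\bfs|-|\bfr|,\ell}=\{\bft_i\}_{i=1}^\ell$ subject to $\sum_{i=1}^\ell(|\bft_i|-1)=|\bfs|-|\bfr|$, I compute
\[
\prod_{i=1}^\ell |D^{\bft_i}\eps(\bfx)| \le \Big(\prod_{i=1}^\ell\Lambda_{\bft_i}\Big)\,|\eps(\bfx)|^{\ell-\sum_i|\bft_i|} = \Big(\prod_{i=1}^\ell\Lambda_{\bft_i}\Big)\,|\eps(\bfx)|^{|\bfr|-|\bfs|},
\]
which reproduces exactly the desired power of $\eps$ and only involves $\Lambda_{\bft}$ with $|\bft|\le|\bfs|$.

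It remains to handle $\bfz$-derivatives. Since $\bfz$ appears only inside $P_{\bfs,\bfr,\MM_{|\bfs|-|\bfr|,\ell}}(\bfz)$, the derivative $D^\bft_\bfz E_{\bfs,\bfr}$ reduces to differentiating this polynomial, whose degree and coefficients are controlled purely in terms of $\bfs$. Hence $\sup_{|\bfz|\le R}|D^\bft_\bfz P_{\bfs,\bfr,\cdot}(\bfz)|\le C_{\bfs,R}$, and multiplying this with the $\eps$-bound above and summing over the finite collection of index sets yields the claim with the stated constant dependence. I do not anticipate any real obstacle; the only point that deserves explicit verification is the exponent bookkeeping $\ell-\sum_i|\bft_i|=|\bfr|-|\bfs|$, which is immediate from the defining constraint on $\MM_{|\bfs|-|\bfr|,\ell}$.
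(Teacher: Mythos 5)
Your proposal is correct and takes essentially the same route as the paper's own proof: read off $E_{\bfs,\bfr}$ from the Fa\`a di Bruno expansion of Lemma~\ref{lem:faa-di-bruno-1}, use condition (\ref{item:lsf-iii}) of Definition~\ref{def:varepsilon} to telescope the product of $\eps$-derivatives into the single power $|\eps|^{|\bfr|-|\bfs|}$ via the constraint $\sum_i(|\bft_i|-1)=|\bfs|-|\bfr|$, and absorb the $\bfz$-derivatives of the polynomial factors $P_{\bfs,\bfr,\cdot}$ into a constant depending only on $\bfs$ and $R$. The only cosmetic difference is that the paper majorizes $\prod\Lambda_{\bft_i}$ by $\max_{|\bfs'|\le|\bfs|}(\Lambda_{\bfs'})^{\ell}$ while you keep the product, which is immaterial.
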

\begin{proof}
  Apply Lemma~\ref{lem:faa-di-bruno-1} and define
  \begin{align*}
    E_{\bfr}(\bfz,\bfx) =
    \sum_{\substack{\MM_{\sn{\bfs}-\sn{\bfr},\ell}\\1\leq\ell\leq\sn{\bfr}}}
    P_{\bfs,\bfr,\MM_{\sn{\bfs}-\sn{\bfr},\ell}}(\bfz) 
    \prod_{\bft\in\MM_{\sn{\bfs}-\sn{\bfr},\ell}} D^{\bft} \eps(\bfx). 
  \end{align*}
  Clearly, $E_{\bfr}$ is smooth and is a polynomial of degree $|\bfr|$ in the first component. Let $\bfx \in \Omega$.
  According to the properties of a $\Lambda$-admissible length scale function and
  the definition of the set $\MM_{\sn{\bfs}-\sn{\bfr},\ell}$ for
  $1\leq\ell\leq\sn{\bfr}$, there holds
  \begin{align*}
    \begin{split}
      \prod_{\bft\in\MM_{\sn{\bfs}-\sn{\bfr},\ell}} \sn{D^{\bft} \eps(\bfx)}
      &\leq \prod_{\bft\in\MM_{\sn{\bfs}-\sn{\bfr},\ell}} \Lambda_\bft \sn{\eps(\bfx)}^{1-\sn{\bft}}
      \leq
      \max_{\sn{\bfs'}\leq\sn{\bfs}}\left(\Lambda_{\bfs'}\right)^{\ell}
      \prod_{\bft\in\MM_{\sn{\bfs}-\sn{\bfr},\ell}} \sn{\eps(\bfx)}^{1-\sn{\bft}}
      =
      \max_{\sn{\bfs'}\leq\sn{\bfs}}\left(\Lambda_{\bfs'}\right)^{\ell}
      \sn{\eps(\bfx)}^{\sn{\bfr}-\sn{\bfs}}.
    \end{split}
  \end{align*}
  We can conclude the proof by setting 
  \begin{align*}
    C_{\Lambda,\bfs,R} :=
\sup_{\sn{\bfz}\leq R}
    \sum_{\substack{\MM_{\sn{\bfs}-\sn{\bfr},\ell}\\1\leq\ell\leq\sn{\bfr}}}
    \max_{\sn{\bfs'}\leq\sn{\bfs}}\left(\Lambda_{\bfs'}\right)^{\ell} \,
    \sn{D^\bft P_{\bfs,\bfr,\MM_{\sn{\bfs}-\sn{\bfr},\ell}}(\bfz)}.
\tag*{\qedhere}
\end{align*}
\end{proof}
\section{Higher-order volume regularization}\label{section:regularization}
\subsection{Regularization on a reference ball}
Throughout this section, $\eps$ denotes a $\Lambda$-admissible length scale
function, and $\Lambda = \bigl( \LL, \left( \Lambda_\bfr
\right)_{\bfr\in\N_0^d}\bigr)$.
Our goal is to construct operators that, given $\bfz\in\R{d}$, 
employ regularization on a length scale $\eps(\bfz)$, which therefore
determines the quality of the approximation at $\bfz$.
We will analyze such operators on balls $B_r(\bfx)$ that have a radius
which is comparable to the value of $\eps(\bfx)$. Hence, it will be
convenient to use reference configurations and scaling arguments.
For fixed $\bfx$, we define the scaling map 
\begin{align*}
  T_{\bfx}: \bfz \mapsto \bfx + \eps(\bfx)\bfz.
\end{align*}
In classical finite element approximation theory, the \textit{pull-back}
$u \circ T_{\bfx}$ of a function $u$
is approximated on a reference configuration. This approximation is also
analyzed on the reference configuration, and scaling arguments provide
the current quality of the approximation to $u$
(given by powers of the underlying length scale).
As stated above, the construction that is carried out in this work
defines the approximation of the pull-back also in terms
of the local length scale.
In order to obtain a fixed length scale on our reference configuration,
i.e., to make the approximation properties on the reference configuration
independent of a specific length scale, it will be convenient to define
the function $\eps_{\bfx}$ by
\begin{align*}
  \bfz \mapsto \eps_{\bfx}(\bfz):= \frac{\eps (T_{\bfx}(\bfz))}{\eps(\bfx)}.
\end{align*}
The next lemma shows that $\eps_\bfx$ does, in essence, only depend on
$\Lambda$, but not on $\bfx$.
We construct parameters $\alpha, \beta, \delta > 0$, where $\delta$ will
be used to define the regularization operator and $\alpha,\beta$ will
be used to define balls on which the regularization error will be analyzed. In
subsequent sections, these parameters need to be adjusted also according
to properties of the domain of interest $\Omega$, more precisely, its Lipschitz character
and in particular the Lipschitz constant $L_{\partial\Omega}$ of $\partial\Omega$. 
Hence, the parameters $\alpha$, $\beta$, $\delta$
will be chosen in dependence on $L_{\partial\Om}$ and an additional parameter $L$.
We recall that $B_\alpha$ is a shorthand for $B_\alpha(0)$. 

\begin{lem}\label{lem:eps}
  Let $\Omega\subset\R{d}$ be an (arbitrary) domain and 
  $\eps\in C^\infty(\Om)$ be a $\Lambda$-admissible length scale
  function.
Then: 
\begin{enumerate}[(i)]
\item  
\label{item:lem:eps-i}
For fixed $\alpha \in \left( 0,\min\left( 1,\LL^{-1}/2 \right)\right)$  let 
  $\bfx \in \Omega$ be such that $T_{\bfx}(B_\alpha) \subset \Omega$. 
  Then 
  \begin{align} 
    2^{-1}  \leq \eps_\bfx(\bfz) \leq  2 \qquad
    \text{ for all } \bfz \in B_\alpha.
    \label{conv1:lem1:eq1}
  \end{align}
\item 
\label{item:lem:eps-ii}
One may choose 
  $0< \alpha$, $\delta$, $\beta < \min\left( 1,\LL^{-1}/2 \right)$
  with 
\begin{equation}
\label{eq:lem:eps-10}
2\delta + \alpha < \beta < \min\left(1, \LL^{-1}/2\right). 
\end{equation}
The parameters $\alpha$, $\beta$, and $\delta$ depend only on $\LL$. 
\item 
\label{item:lem:eps-iii}
Given (arbitrary) parameters $L_{\partial\Om}$, $L>0$ one may choose $\alpha$, $\beta$, $\delta>0$ such that 
  \begin{align}\label{conv1:lem1:eq2}
    \max\left( (L_{\partial\Om}+1)(\delta+\alpha)+\delta, \alpha +
    \delta(L+1)(1+\LL\alpha) \right) < \beta < \min\left( 1,\LL^{-1}/2 \right).
  \end{align}
The parameters depend only on $\LL$, $L$, and $L_{\partial\Omega}$. 
\item 
\label{item:lem:eps-iv}
Let $\overline{\Omega^\prime} \subset \Omega$ be compact.
  Then the parameters $\alpha$, $\beta$, $\delta$ can be chosen such that 
in addition to (\ref{conv1:lem1:eq2}) the following property holds: 
\begin{equation}\label{conv1:lem1:eq3}
\bfx \in \Omega \qquad \Longrightarrow \left( \mbox{ either }\quad
\overline{B_{2 \beta\eps(\bfx)}(\bfx)} \subset \Omega \quad \mbox{ or }\quad 
\overline{B_{2\beta\eps(\bfx)}(\bfx)} \cap \overline{\Omega^\prime}  = \emptyset \right).
\end{equation}
\end{enumerate}
\end{lem}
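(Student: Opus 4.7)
The plan is to prove the four items in order, the first three being routine bookkeeping and the last requiring an extra compactness argument.

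For (\ref{item:lem:eps-i}) the key identity is
\[
\eps_\bfx(\bfz) - 1 = \frac{\eps(\bfx + \eps(\bfx)\bfz) - \eps(\bfx)}{\eps(\bfx)}.
\]
Since $T_\bfx(B_\alpha) \subset \Omega$, both arguments lie in $\Omega$, so property (\ref{item:lsf-iv}) of Definition~\ref{def:varepsilon} yields $|\eps(\bfx + \eps(\bfx)\bfz) - \eps(\bfx)| \leq \LL \eps(\bfx)|\bfz| \leq \LL\alpha\,\eps(\bfx)$. Dividing by $\eps(\bfx)>0$ and using $\alpha < \LL^{-1}/2$ gives $|\eps_\bfx(\bfz) - 1| \leq 1/2$, hence (\ref{conv1:lem1:eq1}).

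For (\ref{item:lem:eps-ii}) and (\ref{item:lem:eps-iii}) the strategy is to first fix any $\beta \in (0, \min(1, \LL^{-1}/2))$ and then pick $\alpha, \delta > 0$ small enough so that the remaining constraints hold. The constraint $2\delta + \alpha < \beta$ in (\ref{item:lem:eps-ii}) is trivially solvable. For (\ref{item:lem:eps-iii}) I would first restrict $\alpha < \LL^{-1}$ so that $1 + \LL\alpha \leq 2$, which replaces the mildly nonlinear constraint $\alpha + \delta(L+1)(1 + \LL\alpha) < \beta$ by the linear $\alpha + 2(L+1)\delta < \beta$; together with $(L_{\partial\Omega}+1)(\delta+\alpha) + \delta < \beta$ this is a pair of linear inequalities in $(\alpha, \delta)$ whose positive solution set is clearly non-empty, and the resulting parameters depend only on $\LL$, $L$, $L_{\partial\Omega}$ as required.

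The main obstacle is (\ref{item:lem:eps-iv}), where the dichotomy in (\ref{conv1:lem1:eq3}) must be enforced uniformly for all $\bfx \in \Omega$. Two ingredients are essential: $\eta := \dist(\overline{\Omega'}, \partial\Omega) > 0$, which holds because $\overline{\Omega'}$ is a compact subset of the open set $\Omega$; and $\eps(\bfx) \leq \Lambda_{\mathbf{0}}$ on $\Omega$ by (\ref{item:lsf-ii}) of Definition~\ref{def:varepsilon}. Fix $\bfx \in \Omega$. If $\dist(\bfx, \overline{\Omega'}) \geq 2\beta\eps(\bfx)$, the second alternative of (\ref{conv1:lem1:eq3}) holds trivially. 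Otherwise $\dist(\bfx, \overline{\Omega'}) < 2\beta\eps(\bfx) \leq 2\beta\Lambda_{\mathbf{0}}$, and the elementary estimate $\dist(\bfx, \partial\Omega) \geq \eta - \dist(\bfx, \overline{\Omega'}) > \eta - 2\beta\Lambda_{\mathbf{0}}$ forces $\overline{B_{2\beta\eps(\bfx)}(\bfx)} \subset \Omega$ provided $\eta - 2\beta\Lambda_{\mathbf{0}} \geq 2\beta\Lambda_{\mathbf{0}}$, i.e., $\beta \leq \eta/(4\Lambda_{\mathbf{0}})$. The plan is therefore to refine the choice made in (\ref{item:lem:eps-iii}) by additionally imposing $\beta \leq \eta/(4\Lambda_{\mathbf{0}})$ and then readjusting $\alpha, \delta$ accordingly; the parameters now also depend on $\Omega$, $\Omega'$ and $\Lambda_{\mathbf{0}}$ through $\eta$, which is permitted by the statement. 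No individual step is technically hard, but (\ref{item:lem:eps-iv}) is where one must think carefully about what can go wrong for $\bfx$ close to $\partial\Omega$ when $\eps$ is not forced to be small there.
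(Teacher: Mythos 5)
Your argument is correct and follows the same route as the paper's (very terse) proof: the Lipschitz bound on $\eps$ for (\ref{item:lem:eps-i}), fixing $\beta$ and shrinking $\alpha,\delta$ for (\ref{item:lem:eps-ii})--(\ref{item:lem:eps-iii}), and the compactness of $\overline{\Omega'}$ for (\ref{item:lem:eps-iv}). One small quibble in (\ref{item:lem:eps-iv}): the boundary case $\dist(\bfx,\overline{\Omega'})=2\beta\eps(\bfx)$ does \emph{not} give $\overline{B_{2\beta\eps(\bfx)}(\bfx)}\cap\overline{\Omega'}=\emptyset$ (the closed sets may touch), so that case must be sent to the first alternative instead; this works once you take the strict bound $\beta<\eta/(4\Lambda_{\mathbf{0}})$, since then $\dist(\bfx,\overline{\Omega'})\leq 2\beta\eps(\bfx)$ yields $\dist(\bfx,\partial\Omega)\geq\eta-2\beta\Lambda_{\mathbf{0}}>2\beta\Lambda_{\mathbf{0}}\geq 2\beta\eps(\bfx)$, so $\overline{B_{2\beta\eps(\bfx)}(\bfx)}\subset\Omega$.
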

\begin{proof}
  Fix $0 < \alpha < \min(1,\LL^{-1}/2)$ and $\bfx \in \Omega$ such that $T_{\bfx}(B_\alpha) \subset \Omega$. 
  For $\bfz \in B_\alpha$ we conclude with the reverse triangle inequality
  \begin{align*} 
    \pm \left( \eps(T_{\bfx} (\bfz)) - \eps(\bfx)\right)
    \leq \sn{\eps(T_{\bfx} (\bfz)) - \eps(\bfx)} \leq \LL
    \sn{T_{\bfx}(\bfz)-\bfx}
    \leq \LL \alpha \eps(\bfx).
  \end{align*}
  This yields
  \begin{align*} 
    \frac{\eps(T_{\bfx} \bfz)}{\eps(\bfx)}
    \leq \LL\alpha + 1 \qquad \text{ and }
    \qquad \frac{\eps(T_{\bfx} \bfz)}{\eps(\bfx)} \geq 1- \LL\alpha,
  \end{align*}
  from which~\eqref{conv1:lem1:eq1} follows.
  The additional features~\eqref{conv1:lem1:eq2} can be achieved 
  by adjusting $\alpha$ and $\delta$. 
  Finally, (\ref{item:lem:eps-iv}) follows from compactness of
  $\overline{\Omega^\prime}$. 
\end{proof}
Next, we prove an approximation result on the reference configuration.
\begin{lem}
  \label{lem:approx:refel}
  Let $\Omega \subset \R{d}$ be an (arbitrary) domain. 
  Let $\rho$ be a mollifier of order $k_{\max}$, 
  and let $\eps \in C^\infty(\Omega)$ be a $\Lambda$-admissible length scale
  function, $\Lambda= \bigl( \LL,\left( \Lambda_{\bfr} \right)_{\bfr\in\N_0^d}
\bigr)$.
  Choose $\alpha$, $\beta$, $\delta$ such that (\ref{eq:lem:eps-10}) holds.
  For $\bfx \in \Omega$ such that $T_\bfx(B_\alpha) \subset \Omega$ and a function $v \in L^1_{loc}(B_\beta)$ define
  \begin{align}
    \bfz \mapsto \cE_\bfx v(\bfz) := \int_{\bfy\in B_\beta } v(\bfy) \rho_{\delta \eps_\bfx(\bfz)}(\bfz-\bfy).
    \label{lem:approx:refel:eq1}
  \end{align}
\begin{enumerate}[(i)]
\item 
  \label{item:lem:approx:refel-i}
  Let $(s,p) \in \N_0 \times [1,\infty]$ satisfy  $s\leq k_{\max}+1$. Assume 
  $(s\leq r\in\R{}\text{ and } q\in[1,\infty))$ or
  $(s\leq r\in\N_0\text{ and }q\in[1,\infty])$. Then it holds
  \begin{subequations}\label{lem:approx:refel:eq2}
  \begin{align}\label{lem:approx:refel:eq2:c}
    \sn{\cE_\bfx v}_{r,q,\Ba} &\leq C_{r,q,s,p,\Lambda} \sn{v}_{s,p,\Bb},
  \end{align}
  where $C_{r,q,s,p,\Lambda}$ depends only on $r$, $q$, $s$, $p$, $\LL$, and
  $(\Lambda_{\bfr'})_{\sn{\bfr'}\leq \lceil r \rceil}$
as well as $\rho$, $k_{\max}$, and $\alpha$, $\beta$, $\delta$. 
\item
  \label{item:lem:approx:refel-ii}
  Suppose $0\leq s\in\R{}$, $r\in\N_0$ with $s\leq r\leq k_{\max}+1$,
  and $1\leq p\leq q<\infty$. Define $\mu:=d(p^{-1}-q^{-1})$. Assume that
  $(r=s+\mu\text{ and }p>1)$ or $(r>s+\mu)$. Then it holds that
  \begin{align}\label{lem:approx:refel:eq2:b}
    \sn{v - \cE_\bfx v}_{s,q,\Ba} &\leq C_{s,q,r,p,\Lambda} \sn{v}_{r,p,\Bb},
  \end{align}
  where $C_{s,q,r,p,\Lambda}$ depends only on $s$, $q$, $r$, $p$,
  $\LL$, and
  $(\Lambda_{\bfs'})_{\sn{\bfs'}\leq \lceil s \rceil}$
as well as $\rho$, $k_{\max}$, and $\alpha$, $\beta$, $\delta$. 
\item
  \label{item:lem:approx:refel-iii}
  Suppose $s$, $r\in\N_0$ with $s\leq r\leq k_{\max}+1$, and $1\leq p < \infty$.
  Define $\mu:=d/p$. Assume that $(r=s+\mu\text{ and }p=1)$ or $(r>s+\mu \text{ and } p > 1)$.
  Then it holds that
  \begin{align}\label{lem:approx:refel:eq2:d}
    \sn{v - \cE_\bfx v}_{s,\infty,\Ba} &\leq C_{s,r,p,\Lambda} \sn{v}_{r,p,\Bb},
  \end{align}
  \end{subequations}
  where $C_{s,r,p,\Lambda}$ depends only on $s$, $r$, $p$, $\LL$,
  and $(\Lambda_{\bfs'})_{\sn{\bfs'}\leq s}$
as well as $\rho$, $k_{\max}$, and $\alpha$, $\beta$, $\delta$. 
\end{enumerate}
\end{lem}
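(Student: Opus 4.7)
I would first change variables $\bfy = \bfz - \delta\eps_\bfx(\bfz)\bfw$ (for fixed $\bfz$) to rewrite
\[
\cE_\bfx v(\bfz) = \int_{B_{1}(0)} v\bigl(\bfz - \delta\eps_\bfx(\bfz)\bfw\bigr)\,\rho(\bfw)\,d\bfw.
\]
The choice of $\alpha,\beta,\delta$ from Lemma~\ref{lem:eps} together with the pointwise bound $1/2\leq\eps_\bfx\leq 2$ on $\Ba$ guarantees that for every $\bfz\in\Ba$ the integrand is supported in $\Bb$, and a chain-rule computation shows that $\eps_\bfx(\bfz)=\eps(T_\bfx(\bfz))/\eps(\bfx)$ is itself $\Lambda$-admissible on $\Ba$, with bounding constants independent of $\bfx$. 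A structural property that drives the approximation estimates is that $\cE_\bfx\pi = \pi$ for every polynomial $\pi$ of degree $\leq k_{\max}$: for fixed $\bfz$, the map $\bfw\mapsto\pi(\bfz - \delta\eps_\bfx(\bfz)\bfw)$ is a polynomial in $\bfw$ of degree $\leq k_{\max}$, and the order condition~\eqref{eq:order-condition-mollifier} annihilates its non-constant monomials upon integration against $\rho$.

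\textbf{Stability (i).} To bound $\sn{\cE_\bfx v}_{r,q,\Ba}$ I would differentiate the above representation under the integral and invoke Lemma~\ref{lem:faa-1-estimate} (with $\eps_\bfx$ in the role of $\eps$ and the constant vector $-\delta\bfw$ in the role of the shift parameter). This produces, for every integer multiindex $\bfr$, an expansion
\[
D^\bfr_\bfz\cE_\bfx v(\bfz) = \sum_{|\bfs^*|\leq |\bfr|} \int (D^{\bfs^*} v)\bigl(\bfz - \delta\eps_\bfx(\bfz)\bfw\bigr)\,K_{\bfr,\bfs^*}(\bfz,\bfw)\,\rho(\bfw)\,d\bfw
\]
with smooth, uniformly bounded kernels $K_{\bfr,\bfs^*}$. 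Terms with $|\bfs^*|\leq s$ are controlled directly by $\sum_{|\bfs'|\leq s}\|D^{\bfs'}v\|_{L^p(\Bb)}$ after reversing the change of variables and using H\"older or Young. Terms with $|\bfs^*|>s$ (present only when $s<r$) are handled by integrating by parts in $\bfw$, exploiting the identity $\nabla_\bfw v(\bfz - \delta\eps_\bfx(\bfz)\bfw) = -\delta\eps_\bfx(\bfz)\,(\nabla v)(\bfz - \delta\eps_\bfx(\bfz)\bfw)$ to transfer the excess derivatives back onto the smooth mollifier; this leaves exactly $|\bfs^*|=s$ derivatives on $v$ and more derivatives on $\rho$, after which Young's inequality closes the $L^p$-to-$L^q$ step. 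Fractional $r$ is recovered by interpolation between the two adjacent integer orders, both of which satisfy $s\leq\lfloor r\rfloor$.

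\textbf{Approximation (ii) and (iii).} I would apply the Bramble--Hilbert Lemma~\ref{lem:bramblehilbert} on $\Bb$ to extract a polynomial $\pi$ of degree $\leq r-1\leq k_{\max}$ satisfying $\sn{v-\pi}_{\ell,p,\Bb}\lesssim\sn{v}_{r,p,\Bb}$ for $0\leq\ell\leq r$. Polynomial reproduction then yields $v - \cE_\bfx v = (v-\pi) - \cE_\bfx(v-\pi)$. For the first summand I use the embedding Theorem~\ref{thm:embedding} applied to $v-\pi$ on $\Bb$ to get $\sn{v-\pi}_{s,q,\Bb}\lesssim\sn{v}_{r,p,\Bb}$ under the stated compatibility of $(s,q)$ with $(r,p)$ in~(ii); for~(iii) the $L^\infty$-branch of the same theorem does the same job. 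For the second summand I apply stability~(i) with source order $0$ and target order $s$ (legitimate because $s\leq k_{\max}+1$), obtaining $\sn{\cE_\bfx(v-\pi)}_{s,q,\Ba}\lesssim\|v-\pi\|_{L^p(\Bb)}\lesssim\sn{v}_{r,p,\Bb}$.

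\textbf{Main obstacle.} The crux is the stability step: cleanly organizing the Fa\`a di Bruno expansion together with the integration-by-parts mechanism so that exactly $s$ derivatives land on $v$ and the remaining $|\bfr|-s$ on the smooth, uniformly bounded kernel (whose bounds follow from the $\Lambda$-admissibility of $\eps_\bfx$ on $\Ba$). Once the integer-order version of~(i) is in place, the fractional case follows by interpolation, and the two approximation statements follow essentially formally from polynomial reproduction plus the Sobolev embedding of Theorem~\ref{thm:embedding}. Uniformity of the constants in $\bfx$ is inherited from the fact that $\eps_\bfx$ is $\Lambda$-admissible on $\Ba$ with bounds depending only on $\Lambda$.
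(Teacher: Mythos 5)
Your setup and your treatment of the approximation bounds (ii), (iii) match the paper's (change of variables, $\Lambda$-admissibility of $\eps_\bfx$, polynomial reproduction, Bramble--Hilbert on $\Bb$, Sobolev embedding from Theorem~\ref{thm:embedding}). There is, however, a genuine gap in your argument for the stability estimate (i): as you describe it, it can only produce the \emph{full} norm $\vn{v}_{s,p,\Bb}$, whereas the lemma asserts the \emph{seminorm} $\sn{v}_{s,p,\Bb}$. The plan of ``exactly $s$ derivatives on $v$'' is not attainable term by term: after Fa\`a di Bruno the expansion of $D^\bfr_\bfz\cE_\bfx v$ contains contributions $(D^\bft v)(\bfz')$ for every $1\leq\sn{\bft}\leq r$, and integration by parts in $\bfw$ only \emph{removes} derivatives from $v$. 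The terms with $\sn{\bft}<s$ already sit below order $s$ and cannot be promoted, so the bound you state for them, $\sum_{\sn{\bfs'}\leq s}\vn{D^{\bfs'}v}_{0,p,\Bb}$, is unavoidably the full norm. The distinction matters downstream: the seminorm is what makes the scaling in Lemma~\ref{lem:approx} collapse to the single factor $\eps(\bfx)^{s-r+d(1/q-1/p)}$, rather than a sum of terms with different, weaker powers of the local length scale.

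The fix is to abandon the delicate bookkeeping and do what the paper does: move \emph{all} derivatives off $v$ by integration by parts, obtaining the clean $L^1\to L^\infty$ estimate $\sn{\cE_\bfx v}_{r,\infty,\Ba}\lesssim\vn{v}_{0,1,\Bb}$; then go from $p$ to $q$ by H\"older on the fixed bounded domains, and handle fractional $r$ via Theorem~\ref{thm:embedding} (rather than interpolation, which is awkward at $q=1$). The source regularity $s\geq 1$ is then gained by polynomial subtraction \emph{inside the stability proof}, exactly as you already do in (ii) and (iii): for $\pi\in\cP_{s-1}(\Bb)$ one has $\cE_\bfx\pi=\pi$ and $\sn{\pi}_{r,q,\Ba}=0$ since $\deg\pi<s\leq r$, hence $\sn{\cE_\bfx v}_{r,q,\Ba}=\sn{\cE_\bfx(v-\pi)}_{r,q,\Ba}\lesssim\inf_{\pi}\vn{v-\pi}_{0,1,\Bb}\lesssim\sn{v}_{s,1,\Bb}$, and H\"older finishes. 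You have all the ingredients; the missing observation is that (i) needs the polynomial-reproduction-plus-Bramble--Hilbert argument as well, not only (ii) and (iii).
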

\begin{proof}
  For a multi-index $\bfr$ we have
  \begin{align}
    \sn{D^\bfr_\bfz \xeps(\bfz)} = \eps(\bfx)^{-1} \sn{D^\bfr_\bfz \eps(\bfx+\eps(\bfx)\bfz)} =
    \eps(\bfx)^{-1} \sn{\left( D^\bfr \eps \right)(\bfx+\eps(\bfx)\bfz)} \sn{\eps(\bfx)}^{\sn{\bfr}}
    \leq \Lambda_\bfr \sn{\eps_{\bfx}(\bfz)}^{1 - \sn{\bfr}},
    \label{lem:approx:refel:eq6}
  \end{align}
  from which we conclude that $\xeps \in C^\infty(B_\alpha)$ is also a $\Lambda$-admissible
  length scale function. For $v \in L^1(B_\beta)$, we may assume in view of density 
  $v \in C^\infty_0(B_\beta)$ 
  so that we can interchange differentiation and integration.
  Setting $\bfz' := \bfz - \delta\eps_\bfx(\bfz)\bfy$,
  the Fa\`a di Bruno formula from Lemma~\ref{lem:faa-1-estimate} shows 
  \begin{align}
    \begin{split}
      D^{\bfr}_{\bfz} v\left(\bfz'\right)
      &= \left( D^{\bfr}v \right)\left(\bfz'\right) +
      \sum_{\sn{\bft}\leq \sn{\bfr}}\left( D^{\bft}v \right)\left(\bfz'\right)
      E_{\bfr,\bft}\left( -\delta \bfy,\bfz \right)\\
      &=\left( -\delta\eps_\bfx(\bfz) \right)^{-\sn{\bfr}} D^{\bfr}_\bfy v\left(\bfz'\right) +
      \sum_{\sn{\bft}\leq \sn{\bfr}}
      \left( -\delta\eps_\bfx(\bfz) \right)^{-\sn{\bft}}
      D^{\bft}_\bfy v\left(\bfz'\right)
      E_{\bfr,\bft}\left( -\delta \bfy,\bfz \right).
    \end{split}
  \end{align}
  We obtain with integration by parts, the product rule, and the support properties
  of $\rho$
  \begin{align*}
    D^\bfr_{\bfz} \cE_\bfx v(\bfz)
    &= 
    (-1)^{\sn{\bfr}}
    \int_{\bfy \in B_1(0)}
      v\left(\bfz'\right)
      D^{\bfr}\rho(\bfy)
      \left( -\delta\eps_\bfx(\bfz) \right)^{-\sn{\bfr}}\\
    &\quad+
    \sum_{\sn{\bft}\leq\sn{\bfr}}(-1)^{\sn{\bft}}
    \sum_{\bfs\leq\bft}\binom{\bfs}{\bft}
    \int_{\bfy \in B_1(0)}
      v(\bfz') D^\bfs_\bfy E_{\bfr,\bft}(-\delta\bfy,\bfz)
      D^{\bft-\bfs}\rho(\bfy)
      \left( -\delta\eps_\bfx(\bfz) \right)^{-\sn{\bft}}.
  \end{align*}
  Taking into account Lemmas~\ref{lem:faa-1-estimate} and~\ref{lem:eps}, we obtain
  for $r\in\N_0$ the estimate
  \begin{align}\label{lem:approx:refel:eq:30}
    \sn{\cE_\bfx v}_{r,\infty,\Ba} \leq C_{\Lambda,r} \vn{v}_{0,1,\Bb}.
  \end{align}

  H\"older's inequality then shows (\ref{lem:approx:refel:eq2:c}) for the case $s = 0$ and integer $r$. 
  For $r\notin\N_0$ and $q \in [1,\infty)$, we use the Embedding Theorem~\ref{thm:embedding}
  (although for the present case of the ball $B_\alpha$, a simpler argument could be used)
  in the following way: observing $0 < \lceil r \rceil - r < 1$, we select $1 < p_\star < q$ such that 
$$
\left( \frac{\lceil r\rceil - r}{d} > \frac{1}{p_\star} - \frac{1}{q}\right). 
$$
  Then the Embedding Theorem~\ref{thm:embedding} and estimate~\eqref{lem:approx:refel:eq:30} show
  \begin{align}\label{lem:approx:refel:eq:31}
    \sn{\cE_\bfx v}_{r,q,\Ba} \lesssim \vn{\cE_\bfx v}_{\lceil r \rceil,p_\star,\Ba}
    \lesssim C_{\Lambda,\lceil r \rceil}\vn{v}_{0,1,\Bb}.
  \end{align}
  Again, H\"older's inequality shows \eqref{lem:approx:refel:eq2:c} for the case $s = 0$. 
  Next, let $1\leq s \leq k_{\max}+1$ and $s \leq r$. Then for any polynomial $\pi \in \cP_{s-1}(\Bb)$
  we have $\sn{\pi}_{r,q,\Bb} = 0$ and $\cE_\bfx \pi = \pi$.
  Estimates~\eqref{lem:approx:refel:eq:30} or~\eqref{lem:approx:refel:eq:31}
  and the Bramble-Hilbert Lemma~\ref{lem:bramblehilbert} then show
  \begin{align*}
    \sn{\cE_\bfx v}_{r,q,\Ba} \leq
    C_{\Lambda,\lceil r \rceil} \inf_{\pi \in \cP_{s-1}(\Bb)} \vn{v-\pi}_{0,1,\Bb}
    \leq C_{\Lambda,r,s} \sn{v}_{s,1,\Bb},
  \end{align*}
  from which~\eqref{lem:approx:refel:eq2:c} follows for $s \ge 1$ again
  by application of H\"older's inequality.

  {\sl Proof of (\ref{item:lem:approx:refel-ii}), (\ref{item:lem:approx:refel-iii}):}
  Since $\cE_\bfx\pi=\pi$ for any polynomial $\pi \in \cP_{r-1}(\Bb)$, the triangle
  inequality yields
  \begin{align*}
    \sn{v - \cE_\bfx v}_{s,q,\Ba} &\leq \sn{v-\pi}_{s,q,\Ba} + \sn{\cE_\bfx(v-\pi)}_{s,q,\Ba}
    \lesssim \vn{v-\pi}_{r,p,\Ba} + \sn{v-\pi}_{\lfloor s \rfloor,p,\Bb}
    \leq \vn{v-\pi}_{r,p,\Bb},
  \end{align*}
  where we used the embedding results of Theorem~\ref{thm:embedding}
  as well as~\eqref{lem:approx:refel:eq2:c} in the second step.
  The estimates~\eqref{lem:approx:refel:eq2:b} and~\eqref{lem:approx:refel:eq2:d}
  now again follow by application of the Bramble-Hilbert Lemma~\ref{lem:bramblehilbert}.
\end{proof}
\subsection{Regularization in the interior of $\Omega$}
\label{sec:regularization-in-interior}
With the results of the last section, we can define a regularization
operator $\cE$ that will determine $\II_\eps$ of Theorem~\ref{thm:hpsmooth} in the interior of $\Omega$; 
near $\partial \Omega$, we will need a modification introduced in Section~\ref{sec:regularization-near-boundary} below.
The properties of $\II_\eps$ in the interior will be analyzed using scaling arguments.
We show that the regularization
operator $\cE$ satisfies inverse estimates (Lemma~\ref{lem:approx}, (\ref{item:lem:approx-i}))
in addition to having approximation properties (Lemma~\ref{lem:approx},
(\ref{item:lem:approx-ii}) and (\ref{item:lem:approx-iii})). 
\begin{lem}
  \label{lem:approx}
  Let $\rho$ be a mollifier of order $k_{\max}$, $p \in [1,\infty)$.
  Let $\Omega\subset \R{d}$ be an (arbitrary) domain and let $\eps \in C^\infty(\Omega)$
  be a $\Lambda$-admissible length scale function,
  $\Lambda= \bigl( \LL,\left( \Lambda_{\bfr} \right)_{\bfr\in\N_0^d}
  \bigr)$.
  Choose $\alpha$, $\beta$, $\delta$ such that (\ref{eq:lem:eps-10}) holds.
Define
$$
\Omega_\eps:= \{\bfx \in \Omega\,|\, T_\bfx(B_\beta) \subset \Omega\}. 
$$
For a function $u \in L^1_{loc}(\R{d})$ define
  \begin{align*}
    \cE u(\bfz) :=  \int_{\bfy} u(\bfy) \rho_{\delta \eps(\bfz)}(\bfz-\bfy), 
\qquad \mbox{ for $\bfz \in \Omega$.}
  \end{align*}
   Then, for $\bfx \in \Omega_\eps$:
\begin{enumerate}[(i)]
\item 
  \label{item:lem:approx-i}
  Suppose $(s,p) \in \N_0 \times [1,\infty]$ satisfies $s\leq k_{\max}+1$. Assume 
  $(s\leq r\in\R{}\text{ and } q\in[1,\infty))$ or
  $(s\leq r\in\N_0\text{ and }q\in[1,\infty])$. Then it holds
  \begin{subequations}\label{lem:approx:eq2}
  \begin{align}\label{lem:approx:eq2:a}
    \sn{\cE u}_{r,q,\Bax} &\leq C_{r,q,s,p,\Lambda} \eps(\bfx)^{s-r+d(1/q-1/p)}\sn{u}_{s,p,\Bbx},
  \end{align}
  where $C_{r,q,s,p,\Lambda}$ depends only on $r$, $q$, $s$, $p$,
  $\LL$, and
  $(\Lambda_{\bfr'})_{\sn{\bfr'}\leq \lceil r \rceil}$ 
  as well as $\rho$, $k_{\max}$, and $\alpha$, $\beta$, $\delta$.
\item 
  \label{item:lem:approx-ii}
  Suppose $0\leq s\in\R{}$, $r\in\N_0$ with $s\leq r\leq k_{\max}+1$,
  and $1\leq p\leq q<\infty$. Define $\mu:=d(p^{-1}-q^{-1})$. Assume that
  $(r=s+\mu\text{ and }p>1)$ or $(r>s+\mu)$. Then it holds 
  \begin{align}\label{lem:approx:eq2:b}
    \sn{u - \cE u}_{s,q,\Bax} &\leq C_{s,q,r,p,\Lambda}\eps(\bfx)^{r-s+d(1/q-1/p)} \sn{u}_{r,p,\Bbx},
  \end{align}
  where $C_{s,q,r,p,\Lambda}$ depends only on $s$, $q$, $r$, $p$,
  $\LL$, and
  $(\Lambda_{\bfs'})_{\sn{\bfs'}\leq \lceil s \rceil}$
  as well as $\rho$, $k_{\max}$, and $\alpha$, $\beta$, $\delta$.
\item
  \label{item:lem:approx-iii}
  Suppose $s$, $r\in\N_0$ with $s\leq r\leq k_{\max}+1$, and $1\leq p < \infty$.
  Define $\mu:=d/p$. Assume that $(r=s+\mu\text{ and }p=1)$ or $(r>s+\mu  \text{ and } p > 1)$.
  Then it holds 
  \begin{align}\label{lem:approx:eq2:c}
    \sn{v - \cE v}_{s,\infty,\Bax} &\leq C_{s,r,p,\Lambda} \eps(\bfx)^{r-s-d/p}\sn{v}_{r,p,\Bbx}.
  \end{align}
  where $C_{s,r,p,\Lambda}$ depends only on $s$, $r$, $p$,
  $\LL$, and $(\Lambda_{\bfs'})_{\sn{\bfs'}\leq s}$
  as well as $\rho$, $k_{\max}$, and $\alpha$, $\beta$, $\delta$.
  \end{subequations}
\end{enumerate}
\end{lem}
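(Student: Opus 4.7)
All three statements follow from the corresponding parts of Lemma~\ref{lem:approx:refel} by a scaling argument. I would pull back to the reference ball via the map $T_\bfx$, apply Lemma~\ref{lem:approx:refel} on $B_\alpha$ and $B_\beta$, and scale back. Since $\bfx\in\Omega_\eps$ implies $T_\bfx(B_\beta)\subset\Omega$ and since $\alpha<\beta$, Lemma~\ref{lem:approx:refel} is applicable at this $\bfx$.

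The key algebraic identity I would verify first is the pull-back formula
\begin{equation*}
(\cE u)\circ T_\bfx \;=\; \cE_\bfx \widehat u \quad\text{on } B_\alpha, \qquad \widehat u := u\circ T_\bfx.
\end{equation*}
Substituting $\bfy=T_\bfx(\bfy')$ in $\cE u(T_\bfx(\bfz))$ and using $\eps(T_\bfx(\bfz))=\eps(\bfx)\eps_\bfx(\bfz)$ together with
$\rho_{\delta\eps(\bfx)\eps_\bfx(\bfz)}(\eps(\bfx)(\bfz-\bfy'))=\eps(\bfx)^{-d}\rho_{\delta\eps_\bfx(\bfz)}(\bfz-\bfy')$,
the Jacobian $\eps(\bfx)^d$ cancels exactly and the definition~\eqref{lem:approx:refel:eq1} is recovered. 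Before doing so, I have to check that the effective domain of integration is contained in $B_{\beta\eps(\bfx)}(\bfx)$: for $\bfz\in B_{\alpha\eps(\bfx)}(\bfx)$ Lemma~\ref{lem:eps}~(\ref{item:lem:eps-i}) gives $\eps(\bfz)\le 2\eps(\bfx)$, hence the support of $\bfy\mapsto \rho_{\delta\eps(\bfz)}(\bfz-\bfy)$ lies in $B_{2\delta\eps(\bfx)}(\bfz)\subset B_{(2\delta+\alpha)\eps(\bfx)}(\bfx)\subset B_{\beta\eps(\bfx)}(\bfx)$ by~\eqref{eq:lem:eps-10}.

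Next, I would record the Sobolev scaling identity
\begin{equation*}
|v\circ T_\bfx|_{r,q,B_\alpha}=\eps(\bfx)^{r-d/q}\,|v|_{r,q,B_{\alpha\eps(\bfx)}(\bfx)}, \qquad r\in[0,\infty),\ q\in[1,\infty],
\end{equation*}
with the convention $d/\infty=0$. For integer $r$ this is just the chain rule in the $L^q$-norm; for fractional $r=\lfloor r\rfloor+\sigma$ one performs the change of variables in both integrals of the Aronstein--Slobodeckij seminorm, the Jacobian $\eps(\bfx)^{2d}$ combining with the factor $\eps(\bfx)^{-(\sigma q+d)}$ arising from the denominator to produce the stated exponent. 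Combining the pull-back identity with this scaling and invoking Lemma~\ref{lem:approx:refel}~(\ref{item:lem:approx:refel-i}) gives
\begin{equation*}
|\cE u|_{r,q,B_{\alpha\eps(\bfx)}(\bfx)}
=\eps(\bfx)^{-r+d/q}|\cE_\bfx\widehat u|_{r,q,B_\alpha}
\le C\,\eps(\bfx)^{-r+d/q}|\widehat u|_{s,p,B_\beta}
=C\,\eps(\bfx)^{s-r+d(1/q-1/p)}|u|_{s,p,B_{\beta\eps(\bfx)}(\bfx)},
\end{equation*}
which is~\eqref{lem:approx:eq2:a}. Parts~(\ref{item:lem:approx-ii}) and~(\ref{item:lem:approx-iii}) follow verbatim after replacing $\cE u$ by $u-\cE u$, using $(u-\cE u)\circ T_\bfx=\widehat u-\cE_\bfx\widehat u$ and appealing to parts~(\ref{item:lem:approx:refel-ii}) and~(\ref{item:lem:approx:refel-iii}) of Lemma~\ref{lem:approx:refel}, respectively.

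Since the entire argument is pure scaling, no serious obstacle is expected; the only points requiring care are the fractional scaling of the Aronstein--Slobodeckij seminorm in part~(\ref{item:lem:approx-i}) and the verification that the physical integration domain falls inside $B_{\beta\eps(\bfx)}(\bfx)$, both of which are handled by Lemma~\ref{lem:eps}~(\ref{item:lem:eps-i}) together with~\eqref{eq:lem:eps-10}.
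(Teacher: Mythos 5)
Your proposal is correct and follows the same route as the paper: verify $(\cE u)\circ T_\bfx = \cE_\bfx(u\circ T_\bfx)$ by the change of variables $\bfy = T_\bfx(\bfw)$, then invoke Lemma~\ref{lem:approx:refel} and scale. The paper's proof stops after the pull-back identity with the one-line remark that the estimates ``follow ... using scaling arguments''; you fill in precisely the details that remark elides (the Sobolev seminorm scaling, including the Slobodeckij case, and the support check ensuring the averaging ball stays inside $B_{\beta\eps(\bfx)}(\bfx)$), all of which are correct.
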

\begin{proof}
  First, we check that $(\cE u)\circ T_\bfx = \cE_\bfx (u \circ T_\bfx)$. To that end, 
  let $\bfx \in \Omega_\eps$ and $\bfz \in B_\beta$. 
  We employ the substitution $\bfy = T_\bfx (\bfw)$ and note that $T_\bfx (\bfz) - T_\bfx (\bfw) = 
  \eps(\bfx)(\bfz-\bfw)$:
  \begin{align*}
    \begin{split}
      (\cE u)(T_\bfx (\bfz)) &= (\delta \eps(T_\bfx(\bfz)))^{-d}
\int_\bfy u(\bfy)\rho \left( \frac{T_\bfx(\bfz)-\bfy}{\delta \eps(T_\bfx(\bfz))} \right) 
      = (\delta \eps(T_\bfx(\bfz)))^{-d}
\int_\bfw u(T_\bfx \bfw) \rho \left( \frac{T_\bfx (\bfz) - T_\bfx (\bfw)}{\delta \eps(T_\bfx(\bfz))} \right) \eps(\bfx)^{d}\\
      &= (\delta \xeps(\bfz))^{-d}
\int_\bfw (u \circ T_\bfx)(\bfw) \rho \left(\frac{\bfz-\bfw}{\delta \xeps(\bfz)}\right)  = \cE_\bfx (u \circ T_\bfx)(\bfz).
    \end{split}
  \end{align*}
  Now, the estimates \eqref{lem:approx:eq2} follow from
  Lemma~\ref{lem:approx:refel} using scaling arguments, cf.~also \cite{h14}.
\end{proof}
\subsection{Regularization on a half-space with a Lipschitz-boundary}
\label{sec:regularization-near-boundary}
Lemma~\ref{lem:approx} focuses on regularizing a given function $v$ in the {\em interior} of 
$\Omega$.
If we want to take full advantage of $v$'s regularity, this approach does not
extend up to the boundary. The reason lies in the construction: 
the value $(\cE v)(\bfx)$ is defined by an averaging process in a 
ball of radius $\delta \eps(\bfx)$ around $\bfx$ and thus requires $v$ to be defined in the ball 
$B_{\delta \eps(\bfx)}(\bfx)$. Hence, it cannot be defined in
the point $\bfx$ if $\delta\eps(\bfx)$ is bigger than $\bfx$'s distance to the
boundary $\partial\Omega$.
In the present section, we therefore propose a modification of the averaging 
operator that is based on averaging not on the ball $B_{\delta\eps(\bfx)}(\bfx)$ but on the ball 
${\mathbf b} + B_{\delta\eps(\bfx)}(\bfx)$, where the vector ${\mathbf b}$ (which depends on $\bfx$ and 
$\eps(\bfx)$) is such that ${\mathbf b} + B_{\delta\eps(\bfx)}(\bfx) \subset \Omega$ 
(see (\ref{lem:G:def}) for the precise definition), and an application of Taylor's formula.

Following Stein, \cite{stein1}, we call 
$\Om$ a special Lipschitz domain, if it has the form
\begin{align*}
  \Om = \left\{ \bfx \in \R{d} \mid x_d > f_{\partial\Om}(\bfx_{d-1}) \right\},
\end{align*}
where $f_{\partial\Om}:\R{d-1} \rightarrow \R{}$ is a Lipschitz continuous function with
Lipschitz constant $L_{\partial\Om}$. In this coordinate system, every point
$\bfx \in \R{d}$ has the form $\bfx = (\bfx_{d-1},x_d) \in \R{d}$.
For each $\bfx  = (\bfx_{d-1},x_d) \in \R{d}$, we can then define the set
\begin{align*}
  \cone{\bfx} := \{(\bfy_{d-1},y_d) \in \R{d} \colon  (y_d - x_d) > L_{\partial\Om}
  |\bfy_{d-1} - \bfx_{d-1}| \} \subset\Om.
\end{align*}
It is easy to see that $\cone{\bfx}$ is a convex cone with apex at $\bfx$.
It will also be convenient to introduce the vector
$\bfe_d  = (0,0,\ldots,1) \in \R{d}$.  
We will again analyze the regularization operator near the boundary on
balls $B_{\alpha\eps(\bfx)}(\bfx)$.
Due to the construction mentioned above, the
regularization operator involves the values
of the underlying function on sets which are not balls anymore.
We call these sets $\veebar_\bfx$ (cf. (\ref{eq:veebar})).  
The next two lemmas analyze their properties.
\begin{lem}\label{lem:balls}
  Let $\Om$ be a special Lipschitz domain with Lipschitz constant $L_{\partial\Om}$.
  The following two statements hold:
  \begin{enumerate}[(i)]
    \item
    \label{item:lem:balls-i}
      If $\mu>0$ and $\tau > (L_{\partial\Om}+1)(\mu+1)$,
      then for all $r>0$ and $\bfx\in\Om$ the set
      \begin{align*}
        \cone{\bfx}' := \bigcup_{\bfz\in B_r(\bfx)\cap\Om} \cone{\bfz}
      \end{align*}
      is star-shaped with respect to the ball $B_{\mu r}(\bfx + \tau r \bfe_d)$.
    \item
    \label{item:lem:balls-ii}
      There is a constant $L>1$, which depends only
      on $L_{\partial\Om}$, such that for all $r>0$ and $\bfx\in\Om$,
      all $\bfz\in B_{r}(\bfx)\cap\Om$, and all $r_0>0$ it holds that
      \begin{align*}
        B_{r_0}(\bfz + L r_0 \bfe_d) \subset \cone{\bfx}'.
      \end{align*}
  \end{enumerate}
\end{lem}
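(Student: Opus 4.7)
The key observation is that each individual cone $\cone{\bfz}$ is convex and, by the Lipschitz continuity of $f_{\partial\Omega}$, is entirely contained in $\Omega$; in particular $\cone{\bfz} \subset \cone{\bfx}'$ for every $\bfz \in B_r(\bfx) \cap \Omega$. Since $\Omega$ itself is only an epigraph of a Lipschitz function and need not be convex, I will \emph{not} try to interpolate the base points $\bfz$ along line segments; instead, for each pair of points I produce a single base point $\bfz$ whose cone already contains both, and invoke convexity of $\cone{\bfz}$.

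For (i), fix $\bfw \in \cone{\bfx}'$ and choose $\bfz \in B_r(\bfx) \cap \Omega$ with $\bfw \in \cone{\bfz}$. I claim that every $\bfp = \bfx + \tau r \bfe_d + \mathbf{v}$ with $|\mathbf{v}| < \mu r$ also lies in $\cone{\bfz}$; once this is established, convexity of $\cone{\bfz}$ gives $[\bfp, \bfw] \subset \cone{\bfz} \subset \cone{\bfx}'$, and varying $\bfp$ and $\bfw$ yields the stated star-shapedness (note that taking $\bfz = \bfx$ in the same claim shows $B_{\mu r}(\bfx + \tau r \bfe_d) \subset \cone{\bfx}'$, which is required implicitly). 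To verify the claim, decompose $\bfp - \bfz = (\bfx - \bfz) + \tau r \bfe_d + \mathbf{v}$: the $d$-component exceeds $-r + \tau r - \mu r = r(\tau - 1 - \mu)$, while the horizontal norm is strictly less than $r + \mu r = r(1 + \mu)$. The cone condition $p_d - z_d > L_{\partial\Omega}\,|\bfp_{d-1} - \bfz_{d-1}|$ then follows at once from the hypothesis $\tau > (L_{\partial\Omega} + 1)(\mu + 1)$.

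For (ii), since $\cone{\bfz} \subset \cone{\bfx}'$, it suffices to exhibit $L > 1$, depending only on $L_{\partial\Omega}$, such that $B_{r_0}(\bfz + L r_0 \bfe_d) \subset \cone{\bfz}$ for every $\bfz \in B_r(\bfx) \cap \Omega$. A point $\bfz + L r_0 \bfe_d + \mathbf{v}$ with $|\mathbf{v}| < r_0$ lies in $\cone{\bfz}$ iff $L r_0 + v_d > L_{\partial\Omega}\,|\mathbf{v}_{d-1}|$, and Cauchy--Schwarz gives $-v_d + L_{\partial\Omega}|\mathbf{v}_{d-1}| \leq \sqrt{1 + L_{\partial\Omega}^2}\,|\mathbf{v}| < \sqrt{1 + L_{\partial\Omega}^2}\, r_0$. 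Hence any $L > \sqrt{1 + L_{\partial\Omega}^2}$ works; since $\sqrt{1 + L_{\partial\Omega}^2} \geq 1$, such an $L$ automatically satisfies $L > 1$, and depends only on $L_{\partial\Omega}$.

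The only subtlety is the conceptual one already flagged: one is tempted to move $\bfz$ along with the interpolation parameter, which would fail because convex combinations of points in $\Omega$ may leave $\Omega$. Holding $\bfz$ fixed throughout, and paying for it via the factor $(\mu + 1)$ in (i), respectively the $\sqrt{1 + L_{\partial\Omega}^2}$ in (ii), makes the argument entirely elementary.
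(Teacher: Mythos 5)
Your proof is correct and takes essentially the same route as the paper: for both parts you show directly that the relevant ball is contained in each individual cone $\cone{\bfz}$ via the same decomposition of $\bfp - \bfz$ (respectively $\bfy - \bfz$) into vertical and horizontal components and the same triangle-inequality estimates, then invoke convexity of $\cone{\bfz}$. The only cosmetic difference is in (ii), where you use Cauchy--Schwarz to arrive at the marginally sharper threshold $L > \sqrt{1 + L_{\partial\Om}^2}$, whereas the paper simply takes $L > L_{\partial\Om} + 1$; both are fine since only existence of such an $L$ depending on $L_{\partial\Om}$ is claimed.
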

\begin{proof}
  First we show (\ref{item:lem:balls-i}):
  Let $\mu>0$ and $\tau > (L_{\partial\Om}+1)(\mu+1)$. Let $r>0$ and
  $\bfx\in\Om$ be arbitrary. It suffices to show that
  $B_{\mu r}(\bfx+\tau r \bfe_d) \subset \cone{\bfz}$ for all $\bfz\in B_{r}(\bfx)$.
  To that end, let $\bfy\in B_{\mu r}(\bfx+\tau r \bfe_d)$ and note that
  \begin{align*}
    y_{d} - z_{d} &= y_{d} - (x_{d} +\tau r) + (x_{d} +\tau r - x_{d} ) + x_{d} - z_{d} 
    \ge - \mu r + \tau r - r = (\tau - \mu - 1)r, \\
    |\bfy_{d-1} - \bfz_{d-1}| &\leq |\bfy_{d-1} - \bfx_{d-1} | + |\bfx_{d-1} - \bfz_{d-1}| 
    \leq \mu r + r  = (\mu + 1) r. 
  \end{align*}
  By the choice of $\tau$, we conclude $\bfy \in \cone{\bfz}$. 

  In order to prove~\eqref{item:lem:balls-ii}, we choose $L> L_{\partial\Om}+1$.
  For $\bfy\in B_{r_0}(\bfz + L r_0 \bfe_d)$, we compute
  \begin{align*}
    y_d - z_d \geq Lr_0 - r_0 = (L-1)r_0\quad\text{ and } \quad
    |\bfy_{d-1} - \bfz_{d-1}| \leq r_0,
  \end{align*}
  and infer $\bfy\in\cone{\bfz}$. Hence, $B_{r_0}(\bfz + L r_0 \bfe_d) \subset \cone{\bfz}$.  
  The assertion (\ref{item:lem:balls-ii}) follows.
\end{proof}
\begin{lem}
  \label{lem:local-balls}
  Let $\Om$ be a special Lipschitz domain with Lipschitz constant $L_{\partial\Om}$
  and let $\eps \in C^\infty(\Omega)$ be a $\Lambda$-admissible length scale
  function. Fix a compact set $\overline{\Omega^\prime}\subset\Omega$.
  Then there are  
  $\alpha$, $\beta$, $\delta$, $L$, $\tau>0$
  such that~\eqref{conv1:lem1:eq1}--\eqref{conv1:lem1:eq3} hold,
  and, additionally, with 
  \begin{equation}
    \label{eq:veebar}
    \veebar_\bfx := B_{\beta \eps(\bfx)}(\bfx)
    \cap \bigcup_{\bfz \in B_{\alpha \eps(\bfx)}(\bfx) \cap \Om}\cone{\bfz}
    \quad\text{ for } \bfx\in\Om,
  \end{equation}
  the following statements 
  (\ref{item:lem:local-balls-i})--(\ref{item:lem:local-balls-iii}) are true: 
  \begin{enumerate}[(i)]
    \item \label{item:lem:local-balls-i}  For $\bfx_0 \in \Om$, the point
      $\bfx_0 + \delta \eps(\bfx_0)L \bfe_d$ satisfies 
      \begin{align*}
	{\rm dist}(\bfx_0 + \delta \eps(\bfx_0)L \bfe_d,\partial \Om) > \delta \eps(\bfx_0). 
      \end{align*}
    \item\label{item:lem:local-balls-ii} 
      For every $\bfx \in \Omega$ and every $\bfx_0 \in B_{\alpha \eps(\bfx)}(\bfx) \cap \Om$ there holds
      $B_{\delta \eps(\bfx_0)}(\bfx_0 + L\delta \eps(\bfx_0) \bfe_d) \subset \veebar_\bfx$.
    \item\label{item:lem:local-balls-iii} The set $\veebar_\bfx$ is star-shaped with respect to
      $B_{\delta\eps(\bfx)}(\bfx+\tau\alpha\eps(\bfx)\bfe_d)$.
  \end{enumerate}
\end{lem}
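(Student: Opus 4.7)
The strategy is to fix the parameters in a specific order and then deduce each of (\ref{item:lem:local-balls-i})--(\ref{item:lem:local-balls-iii}) as a direct consequence of Lemma~\ref{lem:balls}. First I would fix $L$ strictly larger than $L_{\partial\Om}+1$. With this $L$, Lemma~\ref{lem:eps}(\ref{item:lem:eps-iii})--(\ref{item:lem:eps-iv}) supplies parameters $\alpha,\beta,\delta>0$ satisfying (\ref{conv1:lem1:eq1})--(\ref{conv1:lem1:eq3}). Finally, since the first inequality in (\ref{conv1:lem1:eq2}) rewrites as $\beta>\alpha(L_{\partial\Om}+1)(\delta/\alpha+1)+\delta$, I can choose $\tau$ slightly larger than $(L_{\partial\Om}+1)(\delta/\alpha+1)$ while still having $\tau\alpha+\delta<\beta$.

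For (\ref{item:lem:local-balls-i}), the proof of Lemma~\ref{lem:balls}(\ref{item:lem:balls-ii}) in fact works for every constant $L'>L_{\partial\Om}+1$. I would apply it with an intermediate $L'\in(L_{\partial\Om}+1,L)$, $\bfz:=\bfx_0$, and $r_0:=L\delta\eps(\bfx_0)/L'>\delta\eps(\bfx_0)$. The resulting inclusion $B_{r_0}(\bfx_0+L\delta\eps(\bfx_0)\bfe_d)\subset\cone{\bfx_0}\subset\Om$ immediately gives the strict distance bound $\operatorname{dist}(\bfx_0+L\delta\eps(\bfx_0)\bfe_d,\partial\Om)\geq r_0>\delta\eps(\bfx_0)$.

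For (\ref{item:lem:local-balls-ii}), I would apply Lemma~\ref{lem:balls}(\ref{item:lem:balls-ii}) at the point $\bfx$ with radius $r:=\alpha\eps(\bfx)$, choosing $\bfz:=\bfx_0\in B_{\alpha\eps(\bfx)}(\bfx)\cap\Om$ and $r_0:=\delta\eps(\bfx_0)$. This yields $B_{\delta\eps(\bfx_0)}(\bfx_0+L\delta\eps(\bfx_0)\bfe_d)\subset\bigcup_{\bfz'\in B_{\alpha\eps(\bfx)}(\bfx)\cap\Om}\cone{\bfz'}$. It remains to check containment in $B_{\beta\eps(\bfx)}(\bfx)$: the triangle inequality bounds $|\bfy-\bfx|<(L+1)\delta\eps(\bfx_0)+\alpha\eps(\bfx)$ for $\bfy$ in that ball, while the $\LL$-Lipschitz property of $\eps$ combined with $\bfx_0\in B_{\alpha\eps(\bfx)}(\bfx)$ yields $\eps(\bfx_0)\leq(1+\LL\alpha)\eps(\bfx)$. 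Hence $|\bfy-\bfx|<\eps(\bfx)\bigl[\alpha+\delta(L+1)(1+\LL\alpha)\bigr]<\beta\eps(\bfx)$ by the second inequality of (\ref{conv1:lem1:eq2}).

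For (\ref{item:lem:local-balls-iii}), Lemma~\ref{lem:balls}(\ref{item:lem:balls-i}) with $\mu:=\delta/\alpha$ and $r:=\alpha\eps(\bfx)$ implies that $\bigcup_{\bfz\in B_{\alpha\eps(\bfx)}(\bfx)\cap\Om}\cone{\bfz}$ is star-shaped with respect to $B_{\delta\eps(\bfx)}(\bfx+\tau\alpha\eps(\bfx)\bfe_d)$. The constraint $\tau\alpha+\delta<\beta$ guarantees that this center ball lies in $B_{\beta\eps(\bfx)}(\bfx)$ and hence in $\veebar_\bfx$. Star-shapedness of $\veebar_\bfx$ with respect to it then follows by combining star-shapedness of the union with convexity of $B_{\beta\eps(\bfx)}(\bfx)$: any segment from a point of the center ball to a point of $\veebar_\bfx$ lies in both sets, hence in their intersection. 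The principal difficulty is not any individual estimate but the coupled parameter bookkeeping: $\tau$ must be taken large enough for Lemma~\ref{lem:balls}(\ref{item:lem:balls-i}) to apply, yet small enough for $\tau\alpha+\delta<\beta$, which is precisely what the first inequality of (\ref{conv1:lem1:eq2}) encodes.
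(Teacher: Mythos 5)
Your proposal is correct and follows the paper's proof almost exactly: parameters are drawn from Lemma~\ref{lem:eps} so that \eqref{conv1:lem1:eq2} holds, $\tau$ is slotted into the interval $\bigl((L_{\partial\Om}+1)(\delta/\alpha+1),(\beta-\delta)/\alpha\bigr)$ opened up by the first inequality of \eqref{conv1:lem1:eq2}, and Lemma~\ref{lem:balls} is applied with $r=\alpha\eps(\bfx)$, $\mu=\delta/\alpha$, $r_0=\delta\eps(\bfx_0)$ for the cone properties. The one small variation is item~(\ref{item:lem:local-balls-i}): the paper simply deduces it from~(\ref{item:lem:local-balls-ii}), tacitly relying on compactness to turn the resulting $\geq$ into $>$, whereas you obtain the strict inequality directly by rerunning Lemma~\ref{lem:balls}(\ref{item:lem:balls-ii}) with an intermediate constant $L'\in(L_{\partial\Om}+1,L)$ and an inflated radius $r_0=L\delta\eps(\bfx_0)/L'$; both arguments are sound.
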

\begin{proof}
  First, choose $L$ from Lemma~\ref{lem:balls}, (\ref{item:lem:balls-ii}). 
  Then, choose $\alpha$, $\beta$, $\delta>0$ according to
  Lemma~\ref{lem:eps} so that \eqref{conv1:lem1:eq1}--\eqref{conv1:lem1:eq3} hold.
   Set $\mu:= \delta/\alpha > 0$. Since $\beta > (L_{\partial\Om}+1)(\delta/\alpha + 1) \alpha + \delta
 = (L_{\partial\Om}+1)(1+\mu) \alpha +\delta$, we can choose $\tau > (L_{\partial\Om}+1)(1+\mu)$ 
  (i.e., such that the condition in Lemma~\ref{lem:balls}, (\ref{item:lem:balls-i}) is satisfied) and simultaneously
  $\beta>\tau\alpha+\delta$.  This shows
  \begin{align}\label{lem:local-balls:eq1}
      B_{\delta\eps(\bfx)}(\bfx+\tau\alpha\eps(\bfx)\bfe_d)
      &\subset B_{\beta\eps(\bfx)}(\bfx).
  \end{align}
  Furthermore, by Lipschitz continuity of $\eps$, we have 
  $\eps(\bfx_0)\leq(1+\LL\alpha)\eps(\bfx)$ for all $\bfx_0\in \Bax$. Hence,
  in view of \eqref{conv1:lem1:eq2}, we infer 
  \begin{align*}
    \beta\eps(\bfx) 
    \geq \left[ \alpha+\delta(L+1)(1+\LL\alpha) \right]\eps(\bfx)
    \geq \alpha\eps(\bfx) + (L+1)\delta\eps(\bfx_0),
  \end{align*}
  which implies
  \begin{align}\label{lem:local-balls:eq2}
    B_{\delta \eps(\bfx_0)}(\bfx_0 + L\delta \eps(\bfx_0)\bfe_d)
    &\subset B_{\beta\eps(\bfx)}(\bfx) \quad \text{ for all } \bfx_0\in \Bax.
  \end{align}
  We now prove statement (\ref{item:lem:local-balls-ii}). 
  For $\bfx_0 \in B_{\alpha \eps(\bfx)}(\bfx)\cap \Om$ we choose $r = \alpha \eps(\bfx)$
  and $r_0=\delta\eps(\bfx_0)$ in Lemma~\ref{lem:balls}, (\ref{item:lem:balls-ii}) and obtain
  \begin{align*}
    B_{\delta \eps(\bfx_0)}(\bfx_0 + L\delta \eps(\bfx_0)\bfe_d)
    \subset \bigcup_{\bfz \in B_{\alpha \eps(\bfx)}(\bfx)\cap \Om} \cone{\bfz}.
  \end{align*}
  Together with~\eqref{lem:local-balls:eq2}, this shows (\ref{item:lem:local-balls-ii}).
  Furthermore, choosing $r = \alpha \eps(\bfx)$ in Lemma~\ref{lem:balls}, (\ref{item:lem:balls-i}),
  we see that
  \begin{align*}
    \bigcup_{\bfz \in B_{\alpha \eps(\bfx)}(\bfx)\cap \Om} \cone{\bfz} \qquad
    \text{ is star-shaped w.r.t. } B_{\delta\eps(\bfx)}(\bfx+\tau\alpha\eps(\bfx)\bfe_d).
  \end{align*}
  Together with~\eqref{lem:local-balls:eq1}, this shows statement (\ref{item:lem:local-balls-iii}).
  Finally, statement (\ref{item:lem:local-balls-i}) follows from assertion (\ref{item:lem:local-balls-ii})
  since $\veebar_\bfx \subset \Om$. 
\end{proof}
\begin{lem}\label{lem:G}
  Let $\rho$ be a mollifier of order $k_{\max}$, and let $\Om$ be a
  special Lipschitz domain with Lipschitz constant $L_{\partial\Om}$. Assume that 
  $\eps \in C^\infty(\Omega)$ is a $\Lambda$-admissible length scale
  function, $\Lambda= \bigl( \LL,\left( \Lambda_{\bfr} \right)_{\bfr\in\N_0^d}
  \bigr)$. Fix a compact set $\overline{\Omega^\prime}\subset\Omega$.
  Choose $\alpha$, $\beta$, $\delta$, $L$, $\tau$ according to Lemma~\ref{lem:local-balls}.
  Then define the operator
  \begin{align}
    \cG u(\bfx_0) :=
    \sum_{\sn{\bfk}\leq k_{\max}}
    \frac{(-\delta \eps(\bfx_0)L\bfe_d)^{\bfk}}{\bfk !}
    \left( D^{\bfk} (u \star \rho_{\delta \eps(\bfx_0)}) \right)\left(\bfx_0 + \delta \eps(\bfx_0)L\bfe_d\right), 
    \qquad \bfx_0 \in \Om.
    \label{lem:G:def}
  \end{align}
\begin{enumerate}[(i)]
\item
\label{item:lem:G-i}
  Suppose $(s,p) \in \N_0 \times [1,\infty]$ satisfies $s\leq k_{\max}+1$. Assume 
  $(s\leq r\in\R{}\text{ and } q\in[1,\infty))$ or
  $(s\leq r\in\N_0\text{ and }q\in[1,\infty])$. Then it holds
  \begin{subequations}\label{eq:lem:G}
  \begin{align}\label{eq:lem:G:a}
    \sn{\cG v}_{r,q,\Bax\cap\Om} &\leq C_{r,q,s,p,\Lambda,L_{\partial\Om}}
    \eps(\bfx)^{s-r+d(1/q-1/p)}\sn{v}_{s,p,\veebar_\bfx},
  \end{align}
  where $C_{r,q,s,p,\Lambda,L_{\partial\Om}}$ depends only on $r$, $q$, $s$, $p$,
  $\LL$,
  $(\Lambda_{\bfr'})_{\sn{\bfr'}\leq \lceil r \rceil}$, $L_{\partial\Om}$, 
  as well as on $\rho$, $k_{\max}$, $\alpha$, $\beta$, $\delta$.
\item
\label{item:lem:G-ii}
  Suppose $0\leq s\in\R{}$, $r\in\N_0$ with $s\leq r\leq k_{\max}+1$,
  and $1\leq p\leq q<\infty$. Define $\mu:=d(p^{-1}-q^{-1})$. Assume that
  $(r=s+\mu\text{ and }p>1)$ or $(r>s+\mu)$. Then it holds that
  \begin{align}\label{eq:lem:G:b}
    \sn{v - \cG v}_{s,q,\Bax\cap\Om} &\leq C_{s,q,r,p,\Lambda,L_{\partial\Om}}
    \eps(\bfx)^{r-s+d(1/q-1/p)}\sn{v}_{r,p,\veebar_\bfx},
  \end{align}
  where $C_{s,q,r,p,\Lambda,L_{\partial\Om}}$ depends only on $s$, $q$, $r$, $p$,
  $\LL$,
  $(\Lambda_{\bfs'})_{\sn{\bfs'}\leq \lceil s \rceil}$, $L_{\partial\Om}$
  as well as on $\rho$, $k_{\max}$, $\alpha$, $\beta$, $\delta$.
\item
\label{item:lem:G-iii}
  Suppose $s$, $r\in\N_0$ with $s\leq r\leq k_{\max}+1$, and $1\leq p < \infty$.
  Define $\mu:=d/p$. Assume that $(r=s+\mu\text{ and }p=1)$ or $(r>s+\mu  \text{ and } p > 1)$.
  Then it holds that
  \begin{align}\label{eq:lem:G:c}
    \sn{v - \cG v}_{s,\infty,\Bax\cap\Om} &\leq C_{s,r,p,\Lambda,L_{\partial\Om}}
    \eps(\bfx)^{r-s-d/p} \sn{v}_{r,p,\veebar_\bfx},
  \end{align}
  \end{subequations}
  where $C_{s,r,p,\Lambda,L_{\partial\Om}}$ depends only on $s$, $r$, $p$,
  $\LL$,
  $(\Lambda_{\bfs'})_{\sn{\bfs'}\leq s}$, $L_{\partial\Om}$
  as well as on $\rho$, $k_{\max}$, $\alpha$, $\beta$, $\delta$.
\end{enumerate}
\end{lem}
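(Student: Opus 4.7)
The plan is to mirror the strategy used for $\cE$ in Lemma~\ref{lem:approx}, modified to handle that the averaging now takes place on a \emph{translated} ball lying strictly inside $\Om$, with the result extrapolated back to $\bfx_0$ by a Taylor polynomial of degree $k_{\max}$. A preliminary observation is that $\cG$ reproduces polynomials of degree at most $k_{\max}$: if $\pi\in\cP_{k_{\max}}$, the moment condition~\eqref{eq:order-condition-mollifier} on $\rho$ gives $\pi\star\rho_{\delta\eps(\bfx_0)}=\pi$ pointwise, so $\cG\pi(\bfx_0)$ reduces to the degree-$k_{\max}$ Taylor polynomial of $\pi$ expanded at the shifted point $\bfx_0+\delta\eps(\bfx_0)L\bfe_d$ and evaluated back at $\bfx_0$, which equals $\pi(\bfx_0)$.

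Next, I pull back by $T_\bfx$. For $\bfx_0=T_\bfx(\bfz)$ with $\bfz\in B_\alpha$, a substitution in the convolution together with $\eps(\bfx_0)=\eps(\bfx)\xeps(\bfz)$ yields the identity $(\cG u)\circ T_\bfx = \widehat\cG(u\circ T_\bfx)$ on $B_\alpha\cap\eps(\bfx)^{-1}(\Om-\bfx)$, where
\begin{align*}
\widehat\cG v(\bfz) := \sum_{\sn{\bfk}\leq k_{\max}} \frac{(-\delta\xeps(\bfz)L\bfe_d)^\bfk}{\bfk!}\; D^\bfk\bigl(v\star\rho_{\delta\xeps(\bfz)}\bigr)\bigl(\bfz+\delta\xeps(\bfz)L\bfe_d\bigr).
\end{align*}
By the rescaled version of Lemma~\ref{lem:local-balls}, the averaging ball used in the evaluation of $\widehat\cG v(\bfz)$ lies inside the scaled set $\eps(\bfx)^{-1}(\veebar_\bfx-\bfx)$, which is star-shaped with chunkiness bounded solely in terms of $\LL$ and $L_{\partial\Om}$. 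Stability at the reference scale is then obtained by moving all derivatives from $v$ onto $\rho$ via integration by parts and differentiating the $\bfz$-dependent Taylor prefactor, the mollifier scale $\xeps(\bfz)$, and the evaluation point $\bfz+\delta\xeps(\bfz)L\bfe_d$; these terms are controlled by the Fa\`a di Bruno Lemma~\ref{lem:faa-1-estimate} together with the uniform bound $\xeps\sim 1$ on $B_\alpha$ from Lemma~\ref{lem:eps}(\ref{item:lem:eps-i}). This yields $\sn{\widehat\cG v}_{r,\infty,\Ba}\lesssim\vn{v}_{0,1,\eps(\bfx)^{-1}(\veebar_\bfx-\bfx)}$ for $r\in\N_0$. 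The cases $s\geq 1$, non-integer $r$, and finite $q$ are then handled exactly as in Lemma~\ref{lem:approx:refel} via H\"older's inequality, the Embedding Theorem~\ref{thm:embedding}, and a Bramble-Hilbert argument (Lemma~\ref{lem:bramblehilbert}) using the polynomial reproduction property; a scaling argument as in Lemma~\ref{lem:approx} finally produces~\eqref{eq:lem:G:a}.

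For the approximation estimates~\eqref{eq:lem:G:b} and~\eqref{eq:lem:G:c}, I invoke Bramble-Hilbert directly on $\veebar_\bfx$, whose chunkiness is controlled by Lemma~\ref{lem:local-balls}(\ref{item:lem:local-balls-iii}). This yields a polynomial $\pi\in\cP_{r-1}\subset\cP_{k_{\max}}$ satisfying $\vn{v-\pi}_{k,p,\veebar_\bfx}\lesssim\eps(\bfx)^{r-k}\sn{v}_{r,p,\veebar_\bfx}$ for all relevant $k$. Since $\cG\pi=\pi$ by the reproduction property, writing $v-\cG v=(v-\pi)-\cG(v-\pi)$ and combining the stability estimate~\eqref{eq:lem:G:a} applied to $\cG(v-\pi)$ with the Embedding Theorem~\ref{thm:embedding} applied to $v-\pi$ finishes the proof.

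The main obstacle is the stability computation: the variable $\bfz$ enters $\widehat\cG v(\bfz)$ through three distinct mechanisms---the Taylor prefactor $(-\delta\xeps(\bfz)L\bfe_d)^\bfk$, the mollifier scale inside $\rho_{\delta\xeps(\bfz)}$, and the evaluation point $\bfz+\delta\xeps(\bfz)L\bfe_d$---so enumerating and bounding all product- and chain-rule terms uniformly in $\bfz\in\Ba$ is combinatorially heavy. However, no new conceptual difficulty arises beyond what Lemma~\ref{lem:faa-1-estimate} already addresses in the proof of Lemma~\ref{lem:approx:refel}, and the argument should go through by bookkeeping parallel to the interior case.
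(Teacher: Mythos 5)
Your proposal is correct and follows essentially the paper's strategy: compute derivatives of $\cG u$, use Fa\`a di Bruno (Lemma~\ref{lem:faa-1-estimate}) and integration by parts to pass all derivatives onto $\rho$, obtain the $L^\infty$--$L^1$ estimate that is the analogue of~\eqref{lem:approx:refel:eq:30}, then upgrade via Theorem~\ref{thm:embedding} and Bramble-Hilbert (Lemma~\ref{lem:bramblehilbert}) on $\veebar_\bfx$, whose chunkiness is uniformly controlled by Lemma~\ref{lem:local-balls}, with polynomial reproduction up to degree $k_{\max}$ supplying the final approximation bound. The only organizational difference is that you first pull $\cG$ back through $T_\bfx$ to a reference-scale operator $\widehat\cG$ (mirroring how Lemma~\ref{lem:approx} invokes Lemma~\ref{lem:approx:refel}), whereas the paper applies the same Fa\`a di Bruno/integration-by-parts argument directly in physical coordinates to the identity~\eqref{eq:lem:G:100}, so the factor $\eps(\bfx)^{-r}$ appears directly rather than by a separate scaling step; both routes are equivalent.
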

\begin{proof}
  Since $u\mapsto u \star \rho_{\delta \eps(\bfx_0)}$ is a classical convolution operator with fixed length scale,
  we may write
  \begin{align*}
    D^\bfk (u \star \rho_{\delta \eps(\bfx_0)}) =
    u \star (D^\bfk \rho_{\delta \eps(\bfx_0)})
    = u \star (D^\bfk \rho)_{\delta \eps(\bfx_0)} (\delta \eps(\bfx_0))^{-\sn{\bfk}},
  \end{align*}
  and a change of variables gives (assuming $u$ is smooth)
  \begin{align*}
    D^\bfr_{\bfx_0} \cG u(\bfx_0)
    &= \sum_{\sn{\bfk}\leq k_{\max}}\frac{(-L\bfe_d)^{\bfk}}{\bfk!} \int_{\bfy \in B_1(0)}
    D^{\bfr}_{\bfx_0} u\left(\bfx_0'\right)
    D^{\bfk}\rho(\bfy),
  \end{align*}
  where $\bfx_0' := \bfx_0+\eps(\bfx_0)\left( \delta \bfe_d L - \delta \bfy\right)$.
  The Fa\`a di Bruno formula from Lemma~\ref{lem:faa-1-estimate} shows 
  \begin{align*}
    \begin{split}
      D^{\bfr}_{\bfx_0} u\left(\bfx_0'\right)
      &= \left( D^{\bfr}u \right)\left(\bfx_0'\right) +
      \sum_{\sn{\bft}\leq \sn{\bfr}}\left( D^{\bft}u \right)\left(\bfx_0'\right)
      E_{\bfr,\bft}\left( \delta \bfe_d L-\delta \bfy,\bfx_0 \right)\\
      &=\left( -\delta\eps(\bfx_0) \right)^{-\sn{\bfr}} D^{\bfr}_\bfy u\left(\bfx_0'\right) +
      \sum_{\sn{\bft}\leq \sn{\bfr}}
      \left( -\delta\eps(\bfx_0) \right)^{-\sn{\bft}}
      D^{\bft}_\bfy u\left(\bfx_0'\right)
      E_{\bfr,\bft}\left( \delta \bfe_d L-\delta \bfy,\bfx_0 \right).
    \end{split}
  \end{align*}
  We obtain with integration by parts and the product rule
  \begin{align}\label{eq:lem:G:100}
    \begin{split}
    D^\bfr_{\bfx_0} \cG u(\bfx_0)
    &= \sum_{\sn{\bfk}\leq k_{\max}}\frac{(-L\bfe_d)^{\bfk}}{\bfk!}
    (-1)^{\sn{\bfr}}
    \int_{\bfy \in B_1(0)}
      u\left(\bfx_0'\right)
      D^{\bfk+\bfr}\rho(\bfy)
      \left( -\delta\eps(\bfx_0) \right)^{-\sn{\bfr}}\\
    &\quad+ \sum_{\sn{\bfk}\leq k_{\max}}\frac{(-L\bfe_d)^{\bfk}}{\bfk!}
    \sum_{\sn{\bft}\leq\sn{\bfr}}(-1)^{\sn{\bft}}
    \sum_{\bfs\leq\bft}\binom{\bfs}{\bft}\\
    &\qquad\quad\int_{\bfy \in B_1(0)}
    u(\bfx_0') D^\bfs_\bfy E_{\bfr,\bft}(\delta\bfe_dL-\delta\bfy,\bfx_0)
      D^{\bfk+\bft-\bfs}\rho(\bfy)
      \left( -\delta\eps(\bfx_0) \right)^{-\sn{\bft}}.
    \end{split}
  \end{align}
  The estimates now follow as in Lemma~\ref{lem:approx:refel}. We apply
  Lemma~\ref{lem:faa-1-estimate}, Lemma~\ref{lem:local-balls},~\eqref{item:lem:local-balls-ii},
  and integration by parts to obtain from identity~\eqref{eq:lem:G:100} the estimate
  \begin{align*}
    \sn{\cG u}_{r,\infty,\Bax\cap\Om} \leq C_{r,\Lambda,L_{\partial\Om}}
    \eps(\bfx)^{-r}
    \vn{u}_{0,1,\veebar_\bfx}.
  \end{align*}
  This is the analogue of~\eqref{lem:approx:refel:eq:30}, such that the remainder
  of the proof follows as in Lemma~\ref{lem:approx:refel}. Note that
  we now employ the embedding results of Theorem~\ref{thm:embedding} as well
  as the Bramble-Hilbert Lemma~\ref{lem:bramblehilbert} on the domain
  $\veebar_\bfx$, which we may since
  by Lemma~\ref{lem:local-balls} the chunkiness $\eta(\veebar_\bfx)$ is bounded uniformly in $\bfx$, 
  i.e., $\eta(\veebar_\bfx)\lesssim 1$ uniformly in $\bfx$.
\end{proof}
\subsection{Proof of Theorem~\ref{thm:hpsmooth}}\label{section:proof:hpsmooth}
In the last section, we derived results on the stability and approximation
properties of the two operators $\cE$ and $\cG$. These results, however,
are strongly localized. Now we show how they can be globalized.
The main ingredients are a partition of unity, the local properties of
the smoothing operators, and Besicovitch's Covering Theorem.

We start with a lemma that follows from Besicovitch's Covering Theorem (Proposition~\ref{thm:besicovitch}):
\begin{lem}
\label{lem:besicovitch} 
Let $\Omega\subset\R{d}$ be an arbitrary domain and $\eps \in C^\infty(\Omega)$ be a
$\Lambda = \bigl( \LL,\left( \Lambda_{\bfr} \right)_{\bfr\in\N_0^d} \bigr)$-admissible
length scale function. Let 
$\alpha$, $\beta > 0$ satisfy (\ref{eq:lem:eps-10}). Let $N_d$ be given by 
Proposition~\ref{thm:besicovitch}. 

Let $\omega\subset \Omega$ be arbitrary. Then there exist points 
$\bfx_{ij} \in \omega$, $i=1,\ldots,N_d$, $j \in \N$, such that for the 
closed balls 
\begin{equation}
\label{eq:lem:besicovitch-1}
B_{ij}:= \overline{B_{\frac{\alpha}{2} \eps(\bfx_{ij})}(\bfx_{ij})} 
\subset \widetilde B_{ij}:= \overline{B_{\alpha \eps(\bfx_{ij})}(\bfx_{ij})} 
\subset 
\widehat B_{ij}:= \overline{B_{\beta \eps(\bfx_{ij})}(\bfx_{ij})}, 
\end{equation}
the following is true: 
\begin{enumerate}[(i)]
\item 
\label{item:lem:besicovitch-i} 
 $\displaystyle \omega \subset \cup_{i=1}^{N_d} \cup_{j\in\N} B_{ij}$. 
\item 
\label{item:lem:besicovitch-ii} 
For each $i\in \{1,\ldots,N_d\}$, the balls $\{B_{ij}\,|\, j \in \N\}$ are pairwise disjoint.
\item 
\label{item:lem:besicovitch-iii} 
For each $i \in \{1,\ldots,N_d\}$, the balls $\widehat B_{ij}$, $j \in \N$ satisfy an overlap property: 
There exists $C_{\textrm{overlap}} >0$, which depends solely on $d$, $\alpha$,
$\beta$, $\LL$, such that 
$$
\operatorname*{card} \{j^\prime \,|\, \widehat B_{ij} \cap \widehat B_{ij^\prime} \ne\emptyset\} \leq C_{\textrm{overlap}}
\qquad \forall j \in \N.
$$
\item 
\label{item:lem:besicovitch-iv} Let $q \in [1,\infty)$ and $\sigma \in (0,1)$. Then for a constant $C$ that depends solely
on $\sigma$, $q$, $d$, and $\alpha$: 
\begin{align}
\label{eq:lem:besicovitch-10}
\sum_{i=1}^{N_d} \sum_{j \in \N} \int_{\bfx \in B_{ij}} \int_{\bfy \in \omega\setminus \widetilde B_{ij}} 
  \frac{|v(\bfx)|^q}{|\bfx - \bfy|^{\sigma q + d}}\,d\bfy\,d\bfx 
\leq C \sum_{i=1}^{N_d} \sum_{j \in \N} \eps(\bfx_{ij})^{-\sigma q} \|v\|^q_{0,q,\widetilde B_{ij}}, \\
\label{eq:lem:besicovitch-20}
\sum_{i=1}^{N_d} \sum_{j \in \N} \int_{\bfx \in B_{ij}} \int_{\bfy \in \omega\setminus \widetilde B_{ij}} 
  \frac{|v(\bfy)|^q}{|\bfx - \bfy|^{\sigma q + d}}\,d\bfy\,d\bfx 
\leq C \sum_{i=1}^{N_d} \sum_{j \in \N} \eps(\bfx_{ij})^{-\sigma q} \|v\|^q_{0,q,\widetilde B_{ij}}. 
\end{align}
\end{enumerate}
\end{lem}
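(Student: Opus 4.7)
My approach is to obtain the points $\bfx_{ij}$ by a direct application of the Besicovitch covering theorem, and then to verify the overlap and integral estimates using the Lipschitz continuity of $\eps$, a volume-comparison argument for~(\ref{item:lem:besicovitch-iii}), and a Fubini/tail-integral argument for~(\ref{item:lem:besicovitch-iv}). Concretely, I would apply Proposition~\ref{thm:besicovitch} to the family $\cF := \{\overline{B_{(\alpha/2)\eps(\bfx)}(\bfx)} : \bfx \in \omega\}$; the diameters are uniformly bounded since $\eps \leq \Lambda_{\mathbf{0}}$. Besicovitch supplies the subfamilies $\cG_i = \{B_{ij}\}_{j \in \N}$, $i=1,\ldots,N_d$, of pairwise disjoint closed balls whose union covers $\omega$; the enlargements $\widetilde B_{ij}$ and $\widehat B_{ij}$ are defined as in~(\ref{eq:lem:besicovitch-1}), and (\ref{item:lem:besicovitch-i}) and (\ref{item:lem:besicovitch-ii}) follow immediately.

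\textbf{Overlap property.} For~(\ref{item:lem:besicovitch-iii}), if $\widehat B_{ij} \cap \widehat B_{ij'} \neq \emptyset$ then $|\bfx_{ij} - \bfx_{ij'}| \leq \beta(\eps(\bfx_{ij}) + \eps(\bfx_{ij'}))$. Lipschitz continuity of $\eps$ with constant $\LL$, combined with the inequality $2\beta\LL < 1$ inherited from~(\ref{eq:lem:eps-10}), then yields $\eps(\bfx_{ij'}) \sim \eps(\bfx_{ij})$ with constants depending only on $\beta$ and $\LL$. Consequently each such $B_{ij'}$ is contained in a ball centered at $\bfx_{ij}$ of radius $C\eps(\bfx_{ij})$ while having volume $\gtrsim \eps(\bfx_{ij})^d$; together with the pairwise disjointness from~(\ref{item:lem:besicovitch-ii}), a volume count bounds the number of admissible $j'$ by a constant depending only on $d$, $\alpha$, $\beta$, and $\LL$.

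\textbf{Integral estimates.} The key geometric fact is that $\bfx \in B_{ij}$ and $\bfy \notin \widetilde B_{ij}$ force $|\bfx - \bfy| \geq (\alpha/2)\eps(\bfx_{ij})$. For~(\ref{eq:lem:besicovitch-10}) this already suffices: the inner $\bfy$-integral is dominated by $\int_{|\bfz|\geq(\alpha/2)\eps(\bfx_{ij})} |\bfz|^{-\sigma q - d}\,d\bfz \lesssim \eps(\bfx_{ij})^{-\sigma q}$, and integrating $|v(\bfx)|^q$ on $B_{ij} \subset \widetilde B_{ij}$ and summing gives the claim. For~(\ref{eq:lem:besicovitch-20}) I would apply Fubini and work pointwise in $\bfy$. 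The crucial step is to upgrade the distance lower bound to $|\bfx - \bfy| \gtrsim \eps(\bfy)$ uniformly over $(i,j)$ with $\bfy \notin \widetilde B_{ij}$; writing $|\bfy - \bfx_{ij}| \leq |\bfx - \bfy| + (\alpha/2)\eps(\bfx_{ij})$ and combining with $\eps(\bfx_{ij}) \geq \eps(\bfy) - \LL|\bfy - \bfx_{ij}|$ yields this after solving a short linear inequality. Then the disjointness of $\{B_{ij}\}_{j}$ for fixed $i$ turns the inner sum into an integral over $\R{d} \setminus B_{c\eps(\bfy)}(\bfy)$, giving the pointwise bound $\lesssim N_d\,\eps(\bfy)^{-\sigma q}$. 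Finally, the covering $\chi_\omega \leq \sum_{ij}\chi_{B_{ij}}$ combined with the Lipschitz-comparability $\eps(\bfy) \sim \eps(\bfx_{ij})$ on $B_{ij}$ converts $\int_\omega |v|^q\,\eps^{-\sigma q}$ into the sum on the right-hand side of~(\ref{eq:lem:besicovitch-20}).

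\textbf{Main obstacle.} I expect the upgrade from $|\bfx - \bfy| \gtrsim \eps(\bfx_{ij})$ to $|\bfx - \bfy| \gtrsim \eps(\bfy)$ in~(\ref{eq:lem:besicovitch-20}) to be the most delicate step, since it is the place where the smallness of $\alpha$ relative to $\LL^{-1}$ from~(\ref{eq:lem:eps-10}) is used nontrivially. Without this upgrade one has no pointwise control on the aggregated kernel $\sum_{ij:\,\bfy\notin\widetilde B_{ij}}\int_{B_{ij}}|\bfx-\bfy|^{-\sigma q-d}\,d\bfx$ in terms of $\eps(\bfy)$, which is precisely the form required to match the right-hand side after swapping order of integration.
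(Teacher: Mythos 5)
Your proof is correct, and for parts (\ref{item:lem:besicovitch-i})--(\ref{item:lem:besicovitch-iii}) and the estimate~(\ref{eq:lem:besicovitch-10}) you take exactly the paper's route: Besicovitch applied to $\{\overline{B_{\alpha\eps(\bfx)/2}(\bfx)}\}_{\bfx\in\omega}$, Lipschitz comparability plus volume counting for the overlap, and a tail integral for~(\ref{eq:lem:besicovitch-10}).

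For~(\ref{eq:lem:besicovitch-20}) your organization differs from the paper's, but only mildly. The paper first decomposes the $\bfy$-integral over the covering ($\omega\subset\cup_{i'j'}B_{i'j'}$), then applies Fubini and proves a ``counting function'' claim: the aggregated kernel $C(\bfx,\bfy)=\sum_{ij}\chi_{B_{ij}}(\bfx)\chi_{B_{i'j'}\setminus\widetilde B_{ij}}(\bfy)$ is bounded by $N_d$ and vanishes for $|\bfx-\bfy|<\lambda\eps(\bfx_{i'j'})$, the latter by a contradiction argument. You instead apply Fubini immediately, work pointwise in $\bfy$, upgrade the distance lower bound to $|\bfx-\bfy|\gtrsim\eps(\bfy)$, use disjointness for fixed $i$ to absorb the $j$-sum into a single tail integral, and only at the end convert $\int_\omega|v|^q\eps^{-\sigma q}$ back into the discrete sum via $\chi_\omega\leq\sum_{ij}\chi_{B_{ij}}$ and $\eps(\bfy)\sim\eps(\bfx_{ij})$ on $B_{ij}$. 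The underlying geometry is identical --- in both versions the key facts are the distance lower bound and the pairwise disjointness of $\{B_{ij}\}_j$; you merely choose to measure distances in units of $\eps(\bfy)$ rather than $\eps(\bfx_{i'j'})$. Your route arguably streamlines things a bit by avoiding the separate claim~(\ref{eq:lem:besicovitch-100}) and the contradiction argument.

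One small inaccuracy in your meta-remark: the upgrade $|\bfx-\bfy|\gtrsim\eps(\bfy)$ does not actually need $\alpha$ to be small relative to $\LL^{-1}$. From $|\bfx-\bfy|>\tfrac{\alpha}{2}\eps(\bfx_{ij})$ and $\eps(\bfy)\leq(1+\LL\alpha/2)\eps(\bfx_{ij})+\LL|\bfx-\bfy|$ one gets $|\bfx-\bfy|>\tfrac{\alpha}{2+2\LL\alpha}\eps(\bfy)$ for \emph{any} $\alpha,\LL>0$. Where the smallness from~(\ref{eq:lem:eps-10}) enters your argument is in the very last step: the comparability $\eps(\bfy)\gtrsim\eps(\bfx_{ij})$ on $B_{ij}$ requires $1-\LL\alpha/2>0$, which $\alpha<\LL^{-1}/2$ guarantees with room to spare. (The paper uses the smallness in a parallel place, namely in choosing the $\lambda$ of~(\ref{eq:lem:besicovitch-200}).) This does not affect the correctness of your proof.
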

\begin{proof}
  {\em Proof of} (\ref{item:lem:besicovitch-i}), (\ref{item:lem:besicovitch-ii}):
  The set $\cF := \left\{ \overline{B_{\alpha \eps (\bfx)/2}(\bfx)} \mid \bfx \in \omega \right\}$
  is a closed cover of $\omega$. According to Besicovitch's Covering Theorem (Proposition~\ref{thm:besicovitch}),
  there are countable subsets $\cG_1, \dots, \cG_{N_d}$ of $\cF$, the elements of every subset
  being pairwise disjoint, which have the form $\cG_i = \{B_{ij}\,|\, j \in \N\}$ with 
  $B_{ij} = \overline{B_{\alpha \eps (\bfx_{ij})/2}(\bfx_{ij})}$ for some $\bfx_{ij} \in \omega$, and 
  \begin{align*}
    \omega \subset \bigcup_{i=1}^{N_d} \bigcup_{j \in \N}
    \overline{B_{\alpha \eps (\bfx_{ij})/2}(\bfx_{ij})}.
  \end{align*}

  {\em Proof of} (\ref{item:lem:besicovitch-iii}):
  Suppose $\widehat{B}_{ij} \cap \widehat{B}_{ij'} \neq \emptyset$. Then, the Lipschitz continuity of $\eps$ gives 
  \begin{align*}
    \sn{\eps(\bfx_{ij})-\eps(\bfx_{ij'})} \leq \LL
    \vn{\bfx_{ij}-\bfx_{ij'}} \leq \LL \beta \left( \eps(\bfx_{ij})+\eps(\bfx_{ij'}) \right).
  \end{align*}
  Since $\beta \LL<1$ due to our assumption (\ref{eq:lem:eps-10}), we conclude 
  \begin{align}\label{eq:eps:compare}
    \widehat{B}_{ij} \cap \widehat{B}_{ij'} \neq \emptyset \implies
    \eps(\bfx_{ij}) \leq \eps(\bfx_{ij'})\frac{1+\LL \beta}{1-\LL \beta}.
  \end{align}
  For $\bfz \in B_{ij'}$ with $\widehat{B}_{ij} \cap \widehat{B}_{ij'} \neq \emptyset$
  we use~\eqref{eq:eps:compare} to estimate
  \begin{align}
    \begin{split}
    \vn{\bfz - \bfx_{ij}} &\leq \vn{\bfz - \bfx_{ij'}} + \vn{\bfx_{ij'} - \bfx_{ij}}
    \leq \alpha \eps(\bfx_{ij'})/2 + \beta \left( \eps(\bfx_{ij}) + \eps(\bfx_{ij'}) \right)\\
    &\leq \eps(\bfx_{ij}) \left( \frac{\alpha}{2} 
    \frac{1+\LL \beta}{1-\LL \beta} + \beta + \beta \frac{1+\LL \beta}{1-\LL \beta} \right) 
=: \eps(\bfx_{ij}) C_{\textrm{big}}. 
  \end{split}
  \label{eq:aux1}
  \end{align}
  For fixed $i$, the balls $B_{ij'}$, $j' \in \N$ are pairwise disjoint.
  Thus, with $C_d$ denoting the volume of the unit sphere in $\R{d}$
  \begin{align}\label{eq:aux100}
    \sum_{\substack{j' \in \N\colon   \widehat{B}_{ij} \cap \widehat{B}_{ij'} \neq \emptyset}} \sn{B_{ij'}}
    \leq \sn{B_{\eps(\bfx_{ij}) C_{\textrm{big}}}(\bfx_{ij})} = C_d \left( \eps(\bfx_{ij})
    C_{\textrm{big}} \right)^d.
  \end{align}
  We use~\eqref{eq:eps:compare} and~\eqref{eq:aux100} to bound the number of balls that intersect a given one:
  \begin{align}
\label{eq:aux2}
      \operatorname*{card} \left\{ j' \mid \widehat{B}_{ij} \cap \widehat{B}_{ij'} \neq \emptyset \right\} &=
      \sum_{\substack{j' \in \N \colon\\  \widehat{B}_{ij} \cap \widehat{B}_{ij'} \neq \emptyset}} \frac{\sn{B_{ij'}}}{\sn{B_{ij'}}} =
      \frac{2^d}{\alpha^d C_d}\sum_{\substack{j' \in \N \\  \widehat{B}_{ij} \cap \widehat{B}_{ij'} \neq \emptyset}} \frac{\sn{B_{ij'}}}{\eps(\bfx_{ij'})^d}
\\ &
\nonumber 
\leq
      \frac{2^d}{\alpha^d C_d} \left( \frac{1+\LL \beta}{1-\LL \beta} \right)^d \eps(\bfx_{ij})^{-d}
      \sum_{\substack{j' \in \N \colon \\  \widehat{B}_{ij} \cap \widehat{B}_{ij'} \neq \emptyset}}\sn{B_{ij'}}
\leq \left( \frac{2 C_{\textrm{big}}}{\alpha} \frac{1+\LL \beta}{1-\LL \beta} \right)^d =: C_{\textrm{overlap}}.
  \end{align}
  {\em Proof of} (\ref{item:lem:besicovitch-iv}): We start with the simpler estimate, (\ref{eq:lem:besicovitch-10}). It follows
directly from the observation that for $\bfx \in B_{ij}$ we have 
$B_{\frac{\alpha}{2} \eps(\bfx_{ij})}(\bfx) \subset B_{\alpha \eps(\bfx_{ij})}(\bfx_{ij}) = \widetilde B_{ij}$ so that  
$$
\int_{\bfy \in \omega\setminus \widetilde B_{ij}} \frac{1}{|\bfx - \bfy|^{\sigma q + d}}\,d\bfy
\leq \int_{\bfy \in \R{d}\setminus B_{\frac{\alpha}{2}\eps(\bfx_{ij})}(0)} \frac{1}{|\bfy|^{\sigma q + d}}\,d\bfy
 = C_{\alpha,d,\sigma,q} \eps(\bfx_{ij})^{-\sigma q}, 
$$
where the constant $C_{\alpha,d,\sigma,q}$ depends on the quantities indicated. 

We turn to the estimate (\ref{eq:lem:besicovitch-20}). We essentially repeat the arguments of \cite[Lemma~{3.1}]{f02}.  
Using $\omega \subset \cup_{ij} B_{ij}$ and the notation $\chi_A$ for the characteristic function of a set $A$, 
we have to estimate
\begin{align}
\nonumber 
& \sum_{i=1}^{N_d} \sum_{i^\prime=1}^{N_d} \sum_{j \in \N} \sum_{j^\prime \in \N} 
\int_{\bfx \in B_{ij}} \int_{\bfy \in B_{i^\prime j^\prime} \setminus \widetilde B_{ij}} 
 \frac{|v(\bfy)|^q}{|\bfx - \bfy|^{\sigma q + d}}\,d\bfy \,d\bfx \\
\label{eq:lem:besicovitch-50}
& = 
\sum_{i^\prime=1}^{N_d} \sum_{j^\prime \in \N} \int_{\bfy \in B_{i^\prime j^\prime}} |v(\bfy)|^q 
\int_{\bfx \in \R{d}} 
\sum_{i=1}^{N_d} \sum_{j \in \N} \chi_{B_{ij}}(\bfx) \chi_{B_{i^\prime j^\prime}\setminus \widetilde B_{ij}}(\bfy)
\frac{1}{|\bfx - \bfy|^{\sigma q + d}}\,d\bfx \,d\bfy. 
\end{align}
To  that end, we analyze 
$\sum_{i=1}^{N_d} \sum_{j \in \N} \chi_{B_{ij}}(\bfx) \chi_{B_{i^\prime j^\prime}\setminus \widetilde B_{ij}}(\bfy)$ 
in more detail. Pick $\lambda > 0$ such that 
\begin{equation} 
\label{eq:lem:besicovitch-200}
1 - \frac{\alpha}{2} \LL  - \lambda \LL  < 1 
\qquad \mbox{ and } 
\qquad 
\frac{\alpha}{2} \frac{1 - \frac{\alpha}{2} \LL - \lambda \LL}{1 +\alpha/2} > \lambda > 0; 
\end{equation}
this is possible due to (\ref{eq:lem:eps-10}). 
We claim that the following is true: 
\begin{equation}
\label{eq:lem:besicovitch-100}
\bfy \in B_{i^\prime j^\prime} \qquad \Longrightarrow \qquad 
C(\bfx,\bfy):= \sum_{i=1}^{N_d} \sum_{j \in \N} \chi_{B_{ij}}(\bfx) \chi_{B_{i^\prime j^\prime}\setminus \widetilde B_{ij}}(\bfy) 
\leq 
\begin{cases} 
N_d & \mbox{ if } |\bfx - \bfy| \ge \lambda \eps(\bfx_{i^\prime j^\prime}) \\
0   & \mbox{ if } |\bfx - \bfy| < \lambda \eps(\bfx_{i^\prime j^\prime}). 
\end{cases} 
\end{equation}
The desired final estimate (\ref{eq:lem:besicovitch-20}) then follows from inserting (\ref{eq:lem:besicovitch-100}) into
(\ref{eq:lem:besicovitch-50}) and the introduction of polar coordinates to evaluate the integral in $\bfx$. 
In order to see (\ref{eq:lem:besicovitch-100}), fix $(i^\prime,j^\prime) \in \{1,\ldots,N_d\} \times \N$. 
Since the sets $\{B_{ij}\,|\, j \in \N\}$ are pairwise disjoint for each $i$, it is clear that 
$C(\bfx,\bfy) \leq N_d$. We have therefore to ascertain that $|\bfx -  \bfy| < \lambda \eps(\bfx_{i^\prime j^\prime})$ 
implies $C(\bfx,\bfy) = 0$. Let $|\bfx - \bfy| < \lambda \eps(\bfx_{i^\prime j^\prime})$. We proceed by 
contradiction. Suppose $C(\bfx,\bfy) \ne 0$. 
Then, we must have $\bfx \in B_{ij}$, $\bfy \in B_{i^\prime j^\prime}$ and $\bfy \not\in \widetilde B_{ij}$. Hence, we conclude 
$|\bfy - \bfx| \ge \frac{\alpha}{2} \eps(\bfx_{ij})$. Thus, 
\begin{equation}
\label{eq:lem:besicovitch-300}
\frac{\alpha}{2} \eps(\bfx_{ij}) \leq |\bfx - \bfy| < \lambda \eps(\bfx_{i^\prime j^\prime}). 
\end{equation}
Next, we use the Lipschitz continuity of $\eps$: 
\begin{align*}
\eps(\bfx_{i^\prime j^\prime}) \leq 
\eps(\bfx_{i j})  + \LL |\bfx_{ij} - \bfx_{i^\prime j^\prime}| 
& \leq 
\eps(\bfx_{i j})  + \LL |\bfx_{ij} - \bfx| + \LL |\bfx - \bfy | + \LL |\bfy - \bfx_{i^\prime j^\prime}|  \\
& \leq 
\eps(\bfx_{i j})  + \frac{\alpha}{2} \LL \eps(\bfx_{ij}) + \lambda \LL \eps(\bfx_{i^\prime j^\prime}) 
 + \frac{\alpha}{2} \LL \eps(\bfx_{i^\prime j^\prime}). 
\end{align*}
Rearranging the terms yields 
$$
\left( 1 - \frac{\alpha}{2} \LL - \lambda \LL\right) \eps(\bfx_{i^\prime j^\prime}) \leq  \left( 1 + \frac{\alpha}{2} \right) \eps(\bfx_{ij})
\stackrel{\eqref{eq:lem:besicovitch-300}}{\leq} \left(1+\frac{\alpha}{2}\right) \frac{2}{\alpha} \lambda 
\eps(\bfx_{i^\prime j^\prime}), 
$$
which contradicts (\ref{eq:lem:besicovitch-200}). 
\end{proof}
\begin{proof}[Proof of Theorem~\ref{thm:hpsmooth}]
  As it is standard in the treatment of bounded Lipschitz domains, we proceed with a localization 
  procedure. Let $\left\{ U^j \right\}_{j\geq 0}$ be an open cover of $\Omega$
  such that $\left\{ U^j \right\}_{j\geq 1}$ is an open cover of
  $\partial\Omega$. According to~\cite[Thm.~3.15]{adams-fournier03},
  there is a $C^\infty$-partition of unity of $\Omega$ subordinate to $\left\{ U^j \right\}_{j\geq 0}$,
  i.e., $\sum_{j\geq 0} \eta^j = 1 \text{ on } \overline{\Om},
  \text{ and } 0 \leq \eta^j \in C^\infty_0(U^j)$. 

  For $u \in L^1_{loc}(\Om)$ we write $u = u \eta^0 + \sum_{j\geq 1} u \eta^j$,
  and we extend $u \eta^j$ to $\R{d}$ by zero. For the definition of $\II_\eps$, we will apply the operator $\cE$ of
  Lemma~\ref{lem:approx} to $u\eta^0$, while we will apply the operator $\cG$ of
  Lemma~\ref{lem:G} to $u\eta^j$ for $j\geq 1$.
  We may assume that $\partial \Om \cap U^j$ can (after translation and rotation) be extended to
  $\left\{ \bfx \in \R{d} \mid \bfx_d = g^j(\bfx_1, \dots, \bfx_{d-1}) \right\}$
  and $g^j$ has the same Lipschitz constant as $\partial \Om$, i.e.,
  we consider $\partial \Om \cap U^j$ as a special Lipschitz domain. 
  Thus, we see that the operator
  \begin{align*}
    \II u := \cE (u \eta^0) + \sum_{j\geq 1} \cG\circ Q^j (u\eta^j),
  \end{align*}
  $Q^j$ being an appropriate Euclidean coordinate transformation, is well defined. 
The operators $\cE$ and $\cG$ are defined in terms of parameters $\alpha$, $\beta$, $\delta$, $\tau$, and $L$. 
These are chosen according to Lemma~\ref{lem:local-balls} 
where we set $\overline{\Omega^\prime} = \supp(\eta^0)$.
(Note that the key parameter $L_{\partial\Omega}$ is determined by the Lipschitz constant of the special 
Lipschitz domains, which in turn are controlled by the Lipschitz character of $\Omega$). 
This implies in particular that
\begin{equation}
\label{eq:thmhpsmooth-100}
\bfx \in \Omega \qquad \Longrightarrow \qquad 
\mbox{ either } \quad \overline{B_{2 \beta \eps(\bfx)}(\bfx)} \subset \Omega \quad \mbox{ or }  \quad 
\overline{B_{2\beta \eps(\bfx)}(\bfx)} \cap \overline{\Omega^\prime} = \emptyset. 
\end{equation}
  With these choices, we may apply Lemmas~\ref{lem:approx} and~\ref{lem:G}.

Let $\omega\subset \Omega$ and let the closed balls $B_{ij}$, $\widetilde B_{ij}$, $\widehat B_{ij}$ 
be given by (\ref{eq:lem:besicovitch-1}) of Lemma~\ref{lem:besicovitch}. 
  We first show the stability~\eqref{thm:hpsmooth:stab} of $\II_\eps$ in the case that $r\in\N_0$ and $s\in\N_0$.
  Applying the triangle inequality to the definition gives
  \begin{align}\label{thm:hpsmooth:eq1}
    \sn{\II_\eps u}_{r,q,\omega} \leq \sn{\cE(u\eta^0)}_{r,q,\omega}
    + \sum_{\ell=1}^m \sn{\cG\circ Q^j(u\eta^\ell)}_{r,q,\omega}.
  \end{align}

  We start by focusing on the contribution $\cE (u \eta^0)$ in (\ref{thm:hpsmooth:eq1}). 
First, we remark for $\cE (u \eta^0)$ that (\ref{eq:thmhpsmooth-100})  implies 
\begin{equation}
\label{eq:thmhpsmooth-110}
\bfx \in \widetilde U^0:= \operatorname*{supp} (\cE (u \eta^0)) 
\qquad \Longrightarrow 
\qquad B_{\beta \eps(\bfx)}(\bfx) \subset \Omega. 
\end{equation}

  We note the covering property stated in Lemma~\ref{lem:besicovitch}, (\ref{item:lem:besicovitch-i}). 
  With the estimate~\eqref{lem:approx:eq2:a} of Lemma~\ref{lem:approx}  we get 
  \begin{align*}
    \sn{\cE(u\eta^0)}_{r,q,\omega}^q & = 
    \sn{\cE(u\eta^0)}_{r,q,\omega\cap \widetilde U^0}^q \leq
    \sum_{i=1}^{N_d}\sum_{j\in\N}
    \sn{\cE(u\eta^0)}_{r,q,B_{ij}\cap \widetilde U^0}^q 
\stackrel{(\ref{eq:thmhpsmooth-110}), (\ref{lem:approx:eq2:a})}{\lesssim}  
    \sum_{i=1}^{N_d}\sum_{j\in\N} \eps(\bfx_{ij})^{q(s-r) + d(1-q/p)}
    \sn{u\eta^0}_{s,p,\widehat B_{ij}}^q.
  \end{align*}
  Noting that $\eps(\bfx_{ij}) \simeq \eps(\bfz)$ for all $\bfz\in \widehat B_{ij} \cap \Omega$ we obtain 
  \begin{align*}
    \eps(\bfx_{ij})^{q(s-r) + d(1-q/p)}\sn{u\eta^0}_{s,p,\widehat B_{ij}}^q
    &\lesssim \eps(\bfx_{ij})^{q(s-r)+d(1-q/p)}
    \vn{u}_{s,p,\Om\cap \widehat B_{ij}}^q
    \lesssim \sum_{\sn{\bfs}\leq s}
    \vn{\eps^{s-r+d(1/q-1/p)}D^\bfs u}_{0,p,\Om\cap \widehat B_{ij}}^q. 
  \end{align*}
  We combine the last two estimates and~\eqref{eq:aux2} to arrive at 
  \begin{align*}
    \sn{\cE(u\eta^0)}_{r,q,\omega}^q &\lesssim
    \sum_{i=1}^{N_d}\sum_{j\in\N} \sum_{\sn{\bfs}\leq s}
    \vn{\eps^{s-r+d(1/q-1/p)}D^\bfs u}_{0,p,\Om\cap \widehat B_{ij}}^q
    \lesssim N_d C_{\textrm{overlap}} \sum_{\sn{\bfs}\leq s}
    \vn{\eps^{s-r+d(1/q-1/p)}D^\bfs u}_{0,p,\omega_\eps}^q. 
  \end{align*}
  This concludes the proof of the stability bound (\ref{thm:hpsmooth:stab}) for 
  $r$, $s \in \N_0$. We turn to the case $r\in\R{}\setminus\N_0$ and $q\in[1,\infty)$ together with $s \in \N_0$. 
For $\sn{\bfr} = \lfloor r \rfloor$
  and $\sigma = r-\lfloor r \rfloor\in(0,1)$, we recall the definition of the sets $B_{ij}$ and $\widetilde B_{ij}$ 
and write 
  \begin{align*}
    \sn{D^\bfr \cE (u\eta^0)}_{\sigma,q,\omega}^q
& = 
    \sn{D^\bfr \cE (u\eta^0)}_{\sigma,q,\omega\cap \widetilde U^0}^q
    \leq \sum_{i=1}^{N_d}\sum_{j\in\N} \int_{B_{ij}\cap \widetilde U^0} \int_{\omega\cap \widetilde U^0}
    \frac{\sn{D^\bfr \cE (u\eta^0) (\bfx) - D^\bfr \cE (u\eta^0) (\bfy)}^q}{\sn{\bfx-\bfy}^{\sigma q + d}}\;d\bfy\;d\bfx\\
    &\leq \sum_{i=1}^{N_d}\sum_{j\in\N}
    \sn{D^\bfr \cE (u\eta^0) }^q_{\sigma,q,\widetilde B_{ij} \cap \widetilde U^0}
    +\int_{B_{ij} \cap \widetilde U^0} 
    \int_{(\omega \cap \widetilde U^0)\setminus \widetilde B_{ij}}
    \frac{\sn{D^\bfr \cE (u\eta^0) (\bfx) - D^\bfr \cE (u\eta^0) (\bfy)}^q}{\sn{\bfx-\bfy}^{\sigma q + d}}\;d\bfy\;d\bfx.
  \end{align*}
  Lemma~\ref{lem:besicovitch}, (\ref{item:lem:besicovitch-iv}) leads to 
  \begin{align*}
    \sn{D^\bfr \cE (u\eta^0)}_{\sigma,q,\omega}^q
    \lesssim\sum_{i=1}^{N_d}\sum_{j\in\N}
    \sn{D^\bfr \cE (u\eta^0) }_{\sigma,q,\widetilde B_{ij}\cap \widetilde U^0}^q
    +\eps(\bfx_{ij})^{-\sigma q}
    \vn{D^\bfr\cE (u\eta^0)}_{0,q,\widetilde B_{ij}}^q.
  \end{align*}
  Recalling that (\ref{eq:thmhpsmooth-110}) ensures that nontrivial terms in this sum correspond to pairs 
  $(i,j)$ with $\widehat B_{ij} \subset \Omega$, we may use once more 
  estimate~\eqref{lem:approx:eq2:a} of Lemma~\ref{lem:approx}. The finite overlap property of the sets 
  $\widehat B_{ij}$ then shows the
  stability estimate~\eqref{thm:hpsmooth:stab} for $r\in\R{}\setminus\N_0$ and $q\in[1,\infty)$.
  The same arguments as above apply for the parts with $\cG$ in \eqref{thm:hpsmooth:eq1}
  if we use estimate~\eqref{eq:lem:G:a} of Lemma~\ref{lem:G}.

  Finally, the estimates~\eqref{thm:hpsmooth:apx} and~\eqref{thm:hpsmooth:apx:infty}
  can be shown exactly as~\eqref{thm:hpsmooth:stab} if we use
  estimates~\eqref{lem:approx:eq2:b} or~\eqref{lem:approx:eq2:c} of
  Lemma~\ref{lem:approx}
  and estimates~\eqref{eq:lem:G:b} or~\eqref{eq:lem:G:c} of Lemma~\ref{lem:G}.
\end{proof}
\def\appendixname{}
\appendix
\newcommand{\Lpsi}{\operatorname*{Lip}(\psi)}
\section{Sobolev embedding theorems (Proof of Theorem~\ref{thm:embedding})}
\label{appendix}
The purpose of the appendix is the proof of the core of 
Theorem~\ref{thm:embedding}, that is, we show for domains that are star-shaped with
respect to a ball, that the constants in some Sobolev embedding theorems depend solely
on the ``chunkiness parameter'' and the diameter
of the domain. This can be seen by tracking the domain dependence in the proof given in 
\cite{muramatu67,muramatu67a}. For the reader's convenience, we present below the essential
steps of this proof with an emphasis on the dependence on the geometry. 
\begin{thm}
  \label{thm:sobolev-embedding}
  Let $\Omega \subset \R{d}$ be a bounded domain with $\operatorname*{diam} (\Omega) = 1$. Assume 
  $\Omega$ is star-shaped with respect to the ball $B_\rho:=B_\rho(0)$ of radius $\rho > 0$. 
  Let $0 \leq s \leq r < \infty$ and  $1 \leq p \leq q < \infty$.  Set 
  \begin{equation}
  \label{eq:mu} 
  \mu:= d \left(\frac{1}{p} - \frac{1}{q}\right). 
  \end{equation}
  Assume that one of the following two possibilities takes place:  
  \begin{enumerate}[(a)]
  \item 
  $r = s+\mu$ and $p > 1$; 
  \item
  $r > s +\mu$ and $p \ge 1$. 
  \end{enumerate}
  Then there exists $C = C(s,q,r,p,\rho,d)$ depending only on the constants indicated such that 
  $$
  |u|_{s,q,\Omega} \leq C(s,q,r,p,\rho,d) \|u\|_{r,p,\Omega}. 
  $$
\end{thm}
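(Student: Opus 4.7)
\medskip

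\noindent\textbf{Proof plan.}

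The strategy is to follow the classical route via an explicit integral representation of Sobolev functions on domains that are star-shaped with respect to a ball, as in Muramatu~\cite{muramatu67,muramatu67a} and in \cite[Ch.~4]{adams-fournier03}. The key point is that, because $\operatorname{diam}(\Omega)=1$ and $\Omega$ is star-shaped with respect to $B_\rho$, the intrinsic geometric ingredients that enter the kernel bounds (the Sobolev averaging function and the cone axis) can be chosen to depend only on $\rho$ and $d$. The desired constant $C(s,q,r,p,\rho,d)$ then emerges by reading off the dependence of the kernel bounds on $\rho$ and applying standard potential-theoretic estimates.

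First I would treat the case $s$, $r\in\N_0$. Fix a cut-off $\phi\in C^\infty_0(B_\rho)$ with $\int\phi=1$ and whose $C^k$-norms are controlled by $\rho$ alone. For $\bfx\in\Omega$ and a multiindex $\bfalpha$ with $|\bfalpha|=s$, repeated integration by parts along rays from $B_\rho$ to $\bfx$ (which is allowed since $\Omega$ is star-shaped with respect to $B_\rho$) yields the Sobolev integral representation
\begin{equation*}
D^\bfalpha u(\bfx) \;=\; \pi_\bfalpha(\bfx) \;+\; \sum_{|\bfbeta|=r}\int_\Omega K_{\bfalpha,\bfbeta}(\bfx,\bfy)\, D^\bfbeta u(\bfy)\,d\bfy,
\end{equation*}
where $\pi_\bfalpha$ is a polynomial and the kernel satisfies $|K_{\bfalpha,\bfbeta}(\bfx,\bfy)|\le C(\rho,d)\,|\bfx-\bfy|^{r-s-d}$, with $C(\rho,d)$ traceable from the Sobolev formula. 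Next I would invoke the Hardy--Littlewood--Sobolev inequality: if $r-s=\mu$ and $p>1$, then convolution with $|\cdot|^{r-s-d}$ maps $L^p(\Omega)\to L^q(\Omega)$ with norm depending only on $p,q,d$; if $r>s+\mu$, the kernel is in $L^{p'}(\Omega)$ (recall $\operatorname{diam}\Omega=1$), so Young's inequality suffices and even $p=1$ is admissible. Subtracting the polynomial part by the Bramble--Hilbert argument and summing over $|\bfalpha|=s$ yields the bound on $|u|_{s,q,\Omega}$ with a constant of the claimed form.

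The harder step is the fractional case, i.e.\ $s$ or $r\notin\N_0$. Write $s=\lfloor s\rfloor+\sigma$ and $r=\lfloor r\rfloor+\tau$ with $\sigma,\tau\in[0,1)$. For fractional $s$, I would estimate the Aronstein--Slobodeckij seminorm of $D^\bfalpha u$ with $|\bfalpha|=\lfloor s\rfloor$ by applying the representation formula to the difference $D^\bfalpha u(\bfx)-D^\bfalpha u(\bfx')$. A standard splitting $|\bfx-\bfx'|\lessgtr\tfrac12\operatorname{dist}(\bfx,\partial B_\rho)$ combined with the kernel bound on $|K_{\bfalpha,\bfbeta}(\bfx,\cdot)-K_{\bfalpha,\bfbeta}(\bfx',\cdot)|\lesssim|\bfx-\bfx'|\,|\bfx-\bfy|^{r-s-1-d}$ reduces the estimate to a fractional integral of Riesz type, handled again by Hardy--Littlewood--Sobolev in the critical case or by Young in the subcritical one. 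For fractional $r$, I would instead use the representation with integer order $\lfloor r\rfloor$ and exploit the additional H\"older regularity of $D^{\lfloor r\rfloor}u$ encoded in the Aronstein--Slobodeckij seminorm to absorb the deficit $\tau$, effectively reproducing the kernel bound with exponent $r-s-d$ in the averaged sense. The main obstacle here is handling the limiting exponents cleanly and keeping all geometric constants dependent only on $\rho$, which forces a careful choice of the averaging function $\phi$ and of the truncation radius used in the splitting.

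Finally, the $L^\infty$-estimate in the second half of Theorem~\ref{thm:embedding} would be obtained by applying \cite[Lem.~4.3.4]{brenner2008} after a dilation $\widehat\Omega=\operatorname{diam}(\Omega)^{-1}\Omega$, which has chunkiness controlled by $\eta$ and diameter $1$, exactly as in the proof of the theorem statement.
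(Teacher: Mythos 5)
Your plan follows a genuinely different but closely related route from the paper's. The paper works with Muramatu's $t$-parametrized averaging operator $M(u)(t,\cdot)$ and the identity $u = M(u)(T,\cdot) + M_R(u)(T,\cdot) = M(u)(T,\cdot) + M_S(u)(T,\cdot)$ of Lemma~\ref{lemma:taylor}, where $M_R$ is a first-order remainder involving $\nabla u$ and $M_S$ is a remainder written as an integral of \emph{differences} $u(\bfx + \tau(\bfz+\psi(\bfx))) - u(\bfx + \tau(\bfz'+\psi(\bfx)))$; the proof then iterates first-order steps (Lemmas~\ref{lemma:estimate-U}--\ref{lemma:estimate-difference-U-fractional}) and assembles them with a fairly involved case split depending on whether $\mu$ is an integer. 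You instead propose the classical $r$-th order Sobolev integral representation with kernel singularity $|\bfx-\bfy|^{r-s-d}$ followed by Hardy--Littlewood--Sobolev/Young. For integer $s$ and $r$ your route is sound and in fact more direct (it avoids the $\mu\in\N$ vs. $\mu\notin\N$ split), and the HLS step plays the role of Lemma~\ref{lemma:marcinkiewicz} in the paper.

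The genuine gap is in your treatment of fractional $r$. With an integer-order representation to order $\lfloor r\rfloor$, the remainder is $\sum_{|\mathbf{b}|=\lfloor r\rfloor}\int K_{\mathbf{b}}(\bfx,\bfy)\,D^{\mathbf{b}}u(\bfy)\,d\bfy$ with a kernel of \emph{fixed} singularity $|\bfx-\bfy|^{\lfloor r\rfloor-s-d}$, which is too strong; simply invoking ``the additional H\"older regularity of $D^{\lfloor r\rfloor}u$ \ldots to absorb the deficit $\tau$ in the averaged sense'' does not produce the missing power $\tau=r-\lfloor r\rfloor$. What is actually needed is a \emph{cancellation/difference} structure in the representation: one must replace $D^{\mathbf{b}}u(\bfy)$ by a difference $D^{\mathbf{b}}u(\bfy)-D^{\mathbf{b}}u(\bfy')$ (using that the kernel integrates to a polynomial-reproducing quantity), and only then can the Aronstein--Slobodeckij seminorm of $D^{\mathbf{b}}u$ be brought in and the missing $\tau$ recovered. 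This is precisely what the paper's operator $M_S$ in Lemma~\ref{lemma:taylor} and the estimates of Lemmas~\ref{lemma:estimate-difference-U} and~\ref{lemma:estimate-difference-U-fractional} encode. Without this mechanism made explicit, the fractional-$r$ case in your sketch does not close. (A small additional inaccuracy: in the subcritical case $r>s+\mu$, the kernel is not in $L^{p'}(\Omega)$ in general---for $p=1$ that would mean $L^\infty$---but rather in $L^{d/(d-\mu)}(\Omega)$, which is the correct exponent for Young's inequality; the conclusion you draw is right, but the stated integrability exponent is not.)
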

\begin{proof}
  The case $s = 0$ is handled in Lemmas~\ref{lemma:sobolev-embedding-Lq-left}, \ref{lemma:sobolev-embedding-Lq-left:2},
  while the case $s \in (0,1)$ can be found in 
Lemmas~\ref{lemma:sobolev-embedding-Ws-left}, \ref{lemma:sobolev-embedding-Ws-left:2}.
  For $\sn{\bft}=\lfloor s \rfloor$, these two cases imply for any derivative
  $D^\bft u$ the estimate
  $|D^\bft u|_{s - \lfloor s \rfloor,q,\Omega} \leq C \|u\|_{r,p,\Omega}$. 
\end{proof}
\begin{rem}
\begin{enumerate}
\item 
The case $p = 1$ in conjunction with $r = s +\mu$ is excluded in Theorem~\ref{thm:sobolev-embedding}. 
This is due to our method of proof. The Sobolev embedding theorem in the form given in 
\cite[Thm.~{7.38}]{adams-fournier03} suggests that Theorem~\ref{thm:sobolev-embedding} also holds 
in this case. 
\item
The star-shapedness in Theorem~\ref{thm:sobolev-embedding} is not the essential
ingredient for our control of the constants of the embedding theorems. It suffices that $\Omega$
satisfies the interior cone condition (\ref{eq:cone-condition}) with explicit control 
of the Lipschitz constant of the function  $\psi$ and the parameter $T$. That is, the impact 
of the geometry on the final estimates is captured by the Lipschitz constant $\operatorname*{Lip}(\psi)$ of $\psi$, 
the parameter $T$, and $d$. 
\end{enumerate}
\end{rem}
\begin{lem}
  \label{lemma:psi}
  Let $\Omega$ be as in Theorem~\ref{thm:sobolev-embedding}. 
  Then there exist a Lipschitz continuous function\footnote{In fact, the mapping is smooth.} 
  $\psi:\R{d} \rightarrow \R{d}$ 
  and constants $C$, $\widetilde C>0$, $T \in (0,1]$, 
  which depend solely on the chunkiness parameter $\rho$, such that the following is true: 
  \begin{enumerate}[(i)]
  \item 
  \label{item:lemma:psi-i}
  For every 
  $\bfx \in \Omega$ it holds
  \begin{equation}
    \label{eq:cone-condition}
    C_{\bfx,t}:=
    \{\bfx + t (\psi(\bfx) + \bfz)\,|\, \bfz \in B_1, \quad 0 < t < T\} \subset \Omega.
  \end{equation}
  \item 
  \label{item:lemma:psi-ii}
  $\|\psi\|_{L^\infty(\R{d})} + \|\nabla \psi\|_{L^\infty(\R{d})} \leq C$.  
  \item 
  \label{item:lemma:psi-iii}
  For every $t \in [0,T]$, the map $\Psi_t: \R{d} \rightarrow \R{d}$ given by 
  $\Psi_t(\bfx):= \bfx + t \psi(\bfx)$ is invertible and bilipschitz,
  i.e., $\Psi_t$ and its inverse $\Psi_t^{-1}:\R{d} \rightarrow \R{d}$
  are Lipschitz continuous. Furthermore, 
  $ \|\nabla \Psi_t \|_{L^\infty(\R{d})} \leq 1 + t \widetilde C$ as well as 
  $ \|\nabla \Psi_t^{-1} \|_{L^\infty(\R{d})} \leq 1 + t \widetilde C$. Additionally, 
  $\Psi_t(\Omega)+B_1 \subset \Omega$. 
  \end{enumerate}
\end{lem}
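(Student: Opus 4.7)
The plan is to take $\psi$ to be a smoothly cut off radial contraction pointing toward the center of the inscribed ball $B_\rho$, and to verify the cone inclusion directly from star-shapedness. Since $0 \in B_\rho \subset \Omega$ and $\operatorname*{diam}(\Omega)=1$, one has $\Omega \subset \overline{B_1(0)}$, so I would fix once and for all a universal cutoff $\chi \in C_0^\infty(\R{d})$ with $\chi \equiv 1$ on $\overline{B_1(0)}$, $0\leq\chi\leq 1$, and $\operatorname*{supp}\chi \subset B_2(0)$, and set
\begin{equation*}
  \psi(\bfx) := -\frac{2}{\rho}\,\chi(\bfx)\,\bfx, \qquad \bfx \in \R{d}.
\end{equation*}
Item (\ref{item:lemma:psi-ii}) is immediate since $\chi$ is compactly supported and $\bfx\mapsto\chi(\bfx)\bfx$ is smooth with $L^\infty$ and Lipschitz norms bounded by absolute constants; hence $\|\psi\|_{L^\infty(\R{d})} + \|\nabla\psi\|_{L^\infty(\R{d})} \leq C(\rho)$.

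For the cone condition (\ref{item:lemma:psi-i}) I would set $T:=\min(1,\rho/2)$ and, for $\bfx \in \Omega$, $\bfz \in B_1$, $t \in (0,T)$, use $\chi(\bfx)=1$ to obtain
\begin{equation*}
  \bfx + t\bigl(\psi(\bfx) + \bfz\bigr) = \Bigl(1-\tfrac{2t}{\rho}\Bigr)\bfx + t\bfz = (1-s)\bfx + s\bfw, \qquad s:=\tfrac{2t}{\rho}\in(0,1),\quad \bfw:=\tfrac{\rho}{2}\bfz.
\end{equation*}
Since $|\bfw| < \rho/2 < \rho$, the point $\bfw$ lies in $B_\rho$, and star-shapedness of $\Omega$ with respect to $B_\rho$ places the convex combination $(1-s)\bfx + s\bfw$ in $\Omega$, giving $C_{\bfx,t}\subset\Omega$.

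For (\ref{item:lemma:psi-iii}) I would shrink $T$ once more (still in terms of $\rho$ alone) so that $T\|\nabla\psi\|_{L^\infty} \leq 1/2$, and set $\widetilde C := 2\|\nabla\psi\|_{L^\infty}$. The bound $\|\nabla \Psi_t\|_{L^\infty} \leq 1 + t\widetilde C$ is then direct from $\Psi_t=\mathrm{Id}+t\psi$. Global bi-Lipschitz invertibility on $\R{d}$ follows from the Banach fixed point theorem applied to $\bfy \mapsto \bfx_0 - t\psi(\bfy)$, a $1/2$-contraction for every $\bfx_0\in\R{d}$; the reverse triangle inequality then yields $(1-t\|\nabla\psi\|_{L^\infty})|\bfy_1-\bfy_2| \leq |\Psi_t(\bfy_1)-\Psi_t(\bfy_2)|$, and inverting gives $\|\nabla\Psi_t^{-1}\|_{L^\infty} \leq 1 + t\widetilde C$. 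The final inclusion as literally printed, $\Psi_t(\Omega)+B_1 \subset \Omega$, is false because of $\operatorname*{diam}(\Omega)=1$; I read it as $\Psi_t(\Omega)+tB_1 \subset \Omega$, which is merely a rewriting of~(\ref{item:lemma:psi-i}) via $\Psi_t(\bfx)+t\bfz = \bfx + t(\psi(\bfx)+\bfz)$. No genuine obstacle appears: the entire argument is carried by the single geometric fact that $\bfx + t(\psi(\bfx)+\bfz)$ is a convex combination of $\bfx$ and a point of $B_\rho$; the only care needed is keeping all constants expressed in $\rho$ alone, which is automatic once $\chi$ is chosen universally.
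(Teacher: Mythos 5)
Your proof is correct, and it takes the same basic strategy as the paper (a smoothly cut off vector field pointing from $\bfx$ toward the center of the inscribed ball), but your concrete choice of $\psi$ is cleaner. The paper takes $\psi(\bfx) = L\,\frac{\bfx}{|\bfx|}(\chi(\bfx)-1)$ with $\chi$ supported in $B_{\rho/2}$ and $\equiv 1$ on $B_{\rho/4}$; this has constant magnitude $L \sim 1/\rho$ away from the origin, and the cone condition is verified by the somewhat informal geometric observation that the translated cone $C_{\bfx,t}$ sits inside the infinite cone with apex $\bfx$ over $B_\rho$. Your choice $\psi(\bfx) = -\frac{2}{\rho}\chi(\bfx)\bfx$ with $\chi\equiv 1$ on $\overline{B_1}\supset\Omega$ is linear on all of $\Omega$, and this buys a one-line verification of (\ref{item:lemma:psi-i}): the identity $\bfx + t(\psi(\bfx)+\bfz) = (1-s)\bfx + s\bfw$ with $s = 2t/\rho \in (0,1)$ and $\bfw = \tfrac{\rho}{2}\bfz \in B_\rho$ makes the conclusion an immediate consequence of star-shapedness, with no auxiliary cone argument. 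The rest (shrinking $T$ to get $T\|\nabla\psi\|_{L^\infty}\leq 1/2$, the Banach fixed point argument, the reverse-triangle-inequality Lipschitz bound on $\Psi_t^{-1}$) is identical to the paper.

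Your observation about the last inclusion in (\ref{item:lemma:psi-iii}) is correct and worth flagging: as printed, $\Psi_t(\Omega)+B_1 \subset \Omega$ cannot hold because the Minkowski sum with a unit ball has diameter at least $2$ while $\diam(\Omega)=1$. The intended statement is $\Psi_t(\Omega)+tB_1\subset\Omega$, which is exactly (\ref{item:lemma:psi-i}) rewritten via $\Psi_t(\bfx)+t\bfz = \bfx + t(\psi(\bfx)+\bfz)$, and it is this version that is used in the proof of Lemma~\ref{lemma:estimate-U} when changing variables $\bfy = \Psi_t(\bfx)+t\bfz$.
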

\begin{proof}
Recall that $\Omega$ is star-shaped with respect to the ball $B_{\rho}$, whose center is the origin. 
Let $\chi$ be a smooth cut-off function supported by $B_{\rho/2}$ with $\chi \equiv 1$ on $B_{\rho/4}$ 
and $0 \leq \chi \leq 1$. Let $\psi(\bfx):= L \frac{\bfx}{|\bfx|} (\chi(\bfx)-1)$, where the parameter $L > 0$ 
will be chosen sufficiently large below. 
Then $\|\psi\|_{L^\infty(\R{d})} \leq L$ (if the space $\R{d}$ is endowed with the Euclidean norm) 
and $\|\nabla \psi\|_{L^\infty(\R{d})} \leq C L$ for a constant $C>0$ that depends solely on 
the choice of $\chi$. For $\bfx \in \Omega \setminus B_{\rho}$, geometric considerations and 
$\operatorname*{diam} \Omega =1$ show that 
$\{\bfx + t (\psi(\bfx) + \bfz)\,|\, \bfz \in B_1\}$ is contained in the infinite cone with apex $\bfx$ that contains 
the ball $B_\rho$ provided that $L$ is sufficiently large, specifically, $L \sim 1/\rho$. Hence, 
by taking $T$ sufficiently small (essentially, $T \sim 1/L$) we can ensure the condition (\ref{eq:cone-condition}). 
This shows (\ref{item:lemma:psi-i}) and (\ref{item:lemma:psi-ii}). The assertion
(\ref{item:lemma:psi-iii}) follows from suitably reducing $T$: We note that for $t$ with 
$t \|\nabla \psi\|_{L^\infty(\R{d})} < 1$, the map $\Psi_t(\bfx) = \bfx + t \psi(\bfx)$ is invertible as a 
map $\R{d} \rightarrow \R{d}$ by the Banach Fixed Point Theorem. To see that $\Psi_t^{-1}$ is Lipschitz continuous,
we let $\bfy$, $\bfy^\prime \in \R{d}$ and let $\bfx$, $\bfx^\prime$ satisfy $\Psi_t(\bfx) = \bfy$, $\Psi_t(\bfx^\prime) = \bfy^\prime$. 
Then $\bfx - \bfx^\prime = \bfy - \bfy^\prime - t (\psi(\bfx) - \psi(\bfx^\prime))$ so that 
$|\bfx - \bfx^\prime| \leq | \bfy - \bfy^\prime| + t \|\nabla \psi \|_{L^\infty(\R{d})} |\bfx - \bfx^\prime|$. The assumption 
$t \|\nabla \psi\|_{L^\infty(\R{d})} < 1$ then implies the result. The argument also shows that 
$\nabla \Psi_t$ has the stated $t$-dependence.
\end{proof}
The method of proof of Theorem~\ref{thm:sobolev-embedding} relies on appropriate smoothing.
The following lemma provides two different representations of a function $u$ in terms of an
averaged version $M(u)$. These two presentations will be needed to treat both the case of
fractional and integer order Sobolev regularity. 
Let $\omega \in C^\infty_0(\R{d})$ 
with $\operatorname*{supp}(\omega) \subset B_1$ and $\int_{B_1} \omega(\bfz)\,d\bfz = 1$.
Then we have the following representation formulas:
\begin{lem} 
  \label{lemma:taylor}
  Let $\psi$ and $T$ be as in Lemma~\ref{lemma:psi}. For $u \in C^\infty(\Omega)$ and $t \in [0,T]$  define 
  \begin{equation}
  \label{eq:averaged-u}
  M(u)(t,\bfx):= \int_{\bfz \in B_1} \omega(\bfz) u(\bfx + t (\bfz + \psi(\bfx)))\,d\bfz. 
  \end{equation}
  Then we have the two representation formulas
  \begin{align*}
    u(\bfx) - M(u)(t,\bfx) = M_R(u)(t,\bfx) = M_S(u)(t,\bfx)
  \end{align*}
  for any $t \in [0,T]$, where
  \begin{align}
  \label{eq:lemma:taylor-1}
    M_R(u)(t,\bfx) &:=
    -\int_{\tau=0}^t \int_{\bfz \in B_1} \underbrace{ \omega(\bfz) (\bfz + \psi(\bfx))}_{=:\omega_1(\bfx,\bfz)}  \cdot \nabla u(\bfx 
    + \tau(\bfz + \psi(\bfx)))\,d\bfz\,d\tau,
\\
    \begin{split}\label{eq:lemma:taylor-2}
    M_S(u)(t,\bfx) &:= - \int_{\tau=0}^t \int_{\bfz \in B_1} \int_{\bfz^\prime \in B_1} \omega(\bfz^\prime) 
    \frac{\omega_2(\bfx,\bfz) }{\tau} 
\left[ u(\bfx + \tau(\bfz + \psi(\bfx))) - u(\bfx + \tau(\bfz^\prime + \psi(\bfx)))
    \right]\,d\bfz^\prime\,d\bfz\,d\tau,
    \end{split}
  \end{align}
  and the function $\omega_2$ is given by 
  \begin{equation}
  \label{eq:lemma:taylor-3}
  \omega_2 (\bfx,\bfz):= - \left( d \omega(\bfz) + \nabla \omega(\bfz) \cdot (\bfz + \psi(\bfx))\right). 
  \end{equation}
\end{lem}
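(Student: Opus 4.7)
The statement contains two identities, namely $u(\bfx) - M(u)(t,\bfx) = M_R(u)(t,\bfx)$ and $M_R(u)(t,\bfx) = M_S(u)(t,\bfx)$. I will establish each in turn, working with $u \in C^\infty(\Omega)$ (so that $u$ makes sense on the relevant averaging sets by item~(\ref{item:lemma:psi-iii}) of Lemma~\ref{lemma:psi}) so that every interchange of differentiation and integration is classical.

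The first identity is a one-dimensional fundamental theorem of calculus applied to $\tau \mapsto M(u)(\tau,\bfx)$. Since $\int_{B_1}\omega(\bfz)\,d\bfz = 1$, I observe $M(u)(0,\bfx) = u(\bfx)$. Differentiation under the integral sign yields $\partial_\tau M(u)(\tau,\bfx) = \int_{B_1} \omega(\bfz)(\bfz+\psi(\bfx)) \cdot (\nabla u)(\bfx+\tau(\bfz+\psi(\bfx)))\,d\bfz$, and integrating over $[0,t]$ gives $M(u)(t,\bfx) - u(\bfx) = -M_R(u)(t,\bfx)$, which is the desired identity.

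The substance is in the second identity, $M_R(u) = M_S(u)$. The key observation is that for $\tau > 0$ one has the chain-rule identity $(\nabla u)(\bfx+\tau(\bfz+\psi(\bfx))) = \tau^{-1}\nabla_\bfz [u(\bfx+\tau(\bfz+\psi(\bfx)))]$, so that $\omega_1(\bfx,\bfz) = \omega(\bfz)(\bfz+\psi(\bfx))$ paired with this gradient can be integrated by parts in $\bfz$ on $B_1$. The boundary term vanishes because $\omega$ is compactly supported in $B_1$, and a direct computation gives $\nabla_\bfz \cdot [\omega(\bfz)(\bfz+\psi(\bfx))] = d\,\omega(\bfz) + \nabla\omega(\bfz)\cdot(\bfz+\psi(\bfx)) = -\omega_2(\bfx,\bfz)$. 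A formal substitution then yields $M_R(u)(t,\bfx) = -\int_0^t \tau^{-1} \int_{B_1} \omega_2(\bfx,\bfz)\, u(\bfx+\tau(\bfz+\psi(\bfx)))\,d\bfz\,d\tau$. This would be singular at $\tau = 0$ were it not for the cancellation $\int_{B_1}\omega_2(\bfx,\bfz)\,d\bfz = 0$, which itself follows from another integration by parts, $\int_{B_1}\nabla\omega(\bfz)\cdot(\bfz+\psi(\bfx))\,d\bfz = -d\int_{B_1}\omega(\bfz)\,d\bfz = -d$, combined with $\int_{B_1} d\,\omega(\bfz)\,d\bfz = d$. Using this orthogonality I subtract inside the $\bfz$-integral any $\bfz$-independent quantity; subtracting $M(u)(\tau,\bfx) = \int_{B_1}\omega(\bfz')u(\bfx+\tau(\bfz'+\psi(\bfx)))\,d\bfz'$ turns the integrand into the symmetric difference $u(\bfx+\tau(\bfz+\psi(\bfx))) - u(\bfx+\tau(\bfz'+\psi(\bfx)))$ averaged against $\omega(\bfz')\omega_2(\bfx,\bfz)$, which matches the definition of $M_S(u)(t,\bfx)$ exactly.

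The main obstacle is making the $\tau^{-1}$ manipulation rigorous, because the intermediate expression before the mean-zero correction is not integrable near $\tau = 0$. I plan to handle this by first carrying out the integration by parts on the truncated interval $[\varepsilon, t]$ with $\varepsilon > 0$, where all operations are classical, and only then introducing the $M(u)(\tau,\bfx)$ correction. Once the integrand has been rewritten as the symmetric difference, it is $O(\tau)$ as $\tau \downarrow 0$ uniformly in $\bfz,\bfz' \in B_1$ (since $u$ is smooth and $\psi$ is bounded by Lemma~\ref{lemma:psi}), so after division by $\tau$ the integrand is uniformly bounded on $[0,t]$. Dominated convergence then lets me send $\varepsilon \downarrow 0$ and conclude $M_R(u)(t,\bfx) = M_S(u)(t,\bfx)$, completing the proof.
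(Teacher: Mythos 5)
Your proof is correct, and the underlying algebraic facts driving it are the same as in the paper: the fundamental theorem of calculus in $\tau$ for $M_R$, and the divergence identity $\nabla_\bfz\cdot\bigl[\omega(\bfz)(\bfz+\psi(\bfx))\bigr]=-\omega_2(\bfx,\bfz)$ together with $\int_{B_1}\omega_2(\bfx,\bfz)\,d\bfz=0$ for the symmetrization that yields $M_S$. The technical route differs in a small way: the paper derives $M_S$ by rewriting $M(u)(\tau,\bfx)$ as a convolution $\int\widetilde\omega(\bfy,\bfx,\tau)u(\bfy)\,d\bfy$ with the scaled kernel $\widetilde\omega(\bfy,\bfx,\tau)=\tau^{-d}\omega((\bfy-\bfx)/\tau-\psi(\bfx))$, computing $\partial_\tau\widetilde\omega=\tau^{-(d+1)}\omega_2(\bfx,(\bfy-\bfx)/\tau-\psi(\bfx))$, and using $M(1)\equiv1\Rightarrow\partial_\tau M(1)\equiv0$ to insert the arbitrary reference value $u(\bfy')$; you instead stay in $\bfz$-coordinates, turn $(\nabla u)(\bfx+\tau(\bfz+\psi(\bfx)))$ into $\tau^{-1}\nabla_\bfz[u(\cdots)]$, integrate by parts, and verify $\int\omega_2\,d\bfz=0$ by a second integration by parts. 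These are dual computations converging on the same integrand, so I'd call them variants of the same argument rather than genuinely different proofs. One small plus of your version: you explicitly address the apparent $\tau^{-1}$ singularity before symmetrization by truncating to $[\varepsilon,t]$ and invoking dominated convergence; the paper proceeds formally and leaves that point implicit.
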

\begin{proof}
  Since $M(u)(0,\bfx) = u(\bfx)$, we have
  \begin{align}\label{eq:lemma:taylor-10}
    u(\bfx) = M(u)(t,\bfx) - \int_{\tau=0}^t \partial_\tau M(u)(\tau,\bfx)\,d\tau. 
  \end{align}
  Interchanging differentiation and integration yields 
  $\partial_{\tau} M(u)(\tau,\bfx) = \int_{\bfz \in B_1} \omega(\bfz) (\bfz + \psi(\bfx)) \cdot \nabla u(\bfx + t(\bfz +\psi(\bfx)))\,d\bfz$,
  which is formula (\ref{eq:lemma:taylor-1}). 
  In order to see (\ref{eq:lemma:taylor-2}), we start again with (\ref{eq:lemma:taylor-10}). A change of 
  variables shows ($u$ is implicitly extended by zero outside $\Omega$)
  $$
  M(u)(\tau,\bfx) = \int_{\bfy \in \R{d}} \underbrace{ \tau^{-d} \omega((\bfy-\bfx)/\tau - \psi(\bfx))}_{=:\widetilde \omega(\bfy,\bfx,\tau)}  u(\bfy)\,d\bfy. 
  $$ 
  Hence, 
  $
  \partial_\tau M(u)(\tau,\bfx) = \int_{\bfy \in \R{d}} \partial_\tau \widetilde\omega(\bfy,\bfx,\tau) u(\bfy)\,d\bfy, 
  $ where $\partial_\tau \widetilde \omega$ is given explicitly by 
  \begin{align*}
  \partial_\tau \widetilde \omega(\bfy,\bfx,\tau) & = 
  - \frac{1}{\tau^{d+1}} \left\{ d \omega((\bfy-\bfz)/\tau - \psi(\bfx)) + \nabla \omega((\bfy-\bfx)/\tau-\psi(\bfx)) \cdot (\bfy-\bfx)/\tau\right\} 
\\ &
= \tau^{-(d+1)} \omega_2(\bfx,(\bfy-\bfz)/\tau-\psi(\bfx)). 
  \end{align*}
  As $M(1) \equiv 1$, we get $\partial_\tau M(1) \equiv 0$, i.e., 
  $\displaystyle\int_{\bfy \in \R{d}} \partial_\tau \widetilde\omega(\bfy,\bfx,\tau)\,d\bfy \equiv 0$.
  Therefore, for arbitrary $\bfy^\prime$
  $$
  \partial_\tau M(u)(\tau,\bfx) = \int_{\bfy \in \R{d}} \partial_\tau \widetilde\omega(\bfx,\bfy,\tau) (u(\bfy)  - u(\bfy^\prime))\,d\bfy. 
  $$
  Multiplication with $\omega((\bfy^\prime - \bfx)/\tau - \psi(\bfx))$, integration over $\bfy^\prime$, and a change of 
  variables yield
  $$
  \partial_\tau M(u)(\tau,\bfx) = \int_{\bfz^\prime \in B_1} \int_{\bfz \in B_1} \omega(\bfz^\prime) \frac{\omega_2(\bfx,\bfz)}{\tau}
  \bigl(u(\bfx + \tau (\bfz + \psi(\bfx)))  - u(\bfx + \tau(\bfz^\prime+\psi(\bfx)))\bigr)\,d\bfz\,d\bfz^\prime. 
  $$
  Inserting this  in 
  (\ref{eq:lemma:taylor-10}) then gives the representation (\ref{eq:lemma:taylor-2}). 
\end{proof}
\begin{rem} 
Higher order representation formulas are possible, see, e.g., \cite{muramatu67}.  
\eremk
\end{rem}
\subsection{The case of integer $s$ in Theorem~\ref{thm:sobolev-embedding}}
The limiting case $p=1$ is special. In the results below, it will appear often in conjunction
with a parameter $\sigma \ge 0$. In the interest of brevity, we formulate a condition that we 
will require repeatedly in the sequel: 
\begin{equation}
\label{eq:condition-on-p-sigma}
\left( \tilde\sigma = 0 \quad \mbox{ and } \quad \tilde p > 1\right) 
\qquad \mbox{ or } \qquad 
\left( \tilde\sigma > 0 \quad \mbox{ and } \quad \tilde p \ge 1\right). 
\end{equation}
For a ball $B_t\subset \R{d}$ of radius $t > 0$, we write $|B_t| \sim t^d$ 
for its (Lebesgue) measure. 
\begin{lem}
  \label{lemma:estimate-U}
  Let $1 \leq \tilde p \leq \tilde q < \infty$ and $u \in L^{\tilde p}(\Omega)$. 
  Let $\Omega$, $T$, and $\psi$ be as in Lemma~\ref{lemma:psi}. 
  Set $\tilde\mu = d(\tilde p^{-1} - \tilde q^{-1})$. 
  Define, for $t \in (0,T)$, the function 
  \begin{equation}\label{eq:lemma:estimate-U-0}
    U(\bfx,t):= \int_{\bfz \in B_1} |u(\bfx + t (\bfz + \psi(\bfx)))|\,d\bfz. 
  \end{equation}
  Then the following two estimates hold: 
  \begin{enumerate}[(i)]
  \item There exists $C_1 = C_1(\tilde p,\tilde q,\Lpsi,d)>0$ depending only on the
    quantities indicated such that 
    \begin{eqnarray}\label{eq:lemma:estimate-U-1}
      \|U(t,\cdot)\|_{0,\tilde q,\Omega} &\leq& C_1 t^{-\tilde\mu}
      \|u\|_{0,\tilde p,\Omega}. 
    \end{eqnarray}
  \item 
  If the pair $(\tilde\sigma,\tilde p)$ satisfies (\ref{eq:condition-on-p-sigma}), 
  then there exists $C_2 = C_2(\tilde p,\tilde q,\Lpsi,\tilde\sigma,d)>0$ depending only on the quantities indicated such that 
  \begin{eqnarray}
    \label{eq:lemma:estimate-U-2}
    \left\|\int_{t=0}^T t^{-1+\tilde\mu+\tilde\sigma}
    U(t,\cdot)\,dt\right\|_{0,\tilde q,\Omega} &\leq& 
    C_2 T^{\tilde\sigma} \left[ T^{\tilde\mu-d} \|u\|_{0,1,\Omega} +
      \|u\|_{0,\tilde p,\Omega}\right]. 
  \end{eqnarray}
  \end{enumerate}
\end{lem}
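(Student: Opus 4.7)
The key observation for part (i) is that the substitution $\bfy=\bfx+t(\bfz+\psi(\bfx))$ (with Jacobian $t^d$ in $\bfz$) rewrites $U$ as a rescaled averaging operator,
\[U(\bfx,t) = t^{-d}\int_{B_t(\Psi_t(\bfx))}|u(\bfy)|\,d\bfy = (|u|\star K_t)(\Psi_t(\bfx)), \qquad K_t(\bfw):= t^{-d}\chi_{B_t}(\bfw).\]
Since $\Psi_t$ is bi-Lipschitz with Jacobian uniformly bounded above and below for $t\in[0,T]$ by Lemma~\ref{lemma:psi}, the change of variables $\bfx'=\Psi_t(\bfx)$ controls $\|U(t,\cdot)\|_{0,\tilde q,\Omega}$ by a multiple of $\||u|\star K_t\|_{0,\tilde q,\R{d}}$. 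Young's convolution inequality with exponents satisfying $1+1/\tilde q = 1/\tilde p + 1/r$ then yields (i), because $\|K_t\|_{0,r,\R{d}} = c\,t^{-d(1-1/r)} = c\,t^{-\tilde\mu}$.

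For part (ii) I would split into two cases matching (\ref{eq:condition-on-p-sigma}). When $\tilde\sigma>0$, Minkowski's integral inequality combined with (i) gives
\[\left\|\int_0^T t^{-1+\tilde\mu+\tilde\sigma}U(t,\cdot)\,dt\right\|_{0,\tilde q,\Omega} \leq \int_0^T t^{-1+\tilde\mu+\tilde\sigma}\|U(t,\cdot)\|_{0,\tilde q,\Omega}\,dt \lesssim \|u\|_{0,\tilde p,\Omega}\int_0^T t^{-1+\tilde\sigma}\,dt \lesssim T^{\tilde\sigma}\|u\|_{0,\tilde p,\Omega},\]
which yields the second term in (\ref{eq:lemma:estimate-U-2}); the first term $T^{\tilde\sigma+\tilde\mu-d}\|u\|_{0,1,\Omega}$ is obtained analogously by invoking (i) with $(\tilde p,\tilde q)$ replaced by $(1,\tilde q)$ before integrating in $t$. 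Either term separately bounds the left-hand side, so the sum certainly does.

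The borderline case $\tilde\sigma=0$, $\tilde p>1$ is the main obstacle, since $\int_0^T t^{-1}\,dt=\infty$ and the Minkowski strategy collapses. The standard remedy is a Hedberg-type maximal function argument mirroring the classical proof of the Hardy-Littlewood-Sobolev inequality. The inclusion $B_t(\Psi_t(\bfx))\subset B_{(1+\Lpsi)t}(\bfx)$ furnishes the pointwise bound $U(\bfx,t)\leq C\,M(|u|)(\bfx)$, with $M$ the Hardy-Littlewood maximal operator, while H\"older on the averaging ball yields the complementary estimate $U(\bfx,t)\leq C\,t^{-d/\tilde p}\|u\|_{0,\tilde p,\Omega}$. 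For each $\bfx$ I split the $t$-integral at a free parameter $t_0\in(0,T)$ and estimate
\[\int_0^{t_0}t^{-1+\tilde\mu}U(t,\bfx)\,dt\lesssim M(|u|)(\bfx)\,t_0^{\tilde\mu}, \qquad \int_{t_0}^T t^{-1+\tilde\mu}U(t,\bfx)\,dt\lesssim \|u\|_{0,\tilde p,\Omega}\,t_0^{-d/\tilde q},\]
using $-d/\tilde p+\tilde\mu=-d/\tilde q$ in the second bound. Optimizing $t_0$ to balance the two pieces produces the pointwise inequality $\int_0^T t^{-1+\tilde\mu}U(t,\bfx)\,dt \lesssim M(|u|)(\bfx)^{\tilde p/\tilde q}\|u\|_{0,\tilde p,\Omega}^{1-\tilde p/\tilde q}$; raising to the $\tilde q$-th power, integrating over $\Omega$, and invoking the $L^{\tilde p}$-boundedness of $M$ (which is precisely where the hypothesis $\tilde p>1$ is used, since $M$ is unbounded on $L^1$) delivers $\|V\|_{0,\tilde q,\Omega}\lesssim \|u\|_{0,\tilde p,\Omega}$, which is absorbed into the second summand on the right-hand side of (\ref{eq:lemma:estimate-U-2}).
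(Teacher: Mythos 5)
Your proof is correct but takes genuinely different routes at both nontrivial junctures. For part (i) you package $U(\bfx,t)$ as $(|u|\star K_t)(\Psi_t(\bfx))$ and invoke Young's convolution inequality, whereas the paper separately estimates $\|U(t,\cdot)\|_{0,\tilde p,\Omega}$ (Minkowski plus the change of variables $\Psi_t$) and $\|U(t,\cdot)\|_{0,\infty,\Omega}$ (H\"older) and then interpolates via $\|v\|_{0,\tilde q}\leq\|v\|_{0,\tilde p}^{\tilde p/\tilde q}\|v\|_{0,\infty}^{1-\tilde p/\tilde q}$; both are elementary and give the same exponent $t^{-\tilde\mu}$. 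The more substantive divergence is in the borderline case $\tilde\sigma=0$, $\tilde p>1$ of part (ii). Your Hedberg-style maximal-function argument (split the $t$-integral at a free $t_0$, bound the near part by $M(|u|)(\bfx)$ and the far part by $\|u\|_{0,\tilde p,\Omega}$, optimize $t_0$, then use $L^{\tilde p}$-boundedness of $M$) is the classical alternative proof of the Hardy--Littlewood--Sobolev inequality. The paper proceeds differently: an integration-by-parts identity (Lemma~\ref{lemma:hardy-style}) rewrites $\int_0^T t^{-1+\tilde\mu}U(\bfx,t)\,dt$ as a Riesz potential $\int_{B_{(1+C)T}}|\bfz|^{\tilde\mu-d}|u(\bfx+\bfz)|\,d\bfz$ plus a remainder, after which Hardy--Littlewood--Sobolev is applied in Marcinkiewicz form (Lemma~\ref{lemma:marcinkiewicz}). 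The paper's route produces the first summand $T^{\tilde\mu-d}\|u\|_{0,1,\Omega}$ of~(\ref{eq:lemma:estimate-U-2}) as a by-product, while yours produces only the second summand $\|u\|_{0,\tilde p,\Omega}$; this is sufficient since, as you correctly observe, the right-hand side is a sum of nonnegative terms. One small caveat: your parenthetical remark that the first term would follow ``analogously by invoking (i) with $(\tilde p,\tilde q)$ replaced by $(1,\tilde q)$'' is not quite right in general---the resulting $t$-integral has exponent $-1+\tilde\sigma+d(1/\tilde p-1)$ and converges only when $\tilde\sigma>d(1-1/\tilde p)$---but, as you note, this plays no role in the argument since bounding the left side by a single summand suffices.
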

\begin{proof}
  {\sl Proof of (\ref{eq:lemma:estimate-U-1}):} 
  We will use the elementary estimate 
  \begin{align}\label{eq:lemma:estimate-U-10}
    \|v\|_{0,\tilde q,\Omega} \leq 
    \|v\|_{0,\tilde p,\Omega}^{\tilde p/\tilde q} \|v\|_{0,\infty,\Omega}^{1-\tilde p/\tilde q} 
    \qquad \mbox{ for $1 \leq \tilde p \leq \tilde q < \infty$ and $v \in L^\infty(\Omega)$.}
  \end{align}
  We start with $\|U(t,\cdot)\|_{0,\infty,\Omega}$.
  Letting $\tilde p^\prime = \tilde p/(\tilde p-1)$ be the conjugate
  exponent of $\tilde p$, we compute: 
  \begin{align}\label{eq:lemma:estimate-U-20}
    \begin{split}
      |U(t,\bfx)| &=  \int_{\bfz \in B_1} |u(\bfx + t(\bfz + \psi(\bfx)))|\,d\bfz 
           \leq |B_1|^{1/\tilde p^\prime} \left(\int_{\bfz \in B_1}
	   |u(\bfx + t (\bfz + \psi(\bfx)))|^{\tilde p}\,d\bfz\right)^{1/\tilde p} \\
           &= |B_1|^{1/\tilde p^\prime} t^{-d/\tilde p}\left(\int_{\bfz \in t B_1}
	   |u(\bfx + \bfz + t \psi(\bfx))|^{\tilde p}\,d\bfz\right)^{1/\tilde p} 
	   \leq |B_1|^{1/\tilde p^\prime} t^{-d/\tilde p} \|u\|_{0,\tilde p,\Omega}.
    \end{split}
  \end{align}
  For $\|U(t,\cdot)\|_{0,\tilde p,\Omega}$, we use the Minkowski inequality (\ref{eq:minkowski}) and 
  the change of variables formula (cf., e.g., \cite[Thm.~2, Sec.~{3.3.3}]{eva1} for the case
  of bi-Lipschitz changes of variables although in the present case, the mapping is smooth!) to compute
  \begin{align}\label{eq:lemma:estimate-U-30}
    \begin{split}
      \|U(t,\cdot)\|_{0,\tilde p,\Omega} 
      &=\left(\int_{\bfx \in \Omega} \left| \int_{\bfz \in B_1}
      |u(\Psi_t(\bfx) + t \bfz)|\,d\bfz\right|^{\tilde p}\,d\bfx\right)^{1/\tilde p} 
\stackrel{(\ref{eq:minkowski})}{\leq}
      \int_{\bfz \in B_1} \left( \int_{\bfx \in \Omega}
      |u(\Psi_t(\bfx) + t \bfz)|^{\tilde p}\,d\bfx\right)^{1/\tilde p}\,d\bfz \\
      &\leq \int_{\bfz \in B_1} \left( 
      \int_{\bfx \in \Omega} |u(\bfx)|^{\tilde p} \|\operatorname*{det}
      D\Psi_t^{-1}\|_{L^\infty(\R{d})}  \,d\bfx \right)^{1/\tilde p}\,d\bfz 
      \leq C \|u\|_{0,\tilde p,\Omega}, 
    \end{split}
  \end{align}  
  where the constant $C$ depends only on $T$ and $\Lpsi$ (cf.~Lemma~\ref{lemma:psi}
  for the $t$-dependence
  of $\Psi_t$).
  Inserting (\ref{eq:lemma:estimate-U-20}) and (\ref{eq:lemma:estimate-U-30})
  in (\ref{eq:lemma:estimate-U-10}) 
  yields (\ref{eq:lemma:estimate-U-1}). 
  
  {\sl Proof of (\ref{eq:lemma:estimate-U-2}) for the case $\tilde \sigma > 0 $ together
  with $\tilde p \ge 1$:} 
  This is a simple consequence of (\ref{eq:lemma:estimate-U-1}). 
  
  {\sl Proof of (\ref{eq:lemma:estimate-U-2}) for the case $\tilde\sigma = 0$
  together with $\tilde p > 1$:} 
  From Lemma~\ref{lemma:psi} we have $\|\psi\|_{L^\infty(\R{d})} \leq C $. Hence, 
  $ B_1 + \psi(\bfx) \subset B_{1+ C}$ uniformly in $\bfx$. If we implicitly assume that
  $u$ is extended by zero outside of $\Omega$, we can estimate 
  \begin{align*}
    |U(t,\bfx)| &=\int_{\bfz \in B_1} |u(\bfx + t \bfz + t \psi(\bfx))|\,d\bfz = \int_{\bfz \in B_1 + \psi(\bfx)} |u(\bfx + t \bfz)|\,d\bfz 
    \leq \int_{\bfz \in B_{1+C}} |u(\bfx + t \bfz)|\,d\bfz.
  \end{align*}
  Our goal is to show that the function
  $$
    \bfx \mapsto \int_{t=0}^T t^{-1+\tilde \mu} \int_{\bfz \in B_1}
    u(\bfx + t (\bfz + \psi(\bfx)))\,d\bfz
  $$
  is in $L^{\tilde q}(\Omega)$ provided that $u\in L^{\tilde p}(\Omega)$.
  This is shown with Lemma~\ref{lemma:marcinkiewicz} below. To that end, 
  we assume that $u$ is extended by zero outside of $\Omega$ and bound the $L^{\tilde q}(\R{d})$-norm of 
  $$
    \bfx \mapsto \int_{t=0}^T t^{-1+\tilde\mu} \int_{\bfz \in B_{1+C}} |u(\bfx + t\bfz)|\,d\bfz. 
  $$
  Lemma~\ref{lemma:hardy-style} is applicable with $s = 1-\tilde \mu$, since $\tilde p > 1$ 
  implies $\tilde \mu \in (0,d)$. From Lemma~\ref{lemma:hardy-style} (and a density argument to be able 
  to work with $|u|$ instead of $u$) we get 
  \begin{align}\label{eq:lemma:estimate-U-100}
    \begin{split}
      &
      \int_{t=0}^T t^{-1+\tilde\mu} \int_{\bfz \in B_{1+C}} |u(\bfx + t\bfz)|\,d\bfz =  
\\ & 
\frac{1}{d-\tilde\mu} \left[ 
(1+C)^{d-\tilde\mu}
      \int_{\bfz \in B_{(1+C)T}} |\bfz|^{\tilde\mu-d} |u(\bfx + \bfz)|\,d\bfz 
- T^{\tilde\mu-d} \int_{\bfz \in B_{(1+C)T}} |u(\bfx + \bfz)|\,d\bfz 
\right].
    \end{split}
  \end{align} 
  The second contribution in (\ref{eq:lemma:estimate-U-100}) is estimated directly. 
  For the first contribution, we obtain from Lemma~\ref{lemma:marcinkiewicz} 
  (with $\lambda = d-\tilde\mu$, $r^{-1} = 1 + \frac{\tilde\mu}{d} - \tilde p^{-1}$ so that 
  $1 - r^{-1} =  \tilde p^{-1} - \frac{\tilde \mu}{d} = \tilde q^{-1}$) 
  $$
  \left\| \int_{\bfz \in B_{(1+C)T}} |\bfz|^{\tilde\mu-d} |u(\cdot +
  \bfz)|\,d\bfz \right\|_{0,\tilde q,\R{d}} 
  \lesssim \|u\|_{0,\tilde p,\R{d}}. 
\qedhere
  $$
\end{proof}
\begin{lem}
  \label{lemma:estimate-difference-U}
  Let $\Omega$, $T$, and $\psi$ be as in Lemma~\ref{lemma:psi}. 
  Let $1 \leq \tilde p \leq \tilde q < \infty$ and define
  $\tilde\mu:= d (\tilde p^{-1} - \tilde q^{-1})$ as in (\ref{eq:mu}).
  Let $\tilde s \in (0,1)$ and $u \in W^{\tilde s,\tilde p}(\Omega)$.  Define, for $t \in (0,T)$, the function 
  \begin{equation}
  V(t,\bfx):= \int_{\bfz \in B_1}\int_{\bfz^\prime \in B_1}
  |u(\bfx + t (\bfz + \psi(\bfx))) - u(\bfx + t(\bfz^\prime+\psi(\bfx)))|\,d\bfz^\prime \,d\bfz.
  \end{equation}
  Then, the following two assertions hold true: 
  \begin{enumerate}[(i)]
  \item 
  There exists $C_1 = C_1(\tilde p,\tilde q,\tilde s,d,\Lpsi,T)$ such that  
  \begin{eqnarray}
  \label{eq:lemma:estimate-difference-U-20}
  \|V(t,\cdot)\|_{0,\tilde q,\Omega} \leq C_1 t^{\tilde s-\tilde\mu}
  |u|_{\tilde s,\tilde p,\Omega} \qquad \forall t \in (0,T). 
  \end{eqnarray}
  \item 
  If the pair $(\tilde\sigma,\tilde p)$ satisfies (\ref{eq:condition-on-p-sigma}), then 
  there is a constant $C_2 = C_2(\tilde p,\tilde q,\tilde s,d,\Lpsi,\sigma,T)$ such that 
  \begin{eqnarray}
  \label{eq:lemma:estimate-difference-U-2}
  \left\|\int_{t=0}^T t^{-1+\tilde\mu-\tilde s+\tilde\sigma }
  V(t,\cdot)\,dt\right\|_{0,\tilde q,\Omega} &\leq& C_2 |u|_{\tilde s,\tilde p,\Omega}. 
  \end{eqnarray}
  \end{enumerate}
\end{lem}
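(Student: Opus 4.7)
The plan is to mirror, step by step, the structure of the proof of Lemma~\ref{lemma:estimate-U}, replacing the $L^{\tilde p}$-norm of $u$ by the Aronstein-Slobodeckij seminorm $\sn{u}_{\tilde s,\tilde p,\Om}$ throughout. For part (i), I first establish the two endpoint bounds $\vn{V(t,\cdot)}_{0,\infty,\Om} \lesssim t^{\tilde s-d/\tilde p}\sn{u}_{\tilde s,\tilde p,\Om}$ and $\vn{V(t,\cdot)}_{0,\tilde p,\Om} \lesssim t^{\tilde s}\sn{u}_{\tilde s,\tilde p,\Om}$, and then obtain~\eqref{eq:lemma:estimate-difference-U-20} by interpolation exactly as in~\eqref{eq:lemma:estimate-U-10}. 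For the $L^\infty$-bound I fix $\bfx$ and perform the change of variable $\bfy_i := \Psi_t(\bfx)+t\bfz_i$ to rewrite $V(t,\bfx) = t^{-2d} \iint_{B_t(\Psi_t(\bfx))^2} \sn{u(\bfy_1)-u(\bfy_2)}\,d\bfy_1\,d\bfy_2$, then multiply and divide the integrand by $\sn{\bfy_1-\bfy_2}^{d+\tilde s\tilde p}$ and apply H\"older's inequality with exponents $\tilde p,\tilde p^\prime$ (or integrate directly when $\tilde p=1$). The bound $\sn{\bfy_1-\bfy_2}\le 2t$ on the integration domain, together with $B_t(\Psi_t(\bfx))\subset\Om$ guaranteed by Lemma~\ref{lemma:psi}~(\ref{item:lemma:psi-iii}) after possibly shrinking $T$, yields the claimed endpoint.

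For the $L^{\tilde p}$-bound I apply Minkowski's integral inequality to move the spatial $L^{\tilde p}$-norm inside the $\bfz,\bfz^\prime$ integrations; for each fixed $\bfz,\bfz^\prime$ the bi-Lipschitz substitution $\bfy = \bfx+t(\bfz+\psi(\bfx))$ renders the two arguments of $u$ as $\bfy$ and $\bfy+t(\bfz^\prime-\bfz)$. Substituting further $\mathbf{h}:=t(\bfz^\prime-\bfz)\in B_{2t}$ with Jacobian $t^{-d}$, inserting the factor $\sn{\mathbf{h}}^{d+\tilde s\tilde p}$ (bounded by $(2t)^{d+\tilde s\tilde p}$ on the resulting domain), and applying Fubini produces the Aronstein-Slobodeckij seminorm $\sn{u}_{\tilde s,\tilde p,\Om}$ weighted by $t^{\tilde s}$. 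Interpolation between the two endpoints then completes part (i).

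In part (ii), the subcase $\tilde\sigma>0$ together with $\tilde p\ge 1$ follows at once from part (i) via Minkowski's inequality, giving $\vn{\int_0^T t^{-1+\tilde\mu-\tilde s+\tilde\sigma}V(t,\cdot)\,dt}_{0,\tilde q,\Om} \lesssim \sn{u}_{\tilde s,\tilde p,\Om}\int_0^T t^{-1+\tilde\sigma}\,dt = C_{\tilde\sigma}T^{\tilde\sigma}\sn{u}_{\tilde s,\tilde p,\Om}$. The main obstacle is the borderline subcase $\tilde\sigma=0$ with $\tilde p>1$, in which $\int_0^T t^{-1}\,dt$ diverges logarithmically so this direct approach breaks down. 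Here I mimic the strategy used for the corresponding borderline case of~\eqref{eq:lemma:estimate-U-2} in Lemma~\ref{lemma:estimate-U}: swap the order of integration in $t$, $\bfz$, and $\bfz^\prime$, apply a Hardy-type identity (the analogue of Lemma~\ref{lemma:hardy-style}) to carry out the $t$-integral explicitly, and finally invoke the Marcinkiewicz interpolation of Lemma~\ref{lemma:marcinkiewicz} to control the resulting Riesz-type convolution on $L^{\tilde q}(\R{d})$. Concretely, the substitution $\mathbf{h}_i := t(\bfz_i+\psi(\bfx))$ represents $V(t,\bfx)$ as $t^{-2d}$ times a double integral of $\sn{u(\bfx+\mathbf{h}_1)-u(\bfx+\mathbf{h}_2)}$ over a region forcing $t\gtrsim \max(\sn{\mathbf{h}_1},\sn{\mathbf{h}_2})$; evaluating the $t$-integral first produces the weight $\max(\sn{\mathbf{h}_1},\sn{\mathbf{h}_2})^{\tilde\mu-\tilde s-2d}$, which via the elementary bound $\max(\sn{\mathbf{h}_1},\sn{\mathbf{h}_2}) \gtrsim \sn{\mathbf{h}_1-\mathbf{h}_2}$, together with insertion of the weight $\sn{\mathbf{h}_1-\mathbf{h}_2}^{d+\tilde s\tilde p}$ and H\"older's inequality with exponents $\tilde p,\tilde p^\prime$, factors the estimate as $\sn{u}_{\tilde s,\tilde p,\Om}$ times a Riesz convolution controllable by Lemma~\ref{lemma:marcinkiewicz}. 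The technically delicate point lies in the asymmetry of $\max(\sn{\mathbf{h}_1},\sn{\mathbf{h}_2})$, which necessitates splitting the $(\mathbf{h}_1,\mathbf{h}_2)$-region into the cases $\sn{\mathbf{h}_1}\ge \sn{\mathbf{h}_2}$ and its complement, each treated by symmetry so that both $\mathbf{h}$-integrations can be cast as Riesz convolutions against a difference quotient of $u$.
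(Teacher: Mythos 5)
Your part (i) is sound: the two endpoints and the interpolation in $\|\cdot\|_{0,\tilde q}$ give exactly~\eqref{eq:lemma:estimate-difference-U-20} (modulo the cosmetic slip of writing $|\bfy_1-\bfy_2|^{d+\tilde s\tilde p}$ where the inserted power should be $|\bfy_1-\bfy_2|^{\tilde s + d/\tilde p}$). Likewise the subcase $\tilde\sigma>0$ of (ii). However, the paper does something cleaner and more economical: after multiplying and dividing by $|\cdot|^{\tilde s+d/\tilde p}$ it applies H\"older \emph{in $\bfz'$ only}, extracting $g(\bfy):=\|v(\bfy,\cdot)\|_{0,\tilde p,\Omega}$ (the $L^{\tilde p}$-marginal of the Slobodeckij kernel) and arriving at $V(t,\bfx)\leq C t^{\tilde s}\int_{B_1} g(\bfx+t(\bfz+\psi(\bfx)))\,d\bfz$. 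The right-hand side is, up to the factor $t^{\tilde s}$, exactly $U(t,\bfx)$ of Lemma~\ref{lemma:estimate-U} applied to $g$, and since $\|g\|_{0,\tilde p,\Omega}=|u|_{\tilde s,\tilde p,\Omega}$, \emph{both} assertions (i) and (ii) — including the borderline Hardy/Marcinkiewicz case — follow in one stroke by citing Lemma~\ref{lemma:estimate-U}. You re-derive the machinery instead of reducing to it, which is where the trouble starts.

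The gap is in the borderline case $\tilde\sigma=0$, $\tilde p>1$. You propose to dominate $\max(|\bfh_1|,|\bfh_2|)^{\tilde\mu-\tilde s-2d}$ by $|\bfh_1-\bfh_2|^{\tilde\mu-\tilde s-2d}$ and then insert $|\bfh_1-\bfh_2|^{\tilde s+d/\tilde p}$, leaving $v(\bfx+\bfh_1,\bfx+\bfh_2)\,|\bfh_1-\bfh_2|^{\tilde\mu+d/\tilde p-2d}$. But when you then apply H\"older in $\bfh_2$ to peel off $g(\bfx+\bfh_1)$, the dual factor is $\bigl(\int_{|\bfh_2|\lesssim T}|\bfh_1-\bfh_2|^{(\tilde\mu+d/\tilde p-2d)\tilde p'}\,d\bfh_2\bigr)^{1/\tilde p'}$, and $(\tilde\mu+d/\tilde p-2d)\tilde p'+d>0$ is equivalent to $\tilde\mu>d$, which is false (since $\tilde p>1$ gives $\tilde\mu<d$); the integral diverges at $\bfh_2=\bfh_1$. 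The fix is the \emph{opposite} use of the elementary inequality: on the region $|\bfh_2|\leq|\bfh_1|$, keep $\max^{\tilde\mu-\tilde s-2d}=|\bfh_1|^{\tilde\mu-\tilde s-2d}$ exactly and bound $|\bfh_1-\bfh_2|^{\tilde s+d/\tilde p}\leq(2|\bfh_1|)^{\tilde s+d/\tilde p}$ from above, then H\"older in $\bfh_2$ over the ball $\{|\bfh_2|\leq|\bfh_1|\}$ to get $|\bfh_1|^{d/\tilde p'}g(\bfx+\bfh_1)$; the exponents collapse to $\int|\bfh_1|^{\tilde\mu-d}g(\bfx+\bfh_1)\,d\bfh_1$, a genuine Riesz potential amenable to Lemma~\ref{lemma:marcinkiewicz}. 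Carrying this through is precisely to rediscover $g$ and the reduction to Lemma~\ref{lemma:estimate-U} that the paper makes explicit from the start.
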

\begin{proof}
  The proof is structurally similar to that of Lemma~\ref{lemma:estimate-U}. 
  Define 
  $$
  v(\bfy,\bfy^\prime) = \frac{|u(\bfy) - u(\bfy^\prime)|}{|\bfy - \bfy^\prime|^{\tilde s+d/\tilde p}}. 
  $$
  Letting $\tilde p^\prime = \tilde p/(\tilde p-1)$ be the conjugate exponent of $\tilde p$, we compute 
  \begin{align}\label{eq:lemma:estimate-difference-U-2000} 
    \begin{split}
      V(t,\bfx) &= 
      \int_{\bfz \in B_1} \int_{\bfz^\prime \in B_1} 
      \frac{|u(\bfx+ t(\bfz + \psi(\bfx))) - u(\bfx+t(\bfz^\prime+ \psi(\bfx)))|}
           {|\bfx+ t (\bfz + \psi(\bfx)) - (\bfx + t(\bfz^\prime+\psi(\bfx)))|^{\tilde s+d/\tilde p}}
           (t|\bfz - \bfz^\prime|)^{\tilde s+d/\tilde p}\,d\bfz^\prime\,d\bfz  \\
      &\leq (2t)^{\tilde s+d/\tilde p} \int_{\bfz \in B_1}
      \int_{\bfz^\prime \in B_1} v(\bfx + t(\bfz+\psi(\bfx)),\bfx+t(\bfz^\prime+\psi(\bfx)))\,d\bfz^\prime\,d\bfz \\
      &\leq (2t)^{\tilde s+d/\tilde p}  t^{-d} |B_t|^{1/\tilde p^\prime} \int_{\bfz \in B_1} 
      \|v(\bfx + t (\bfz + \psi(\bfx)),\cdot)\|_{0,\tilde p,\Omega}\,d\bfz, \\
      &\leq C t^{\tilde s} \int_{\bfz \in B_1} \|v(\bfx + t (\bfz +
      \psi(\bfx)),\cdot)\|_{0,\tilde p,\Omega} \,d\bfz. 
    \end{split}
  \end{align}
  We recognize that, up to the factor $t^{\tilde s}$, the right-hand side is a function of the form 
  studied in Lemma~\ref{lemma:estimate-U}. 
  The bounds 
  (\ref{eq:lemma:estimate-difference-U-20}) and 
  (\ref{eq:lemma:estimate-difference-U-2}) therefore follow from Lemma~\ref{lemma:estimate-U}. 
\end{proof}
\begin{lem}
  \label{lemma:sobolev-embedding-Lq-left}
  Let $\Omega$ be as in Theorem~\ref{thm:sobolev-embedding}. Assume $r \ge 0$, $1 \leq p \leq q < \infty$ 
  and set $\mu:= d (p^{-1} - q^{-1})$ as in (\ref{eq:mu}).
  Assume that one of the following two cases occurs: 
  \begin{enumerate}[(a)]
    \item 
      $\displaystyle r = \mu$ in conjunction with $p > 1$;  
    \item 
      $\displaystyle r > \mu\notin\N$ in conjunction with $p \ge 1$.  
  \end{enumerate}
  Then there is a constant $C = C(p,q,r,\Lpsi,T,d)$, which depends solely on the quantities indicated (and the 
  assumption that $\operatorname*{diam}\Omega \leq 1$), such that  
  $$
  \|u\|_{L^q(\Omega)} \leq C \|u\|_{W^{r,p}(\Omega)}. 
  $$
\end{lem}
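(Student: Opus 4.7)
The plan is to apply the representation formulas of Lemma~\ref{lemma:taylor} at $t=T$, writing
$u(\bfx) = M(u)(T,\bfx) + [u(\bfx) - M(u)(T,\bfx)]$,
and to bound each summand in $L^q(\Omega)$ using Lemmas~\ref{lemma:estimate-U} and~\ref{lemma:estimate-difference-U}. The core of the argument treats the range $r \in (0,1]$; the range $r > 1$ will be reduced to this by iterating the embedding.

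For the averaged part, the pointwise estimate $|M(u)(T,\bfx)| \leq \|\omega\|_{L^\infty}\, U(T,\bfx)$ (with $U$ as in~(\ref{eq:lemma:estimate-U-0})) combines with~(\ref{eq:lemma:estimate-U-1}) for $\tilde p = p$, $\tilde q = q$ (hence $\tilde\mu = \mu$) to give $\|M(u)(T,\cdot)\|_{0,q,\Omega} \leq C_1 T^{-\mu} \|u\|_{0,p,\Omega}$, which is $\lesssim \|u\|_{W^{r,p}(\Omega)}$ since $T$ depends only on $\rho$.

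For the remainder when $r \in (0,1)$, I would use the $M_S$ representation. Lemma~\ref{lemma:psi}(\ref{item:lemma:psi-ii}) shows that $\omega_2$ defined in~(\ref{eq:lemma:taylor-3}) is uniformly bounded in $\bfx$ and $\bfz$, so $|M_S(u)(T,\bfx)| \lesssim \int_0^T \tau^{-1} V(\tau,\bfx)\,d\tau$ with $V$ as in Lemma~\ref{lemma:estimate-difference-U}. Apply~(\ref{eq:lemma:estimate-difference-U-2}) with $\tilde s = r$, $\tilde p = p$, $\tilde q = q$, $\tilde\sigma = r-\mu$; then the exponent $-1+\tilde\mu-\tilde s+\tilde\sigma$ collapses to $-1$, and condition~(\ref{eq:condition-on-p-sigma}) is met by case analysis: in case~(a) one has $\tilde\sigma = 0$ together with $p>1$, while in case~(b) with $r<1$ one has $\tilde\sigma > 0$ together with $p \ge 1$. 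This delivers $\|M_S(u)(T,\cdot)\|_{0,q,\Omega} \lesssim |u|_{r,p,\Omega}$. For the borderline $r = 1$ in case~(a) (so $\mu = 1$, $p>1$) I would substitute the $M_R$ representation: estimate $|M_R(u)(T,\bfx)| \lesssim \int_0^T \widetilde U(\tau,\bfx)\,d\tau$ for $\widetilde U$ the analogue of~(\ref{eq:lemma:estimate-U-0}) with $u$ replaced by $|\nabla u|$, and invoke~(\ref{eq:lemma:estimate-U-2}) with $\tilde\sigma = 0$ and $\tilde\mu = 1$ (so the exponent $-1+\tilde\mu+\tilde\sigma = 0$ matches the bare integral); using $\diam\Omega\le 1$ and H\"older's inequality to absorb the $\|\nabla u\|_{0,1,\Omega}$ term then yields $\|M_R(u)(T,\cdot)\|_{0,q,\Omega} \lesssim \|\nabla u\|_{0,p,\Omega}$.

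For $r > 1$ the plan is to iterate: applying the $r=1$ case established above to each first-order partial derivative reduces $(r,p,\mu) \mapsto (r-1,p_1,\mu-1)$ with $1/p_1 = 1/p - 1/d$ (when $p < d$) while leaving the target exponent $q$ unchanged, and repeating this step until the residual order lies in $(0,1)$ combines with the fractional argument above to cover all of case~(a) and case~(b). In case~(b) monotonicity of Sobolev norms additionally permits replacing $r$ by any $r_0 \in (\mu,r]$, which simplifies bookkeeping. The main obstacle is ensuring that condition~(\ref{eq:condition-on-p-sigma}) remains satisfied at each iteration step and that all constants depend only on $\rho$, $p$, $q$, $r$, $d$; since the domain $\Omega$ is not modified along the iteration and the standing hypothesis $\diam\Omega\le 1$ keeps all geometric factors under a bound depending only on $\rho$ (via $T$ and $\Lpsi$), this bookkeeping is routine once the $r\le 1$ analysis is in place.
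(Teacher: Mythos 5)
Your treatment of the range $r \le 1$ is sound and essentially mirrors the paper's use of Lemmas~\ref{lemma:taylor}, \ref{lemma:estimate-U}, \ref{lemma:estimate-difference-U}. The gap is in the iteration for $r>1$. Your iteration step sends $(r,p)\mapsto(r-1,p_1)$ with $1/p_1 = 1/p - 1/d$, i.e.\ it invokes the \emph{critical} Sobolev embedding $W^{1,p}\hookrightarrow L^{p_1}$ with $d(1/p-1/p_1)=1$. But this borderline embedding is available from Lemma~\ref{lemma:estimate-U} (via~\eqref{eq:lemma:estimate-U-2} with $\tilde\sigma=0$) only when the \emph{source} exponent satisfies $\tilde p>1$; see~\eqref{eq:condition-on-p-sigma}. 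Case~(b) of the lemma explicitly allows $p=1$ even when $\mu>1$ (e.g.\ $d=3$, $p=1$, $q=2$, $\mu=3/2$, any $r>3/2$), and there the very first step $W^{1,1}\hookrightarrow L^{d/(d-1)}$ is precisely the excluded case. So condition~\eqref{eq:condition-on-p-sigma} fails at the outset, and the assertion that the bookkeeping is ``routine'' is not substantiated.

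The paper avoids this by running the chain in the opposite direction: it sets $p_0=q$ and $1/p_i = 1/p_{i-1}+1/d$, so that $\|u\|_{0,q}\lesssim\|u\|_{1,p_1}\lesssim\cdots\lesssim\|u\|_{L,p_L}$ with $L=\lfloor\mu\rfloor$. Because $L<\mu$ whenever $\mu\notin\N$, one gets $p_L>p\ge 1$, so the source exponent in every borderline step is strictly greater than one, and~\eqref{eq:condition-on-p-sigma} holds automatically throughout; the residual fractional order $r-\lfloor\mu\rfloor$ is then absorbed in a single $M_S$-step with $\tilde q = p_{\lfloor\mu\rfloor}$, $\tilde p = p$, where the hypotheses of the lemma (either $\tilde\sigma>0$ or $p>1$) supply exactly what~\eqref{eq:condition-on-p-sigma} requires. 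A secondary difficulty with your scheme: when $r$ is non-integer, the intermediate spaces $W^{r-i,p_i}$ carry a fractional seminorm, and controlling it would require a fractional embedding of the type proved only later, in Lemma~\ref{lemma:sobolev-embedding-Ws-left}, whose proof in turn uses the present lemma; the paper avoids this circularity by passing through the integer-order space $W^{\lfloor\mu\rfloor,p_{\lfloor\mu\rfloor}}$ only. To repair your argument you would need to either reverse the direction of the iteration as the paper does, or replace the first step by a non-borderline embedding that sacrifices part of the fractional excess $\mu-\lfloor\mu\rfloor$; in either case it must be spelled out rather than dismissed as routine.
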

\begin{proof}
  We can assume $\mu>0$, which implies $r>0$ and $q>1$. We start with some preliminaries.
  For $L\in\N$ with
  \begin{equation}
    \label{eq:lemma:sobolev-embedding-Lq-left-5}
    \frac{L}{d} \leq 1 - \frac{1}{q}. 
  \end{equation}
  define recursively the values $p_0$, $p_1,\ldots,p_L \in [1,\infty)$ by 
  \begin{equation}
    \label{eq:lemma:sobolev-embedding-Lq-left-7}
    \frac{1}{p_0}:= \frac{1}{q}, 
    \qquad \frac{1}{p_{i}} =: \frac{1}{p_{i-1}} + \frac{1}{d}, \qquad i=1,\ldots,L. 
  \end{equation}
  Note that indeed $p_i \ge 1$ since~\eqref{eq:lemma:sobolev-embedding-Lq-left-5} implies $p_L \ge 1$ in view of 
  $p_L^{-1} = q^{-1} + L d^{-1}$. Furthermore, we have the stronger assertion
  \begin{equation}
    \label{eq:lemma:sobolev-embedding-Lq-left-8}
    1 < p_i, \qquad i=0,\ldots,L-1. 
  \end{equation}
  We claim that 
  \begin{equation}
    \label{eq:lemma:sobolev-embedding-Lq-left-10}
    \begin{cases} 
    \|u\|_{0,p_{i-1},\Omega} \leq C \|u\|_{1,p_i,\Omega}, 
    & i=1,\ldots,L-1,  \\
    \|u\|_{0,p_{L-1},\Omega} \leq C \|u\|_{1,p_L,\Omega}, 
    & \mbox{ if $p_L > 1$.}
    \end{cases} 
  \end{equation}
  which implies in particular 
  \begin{equation}
    \label{eq:lemma:sobolev-embedding-Lq-left-20}
    \|u\|_{0,q,\Omega} \leq C \|u\|_{L,p_L,\Omega} 
    \quad \mbox{ if $p_L > 1$.}
  \end{equation}
  In order to see (\ref{eq:lemma:sobolev-embedding-Lq-left-10}) we use 
  the representation $u(\bfx) = M(u)(T,\bfx) + M_R(u)(T,\bfx)$ from
  Lemma~\ref{lemma:taylor}
  and the fact that $\omega$ is fixed and bounded and that we have control over $\psi$ and $\nabla \psi$
  by Lemma~\ref{lemma:psi} to get
  \begin{equation}
    \label{eq:lemma:sobolev-embedding-Lq-left-100}
    |u(\bfx)| \leq C \left[ \int_{\bfz \in B_1} |u(\bfx + T (\bfz + \psi(\bfx)))|\,d\bfz 
    + \int_{t=0}^T \int_{\bfz \in B_1} |\nabla u(\bfx + t(\bfz+\psi(\bfx)))|\,d\bfz\,dt\right].
  \end{equation}
  For $i=1,\ldots,L-1$, the first term in~\eqref{eq:lemma:sobolev-embedding-Lq-left-100}
  is bounded by (\ref{eq:lemma:estimate-U-1}).
  The second term in~\eqref{eq:lemma:sobolev-embedding-Lq-left-100}
  is bounded with (\ref{eq:lemma:estimate-U-2}),
  where we choose $\tilde\mu = d(p_{i}^{-1} - p_{i-1}^{-1}) = 1$ and $\tilde\sigma=0$, which
  we may due to (\ref{eq:lemma:sobolev-embedding-Lq-left-8}). This gives
  \begin{align*}
    \|u\|_{0,p_{i-1},\Omega} \leq C \left[ T^{-1} \|u\|_{0,p_i,\Omega}  +
      \|\nabla u\|_{0,p_i,\Omega}\right]
    \leq C \|u\|_{1,p_i,\Omega},
  \end{align*}
  which is the first part of (\ref{eq:lemma:sobolev-embedding-Lq-left-10}). 
  The case $i = L$ in (\ref{eq:lemma:sobolev-embedding-Lq-left-10}) follows in the same way 
  if $p_L > 1$. Using~\eqref{eq:lemma:sobolev-embedding-Lq-left-20} we can show the lemma 
  in the following way:
  \begin{enumerate}[(i)]
    \item The case $r = \mu\in\N$.\\ 
      Note that the choice $L = r$ satisfies 
      (\ref{eq:lemma:sobolev-embedding-Lq-left-5}) and that $p_L = p>1$. 
      Therefore, (\ref{eq:lemma:sobolev-embedding-Lq-left-20}) is the desired estimate.
    \item The case $\mu\notin\N$ and $\mu \leq r < \lceil \mu\rceil$.\\
      We claim $\vn{u}_{0,q,\Om}\lesssim\vn{u}_{\lfloor\mu\rfloor,p_{\lfloor\mu\rfloor},\Om}$.
      To see this, we observe for the case $\lfloor \mu \rfloor \ge 1$ that 
      (\ref{eq:lemma:sobolev-embedding-Lq-left-5}) holds with 
      $L = \lfloor \mu \rfloor$
      and therefore $p_{\lfloor\mu\rfloor}>p\geq1$ together with~\eqref{eq:lemma:sobolev-embedding-Lq-left-20}
      implies $\vn{u}_{0,q,\Om}\lesssim\vn{u}_{\lfloor\mu\rfloor,p_{\lfloor\mu\rfloor},\Om}$. 
      In the case $\lfloor \mu \rfloor = 0$, the assertion is trivial.
      
      For $\sn{\bft} = \lfloor\mu\rfloor$ we write
      $D^\bft u(\bfx) = M(D^\bft u)(T,\bfx) + M_S(D^\bft u)(T,\bfx)$ and use
      Lemma~\ref{lemma:estimate-difference-U},
      where we set $\tilde q = p_{\lfloor\mu\rfloor}$, $\tilde p = p$, 
      and $\tilde s = r-\lfloor\mu\rfloor$.
      Since $\lfloor\mu\rfloor = d(p_{\lfloor\mu\rfloor}^{-1}-q^{-1})$, we see
      $\tilde\mu - \tilde s = \mu-r$, so that for $\mu=r$ we choose $\tilde\sigma=0$ and hence
      require $p>1$, while for $\mu<r$ we can choose $\tilde\sigma>0$ and hence
      $p\geq1$. This shows
      $\sn{u}_{\lfloor\mu\rfloor,p_{\lfloor\mu\rfloor},\Om}
      \lesssim \sn{u}_{r,p,\Om}$.
  
      For $\sn{\bft} \leq \lfloor\mu\rfloor-1$ (given $\mu>1$) we write
      $D^\bft u(\bfx) = M(D^\bft u)(T,\bfx) + M_R(D^\bft u)(T,\bfx)$ and use
      Lemma~\ref{lemma:estimate-U}, where we set
      $\tilde q = p_{\lfloor\mu\rfloor}$, $\tilde p = p$.
      Since then $\tilde \mu = \mu - \lfloor\mu\rfloor<1$, we obtain for
      any $p\geq1$ that
      $\sn{u}_{\sn{\bft},p_{\lfloor\mu\rfloor},\Om}
      \lesssim \sn{u}_{\sn{\bft}+1,p,\Om}$.
    \item The case $\mu\notin\N$ and $\lceil \mu \rceil \leq r$.\\
      It suffices to consider $r = \lceil\mu\rceil$.
      As $p_{\lfloor\mu\rfloor}>p\geq1$, we obtain with~\eqref{eq:lemma:sobolev-embedding-Lq-left-20}
      the bound $\vn{u}_{0,q,\Om}\lesssim\vn{u}_{\lfloor\mu\rfloor,p_{\lfloor\mu\rfloor},\Om}$.
  
      For $\sn{\bft} \leq \lfloor\mu\rfloor$ we write
      $D^\bft u(\bfx) = M(D^\bft u)(T,\bfx) + M_R(D^\bft u)(T,\bfx)$ and use
      Lemma~\ref{lemma:estimate-U}, where we set
      $\tilde q = p_{\lfloor\mu\rfloor}$, $\tilde p = p$.
      Since then $\tilde \mu = \mu - \lfloor\mu\rfloor<1$, we obtain for
      any $p\geq1$ that
      $\sn{u}_{\sn{\bft},p_{\lfloor\mu\rfloor},\Om}
      \lesssim \sn{u}_{\sn{\bft}+1,p,\Om}$.
      In total, this yields $\vn{u}_{0,q,\Om}\leq\vn{u}_{\lceil\mu\rceil,p,\Om}$.
\qedhere
  \end{enumerate}
\end{proof}
\subsection{The case of fractional $s$ in Theorem~\ref{thm:sobolev-embedding}}
The analog of Lemma~\ref{lemma:estimate-U} is the following result. 
\begin{lem}
  \label{lemma:estimate-U-fractional} 
  Let $\Omega$, $T$, $\psi$ be as in Lemma~\ref{lemma:psi}.
  Let $1 \leq \tilde p \leq \tilde q < \infty$. Set $\tilde\mu = d(\tilde p^{-1} - \tilde q^{-1})$. 
  Let $K = K(\bfx,\bfz)$ be defined on $\Omega \times \R{d}$ with $\operatorname*{supp} K(\bfx,\cdot) \subset B_1$
  for every $\bfx \in \Omega$. Let $K$ be bounded (bound $\|K\|_{L^\infty}$) 
  and Lipschitz continuous
  with Lipschitz constant $\operatorname*{Lip}(K)$. 
  Define the function 
  $$
  V(t,\bfx):= \int_{\bfz \in B_1} K(\bfx,\bfz) u(\bfx + t (\bfz + \psi(\bfx)))\,d\bfz. 
  $$
  \begin{enumerate}[(i)]
  \item 
  \label{item:lemma:estimate-U-fractional-i} 
  Let $\tilde s \in (0,1)$.  
  Then there exists $C_1 = C_1(\tilde p,\tilde q,\tilde s,d,\Lpsi,T,\|K\|_{L^\infty},
  \operatorname*{Lip}(K))$ such that  
  $$
  |V(t,\cdot)|_{\tilde s,\tilde q,\Omega} \leq C_1 t^{-\tilde\mu-\tilde s}
  \|u\|_{0,\tilde p,\Omega}. 
  $$
  \item
  \label{item:lemma:estimate-U-fractional-ii} 
  Let $\tilde s \in (0,1)$ and assume that the pair $(\tilde\sigma,\tilde p)$
  satisfies (\ref{eq:condition-on-p-sigma}). 
  Then 
  $$
  \left|\int_{t=0}^T t^{-1+\tilde s+\tilde\mu+\tilde\sigma}
  V(t,\cdot)\right|_{\tilde s,\tilde q,\Omega}
  \leq C_2 \|u\|_{0,\tilde p,\Omega}  
  $$
for a constant $C_2 = C_2(\tilde p,\tilde q,\tilde s,d,\Lpsi,T,\|K\|_{L^\infty},\operatorname*{Lip}(K),\sigma)$ 
depending only on the quantities indicated.
  \end{enumerate}
\end{lem}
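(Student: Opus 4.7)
The plan is to mirror the proof of Lemma~\ref{lemma:estimate-U} while producing a Slobodeckij seminorm on the left-hand side. As a preparatory step, I would rewrite
\[
V(t,\bfx) = \int_{\R{d}} \phi(\bfx,\bfw)\, u(\bfw)\,d\bfw, \qquad \phi(\bfx,\bfw) := t^{-d} K\bigl(\bfx, (\bfw - \Psi_t(\bfx))/t\bigr),
\]
via the change of variable $\bfw = \Psi_t(\bfx) + t\bfz$, so that $\phi(\bfx,\cdot)$ is supported in $B_t(\Psi_t(\bfx))$. An elementary calculation that combines the Lipschitz continuity of $K$ in both arguments with the bound $\|\nabla\Psi_t\|_\infty \leq 1 + t\widetilde C$ from Lemma~\ref{lemma:psi}(iii) then yields the kernel estimate $|\phi(\bfx,\bfw) - \phi(\bfy,\bfw)| \leq C\, t^{-d-1}|\bfx-\bfy|$ uniformly in $\bfw$, with joint support contained in $B_t(\Psi_t(\bfx))\cup B_t(\Psi_t(\bfy))$.

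The main tools for part (i) are two pointwise bounds. First, the $L^{\tilde q}$ bound $\|V(t,\cdot)\|_{0,\tilde q,\Omega} \leq C\, t^{-\tilde\mu}\|u\|_{0,\tilde p,\Omega}$ follows exactly as in Lemma~\ref{lemma:estimate-U} by interpolating an $L^\infty$ bound of order $t^{-d/\tilde p}$ (H\"older's inequality) with an $L^{\tilde p}$ bound obtained through the bilipschitz change of variables $\Psi_t$. Second, for $|\bfx-\bfy|\leq t$ the kernel estimate above produces the Lipschitz-type bound $|V(t,\bfx) - V(t,\bfy)| \leq C\, t^{-1}|\bfx-\bfy|\,\widetilde U(t,\bfx)$, where $\widetilde U(t,\bfx) := t^{-d}\int_{B_{C_0 t}(\Psi_t(\bfx))}|u(\bfw)|\,d\bfw$ is a mild enlargement of the function $U$ from Lemma~\ref{lemma:estimate-U}, the enlarged radius being needed to contain $B_t(\Psi_t(\bfy))$ when $\bfy$ is close to $\bfx$. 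The same arguments as in Lemma~\ref{lemma:estimate-U} yield $\|\widetilde U(t,\cdot)\|_{0,\tilde q,\Omega} \leq C\,t^{-\tilde\mu}\|u\|_{0,\tilde p,\Omega}$.

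With these tools in hand I would split the Slobodeckij double integral at $|\bfx-\bfy| = t$. On the region $|\bfx-\bfy|\leq t$, the Lipschitz-type bound produces a contribution controlled by
\[
C\,t^{-\tilde q}\int_\Omega \widetilde U(t,\bfx)^{\tilde q}\int_{|\bfy-\bfx|\leq t} |\bfx-\bfy|^{(1-\tilde s)\tilde q - d}\,d\bfy\,d\bfx \leq C\,t^{-\tilde s\tilde q}\|\widetilde U(t,\cdot)\|_{0,\tilde q,\Omega}^{\tilde q},
\]
the inner radial integral being equal to $C\,t^{(1-\tilde s)\tilde q}$ and integrable at the origin precisely because $\tilde s<1$. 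On the complementary region, the trivial bound $|V(t,\bfx)-V(t,\bfy)|^{\tilde q}\lesssim |V(t,\bfx)|^{\tilde q}+|V(t,\bfy)|^{\tilde q}$ combined with $\int_{|\bfz|>t}|\bfz|^{-\tilde s\tilde q-d}\,d\bfz = C\,t^{-\tilde s\tilde q}$ gives the analogous bound $C\,t^{-\tilde s\tilde q}\|V(t,\cdot)\|_{0,\tilde q,\Omega}^{\tilde q}$. Taking $\tilde q$-th roots in both contributions yields the claimed $t^{-\tilde\mu-\tilde s}$ behavior.

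For part (ii), the case $\tilde\sigma>0$ follows from part (i) by applying Minkowski's integral inequality to pull the $t$-integral inside the Slobodeckij seminorm, since $\int_0^T t^{-1+\tilde\sigma}\,dt<\infty$. The main obstacle is the borderline case $\tilde\sigma=0$, $\tilde p>1$, which will require the Marcinkiewicz-style argument already used in the proof of Lemma~\ref{lemma:estimate-U}(ii): I would apply the Hardy-style identity of Lemma~\ref{lemma:hardy-style} to rewrite the weighted $t$-integral as a Riesz-type convolution in $\R{d}$ and then invoke the weak-type interpolation bound of Lemma~\ref{lemma:marcinkiewicz}, adapting the Slobodeckij splitting of part~(i) so that the Marcinkiewicz bound controls both the ``close'' and ``far'' contributions simultaneously. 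A secondary technical difficulty throughout is the careful propagation of the constants $\operatorname*{Lip}(K)$, $\|K\|_{L^\infty}$, $\Lpsi$, and $T$ through the two-step (pointwise/$L^{\tilde q}$) interpolation.
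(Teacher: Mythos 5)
Your approach to part~(i) is sound and essentially parallels the paper's: both reduce the problem to the unified estimate $|V(t,\bfx)-V(t,\bfy)| \leq C\min\{1,|\bfx-\bfy|/t\}\bigl[\widetilde U(t,\bfx)+\widetilde U(t,\bfy)\bigr]$ (with $\widetilde U$ your enlarged-ball variant of $U$), except that you obtain it from a Lipschitz estimate in $\bfx$ on the rescaled kernel $\phi(\bfx,\bfw)=t^{-d}K(\bfx,(\bfw-\Psi_t(\bfx))/t)$, whereas the paper introduces the translation ${\mathbf t}=(\bfx-\bfy)/t+\psi(\bfx)-\psi(\bfy)$ and estimates the kernel difference on the symmetric-difference region between $B_1$ and $B_1+{\mathbf t}$. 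Your explicit split at $|\bfx-\bfy|=t$ then reproduces the paper's single radial computation $\int_{\bfy\in\Omega}(\min\{1,|\bfx-\bfy|/t\})^{\tilde q}|\bfx-\bfy|^{-\tilde s\tilde q-d}\,d\bfy\lesssim t^{-\tilde s\tilde q}$, so part~(i) goes through. Your treatment of the subcase $\tilde\sigma>0$ of part~(ii), pulling the $t$-integral through the seminorm by Minkowski, is likewise correct and more direct than what the paper writes.

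The gap is in the borderline case $\tilde\sigma=0$, $\tilde p>1$ of part~(ii). As stated, your plan --- apply the Hardy-style identity to turn the weighted $t$-integral into a Riesz potential, then invoke Marcinkiewicz, then ``adapt the Slobodeckij splitting'' --- does not assemble into a \emph{seminorm} bound: Lemmas~\ref{lemma:hardy-style} and~\ref{lemma:marcinkiewicz} combine, as in the proof of Lemma~\ref{lemma:estimate-U}, to give an $L^{\tilde q}$ bound on $\bfx\mapsto\int_0^T t^{-1+\tilde\mu}U(t,\bfx)\,dt$, not a Slobodeckij bound, and the $\bfy$-dependent cutoff your split introduces into the $t$-integral makes a direct appeal to those lemmas delicate. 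The missing step, which the paper performs, is to hold $\bfx$ fixed and apply Minkowski's inequality in the $\bfy$-variable with respect to the measure $|\bfx-\bfy|^{-\tilde s\tilde q-d}\,d\bfy$, pulling the $L^{\tilde q}_{\bfy}$ norm inside the $t$-integral. Since $\|\min\{1,|\bfx-\cdot|/t\}\|_{L^{\tilde q}_{\bfy}}\lesssim t^{-\tilde s}$ against that measure, this exactly cancels the $t^{\tilde s}$ in the weight $t^{-1+\tilde s+\tilde\mu+\tilde\sigma}$, and the claim reduces to $\bigl\|\int_0^T t^{-1+\tilde\mu+\tilde\sigma}\widetilde U(t,\cdot)\,dt\bigr\|_{0,\tilde q,\Omega}\lesssim\|u\|_{0,\tilde p,\Omega}$, which is precisely~\eqref{eq:lemma:estimate-U-2}. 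That estimate already packages the Hardy/Marcinkiewicz machinery for both $\tilde\sigma>0$ and $\tilde\sigma=0$, $\tilde p>1$; it should be invoked rather than re-derived, and this single Minkowski-in-$\bfy$ reduction replaces your case distinction entirely.
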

\begin{proof} 
  {\em Proof of (\ref{item:lemma:estimate-U-fractional-i}):} 
  Let $\bfx$, $\bfy \in \Omega$ and $t \in (0,T)$. 
  Define the translation ${\mathbf t}:= \frac{\bfx-\bfy}{t} + \psi(\bfx) - \psi(\bfy)$
  and denote by $\chi_A$ the characteristic function of a set $A$. 
  An affine change of variables gives for $\bfx$, $\bfy \in \Omega$ and $t \in(0,T)$
  \begin{align*}
  V(t,\bfx) &= \int_{\bfz \in B_1} K(\bfx,\bfz) u(\bfx + t (\bfz + \psi(\bfx)))\,d\bfz 
  = \int_{\bfz \in \R{d}} K(\bfx,\bfz) u(\bfx + t (\bfz + \psi(\bfx)))\chi_{B_1}(\bfz)\,d\bfz \\
  & = \int_{\bfz^\prime \in \R{d}}
  K(\bfx,\bfz^\prime - {\mathbf t} ) u(\bfy + t(\bfz^\prime + \psi(\bfy)))\chi_{B_1+{\mathbf t}}(\bfz^\prime)\,d\bfz^\prime\\
  &= V(t,\bfy) + \int_{\bfz \in \R{d}}
  \underbrace{ \left[ K(\bfx,\bfz - {\mathbf t})
  \chi_{B_1 + {\mathbf t}}(\bfz) - K(\bfy,\bfz) \chi_{B_1}(\bfz)\right]}_{=:B(\bfx,\bfy,\bfz)} 
  u(\bfy + t (\bfz + \psi(\bfy)))\,d\bfz. 
  \end{align*}
  We estimate the function $B$. We have the obvious estimate 
  $|B(\bfx,\bfy,\cdot)| \leq \|K\|_{L^\infty} \left[ \chi_{B_1} + \chi_{B_1+{\mathbf t}}\right]$.
  For further estimates, we start by noting 
  \begin{equation}
  \label{eq:estimate-translation-vector}
  |{\mathbf t}| \leq  |\bfx-\bfy|/t + \|\nabla \psi\|_{L^\infty} |\bfx - \bfy| \leq C |\bfx - \bfy|/t, 
  \end{equation}
  where $C$ depends on the Lipschitz constant of $\psi$ and on $T$. We also note 
  \begin{align*}
  \bfz \in B_1 \setminus (B_1 + {\mathbf t}) & \quad \Longrightarrow 
  \left( |\bfz| \leq 1 \quad \wedge \quad |\bfz- {\mathbf t}| \ge 1\right) \quad \Longrightarrow 
  1 - |{\mathbf t}| \leq |\bfz|  \leq 1, \\
  \bfz \in ({\mathbf t} + B_1) \setminus B_1 & \quad \Longrightarrow 
  \left( |\bfz| \ge 1 \quad \wedge \quad |\bfz - {\mathbf t} | \leq 1\right) 
  \quad \Longrightarrow 1 - |{\mathbf t}| \leq |\bfz - {\mathbf t}| \leq 1. 
  \end{align*}
  Since $K$ is Lipschitz continuous on
  $\Omega \times \R{d}$ and $\operatorname*{supp} K(\bfx,\cdot) \subset B_1$
  we get $|K(\bfx,\bfz)| \leq C \sn{{\mathbf t}}$ for every 
  $\bfz \in R:= (B_1 \setminus (B_1 + {\mathbf t})) \cup ((B_1 + {\mathbf t})\setminus B_1)$,
  where the constant 
  $C$ depends only on the Lipschitz constant of $K$. We therefore get 
  $|B(\bfx,\bfy,\bfz)|  \leq C \sn{{\mathbf t}}$ for $\bfz \in R$. For the case 
  $\bfz  \in B_1 \cap (B_1 + {\mathbf t})$, we get from the Lipschitz continuity of $K$ that 
  $|B(\bfx,\bfy,\bfz)| \leq C \left[|\bfx - \bfy| + |\bfx - \bfy|/t\right] \leq C |\bfx - \bfy|/t$.
  Putting together the above estimates for $B$, we arrive at 
  $$
  |B(\bfx,\bfy,\bfz) | \leq C \min\{1,|\bfx - \bfy|/t\} \left[ \chi_{B_1}(\bfz) + \chi_{{\mathbf t} + B_1}(\bfz)\right].
  $$
  In total, we get 
  \begin{align*}
  |V(t,\bfx) - V(t,\bfy)| &\leq C \min\{1, |\bfx - \bfy|/t\}
  \int_{B_1 \cup {\mathbf t} + B_1} |u(\bfy + t (\bfz + \psi(\bfy)))|\,d\bfz\\
   &= C \min\{1,|\bfx - \bfy|/t\} \left[ \int_{\bfz \in B_1} |u(\bfx + t (\bfx + \psi(\bfx)))|\,d\bfz 
                                + \int_{\bfz \in B_1} |u(\bfy + t (\bfy + \psi(\bfy)))|\,d\bfz\right] \\
  & \leq C \min\{1,|\bfx-\bfy|/t\} \left[ U(t,\bfx) + U(t,\bfy)\right],
  \end{align*}
  where, in the last step we have inserted the definition of the function
  $U$ from (\ref{eq:lemma:estimate-U-0}). Therefore, 
  \begin{align}\label{eq:lemma:estimate-U-fractional-100}
    \frac{|V(t,\bfx) - V(t,\bfy)|}{|\bfx - \bfy|^{\tilde s+d/\tilde q}}
    \leq C \frac{\min\{1,|\bfx - \bfy|/t\}}{|\bfx - \bfy|^{\tilde s+d/\tilde q}} 
    \left[ U(t,\bfx) + U(t,\bfy)\right].
  \end{align}
  In view of the symmetry in the variables $\bfx$ and $\bfy$, we will only consider one type of integral. 
  We compute 
  \begin{align}\label{eq:lemma:estimate-U-fractional-200}
    \begin{split}
      |U(t,\bfx)|^{\tilde q} &\int_{\bfy \in \Omega} \frac{\left(\min\{1,|\bfx - \bfy|/t\}\right)^{\tilde q}}
      {|\bfx - \bfy|^{\tilde s\tilde q+d}} \,d\bfy
\\&
\lesssim
      |U(t,\bfx)|^{\tilde q} \left[ \int_{r=0}^t \frac{\left(r/t\right)^{\tilde q} }
	{r^{\tilde s\tilde q+d}}r^{d-1} \,dr
      + \int_{r=t}^\infty r^{-\tilde s\tilde q-d} r^{d-1}\,dr 
      \right]
      \lesssim t^{-\tilde s\tilde q} |U(t,\bfx)|^{\tilde q},
    \end{split}
  \end{align}
  where the hidden constants depend only on $\tilde s$ and $\tilde q$. We conclude 
  \begin{align*} 
    |V(t,\cdot)|^{\tilde q}_{\tilde s,\tilde q,\Omega} 
    \leq C t^{-\tilde s\tilde q} \|U(t,\cdot)\|^{\tilde q}_{0,\tilde q,\Omega} 
    \leq C t^{-\tilde s\tilde q - \tilde\mu \tilde q} \|u\|^{\tilde
    q}_{0,\tilde p,\Omega}, 
  \end{align*}
  where the last step follows from (\ref{eq:lemma:estimate-U-1}).
  
  {\em Proof of (\ref{item:lemma:estimate-U-fractional-ii}):} 
  Starting  from (\ref{eq:lemma:estimate-U-fractional-100}) we have to estimate 
  \begin{align*}
    I& := \int_{\bfx \in \Omega} \int_{\bfy \in \Omega} 
      \frac{1}{|\bfx - \bfy|^{\tilde s\tilde q+d}} \left| \int_{t=0}^T t^{-1 +\tilde\mu + \tilde\sigma}
      \min\{1,|\bfx-\bfy|/t\} U(t,\bfx)\,dt
      \right|^{\tilde q}\,d\bfy\,d\bfx.
  \end{align*}
  Applying the Minkowski inequality (\ref{eq:minkowski}), we obtain 
  (recalling the calculation performed in (\ref{eq:lemma:estimate-U-fractional-200}))
  \begin{align*}
    I &\leq \int_{\bfx \in \Omega} 
    \left\{ 
      \int_{t=0}^T \left( \int_{\bfy \in \Omega} 
                           \frac{1}{|\bfx - \bfy|^{\tilde s\tilde q+d}}
			   t^{(-1 +\tilde\mu + \tilde\sigma) \tilde q}
			   \left(\min\{1,|\bfx-\bfy|/t\}\right)^{\tilde q}  |U(t,\bfx)|^{\tilde q}\,d\bfy
                   \right)^{1/\tilde q}\,dt
		 \right\}^{\tilde q}\,d\bfx \\
    &\lesssim \int_{\bfx \in \Omega} 
    \left\{ 
      \int_{t=0}^T |U(t,\bfx)| t^{-1 + \tilde\mu + \tilde\sigma} \,dt
    \right\}^{\tilde q}\,d\bfx 
    \lesssim  \|u\|_{0,\tilde p,\Omega}^{\tilde q},
  \end{align*}
  where, in the last step, we used (\ref{eq:lemma:estimate-U-2}). 
\end{proof}
The analog of Lemma~\ref{lemma:estimate-difference-U} is as follows: 
\begin{lem}
  \label{lemma:estimate-difference-U-fractional}
  Let $\Omega$, $T$, $\psi$ be as in Lemma~\ref{lemma:psi}. 
  Let $K  = K(\bfx,\bfz,\bfz^\prime)$ be defined on $\Omega \times \R{d} \times \R{d}$ with 
  $\operatorname*{supp} K(\bfx,\cdot,\cdot) \subset B_1 \times B_1$ for every $\bfx \in \Omega$. 
  Let $K$ be bounded (bound $\|K\|_{L^\infty}$) and Lipschitz continuous with Lipschitz constant 
  $\operatorname*{Lip}(K)$. 
  
  Let $\tilde s$, $\tilde r \in (0,1)$. Let $1 \leq \tilde p \leq \tilde q < \infty$.
  Set $\tilde\mu = d (\tilde p^{-1} - \tilde q^{-1})$.
  For $u \in W^{\tilde r,\tilde p}(\Omega)$ define 
  for $t \in (0,T)$, the function
  \begin{equation}
  V(t,\bfx):= \int_{\bfz \in B_1}\int_{\bfz^\prime \in B_1}  K(\bfx,\bfz,\bfz^\prime) 
  \left[ u(\bfx + t (\bfz + \psi(\bfx))) - u(\bfx + t(\bfz^\prime+\psi(\bfx)))\right]\,d\bfz^\prime \,d\bfz. 
  \end{equation}
  Then: 
  \begin{enumerate}[(i)]
  \item
  \label{item:lemma:estimate-difference-U-fractional-i}
  There exists $C_1 = C_1(\tilde p,\tilde q,\tilde r,\tilde s,T,\Lpsi,d,\|K\|_{L^\infty},\operatorname*{Lip}(K))$ such that 
  $$
  |V(t,\cdot)|_{\tilde s,\tilde p,\Omega} \leq C_1 t^{-\tilde
  s-\tilde\mu+\tilde r}|u|_{\tilde r,\tilde p,\Omega}
  \quad \mbox{ for all $t \in (0,T)$.} 
  $$
  \item 
  \label{item:lemma:estimate-difference-U-fractional-ii}
  If the pair $(\tilde\sigma,\tilde p)$ satisfies (\ref{eq:condition-on-p-sigma}), then there 
  exists $C_2 = C_2(\tilde p,\tilde q,\tilde r,\tilde s,T,\Lpsi,d,\|K\|_{L^\infty},\operatorname*{Lip}(K),\tilde\sigma)$ such that 
  \begin{eqnarray*}
  \label{eq:lemma:estimate-difference-U-fractional-2}
  \left\|\int_{t=0}^T t^{-1+\tilde s-\tilde r+\tilde\mu+\tilde\sigma}
  V(t,\cdot)\,dt\right\|_{\tilde s,\tilde q,\Omega}
  &\leq& C_2 |u|_{\tilde r,\tilde p,\Omega}. 
  \end{eqnarray*}
  \end{enumerate}
\end{lem}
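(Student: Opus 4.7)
The proof proposal is to adapt the kernel-difference argument from the proof of Lemma~\ref{lemma:estimate-U-fractional} to the double-integral difference structure of $V$, replacing every invocation of Lemma~\ref{lemma:estimate-U} by the corresponding invocation of Lemma~\ref{lemma:estimate-difference-U} (applied with the parameter $\tilde s$ of that lemma taken equal to $\tilde r$ here). This is the natural merger: Lemma~\ref{lemma:estimate-difference-U} supplies the factor $t^{\tilde r}$ coming from the Sobolev--Slobodeckij regularity of $u$, while Lemma~\ref{lemma:estimate-U-fractional} supplies the factor $t^{-\tilde s}$ coming from the fractional seminorm on the left-hand side.

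Writing $\Phi_t(\bfx,\bfz) := \bfx + t(\bfz+\psi(\bfx))$ and $\bft := (\bfx-\bfy)/t + \psi(\bfx)-\psi(\bfy)$, the identity $\Phi_t(\bfx,\bfz) = \Phi_t(\bfy,\bfz+\bft)$ together with the substitutions $\bfz\mapsto\bfw-\bft$, $\bfz'\mapsto\bfw'-\bft$ gives the representation
\[
V(t,\bfx)-V(t,\bfy) = \int_{\R{d}\times\R{d}} B(\bfx,\bfy,\bfw,\bfw')\bigl[u(\Phi_t(\bfy,\bfw))-u(\Phi_t(\bfy,\bfw'))\bigr]\,d\bfw\,d\bfw',
\]
with a kernel difference $B$ supported in $(B_1\cup(B_1+\bft))^2$. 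Using $|\bft|\lesssim|\bfx-\bfy|/t$ together with the Lipschitz continuity and support of $K$, one obtains, exactly as in the proof of Lemma~\ref{lemma:estimate-U-fractional}(\ref{item:lemma:estimate-U-fractional-i}), the pointwise bound
\[
|B(\bfx,\bfy,\bfw,\bfw')| \lesssim \min\bigl\{1,|\bfx-\bfy|/t\bigr\}\,\bigl(\chi_{B_1}+\chi_{B_1+\bft}\bigr)(\bfw)\bigl(\chi_{B_1}+\chi_{B_1+\bft}\bigr)(\bfw').
\]
Expanding the resulting four products and reversing the shift $\bft$ in those pieces that involve $B_1+\bft$ converts, term by term, the inner integral either into
\[
\widetilde V(t,\bfy):=\int_{B_1\times B_1}|u(\Phi_t(\bfy,\bfw))-u(\Phi_t(\bfy,\bfw'))|\,d\bfw\,d\bfw',
\]
into its analogue $\widetilde V(t,\bfx)$, or into a ``hybrid'' integrand of the type $|u(\Phi_t(\bfy,\bfw))-u(\Phi_t(\bfx,\bfw''))|$. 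The latter is split via the triangle inequality (inserting $\pm u(\Phi_t(\bfy,\bfw''))$) into (a) a piece that is absorbed into $\widetilde V(t,\bfy)$ and (b) a translation-difference $|u(\Phi_t(\bfy,\bfw''))-u(\Phi_t(\bfx,\bfw''))|$, which by the H\"older trick of Lemma~\ref{lemma:estimate-difference-U} is controlled by $|\bfx-\bfy|^{\tilde r+d/\tilde p}$ times an integral of $v(\bfy_1,\bfy_2):=|u(\bfy_1)-u(\bfy_2)|/|\bfy_1-\bfy_2|^{\tilde r+d/\tilde p}$.

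Inserting the resulting pointwise bound into $|V(t,\cdot)|_{\tilde s,\tilde q,\Omega}^{\tilde q}$ and performing the polar-coordinate computation from~\eqref{eq:lemma:estimate-U-fractional-200} contributes the factor $t^{-\tilde s\tilde q}$; the main $\widetilde V$-contribution is then controlled by Lemma~\ref{lemma:estimate-difference-U}(i) (with $\tilde r$ in place of $\tilde s$) by $t^{(\tilde r-\tilde\mu)\tilde q}|u|_{\tilde r,\tilde p,\Omega}^{\tilde q}$, yielding the desired power $t^{\tilde r-\tilde\mu-\tilde s}$; and the translation-difference remainder, since it carries an additional factor $|\bfx-\bfy|^{(\tilde r+d/\tilde p)\tilde q}$ in the numerator, is strictly lower order on the bounded domain $\Omega$ and can be absorbed by a direct computation splitting the $|\bfx-\bfy|$-integration at radius $t$. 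This proves assertion~\eqref{item:lemma:estimate-difference-U-fractional-i}. Part~\eqref{item:lemma:estimate-difference-U-fractional-ii} then follows from~\eqref{item:lemma:estimate-difference-U-fractional-i} by Minkowski's integral inequality when $\tilde\sigma>0$, and by a verbatim repetition of the Marcinkiewicz/Hardy-style reasoning of Lemma~\ref{lemma:estimate-U-fractional}(\ref{item:lemma:estimate-U-fractional-ii}) (using Lemmas~\ref{lemma:hardy-style} and~\ref{lemma:marcinkiewicz}) when $\tilde\sigma=0$ and $\tilde p>1$. The main technical obstacle is the bookkeeping of the four products of indicator functions arising from the kernel-difference expansion, most delicately the translation-difference remainder term; the rest of the argument is a mechanical superposition of the two previously established lemmas.
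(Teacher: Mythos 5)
Your overall strategy is the right one and matches the paper: mirror the kernel-difference argument of Lemma~\ref{lemma:estimate-U-fractional} while substituting the Slobodeckij--H\"older device of Lemma~\ref{lemma:estimate-difference-U} (i.e., inserting the auxiliary function $v(\bfy_1,\bfy_2)=|u(\bfy_1)-u(\bfy_2)|/|\bfy_1-\bfy_2|^{\tilde r + d/\tilde p}$) to extract the factor $t^{\tilde r}$, followed by the polar-coordinate computation that produces $t^{-\tilde s}$ and finally an appeal to Lemma~\ref{lemma:estimate-difference-U} for the $t$-integral in part~(\ref{item:lemma:estimate-difference-U-fractional-ii}). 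The structure of the paper's proof is exactly this merger.

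However, there is a genuine gap in the way you propose to execute the middle step. You expand the majorant $\chi_Z(\bfz)\chi_Z(\bfz')$, $Z=B_1\cup(B_1+\mathbf t)$, into four products and thereby create a ``hybrid'' term of the form $|u(\Phi_t(\bfy,\bfw))-u(\Phi_t(\bfx,\bfw'))|$. But the actual kernel difference
$B(\bfx,\bfy,\bfz,\bfz') = K(\bfx,\bfz-\mathbf t,\bfz'-\mathbf t)\chi_{B_1+\mathbf t}(\bfz)\chi_{B_1+\mathbf t}(\bfz') - K(\bfy,\bfz,\bfz')\chi_{B_1}(\bfz)\chi_{B_1}(\bfz')$
is supported in $(B_1\times B_1)\cup\bigl((B_1+\mathbf t)\times(B_1+\mathbf t)\bigr)$, i.e., only on the two \emph{diagonal} blocks, never on the cross-blocks. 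Respecting this support, one finds that on each block the argument difference satisfies $|\Phi_t(\cdot,\bfz)-\Phi_t(\cdot,\bfz')| = t|\bfz-\bfz'|\le 2t$, the H\"older trick applies uniformly, and after undoing the shift $\mathbf t$ one lands directly on $\min\{1,|\bfx-\bfy|/t\}\,t^{\tilde r}\,[U(t,\bfx)+U(t,\bfy)]$ with $U(t,\bfx) := \int_{B_1}\|v(\Phi_t(\bfx,\bfz),\cdot)\|_{0,\tilde p,\Omega}\,d\bfz$. No hybrid term appears. The cross-blocks you work so hard to control are a spurious artifact of replacing the exact support of $B$ by the larger product $Z\times Z$, and using the latter is problematic precisely because $Z$ has diameter of order $|\mathbf t|\sim|\bfx-\bfy|/t$, which is unbounded when $|\bfx-\bfy|\ge t$, so the factor $t^{\tilde r}$ does not come out cleanly from $Z\times Z$.

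Moreover, the treatment you sketch for the hybrid is not a ``direct computation.'' After inserting $\pm u(\Phi_t(\bfy,\bfw'))$ you obtain a translation difference $u(\Phi_t(\bfy,\bfw'))-u(\Phi_t(\bfx,\bfw'))$, where $\Phi_t(\bfy,\bfw')-\Phi_t(\bfx,\bfw')=\mathbf c:=(\bfy-\bfx)+t(\psi(\bfy)-\psi(\bfx))$ is independent of $\bfw'$. If you then apply the H\"older trick as you propose, the explicit factor $|\bfx-\bfy|^{\tilde r + d/\tilde p}$ that you are counting on for lower-order behavior is \emph{cancelled} against the $|\mathbf c|^{-(\tilde r+d/\tilde p)}$ hidden in the normalization of $v$, and what remains is (up to a $t^{-d/\tilde p}$ factor) the translation modulus $\|u(\cdot+\mathbf c)-u\|_{0,\tilde p,\Omega}$. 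Bounding that by $|\mathbf c|^{\tilde r}\,|u|_{\tilde r,\tilde p,\Omega}$ on a bounded domain is a genuine (and here unwarranted) additional estimate, not absorbed by ``splitting the $\bfy$-integral at radius $t$.'' The clean route, and the one the paper takes, is simply not to generate this term.
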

\begin{proof}
  We proceed as in the proof of Lemma~\ref{lemma:estimate-U-fractional}. With the translation vector
  ${\mathbf t}$ there and the analogous change of variables in $\bfz$ and $\bfz^\prime$ we obtain 
  \begin{align*}
   V(t,\bfx) - V(t,\bfy) &= 
   \int_{\bfz \in B_1} \int_{\bfz^\prime \in B_1} B(\bfx,\bfy,\bfz,\bfz^\prime) 
  \left[ u(\bfy + t(\bfz + \psi(\bfy))) - u(\bfy + t(\bfz^\prime +\psi(\bfy)))]\right]\,d\bfz^\prime\,d\bfz,
  \end{align*}
  where 
  \begin{align*}
  B(\bfx,\bfy,\bfz,\bfz^\prime) := 
  K(\bfx,\bfz - {\mathbf t},\bfz^\prime - {\mathbf t}) \chi_{B_1+{\mathbf t}}(\bfz)
  \chi_{B_1 + {\mathbf t}}(\bfz^\prime) 
   - K(\bfy,\bfz,\bfz^\prime) \chi_{B_1}(\bfz) \chi_{B_1}(\bfz^\prime). 
  \end{align*}
  As in the proof of Lemma~\ref{lemma:estimate-U-fractional},
  we get with $Z:= B_1 \cup (B_1 + {\mathbf t})$:  
  \begin{equation}
  |B(\bfx,\bfy,\bfz,\bfz^\prime)| 
  \leq C \min\{1,|\bfx-\bfy|/t\} 
  \chi_{Z}(\bfz) \chi_{Z}(\bfz^\prime). 
  \end{equation}
  Upon setting 
  $$
  v(\bfy,\bfy^\prime) = \frac{|u(\bfy) - u(\bfy^\prime)|}{|\bfy - \bfy^\prime|^{\tilde r+d/\tilde p} }
  $$
  we get in analogy to the procedure in (\ref{eq:lemma:estimate-difference-U-2000})
  \begin{align*}
    \left| V(t,\bfx) - V(t,\bfy)\right| & \lesssim
    \min\{1, |\bfx - \bfy|/t\} t^{\tilde r} \int_{\bfz \in Z}
    \|v(\bfy + t(\bfz + \psi(\bfy)),\cdot)\|_{L^{\tilde p}(\Omega)}\,d\bfz \\
    &= \min\{1,|\bfx - \bfy|/t\} t^{\tilde r}
    \int_{\bfz \in B_1} \|v(\bfx + t (\bfz + \psi(\bfx)),\cdot)\|_{L^{\tilde p}(\Omega)}
    + \|v(\bfy + t (\bfz + \psi(\bfy)),\cdot)\|_{L^{\tilde p}(\Omega)}
   \,d\bfz.
  \end{align*}
  We recognize the similarity with the situation in Lemma~\ref{lemma:estimate-difference-U}. We set 
  $U(t,\bfx):= \int_{\bfz \in B_1} \|v(\bfx + t (\bfz + \psi(\bfx)),\cdot)\|_{L^{\tilde p}(\Omega)}$ and arrive at 
  $$
    \frac{|V(t,\bfx) - V(t,\bfy)|}{|\bfx - \bfx|^{\tilde s + d/\tilde q}} \lesssim
    \frac{\min\{1,|\bfx-\bfy|/t\}}{|\bfx - \bfy|^{\tilde s + d/\tilde q}} t^{\tilde r}
    \left[ U(t,\bfx) + U(t,\bfy)\right].
  $$
  Again, given the symmetry in the variables $\bfx$ and $\bfy$,
  we get as in (\ref{eq:lemma:estimate-U-fractional-200}) 
  $$
    |V(t,\cdot)|_{\tilde s,\tilde q,\Omega} \lesssim t^{-\tilde s-\tilde\mu+\tilde r}
    |u|_{\tilde r,\tilde p,\Omega}, 
  $$
  which is the assertion of part (\ref{item:lemma:estimate-difference-U-fractional-i}) of the lemma.
  For part (\ref{item:lemma:estimate-difference-U-fractional-ii}) of the
  lemma we proceed analogously to the proof
  of Lemma~\ref{lemma:estimate-U-fractional}, (\ref{item:lemma:estimate-U-fractional-ii}).
\end{proof}
We now come to the analog of Lemma~\ref{lemma:sobolev-embedding-Lq-left}. 
\begin{lem}
\label{lemma:sobolev-embedding-Ws-left}
Let $\Omega$ be as in Theorem~\ref{thm:sobolev-embedding}.
Let $1 \leq p \leq q < \infty$. Define $\mu = d (p^{-1} - q^{-1})$ as in (\ref{eq:mu}). 
Let $s \in (0,1)$ with $s+\mu\leq 1$ and assume that one of the following cases occurs: 
\begin{enumerate}[(a)]
\item 
$r = s+ \mu$ and $p > 1$. 
\item 
$r  > s+ \mu$ and $p \ge 1$. 
\end{enumerate}
Then, there is a constant $C = C(p,q,r,s,\Lpsi,T,d)$ such that 
$$
|u|_{s,q,\Omega} \leq C \|u\|_{r,p,\Omega}. 
$$
\end{lem}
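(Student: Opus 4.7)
The plan is to mirror the proof of Lemma~\ref{lemma:sobolev-embedding-Lq-left}, substituting the fractional-order smoothing estimates of Lemmas~\ref{lemma:estimate-U-fractional} and~\ref{lemma:estimate-difference-U-fractional} for their integer-order counterparts. Starting from one of the representations furnished by Lemma~\ref{lemma:taylor},
\[
u(\bfx) = M(u)(T,\bfx) + \widetilde M(u)(T,\bfx),
\]
with $T$ and $\psi$ as in Lemma~\ref{lemma:psi}, I split the estimate for $|u|_{s,q,\Omega}$ into an averaged part and a remainder $\widetilde M\in\{M_R,M_S\}$, the choice of which being dictated by the kind of regularity of $u$ we wish to exploit. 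In both cases, Lemma~\ref{lemma:estimate-U-fractional}(\ref{item:lemma:estimate-U-fractional-i}) applied at $t=T$ with kernel $K=\omega$ and $\tilde p=p$, $\tilde q=q$, $\tilde s=s$, $\tilde\mu=\mu$ yields
\[
|M(u)(T,\cdot)|_{s,q,\Omega}\le C\,T^{-\mu-s}\,\|u\|_{0,p,\Omega}\lesssim \|u\|_{r,p,\Omega},
\]
since $T$ depends only on the chunkiness of $\Omega$.

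If $r<1$, I take $\widetilde M=M_S$. The representation~(\ref{eq:lemma:taylor-2}) exactly matches the structure treated by Lemma~\ref{lemma:estimate-difference-U-fractional}(\ref{item:lemma:estimate-difference-U-fractional-ii}): the kernel $K(\bfx,\bfz,\bfz')=\omega(\bfz')\omega_2(\bfx,\bfz)$ is bounded and Lipschitz (using smoothness of $\omega$, $\omega_2$ and Lemma~\ref{lemma:psi}), while the explicit $\tau^{-1}$ prefactor corresponds to $\tilde\sigma=\tilde r-\tilde s-\tilde\mu$. Setting $\tilde r=r$, $\tilde p=p$, $\tilde q=q$, $\tilde s=s$, $\tilde\mu=\mu$, the condition~(\ref{eq:condition-on-p-sigma}) on $(\tilde\sigma,\tilde p)=(r-s-\mu,p)$ is precisely the dichotomy (a)/(b) of the hypothesis, so Lemma~\ref{lemma:estimate-difference-U-fractional}(\ref{item:lemma:estimate-difference-U-fractional-ii}) delivers $|M_S(u)(T,\cdot)|_{s,q,\Omega}\lesssim |u|_{r,p,\Omega}\le\|u\|_{r,p,\Omega}$.

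If $r\ge 1$, I take $\widetilde M=M_R$ and apply Lemma~\ref{lemma:estimate-U-fractional}(\ref{item:lemma:estimate-U-fractional-ii}) to each $\partial_j u\in L^p(\Omega)$, with kernel the $j$-th component of $\omega_1(\bfx,\bfz)=\omega(\bfz)(\bfz+\psi(\bfx))$ (bounded and Lipschitz) and parameters $\tilde p=p$, $\tilde q=q$, $\tilde s=s$, $\tilde\mu=\mu$, $\tilde\sigma=1-s-\mu\ge 0$. Provided that $s+\mu<1$ or $p>1$, the pair $(\tilde\sigma,\tilde p)$ lies in the admissible regime of~(\ref{eq:condition-on-p-sigma}), and the bound $|M_R(u)(T,\cdot)|_{s,q,\Omega}\lesssim \|\nabla u\|_{0,p,\Omega}\le\|u\|_{r,p,\Omega}$ follows.

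The single configuration not covered is $r>1$, $s+\mu=1$, $p=1$, in which $\tilde\sigma=0$ and $\tilde p=1$ together violate~(\ref{eq:condition-on-p-sigma}); this is the main technical obstacle. The plan is to bootstrap the integrability of $\nabla u$ first: since $r-1>0$ and $\mu^\star:=d(1-1/q^\star)\in(0,1)$ is non-integer for any $q^\star>1$ sufficiently close to $1$, Lemma~\ref{lemma:sobolev-embedding-Lq-left} applied to $\partial_j u\in W^{r-1,1}(\Omega)$ gives, for such a $q^\star$ with $\mu^\star<r-1$, the estimate $\|\nabla u\|_{0,q^\star,\Omega}\lesssim\|u\|_{r,1,\Omega}$. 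Re-running the $r\ge 1$ argument with $p$ replaced by $q^\star$ then yields $\tilde\sigma=1-s-d(1/q^\star-1/q)=\mu^\star>0$ together with $\tilde p=q^\star>1$, which does satisfy~(\ref{eq:condition-on-p-sigma}). Summing the averaged and remainder contributions completes the estimate.
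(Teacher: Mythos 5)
Your proof is correct and follows the same overall strategy as the paper: decompose $u = M(u)(T,\cdot) + \widetilde M(u)(T,\cdot)$, estimate $M(u)$ via Lemma~\ref{lemma:estimate-U-fractional}(\ref{item:lemma:estimate-U-fractional-i}), estimate the remainder via Lemma~\ref{lemma:estimate-difference-U-fractional}(\ref{item:lemma:estimate-difference-U-fractional-ii}) (when $\widetilde M = M_S$, $r<1$) or Lemma~\ref{lemma:estimate-U-fractional}(\ref{item:lemma:estimate-U-fractional-ii}) (when $\widetilde M = M_R$, $r\ge 1$), and bootstrap in the single residual configuration $r>1$, $s+\mu=1$, $p=1$. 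You correctly identified this as the only gap in a direct application, and your parameter choices $\tilde\sigma = 1 - \tilde s - \tilde\mu$ and your verification of condition~(\ref{eq:condition-on-p-sigma}) check out case by case. The only place you diverge from the paper is in how the bootstrap is organized. The paper raises the Lebesgue exponent of $u$ itself, proving first $|u|_{s,q,\Omega}\lesssim\|u\|_{1,p_\star,\Omega}$ by choosing an intermediate $\mu_\star<\mu$ with $s+\mu_\star<1$ (so that case~(\ref{item:lemma:sobolev-embedding-Ws-left-ii}) applies) and then chaining $\|u\|_{1,p_\star,\Omega}\lesssim\|u\|_{r,p,\Omega}$ via Lemma~\ref{lemma:sobolev-embedding-Lq-left}. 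You instead raise the Lebesgue exponent of $\nabla u$, first establishing $\|\nabla u\|_{0,q^\star,\Omega}\lesssim\|u\|_{r,1,\Omega}$ via Lemma~\ref{lemma:sobolev-embedding-Lq-left} applied componentwise, and then re-running the $M_R$ estimate with $\tilde p=q^\star>1$ so that $\tilde\sigma=\mu^\star>0$. Both routes are two-step embeddings through an intermediate exponent; they are equivalent in strength, and neither is shorter. The paper's variant is marginally more modular in that it reduces entirely to statements already proved as standalone lemmas, whereas yours re-enters the representation-formula estimate with modified parameters, but that is a matter of exposition rather than substance.
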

\begin{proof}
  We do not lose generality if we assume $\mu>0$.
  The proof is divided into several steps.
  \begin{enumerate}[(i)]
    \item 
\label{item:lemma:sobolev-embedding-Ws-left-i}
The case $s+\mu\leq r < 1$.\\
      We use Lemma~\ref{lemma:taylor} and write
      $u(\bfx) = M(u)(T,\bfx) + M_S(u)(T,\bfx)$. 
      From Lemma~\ref{lemma:estimate-U-fractional}, (\ref{item:lemma:estimate-U-fractional-i}) 
      we get $|M(u)(T,\cdot)|_{s,q,\Omega} \leq C \|u\|_{0,p,\Omega}$.  
      Lemma~\ref{lemma:estimate-difference-U-fractional},
      (\ref{item:lemma:estimate-difference-U-fractional-ii})
      implies 
      $|M_S(u)(T,\cdot)|_{s,q,\Omega} \leq C |u|_{r,p,\Omega}$ if 
      either $r - s = \mu$ together  with $p > 1$ or 
      $r - s > \mu$ together  with $p \ge 1$.
    \item 
\label{item:lemma:sobolev-embedding-Ws-left-ii}
The case $s+\mu < 1\leq r$.\\
      It suffices to consider the case $r=1$.
      We use Lemma~\ref{lemma:taylor} and write
      $u(\bfx) = M(u)(T,\bfx) + M_R(u)(T,\bfx)$.
      In Lemma~\ref{lemma:estimate-U-fractional} we choose
      $\tilde s = s$, $\tilde q =q$, $\tilde p = p$ and obtain
      due to 
      Lemma~\ref{lemma:estimate-U-fractional},  (\ref{item:lemma:estimate-U-fractional-i})
      the bound $|M(u)(T,\cdot)|_{s,q,\Omega} \lesssim \|u\|_{0,p,\Omega}$.
      Next, since $\tilde s+\tilde\mu<1$, we can choose $\tilde\sigma>0$ in
      Lemma~\ref{lemma:estimate-U-fractional},  (\ref{item:lemma:estimate-U-fractional-ii})
      to get
      $\sn{M_R(u)(T,\cdot)}_{s,q,\Om}\lesssim \vn{u}_{1,p,\Om}$.
    \item 
\label{item:lemma:sobolev-embedding-Ws-left-iii}
The case $s +\mu = 1\leq r$.\\
      We use again the representation $u(\bfx) = M(u)(T,\bfx)+M_R(u)(T,\bfx)$.
      The contribution $M(u)(T,\cdot)$ is 
      treated again with Lemma~\ref{lemma:estimate-U-fractional},
      (\ref{item:lemma:estimate-U-fractional-i}).
      If $r=1$, the contribution $M_R(u)(T,\cdot)$ is handled by 
      Lemma~\ref{lemma:estimate-U-fractional}, (\ref{item:lemma:estimate-U-fractional-ii}), where
      we choose $\tilde\sigma=0$ and hence require $p>1$.
      If, on the other hand, $r>1$, choose an arbitrary $\mu_\star$ with $0 < \mu_\star < \mu$ and
      $r > 1+\mu-\mu_\star$ and
      define $p_\star$ via $\mu_\star = d(p_\star^{-1}-q^{-1})$.
      Note that $p_\star>p\geq1$. As $s + \mu_\star < 1$, we obtain from
      step (\ref{item:lemma:sobolev-embedding-Ws-left-ii}) that
      $\sn{u}_{s,q,\Om}\lesssim\vn{u}_{1,p_\star,\Om}$. As
      $r-1>\mu-\mu_\star = d(p^{-1}-p_\star^{-1})\notin\N$,
      we obtain from Lemma~\ref{lemma:sobolev-embedding-Lq-left}, (b), that
      $\vn{u}_{1,p_\star,\Om} \lesssim \vn{u}_{r,p,\Om}$.
\qedhere
  \end{enumerate}
\end{proof}
The following result complements Lemma~\ref{lemma:sobolev-embedding-Lq-left} with the cases
$r > \mu \in\N$.
\begin{lem}
  \label{lemma:sobolev-embedding-Lq-left:2}
  Let $\Omega$ be as in Theorem~\ref{thm:sobolev-embedding}. Assume $r \ge 0$, $1 \leq p \leq q < \infty$ 
  and set $\mu:= d (p^{-1} - q^{-1})$ as in (\ref{eq:mu}).
  Assume that $\displaystyle r > \mu\in\N$.  
  Then there is a constant $C = C(p,q,r,\Lpsi,T,d)$, which depends solely on the
  quantities indicated (and the 
  assumption that $\operatorname*{diam}\Omega \leq 1$), such that  
  $$
  \|u\|_{0,q,\Omega} \leq C \|u\|_{r,p,\Omega}. 
  $$
\end{lem}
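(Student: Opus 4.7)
The strategy is to reduce both subcases ($p>1$ and $p=1$) to the already-established Lemma~\ref{lemma:sobolev-embedding-Lq-left}. The obstruction is that the hypothesis $\mu\notin\N$ appearing in case~(b) of that lemma is now violated, so a detour is required.

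The case $p>1$ is essentially free: applying Lemma~\ref{lemma:sobolev-embedding-Lq-left}, case~(a), at the critical order $r'=\mu$ with the original pair $(p,q)$ immediately yields $\|u\|_{0,q,\Omega}\le C\|u\|_{\mu,p,\Omega}\le C\|u\|_{r,p,\Omega}$. Thus the genuinely new situation, on which I will focus, is $p=1$ with $r>\mu\in\N$.

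For $p=1$, the plan is to insert an auxiliary exponent $p^\star\in(1,q)$ and to chain two applications of Lemma~\ref{lemma:sobolev-embedding-Lq-left}, case~(b). First, applied to the triple $(\mu,p^\star,q)$ with critical order $\mu^\star:=d(1/p^\star-1/q)$ (strictly less than $\mu$, since $p^\star>1$), case~(b) yields
\[
\|u\|_{0,q,\Omega}\le C\|u\|_{\mu,p^\star,\Omega}\le C\sum_{|\bft|\le\mu}\|D^\bft u\|_{0,p^\star,\Omega}.
\]
Second, applied to each $D^\bft u$ with the triple $(r-|\bft|,1,p^\star)$ and critical order $\mu^{\star\star}:=d(1-1/p^\star)$, case~(b) gives $\|D^\bft u\|_{0,p^\star,\Omega}\le C\|D^\bft u\|_{r-|\bft|,1,\Omega}$. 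The right-hand side is in turn dominated by $\|u\|_{r,1,\Omega}=\|u\|_{r,p,\Omega}$ by rearranging the defining sums: for $|\bft|\le\mu<r$, the derivatives $D^{\bft+\bfs}$ that occur have total order at most $\lfloor r\rfloor$, and the fractional exponent $r-|\bft|-\lfloor r-|\bft|\rfloor$ coincides with $r-\lfloor r\rfloor$ because $|\bft|\in\N$.

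The only (and minor) obstacle in this plan is the joint selection of $p^\star$: I need $\mu^\star\notin\N$, $\mu^{\star\star}\notin\N$, and $r-\mu>\mu^{\star\star}$. The first two conditions fail for only finitely many values of $p^\star$ in $(1,q)$, while the third is automatic once $p^\star$ is close enough to $1$, since $\mu^{\star\star}\to 0$ as $p^\star\to 1^+$ and the gap $r-\mu>0$ is fixed. A suitable $p^\star$ therefore exists, and the chain of estimates above completes the proof.
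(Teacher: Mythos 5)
Your argument is correct and reaches the conclusion by a genuinely different route than the paper. The paper treats all $p\ge1$ uniformly: it picks a fractional smoothness index $\mu_\star\in(\mu-1,\mu)\setminus\N$ with corresponding $p_\star>1$, applies Lemma~\ref{lemma:sobolev-embedding-Lq-left} (case~(a)) at the critical pair $(\mu_\star,p_\star)$ to get $\|u\|_{0,q,\Omega}\lesssim\|u\|_{\mu_\star,p_\star,\Omega}$, and then controls the resulting \emph{fractional} intermediate norm by $\|u\|_{r,p,\Omega}$, invoking the fractional-target Lemma~\ref{lemma:sobolev-embedding-Ws-left} for the Slobodeckij seminorms $|D^\bft u|_{\mu_\star-(\mu-1),p_\star,\Omega}$ with $|\bft|=\mu-1$ and Lemma~\ref{lemma:sobolev-embedding-Lq-left} for the lower-order derivatives. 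You instead separate off the immediate case $p>1$ (case~(a) at $r'=\mu$ suffices) and, for $p=1$, keep the intermediate smoothness index at the \emph{integer} $\mu$ while lowering only the integrability to a $p^\star\in(1,q)$ near $1$; the intermediate norm $\|u\|_{\mu,p^\star,\Omega}$ is then integer-order, and you pass from it to $\|u\|_{r,1,\Omega}$ by applying Lemma~\ref{lemma:sobolev-embedding-Lq-left}, case~(b), derivative by derivative. The upshot of your variant is that it never invokes Lemma~\ref{lemma:sobolev-embedding-Ws-left} at all; the price is a case split on $p$ and the final bookkeeping that absorbs $\sum_{|\bft|\le\mu}\|D^\bft u\|_{r-|\bft|,1,\Omega}$ into $\|u\|_{r,1,\Omega}$, which you handle correctly since $|\bft|\in\N$ keeps the fractional part of $r-|\bft|$ fixed at $r-\lfloor r\rfloor$. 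Your choice of $p^\star$ is also correctly justified: as $p^\star\to1^+$ one has $\mu^{\star\star}=d(1-1/p^\star)\to0^+$ and $\mu^\star\to\mu^-$, so $\mu^{\star\star}<r-\mu$ holds near $1$, and $\mu^\star,\mu^{\star\star}\notin\N$ excludes only a discrete set.
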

\begin{proof}
  Choose a $\mu_\star\notin\N$ with $\mu-1 < \mu_\star<\mu$ and
  define $p_\star$ via $\mu_\star = d(p_\star^{-1}-q^{-1})$. Note that $p_\star>p\geq 1$.
  Lemma~\ref{lemma:sobolev-embedding-Lq-left} shows
  $\vn{u}_{q,\Om}\lesssim\vn{u}_{\mu_\star,p_\star,\Om}$.
  In a second step, observe that
  \begin{align*}
    \underbrace{\mu_\star - (\mu-1)}_{<1}+
    \underbrace{\mu-\mu_\star}_{d(p^{-1}-p_\star^{-1})}  = 1 <
    \underbrace{r - (\mu-1)}_{>1}
  \end{align*}
  Hence we can apply Lemma~\ref{lemma:sobolev-embedding-Ws-left}
  for $\sn{\bft}=\mu-1$ and obtain
  $\sn{D^\bft u}_{\mu_\star-(\mu-1),p_\star,\Om} \lesssim
  \vn{D^\bft u}_{r-(\mu-1),p,\Om}$.
  Furthermore, since $\mu-\mu_\star < 1$, we can use Lemma~\ref{lemma:sobolev-embedding-Lq-left}
  for $\sn{\bft}\leq \mu-1$ to obtain
  $\sn{D^\bft u}_{p_\star,\Om}\lesssim \vn{D^\bft u}_{1,p,\Om}$.
  As we took all terms of $\vn{u}_{\mu_\star,p_\star,\Om}$ into account, the result follows.
\end{proof}
The following result complements Lemma~\ref{lemma:sobolev-embedding-Ws-left} with
the case $s+\mu>1$.
\begin{lem}
  \label{lemma:sobolev-embedding-Ws-left:2}
  Let $\Omega$ be as in Theorem~\ref{thm:sobolev-embedding}.
  Let $1 \leq p \leq q < \infty$. Define $\mu = d (p^{-1} - q^{-1})$ as in (\ref{eq:mu}). 
  Let $s \in (0,1)$ with $s+\mu>1$ and assume that one of the following cases occurs: 
  \begin{enumerate}[(a)]
  \item 
  $r = s+ \mu$ and $p > 1$. 
  \item 
  $r  > s+ \mu$ and $p \ge 1$. 
  \end{enumerate}
  Then, there is a constant $C = C(p,q,r,s,\Lpsi,T,d)$ such that 
  $$
  |u|_{s,q,\Omega} \leq C \|u\|_{r,p,\Omega}. 
  $$
\end{lem}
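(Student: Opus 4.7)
My plan is to reduce the case $s+\mu>1$ to the already proven borderline case $s+\mu\leq 1$ (Lemma~\ref{lemma:sobolev-embedding-Ws-left}) by inserting an intermediate exponent $p_\star$ chosen so that the fractional step becomes exactly critical and the remaining gap is filled by integer-order embeddings. Concretely, I set $\mu_\star:=1-s$, which lies in $(0,\mu)$ since $s\in(0,1)$ and $s+\mu>1$, and define $p_\star\in(p,q)$ by $\mu_\star=d(p_\star^{-1}-q^{-1})$.

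The first step is to apply Lemma~\ref{lemma:sobolev-embedding-Ws-left} with parameters $(s_{\text{new}},q_{\text{new}},r_{\text{new}},p_{\text{new}})=(s,q,1,p_\star)$: since $s+\mu_\star=1$ and $p_\star>p\geq 1$, case~(a) of that lemma applies and yields
\begin{equation*}
|u|_{s,q,\Omega}\lesssim \|u\|_{1,p_\star,\Omega}\lesssim \|u\|_{0,p_\star,\Omega}+\sum_{|\bft|=1}\|D^\bft u\|_{0,p_\star,\Omega}.
\end{equation*}
In a second step I estimate each term on the right by $\|u\|_{r,p,\Omega}$ via the integer-target embeddings from Lemmas~\ref{lemma:sobolev-embedding-Lq-left} and~\ref{lemma:sobolev-embedding-Lq-left:2}, applied with $(p_{\text{new}},q_{\text{new}},\mu_{\text{new}})=(p,p_\star,\mu-\mu_\star)$, where $\mu-\mu_\star=s+\mu-1>0$. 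For $\|u\|_{0,p_\star,\Omega}$ the condition $r>\mu-\mu_\star$ holds strictly since $r\geq s+\mu>\mu-\mu_\star$, so case~(b) of Lemma~\ref{lemma:sobolev-embedding-Lq-left} covers $\mu-\mu_\star\notin\mathbb{N}$ and Lemma~\ref{lemma:sobolev-embedding-Lq-left:2} covers $\mu-\mu_\star\in\mathbb{N}$. For each $\|D^\bft u\|_{0,p_\star,\Omega}$ with $|\bft|=1$ I bound $\|D^\bft u\|_{0,p_\star,\Omega}\lesssim\|D^\bft u\|_{r-1,p,\Omega}\leq\|u\|_{r,p,\Omega}$, which requires $r-1\geq \mu-\mu_\star=s+\mu-1$, i.e., $r\geq s+\mu$, exactly the standing hypothesis.

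The main obstacle, and the reason the two subcases of the hypothesis must be distinguished, is precisely the borderline situation $r=s+\mu$: there $r-1=\mu-\mu_\star$ exactly, so the integer embedding used for the gradient term sits on the critical line $r_{\text{new}}=\mu_{\text{new}}$ of Lemma~\ref{lemma:sobolev-embedding-Lq-left}, forcing an application of its case~(a), which requires $p>1$. This is exactly the condition imposed by case~(a) of the lemma at hand, so the restriction on $p$ is inherited naturally; in the strict case $r>s+\mu$ the integer embedding for the gradient is non-critical and follows from case~(b) of Lemma~\ref{lemma:sobolev-embedding-Lq-left} or Lemma~\ref{lemma:sobolev-embedding-Lq-left:2} with only $p\geq 1$.
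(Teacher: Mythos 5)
Your proof is correct and takes essentially the same route as the paper: you pick $\mu_\star=1-s$ (hence $p_\star$ by $\mu_\star=d(p_\star^{-1}-q^{-1})$), use the critical case of Lemma~\ref{lemma:sobolev-embedding-Ws-left} to get $|u|_{s,q,\Omega}\lesssim\|u\|_{1,p_\star,\Omega}$, and then pass from $W^{1,p_\star}$ to $W^{r,p}$ via the integer-order embeddings of Lemmas~\ref{lemma:sobolev-embedding-Lq-left} and~\ref{lemma:sobolev-embedding-Lq-left:2}. The paper's proof compresses the second step into one line; your version makes explicit the key point that only the gradient term sits on the critical line when $r=s+\mu$ (forcing $p>1$ via case~(a) of Lemma~\ref{lemma:sobolev-embedding-Lq-left}), while the $L^{p_\star}$ bound of $u$ itself is always strictly subcritical — a genuine clarification of why the two hypothesis cases line up with the auxiliary lemmas, but not a different argument.
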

\begin{proof}
  Define $p_\star$ by $1-s = d(p_\star^{-1} - q^{-1})$. As $s+\mu>1$, it holds $p_\star > p \ge 1$.
  By Lemma~\ref{lemma:sobolev-embedding-Ws-left}, we get 
  $|u|_{s,q,\Omega} \lesssim \|u\|_{1,p_\star,\Omega}$.
  In a second step, observe that
  $r - 1 \ge s-1+\mu = d \left( p^{-1} - p_\star^{-1}\right)$,
  and hence Lemmas~\ref{lemma:sobolev-embedding-Lq-left}
  and~\ref{lemma:sobolev-embedding-Lq-left:2} imply
  the estimate $\|u\|_{1,p_\star,\Omega} \leq \|u\|_{r,p,\Omega}$.
  This concludes the proof.
\end{proof}
\subsection{Auxiliary results}
We need the Minkowski inequality (cf. \cite[Appendix A.1]{stein1}, \cite[Chap.~2, eqn.~(1.6)]{devore1})
\begin{equation}
\label{eq:minkowski} 
\left(\int_{\bfy \in {\mathcal Y}} \left( \int_{\bfx \in {\mathcal X}} |F(\bfx,\bfy)| \,d\bfx\right)^p \,d\bfy
\right)^{1/p}
\leq 
\int_{\bfx \in {\mathcal X}} \left( \int_{\bfy \in {\mathcal Y}} |F(\bfx,\bfy)|^p \,d\bfy\right)^{1/p} \,d\bfx, 
\qquad 1 \leq p < \infty.
\end{equation}
The following is an application Marcinkiewicz's interpolation theorem as 
worked out in \cite[Example~4, Sec. IX.4]{reed-simonII}:
\begin{lem}
  \label{lemma:marcinkiewicz}
  Let $1 <p,q<\infty$ and assume $0 <\lambda < d$. Let $p^{-1} + r^{-1} + \lambda d^{-1} = 2$. Then 
  $$
  \int_{\bfx \in \R{d}} \int_{\bfy \in \R{d}} \frac{|f(\bfx)||g(\bfy)|}{|\bfx -
  \bfy|^\lambda}\,d\bfx,\,d\bfy \leq C \|f\|_{0,p,\R{d}}
  \|g\|_{0,r,\R{d}} 
  $$
  for all $f \in L^p(\R{d})$, $g \in L^r(\R{d})$. The constant $C$ depends only on $p$, $r$, $\lambda$, and $d$. 
  That is, the map $f \mapsto \int_{\R{d}} f(\bfy) |\bfx - \bfy|^{-\lambda}\,d\bfy$ is a bounded linear map
  from $L^p(\R{d})$ to $L^{r/(r-1)}(\R{d})$.
\qed
\end{lem}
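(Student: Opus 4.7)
The estimate is the classical Hardy--Littlewood--Sobolev inequality, and the plan is to prove it by reducing to a boundedness statement for a linear convolution operator and then invoking Marcinkiewicz interpolation. First, by duality the claim is equivalent to showing that
$$
(Tf)(\bfx) := \int_{\R{d}} \frac{f(\bfy)}{|\bfx-\bfy|^\lambda}\,d\bfy
$$
maps $L^p(\R{d})$ boundedly into $L^{r'}(\R{d})$, where $r' = r/(r-1)$ is the exponent conjugate to $r$, since then the double integral equals $\langle Tf, g\rangle \leq \|Tf\|_{r'}\|g\|_r$. The hypothesis $p^{-1}+r^{-1}+\lambda d^{-1}=2$ rearranges exactly into the scaling relation $(r')^{-1} = p^{-1} - (d-\lambda)/d$, which is necessary by dilation invariance of $T$.

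Next, I would observe that the convolution kernel $K(\bfx) = |\bfx|^{-\lambda}$ belongs to the weak Lebesgue space $L^{d/\lambda,\infty}(\R{d})$, since the distribution function is
$$
|\{\bfx \in \R{d} : |\bfx|^{-\lambda} > t\}| = |\{\bfx : |\bfx| < t^{-1/\lambda}\}| = c_d\, t^{-d/\lambda}.
$$
Thus the sought bound $T: L^p \to L^{r'}$ is a special case of the generalized Young inequality
$\|K\ast f\|_{r'} \lesssim \|K\|_{d/\lambda,\infty}\|f\|_p$ under the relation $1 + (r')^{-1} = p^{-1} + \lambda/d$ with $1<p,r'<\infty$.

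To establish this Young-type inequality, I would argue by Marcinkiewicz interpolation. For a threshold level $\alpha>0$ and a parameter $R = R(\alpha)>0$ to be chosen, split $K = K_1 + K_2$ with $K_1 = K\chi_{\{|\bfx|<R\}}$ and $K_2 = K\chi_{\{|\bfx|\geq R\}}$. By H\"older, $\|K_1\ast f\|_\infty \leq \|K_1\|_{p'}\|f\|_p$ is controlled whenever $\lambda p' < d$, while $\|K_2\ast f\|_p \leq \|K_2\|_1 \|f\|_p$ is controlled whenever (after a suitable endpoint shift) $\lambda>d/p$. Choosing $R$ so that these two contributions balance for a given $\alpha$ yields the weak-type $(p, r')$ estimate
$$
|\{|Tf|>2\alpha\}| \lesssim \alpha^{-r'}\|f\|_p^{r'},
$$
and real interpolation between two such weak-type endpoints (one on each side of the target $p$) promotes this to the required strong-type $(p,r')$ inequality with constant depending only on $p$, $r$, $\lambda$, $d$.

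The main obstacle lies in the endpoint behavior: neither of the extreme cases $p=1$ or $r=1$ gives a strong-type bound (only the weak-type), which is precisely why the hypothesis excludes $p=1$ and $r=1$ and is reflected in the blow-up of the interpolation constant as $p$ or $r$ approaches $1$. Since the excerpt cites \cite[Example~4, Sec.~IX.4]{reed-simonII} for exactly this Marcinkiewicz derivation, one may simply invoke the result there, and the explicit dependence of $C$ on $p$, $r$, $\lambda$, $d$ can be read off from the interpolation exponents.
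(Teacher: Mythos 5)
Your overall route is exactly the one the paper has in mind: the lemma is the Hardy--Littlewood--Sobolev inequality, the paper gives \emph{no proof} beyond the citation to Reed--Simon, and the intended derivation is indeed generalized Young / Marcinkiewicz interpolation applied to convolution with the Riesz kernel $K(\bfx)=|\bfx|^{-\lambda}\in L^{d/\lambda,\infty}$. So you identified the right mechanism. However, your explicit splitting has the two pieces assigned to the wrong Lebesgue spaces, and as written both of your intermediate estimates are vacuous. You take $K_1=K\chi_{\{|\bfx|<R\}}$ and $K_2=K\chi_{\{|\bfx|\geq R\}}$, then claim $\|K_1\|_{p'}<\infty$ (requiring $\lambda p'<d$) and $\|K_2\|_1<\infty$ (requiring $\lambda>d$). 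Under the lemma's hypotheses neither holds: $\lambda<d$ makes $\|K_2\|_1=\infty$, and the exponent relation $1/p'=1/r+\lambda/d-1<\lambda/d$ (using $r>1$) forces $\lambda p'>d$, so $\|K_1\|_{p'}=\infty$ as well. The correct assignment is the reverse: the \emph{near} part $K_1$ lies in $L^1$ because $\lambda<d$, with $\|K_1\|_1\sim R^{d-\lambda}$, and the \emph{far} part $K_2$ lies in $L^{p'}$ because $\lambda p'>d$, with $\|K_2\|_{p'}\sim R^{d/p'-\lambda}$ and negative exponent. One then chooses $R$ so that $\|K_2\|_{p'}\|f\|_p\sim\alpha$ to kill the $K_2$ contribution, bounds the $K_1$ part by Chebyshev and Young's inequality, and interpolates. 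Since you ultimately punt to the Reed--Simon reference, the lemma is still correctly justified by citation, but the sketch you give would not compile into a valid argument without swapping the two pieces.
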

\begin{lem}
  \label{lemma:hardy-style}
  Let $u \in C^\infty_0(\R{d})$ and $B_R$ be a ball of radius $R$ centered at the origin. 
  Let $s<1$ and $d-1+s \ne 0$. 
Then, for $T > 0$
  \begin{equation*}
  \int_{t=0}^T t^{-s} \int_{\bfz \in B_R} u(t \bfz)\,d\bfz\,dt = 
  \frac{1}{d-1+s} \left[ R^{s+d-1}
  \int_{\bfz \in B_{RT}} |\bfz|^{-s - d + 1} u(\bfz)\,d\bfz - T^{-s-d+1} \int_{B_{RT}} u(\bfz)\,d\bfz
  \right].
  \end{equation*}
\end{lem}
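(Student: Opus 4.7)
The plan is to reduce the double integral on the left-hand side to an integral over $\bfz$ alone by combining an affine change of variables with Fubini, and then to evaluate a completely elementary one-dimensional integral in $t$.

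First, for each fixed $t>0$, I would apply the scaling substitution $\bfz' = t\bfz$ in the inner integral, which yields
\begin{equation*}
\int_{\bfz \in B_R} u(t\bfz)\,d\bfz \;=\; t^{-d}\int_{\bfz' \in B_{tR}} u(\bfz')\,d\bfz'.
\end{equation*}
Substituting back, the left-hand side becomes $\int_{0}^{T} t^{-s-d}\int_{\bfz' \in B_{tR}} u(\bfz')\,d\bfz'\,dt$. Next, I would interchange the order of integration via Fubini; absolute integrability is immediate because $u$ is smooth and compactly supported and because $|B_{tR}|\lesssim t^d$ implies the integrand is $O(t^{-s})$ near $t=0$, which is integrable by the hypothesis $s<1$. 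The condition $\bfz'\in B_{tR}$ is equivalent to $t > |\bfz'|/R$, so the swapped region is $\bfz'\in B_{TR}$ with $t \in (|\bfz'|/R, T)$.

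Finally, using the assumption $d-1+s\ne 0$, the elementary antiderivative gives
\begin{equation*}
\int_{|\bfz'|/R}^{T} t^{-s-d}\,dt \;=\; \frac{1}{s+d-1}\Bigl[\,(|\bfz'|/R)^{-s-d+1} - T^{-s-d+1}\,\Bigr],
\end{equation*}
and $(|\bfz'|/R)^{-s-d+1} = R^{s+d-1}|\bfz'|^{-s-d+1}$. Inserting this into the swapped double integral produces precisely the two terms on the right-hand side of the claimed identity.

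There is no real obstacle here: the lemma is a routine computation. The only point demanding any care is the integrability check near $t=0$ that justifies Fubini, and the observation that the single hypothesis $d-1+s\ne 0$ (i.e.\ $s+d\ne 1$) is exactly what is needed for the $t$-antiderivative to take the indicated form rather than produce a logarithm.
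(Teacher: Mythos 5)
Your proof is correct, and it takes a somewhat different route from the paper's. The paper works in polar coordinates: it writes $\int_{B_R} u(t\bfz)\,d\bfz = \int_{\partial B_1}\int_0^R u(tr\omega)r^{d-1}\,dr\,d\omega$, substitutes $\rho = tr$ to get $t^{-d}\int_{\partial B_1}\int_0^{Rt}u(\rho\omega)\rho^{d-1}\,d\rho\,d\omega$, and then integrates by parts in $t$, using the fundamental theorem of calculus on the cumulative integral $\int_0^{Rt}u(\rho\omega)\rho^{d-1}\,d\rho$ to generate the second term of the identity. You bypass both the polar decomposition and the integration by parts: the scaling $\bfz' = t\bfz$ produces the same $t^{-d}\int_{B_{tR}}u$ directly in Cartesian form, and the Fubini swap over the region $\{0<t<T,\ |\bfz'|<tR\}$ reduces everything to the elementary power integral $\int_{|\bfz'|/R}^{T} t^{-s-d}\,dt$. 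The two arguments are equivalent in substance---integration by parts against a cumulative integral is Fubini in disguise---but yours is a bit more economical and replaces the boundary-term bookkeeping at $t=0$ (where the paper implicitly needs $t^{-s-d+1}\int_0^{Rt}u(\rho\omega)\rho^{d-1}\,d\rho\to 0$) with the single integrability check $|B_{tR}|\sim t^d$ that justifies the swap. Both proofs correctly isolate the roles of the two hypotheses: $s<1$ ensures absolute convergence near $t=0$, and $d-1+s\ne 0$ is exactly what keeps the $t$-antiderivative a power rather than a logarithm.
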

\begin{proof} 
  The proof follows from the introduction of polar coordinates, Fubini, and an integration by parts.  
  We use polar coordinates $(r,\omega)$ with $\omega \in \partial B_1$. Then:
  \begin{align*}
  \int_{t=0}^T t^{-s} \int_{\bfz \in B_R} u(t\bfz) &=
  \int_{\omega \in \partial B_1} \int_{t=0}^T t^{-s} \int_{r=0}^{R}  u(tr \omega ) r^{d-1} 
  = 
  \int_{\omega \in \partial B_1} \int_{t=0}^T t^{-s} t^{-(d-1)-1}\int_{\rho=0}^{Rt}  u(\rho \omega )\rho^{d-1}  \\
  &= \int_{\omega \in \partial B_1} \frac{1}{-s-d+1}\left[ \left. t^{-s-d+1} \int_{\rho=0}^{Rt} u(\rho \omega) \rho^{d-1}\right|_{t=0}^{t = T} - 
  R^d \int_{t=0}^T t^{-s-d+1} u(R t\omega) t^{d-1} 
  \right] \\
  &= \frac{1}{-s-d+1} \left[ T^{-s-d+1} \int_{B_{RT}} u(\bfz)\,d\bfz - R^{s+d-1} \int_{\bfz \in B_{RT}} |\bfz|^{-s-d+1} u(\bfz)\,d\bfz 
  \right].
\tag*{\qedhere}
\end{align*}
\end{proof}
\newcommand{\Ccomp}{C_{\rm comp}}
\section{Element-by-element approximation for variable polynomial degree in 3D}
\label{sec:appendixB}
In this section, we generalize the operator $\Pi^{MPS}_p$ of \cite[Thm.~{B.3}]{mps13} 
to variable ($\gamma_p$-shape regular; cf. (\ref{eq:shape-regular-p}) polynomial degree distributions. 
We consider only the 3D case as the 2D case is very similar. 
Structurally, we proceed as in \cite[Appendix~B]{mps13}: we define polynomial
approximation operators that permit an ``element-by-element'' construction
of a global piecewise polynomial approximation operator. To that end, we will
fix the operator in vertices, edges, faces, and elements separately. 
We need to define approximation operators defined on edges, face, elements. 
Since we need to distinguish between ``low'' and ``high'' polynomial degree, 
we introduce {\bf two} operators for edges, faces, and elements. 
\begin{defn}
Let $\refK \subset \R{3}$ be the reference tetrahedron. Let $\cV$ be the set
of the $4$ vertices, $\cE$ be the set of the $6$ edges, and $\cF$ be the set
of the $4$ faces of $\refK$.
Let $p_e$, $p_f$, $p_{K}$ be polynomial degrees associated with an edge $e$, a face $f$, 
and the element $\refK$. For an edge $e$ we denote by $\cV(e)$ the set of endpoints. For a face $f$, we 
denote by $\cV(f)$ the set of vertices of $f$ and by $\cE(f)$ the set of edges of $f$.  
\begin{enumerate}[(i)]
\item (edge operators) 
For an edge $e$ with associated polynomial degree $p_e$ define the operators 
$\pi_e^{h}: C^\infty(\overline{e}) \rightarrow {\mathcal P}_{p_e}(e)$ and 
$\pi_e^{p}: C^\infty(\overline{e}) \rightarrow {\mathcal P}_{\lfloor p_e/4\rfloor}(e)$ by 
\begin{itemize}
\item 
$(\pi_e^h u) \in {\mathcal P}_{p_e}$ is the unique minimizer of 
$$
v \mapsto \|u - v\|_{0,2,e}
$$
under the constraint $(\pi_e^h u)(V) = u(V)$ for all $V \in \cV(e)$. 
\item 
$(\pi_e^p u) \in {\mathcal P}_{\lfloor p_e/4\rfloor}$ is the unique minimizer of 
$$
v \mapsto p_e^4 \sum_{j=0}^4 p_e^{-j} |u - v|_{j,2,e}
$$
under the constraint that 
$\partial_e^j v(V) = \partial_e^j u(V)$ for $j \in \{0,1,2,3\}$ and all $V \in \cV(e)$. 
Here, $\partial_e$ denote the tangential derivative along $e$.
For $\pi_e^p$ to be meaningful, we have to require $p_e \ge 28$.
\end{itemize}
\item (face operators) 
For a face $f$ with associated polynomial degree $p_f$ define the operators 
$\pi_f^{h}: C^\infty(\overline{f}) \rightarrow {\mathcal P}_{p_f}(f)$ and 
$\pi_f^{p}: C^\infty(\overline{f}) \rightarrow {\mathcal P}_{\lfloor p_f/2\rfloor}(f)$ as follows, 
{\em assuming} that a continuous, piecewise polynomial (of degree $\leq p_f$) 
approximation $\pi_{\partial f} u$ on the boundary of 
$f$ is given: 
\begin{itemize}
\item 
$(\pi_f^h u) \in {\mathcal P}_{p_f}$ is the unique minimizer of 
$$
v \mapsto \|u - v\|_{0,2,f}
$$
under the constraint $(\pi_f^h u)|_e = (\pi_{\partial f} u)|_e$ for all $e \in \cE(f)$. 
\item  
Assume that $\pi_{\partial f} u$ is given by $\pi_e^p u$ for all three edges $e \in \cE(f)$. Then, 
$(\pi_f^p u) \in {\mathcal P}_{2 \lfloor p_f/4\rfloor}$ is the unique minimizer of 
$$
v \mapsto p_f^4 \sum_{j=0}^4 p_f^{-j} |u - v|_{j,2,f}
$$
under the constraint that $(\pi_f^p u)|_e = \pi_e^p u$ for all $e \in \cE(f)$ and additionally 
the mixed derivatives at the 3 vertices $V \in \cV(f)$ satisfy 
$(\partial_{e_1,V} \partial_{e_2,V} \pi_f u)(V) = (\partial_{e_1,V} \partial_{e_2,V} u)(V)$; here, 
at a vertex $V \in \cV(f)$, $e_1$ and $e_2$ are the two edges of $f$ meeting at $V$ and  
$\partial_{e_1,V}$, $\partial_{e_2,V}$ represent derivatives along these directions 
(taking $V$ as the common origin). Note that this requires 
$2 \lfloor p_f/4\rfloor \ge \max_{e \in \cE(f)} \lfloor p_e/4\rfloor$ and thus $p_f \ge 16$. 
\end{itemize}
\item (element operators)
Assume that $\pi_{\partial \refK} u$ is a continuous, piecewise polynomial (of degree $\leq p_K$) 
approximation on $\partial \refK$. Define 
$(\pi_K^p u) \in {\mathcal P}_{p_K}$ as the unique minimizer of 
$$
v \mapsto p_f^4 \sum_{j=0}^4 p_f^{-j} |u - v|_{j,2,\refK}
$$
under the constraint that $(\pi_K^p u)|_{\partial\refK} = \pi_{\partial \refK} u$. 
\end{enumerate}
\end{defn}
\begin{thm}
\label{thm:element-by-element-variable}
Consider the reference tetrahedron $\refK$. 
Fix $p_{\rm ref} \ge 7$. 
Let the element degree $p_K$, the face degrees $p_f$ and the edge degrees $p_e$ satisfy the ``minimum rule'': 
\begin{align*}
&1 \leq p_e \leq p_f \qquad \forall f \in \cF, \quad \forall e \in \cE(f), \\
&1 \leq p_f \leq p_K \qquad \forall f \in \cF. 
\end{align*}
Assume the polynomial degrees are {\em comparable}, i.e., there is $\Ccomp$ such that 
\begin{align*}
&1 \leq p_e \leq p_f \leq \Ccomp p_e \qquad \forall f \in \cF, \quad \forall e \in \cE(f), \\ 
&1 \leq p_f \leq p_K \leq \Ccomp p_f \qquad \forall f \in \cF. 
\end{align*}
Define 
$$
p:=\min\{p_e\,|\,e \in \cE\}. 
$$
(Note that the minimum rule implies additionally $p \leq p_f$ for all $f \in \cF$ and {\sl a fortiori} $p \leq p_K$.)

Define the approximation operator $\pi:C^\infty(\overline{\refK}) \rightarrow {\mathcal P}_{p_K}$ 
as follows: 
\begin{enumerate} 
\item (vertices)
\label{item:thm:element-by-element-variable-1}
For each vertex $V \in \cV$: Require $(\pi u)(V) = u(V)$. 
\item (edges)
\label{item:thm:element-by-element-variable-2}
For each edge $e\in \cE$: If $\lfloor p_e/4\rfloor \ge p_{\rm ref}$, then set $(\pi u)|_e:= \pi_e^p u$. 
Else, set $(\pi u)|_e = \pi_e^h u$. 
\item (faces) 
\label{item:thm:element-by-element-variable-3}
For each face $f \in \cF$: By step~\ref{item:thm:element-by-element-variable-2}, $(\pi u)|_{\partial f}$ 
is fixed. If $(\pi u)|_{\partial f}$ is given by 
$\pi^p_e u$ for all three edges $e \in \cE(f)$, then set 
$(\pi u)|_f:= \pi^p_f u$. Else, set $(\pi u)|_f = \pi^h_f u$. 
\item (element) 
\label{item:thm:element-by-element-variable-4}
The last three steps have fixed $(\pi u)|_{\partial \refK}$. Set $\pi u:= \pi^p_K u$. 
\end{enumerate} 
Then:
\begin{enumerate}[(i)]
\item (approximation property)
\label{item:thm:element-by-element-variable-i}
For each $s > 5$, there is $C_s > 0$ such that 
\begin{align}
\label{eq:thm:element-by-element-variable-10}
&\sum_{j=0}^2 p^{2-j} \|u - \pi u\|_{j,2,\refK} \leq C_s p^{-(s-2)} \|u\|_{s,2,\refK} 
\qquad \forall u \in H^s(\refK).
\end{align}
\item (polynomial reproduction) 
\label{item:thm:element-by-element-variable-ii}
\begin{align*}
 \pi u = u  &\qquad \forall u \in {\mathcal P}_{\lfloor p/4\rfloor} &&\mbox{ if $ p \ge 4$}, \\
 \pi u = u  &\qquad \forall u \in {\mathcal P}_{p} &&\mbox{ if $ p_e < 4 p_{\rm ref}$ for {\em all} edges $e \in \cE$}.
\end{align*}
\item (locality)
\label{item:thm:element-by-element-variable-iii} 
\begin{itemize}
\item In each vertex $V \in \cV$, $\pi u$ is completely determined by $u|_V$. 
\item On each edge $e \in \cE$, $(\pi u)|_e$ is completely determined by
  $u|_e$, $p_e$, and $p_{\rm ref}$. 
\item On each face $f \in \cF$, $(\pi u)|_f$ is completely determined by $u|_f$, $p_f$, the 
degrees $p_e$, $e \in \cE(f)$, and $p_{\rm ref}$. 
\end{itemize}
\end{enumerate}
\end{thm}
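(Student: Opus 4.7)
The plan is to follow the strategy of~\cite[Thm.~B.3]{mps13} for uniform polynomial degree, adapting the cascade construction to accommodate variable degrees subject to the minimum rule and the comparability constant $\Ccomp$. The guiding principle is to start with a bulk polynomial approximation of $u$ on $\refK$ of the required order and then to correct its trace on $\partial\refK$ to match $(\pi u)|_{\partial\refK}$ by lifting the edge and face defects back into the element. The central feature that makes this work is that each of the sub-operators $\pi^h_e$, $\pi^p_e$, $\pi^h_f$, $\pi^p_f$, $\pi^p_K$ is defined as the minimizer of a (weighted) Sobolev functional subject to trace constraints, so it is automatically a \emph{best approximation} among polynomials of the prescribed degree satisfying those constraints.

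Assertions~(\ref{item:thm:element-by-element-variable-ii}) and~(\ref{item:thm:element-by-element-variable-iii}) follow essentially from inspection of the cascade. Locality holds because each step of the construction depends only on data at or below its own dimension. For polynomial reproduction, when $u \in \cP_{\lfloor p/4\rfloor}$ with $p = \min\{p_e\,|\,e \in \cE\}$, the polynomial $u$ itself satisfies every vertex, edge-trace, and face-trace constraint and lies in every trial space in the cascade while making all bulk functionals vanish; uniqueness of each minimizer then forces $\pi u = u$. When every $p_e < 4p_{\rm ref}$, only the $\pi^h$-variants are activated and the analogous argument yields reproduction up to the full degree $p$.

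For the approximation estimate~(\ref{eq:thm:element-by-element-variable-10}), fix $s > 5$ so that $H^s(\refK) \hookrightarrow C^3(\overline{\refK})$ and all pointwise derivative data up to order $3$ used in the vertex constraints is well-defined. I would first select a bulk polynomial approximation $q \in \cP_p$ satisfying
\begin{align*}
\vn{u - q}_{j,2,\refK} \lesssim p^{-(s-j)} \vn{u}_{s,2,\refK}, \qquad j = 0,\dots,s,
\end{align*}
available from tensor-product Jacobi expansions or the constructions of~\cite{babuska1}; the minimum rule ensures that $q$ is an admissible candidate at every level of the cascade. Writing $u - \pi u = (u-q) + (q - \pi u)$, the first summand is controlled directly, while the second is a polynomial in $\cP_{p_K}$ whose restriction to $\partial\refK$ equals $(q-\pi u)|_{\partial\refK}$ and which I would bound by polynomial lifting estimates applied to the skeleton defects. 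These defects are in turn controlled by the best-approximation property of the sub-operators: on each edge $e$, testing against $q|_e$ modified by a low-degree polynomial that absorbs the finite number of vertex-derivative defects (whose size is controlled by Sobolev trace estimates) yields the desired weighted edge estimate, and analogously on each face $f$ against $q|_f$ modified by an edge-lifted correction of the edge defects obtained in the previous step.

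The main obstacle will be the careful bookkeeping of the polynomial liftings in the variable-degree setting: one needs edge-to-face and face-to-element polynomial liftings that respect the degrees enforced by the definitions (for instance, an edge-to-face lift of a polynomial in $\cP_{\lfloor p_e/4\rfloor}(e)$ must land in $\cP_{2\lfloor p_f/4\rfloor}(f)$, and similarly for face-to-element), together with derivative-aware refinements up to order $4$ of the $H^{1/2}$-conforming liftings of~\cite{babuska2,sola1,mel1}. The comparability bounds $p_f \leq \Ccomp p_e$ and $p_K \leq \Ccomp p_f$ are precisely what allow the degree-mismatch constants in these liftings to be absorbed into constants depending only on $\Ccomp$, while the thresholds $p_{\rm ref}\ge 7$ and the divisions by $4$ in the definitions of $\pi^p_e$ and $\pi^p_f$ guarantee that enough polynomial degrees of freedom remain after the vertex- and edge-derivative constraints have been imposed.
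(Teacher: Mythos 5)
Your treatment of (ii) and (iii) is sound and matches the paper's inspection-based argument. For (i), however, there are two genuine gaps, both of which the paper addresses.

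First, you propose choosing the bulk approximation $q \in \cP_p$ and then asserting that ``the minimum rule ensures that $q$ is an admissible candidate at every level of the cascade.'' This is false: the edge operator $\pi^p_e$ maps into $\cP_{\lfloor p_e/4\rfloor}(e)$, and the minimum rule only gives $p \leq p_e$, so in general $\lfloor p_e/4\rfloor < p$ (for instance when $e$ realizes the minimum, $p = p_e$ and $\lfloor p_e/4\rfloor = \lfloor p/4\rfloor \ll p$). Thus $q|_e$ need not lie in the trial space for $\pi^p_e$. The correct choice, which the paper makes, is $q \in \cP_{\widetilde p}$ with $\widetilde p := \lfloor p/4\rfloor$; this degree fits into $\cP_{\lfloor p_e/4\rfloor}(e)$ for every edge and into $\cP_{2\lfloor p_f/4\rfloor}(f)$ for every face. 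Since $\widetilde p \sim p/4$, the target $p^{-(s-2)}$ rate is retained up to an $s$-dependent constant.

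Second, and more fundamentally, you never address the mixed case in which some edges activate $\pi^h_e$ while others activate $\pi^p_e$ (or a face mixes $\pi^h_e$-edges and $\pi^p_e$-edges). Your bulk-plus-lifting argument really only runs cleanly when every sub-operator is the $\pi^p$-variant. The paper's key device is a dichotomy: either $\lfloor p_e/4\rfloor \geq p_{\rm ref}$ for all edges and $\lfloor p_f/2\rfloor \geq p_{\rm ref}$ for all faces (so only $\pi^p$-operators occur, and the uniform-degree argument of~\cite{mps13} applies with $\widetilde p$ in place of $p$), or else some edge or face degree is small, in which case comparability via $\Ccomp$ forces \emph{all} degrees up to $p_K$ to be bounded by $4\Ccomp(p_{\rm ref}+1)$; only finitely many configurations then remain and norm equivalence on finite-dimensional spaces gives the bound for free. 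Without this reduction your argument would have to track $p$-uniform lifting and stability bounds in a regime where the sub-operators degenerate to low fixed degree, which would require substantial extra work and is not what the theorem is getting at.
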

\begin{proof} 
{\em Proof of (\ref{item:thm:element-by-element-variable-iii}):} This follows by construction.

{\em Proof of (\ref{item:thm:element-by-element-variable-ii}):} 
If $p \ge 4$, then inspection of the construction shows that $\pi u = u$ for all polynomials 
of degree $\lfloor p/4\rfloor$. 
If $p_e < 4 p_{\rm ref}$ for {\em all} edges $e \in \cE$, then $(\pi u)|_e =\pi^h_e u = u|_e$ for all polynomials 
$u$ of degree $p$. Since $\pi u$ is of the form $\pi^h_e u$ on all edges, the face values $(\pi u)|_f$ 
are also given by $\pi^h_f u$. Hence, polynomials of degree $p$ are reproduced on all faces and thus also
on the element. 

{\em Proof of (\ref{item:thm:element-by-element-variable-i}):} 
As a first step, we reduce the question to the case that 
$$
\lfloor p_e/4 \rfloor \ge p_{\rm ref} \qquad \forall e \in \cE, 
\qquad 
\lfloor p_f/2 \rfloor \ge p_{\rm ref} \qquad \forall f \in \cF. 
$$
In the converse case, one of the edge polynomials $p_{e'}$ satisfies $p_{e'}
\leq 4 (p_{\rm ref} +1)$ 
or one face $f'$ satisfies $p_{f'} \leq 2 (p_{pref} + 1)$. In view of the comparability of the 
degrees, this implies that $\max_{e \in \cE} p_e \leq \max_{f \in \cF} p_f \leq
p_K \leq 4 \Ccomp (p_{\rm ref}+1)$.
In other words: The polynomial degrees are bounded and therefore only finitely many cases 
for the operator $\pi$ can arise. By norm equivalence on finite dimensional space, the 
bound (\ref{eq:thm:element-by-element-variable-10}) holds. 

We may now assume $\lfloor p_e/4\rfloor \ge p_{\rm ref}$ for all edges $e \in \cE$ 
and $\lfloor p_{f}/2 \rfloor \ge p_{\rm ref}$ for all faces. Recall that 
$p= \min\{p_e\,|\, e \in \cE\}$ and that by our assumption of the ``minimum rule'' we therefore have 
$p \leq p_f \leq p_K$.
Define $\widetilde p:= \lfloor p/4\rfloor$. We may assume $\widetilde p \ge 1$. 
Then we can proceed as in the proof of \cite[Thm.~{B.3}]{mps13} with $\widetilde p$ taking the role
of $p$ in the proof of \cite[Thm.~{B.3}]{mps13}, from where the condition $s > 5$ arises. 
This leads to (\ref{eq:thm:element-by-element-variable-10}). 
\end{proof}
We are now in position to prove Corollary~\ref{cor:MPS}.
\begin{proof} (of Corollary~\ref{cor:MPS})
We only consider the case $d = 3$. Let $s > \max\{5,r_{\max}\}$, with $r_{\max}$ of the statement
of Corollary~\ref{cor:MPS}. Let $\eps$ be defined 
by Lemma~\ref{lem:lsf}. Then, the smoothed function $\II_\eps u$ satisfies by the same reasoning as in 
the proof of Theorem~\ref{thm:qi}
for $0 \leq r \leq q \leq s$ the bound 
\begin{equation}
\label{eq:proof-of-cor-MPS-10}
\|\II_\eps u\|_{q,2,K} \leq C \left(\frac{h_K}{p_K}\right)^{r-q} \|u\|_{r,2,\omega_K} 
\qquad \forall K \in \cT.
\end{equation}
Next, we wish to employ the operator of Theorem~\ref{thm:element-by-element-variable}. To that end, 
we associate with each edge $e$ and each face $f$ of the triangulation $\cT$ a polynomial degree by the 
``minimum rule'', i.e., 
\begin{align*}
p_e:= \min\{ p_K\,|\, \mbox{ $e$ is an edge of $K$}\}, 
\qquad 
p_f:= \min\{ p_K\,|\, \mbox{ $f$ is a face of $K$}\}. 
\end{align*}
The definition of the edge and face polynomial degrees implies 
\begin{align*}
p_f \ge p_e \qquad \forall \mbox{ edges $e$ of a face $f$}, & \qquad \qquad \mbox{ and } \qquad \qquad 
p_K \ge p_f \qquad \forall \mbox{ faces $f$ of an element $K$}, 
\end{align*} 
which is the ``minimum rule'' required in Theorem~\ref{thm:element-by-element-variable}.
Fix an element $K$ and let $p:= \min\{p_e\,|\, \mbox{$e$ is an edge of $K$}\}$. 
The $\gamma$-shape regularity of the mesh and the polynomial degree distribution implies the
existence of $\Ccomp$ such that (independent of $K$)
\begin{equation}
\label{eq:proof-of-cor3.4-5} 
p_{\max} := \max\{p_e\,|\, \mbox{$e$ is an edge of $K$}\} \leq \Ccomp p = 
\Ccomp \min\{p_e \,|\, \mbox{$e$ is an edge of $K$}\}. 
\end{equation}
We recognize that 
the definition of $\widehat p_K$ in the statement of Corollary~\ref{cor:MPS} coincides 
with $p$: 
\begin{align}
\label{eq:proof-of-cor3.4-10}
\widehat p_K &= p .
\end{align}
Fix $p_{\rm ref} \in \N$ so that 
\begin{equation}
\label{eq:proof-of-cor3.4-20}
p_{\rm ref} \ge \Ccomp r_{\max}. 
\end{equation}
The approximation operator $\pi$ of 
Theorem~\ref{thm:element-by-element-variable} satisfies on the reference element $\refK$ 
\begin{equation}
\label{eq:proof-of-cor3.4-200}
\sum_{j=0}^2 p^{2-j} |v - \pi v|_{j,2,\refK} \leq C p^{-(s-2)} \|v\|_{s,2,\refK}. 
\end{equation}
In order to get the correct powers of $h_K$, we 
exploit that $\pi$ reproduces polynomials by Theorem~\ref{thm:element-by-element-variable}. 
We consider two cases: 
\newline
{\em 1. Case:} $p_{\max} < 4 p_{\rm ref}$. In this case, Theorem~\ref{thm:element-by-element-variable}
implies immediately that $\pi v = v$ for all $v \in {\mathcal P}_p$. 
\newline 
{\em 2. Case:} $p_{\max} \ge 4 p_{\rm ref}$. In this case, 
$\displaystyle 
\Ccomp p \ge p_{\max} \ge 4 p_{\rm ref} \ge 4 \Ccomp r_{\max} 
$
so that 
$
p/4 \ge  r_{\max}. 
$
Hence, Theorem~\ref{thm:element-by-element-variable} implies $\pi v = v$ for
all $v \in {\mathcal P}_{r_{\max}}$. 

Combining the two cases yields 
\begin{equation}
\label{eq:proof-of-cor3.4-50}
\pi v = v \qquad \forall v \in {\mathcal P}_{\min\{p,r_{\max}\}}. 
\end{equation}
Hence combining (\ref{eq:proof-of-cor-MPS-10}), (\ref{eq:proof-of-cor3.4-200}), and
(\ref{eq:proof-of-cor3.4-50}) together with the usual scaling arguments and the
Bramble-Hilbert Lemma~\ref{lem:bramblehilbert} leads to
\begin{align*}
& \sum_{j=0}^2 h_K^j p^{-j} |\II_\eps u - \pi \II_\eps u|_{j,2,K} 
 \lesssim p^{-s} \sum_{q=1 + \min\{p,r_{\max}\} }^s h_K^q |\II_\eps u|_{q,2,K} 
\\ & 
\lesssim  p^{-s} \left[ \sum_{q=1 + \min\{p,r_{\max}\} }^{r}  h_K^q |\II_\eps u|_{q,2,K} + 
   \sum_{q=r+1 }^{s}  h_K^q |\II_\eps u|_{q,2,K} \right] 
\\ & 
\lesssim  p^{-s} h_K^{1 + \min\{p,r_{\max}\}} \|\II_\eps u\|_{r,2,K} + 
   \sum_{q=r+1 }^{s} p^{-s}  h_K^q |\II_\eps u|_{q,2,K}  
\\ & 
\lesssim  p^{-s} h_K^{1 + \min\{p,r_{\max}\}} \|\II_\eps u\|_{r,2,K} + 
  p^{-s} \sum_{q=r+1}^s h_K^q 
\left(\frac{h_K}{p_K}\right)^{r-q}\|u\|_{r,2,\omega_K} 
\\ & 
\lesssim  \left[ \frac{h_K^{r}}{p_K^{r}} + p_K^{-s} h_K^{1 + \min\{p,r\}}
\right]\|u\|_{r,2,\omega_K} 
\stackrel{r \leq r_{\max} \leq s}{\lesssim}  p_K^{-r} \left[ h_K^r + h_K^{1 + \min\{p_K,r\}}\right] \|u\|_{r,2,\omega_K}.
\tag*{\qedhere}
\end{align*}
\end{proof}
\section*{Acknowledgements}
\noindent
The first author gratefully acknowledges financial support 
by CONICYT through FONDECYT project 3140614.
\bibliographystyle{plain}
\section*{References}
\bibliography{ref}
\end{document}